\documentclass[11pt,twoside,reqno]{amsart}
\usepackage[T1]{fontenc}
\usepackage{graphicx}
\usepackage{amsmath,amssymb,amsthm,mathrsfs}
\usepackage[a4paper,margin=1in]{geometry}
\usepackage{microtype,enumitem,booktabs,array,xurl}
\usepackage{fancyhdr}
\fancypagestyle{plain}{%
  \fancyhf{}%
  \fancyfoot[C]{\thepage}%
}
\usepackage[colorlinks=true,linkcolor=blue,citecolor=blue,urlcolor=blue]{hyperref}
\hypersetup{pdftitle={Finite-time singularity formation for periodic IPM},pdfsubject={Research draft; proposed analytical proof on the two-dimensional torus}}
\numberwithin{equation}{section}
\newtheorem{theorem}{Theorem}[section]
\newtheorem{proposition}[theorem]{Proposition}
\newtheorem{lemma}[theorem]{Lemma}
\newtheorem{corollary}[theorem]{Corollary}
\theoremstyle{remark}

\DeclareMathOperator{\diver}{div}
\DeclareMathOperator{\supp}{supp}
\DeclareMathOperator{\tr}{tr}
\DeclareMathOperator{\cof}{cof}
\newcommand{\R}{\mathbb R}
\newcommand{\T}{\mathbb T}
\newcommand{\cT}{\mathcal T}
\newcommand{\cP}{\mathbb P}
\newcommand{\id}{\mathrm{id}}
\newcommand{\norm}[1]{\left\lVert #1\right\rVert}
\newcommand{\av}[1]{\langle #1\rangle_\omega}
\newcommand{\eps}{\varepsilon}
\newcommand{\G}{\mathcal G}
\newcommand{\A}{\mathcal A}
\allowdisplaybreaks[2]
\title[Finite-time blowup for 2D unforced IPM]{Finite-time blowup for 2D unforced IPM}

\author[Mimi Dai]{Mimi Dai}
\address{Department of Mathematics, Statistics and Computer Science,
University of Illinois at Chicago, Chicago, IL 60607, USA}
\email{mdai@uic.edu}

\thanks{The author is partially supported by the NSF grant DMS--2308208 and Simons Foundation.}

\begin{document}
\begin{abstract}
We construct an odd smooth initial density for the unforced incompressible
porous media (IPM) equation on $\T^2$ whose solution develops a finite-time
blowup. The components $\partial_{x_2}\rho(t,0)$ and
$\partial_{x_1}u_1(t,0)$ tend to $+\infty$ as the maximal smooth existence
time is approached, while the density and velocity remain bounded in
$L^2$. Starting from a stationary solution,
we iterate the angular amplification of localized oscillations with rapidly growing frequencies, 
adapting the mechanism of C\'ordoba and Mart\'inez-Zoroa~\cite{CMZ} for forced IPM.
Each perturbation is constructed from time zero and creates the geometry
configuration for the next iteration stage. A one-sided upper bound for the pressure
Hessian is applied to control backward preparation and initial perturbations, following an
idea from the Euler construction of OpenAI~\cite{OpenAIEuler}.
\end{abstract}
\maketitle
\begingroup
\setlength{\parskip}{0pt}
\tableofcontents
\endgroup

\section{Introduction}\label{sec:introduction}

We consider the incompressible porous media (IPM) equation on
$\T^2=(\R/2\pi\mathbb Z)^2$,
\begin{equation}\label{eq:ipm}
\rho_t+u\cdot\nabla\rho = 0,\qquad
u = -\nabla\pi-\rho e_2,\qquad \diver u = 0,
\end{equation}
where $\rho$ denotes the density, $u$ the velocity field, and $\pi$ the periodic
pressure, normalized to have zero mean. Darcy's law determines the
velocity $u=\cT\rho$ for an operator $\cT$ associated with Fourier symbol
\begin{equation}\label{eq:periodicsymbol}
m(k) = \frac{k_2k}{|k|^2}-e_2\quad(k\in\mathbb Z^2\setminus\{0\}),
\qquad m(0) = -e_2.
\end{equation}
We note $m$ is homogeneous of degree zero, satisfying $m(\xi)\cdot\xi=0$ and $|m(\xi)|\leq1$.

Local existence of smooth solution, uniqueness, and continuation criteria for \eqref{eq:ipm} were established
by C\'ordoba, Gancedo, and Orive \cite{CGO}; and more broad background on the 2D IPM equation can be found in references therein.
This paper investigates finite-time blowup scenarios from smooth periodic initial density. The work is heavily inspired by C\'ordoba and Mart\'inez-Zoroa's 
construction of finite-time blowup for forced 2D IPM in \cite{CMZ}. 
The construction begins with a stationary solution and adds localized
oscillations with higher and higher frequencies, whose initial increments are small but not necessarily zero,
while their subsequent amplification occurs in a bounded time interval.

Our main result is:
\begin{theorem}\label{thm:blowup}
There exists an odd density $\rho_{\rm in}\in C^\infty(\T^2)$ such that the maximal lifespan of the smooth solution of \eqref{eq:ipm} denoted by 
$T_*(\rho_{\rm in})$ satisfies
\[
0<T_*(\rho_{\rm in})<1.
\]
In particular, we have
\[
\partial_{x_2}\rho(t,0)\to+\infty,\qquad
\partial_{x_1}u_1(t,0)\to+\infty
\quad\text{as }t\to T_*(\rho_{\rm in}).
\]
Moreover, 
\[\|\rho(t)\|_{L^2}=\|\rho_{\rm in}\|_{L^2}, \qquad  \|u(t)\|_{L^2}\leq C, \qquad \forall t\in[0, T_*(\rho_{\rm in})]\]
for a constant $C>0$.
\end{theorem}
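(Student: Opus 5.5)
The plan is to adapt the iterative scheme of \cite{CMZ} to the unforced periodic setting. We start from an odd stationary solution $\rho_0$ of \eqref{eq:ipm} that is stratified near the origin. The simplest choice is a function of $x_2$ alone, such as $\rho_0=\sin x_2$, for which $u=\cT\rho_0=0$. By oddness, the origin is a stagnation point of every subsequent flow: $u(t,0)=0$ and $\nabla u(t,0)$ is diagonal. Along the origin the quantities $a(t)=\partial_{x_1}u_1(t,0)$ and $b(t)=\partial_{x_2}\rho(t,0)$ then obey exact ODEs. Differentiating the transport equation gives
\[
\dot b=-\partial_{x_2}u_2(t,0)\,b=a\,b .
\]
Darcy's law together with the symbol \eqref{eq:periodicsymbol} expresses $a$ as a zero-order singular integral of $\partial_{x_1}\rho$, and the sign of its kernel near the origin is determined by the angular distribution of $\nabla\rho$.

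The blowup mechanism is the following. A localized oscillation near $0$ with frequency $N_j$ and initial amplitude $\eps_j N_j^{-1}$ (so that it is small in $C^0$ and $L^2$) is sheared by the ambient hyperbolic flow. The flow rotates its wave vector toward the stretching direction, and this \emph{angular amplification} produces a gradient of size $\eps_j e^{\int a}$. Through the kernel of $\cT$, this gradient contributes a definite positive increment to $a$. We iterate this over infinitely many layers $j=1,2,\dots$, choosing frequencies $N_j\to\infty$ so rapidly that each layer is invisible to all earlier stages and is activated only after the previous ones have built up enough strain. We time the activations in a sequence $t_j\uparrow T_*<1$ with $\sum_j (t_{j+1}-t_j)<1$. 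Each stage roughly doubles $a$ over a time interval of length about $c/a(t_j)$, so that $\int^{T_*} a\,dt=\infty$. This forces $b\to+\infty$ and $a\to+\infty$. The $L^2$ statements follow directly: transport conserves $\|\rho\|_{L^2}$, and $\|u\|_{L^2}\le\|\rho\|_{L^2}$ because $|m|\le1$.

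The essential difference from the forced case is that every perturbation must be present in $\rho_{\rm in}$ at time zero, rather than injected by a force at time $t_j$. So we construct the $j$-th perturbation by solving backward: we prescribe its desired profile at time $t_j$, adapted to the geometry that stages $1,\dots,j-1$ have created, and then transport it back to $t=0$ along the flow generated by the earlier layers. We must then show two things. First, the resulting initial datum is still smooth and has tiny $C^k$ norm relative to $N_j$. Second, the forward evolution of the full sum reproduces the intended configuration up to controllable errors. For the backward transport we plan to use a one-sided bound on the pressure Hessian, in the spirit of \cite{OpenAIEuler}. Since $\nabla u=-\nabla^2\pi-\nabla\rho\otimes e_2$, an upper bound on $\nabla^2\pi$ controls the contraction rate of the backward flow map. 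This keeps the backward-evolved perturbation from concentrating, and its growth is bounded only by a factor depending on stages $<j$. Choosing $\eps_j$ small after $N_j$ is fixed then makes $\sum_j$ of the perturbations converge in $C^\infty$, which gives $\rho_{\rm in}\in C^\infty$ and keeps it odd. Finally, the continuation criterion of \cite{CGO} shows that the solution stays smooth on $[0,T_*)$, so $T_*(\rho_{\rm in})=T_*$.

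The main obstacle is the stability of the nonlinear interaction. Later, high-frequency layers back-react on the earlier flow, and earlier layers distort the wave packets of later ones during the long interval $[0,t_j]$, before those packets are ever activated. I would first try a bootstrap with a hierarchy of norms. The $j$-th layer is measured in $C^{1,\alpha}$ at scale $N_j^{-1}$. Its effect on $\nabla u$ away from its own activation window is bounded by $\eps_j$ times a factor polynomial in $N_j$, using the degree-zero homogeneity of $m$ and cancellation from high-frequency oscillation. The sum of these errors is made much smaller than the designed increment of $a$ at each stage. Verifying that the one-sided Hessian bound survives all of these perturbations uniformly in $j$ is the step I expect to require the most care.
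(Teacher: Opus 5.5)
There is a genuine gap at the core of your plan: the amplification mechanism you describe cannot yield summable initial increments.

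\textbf{The gain per stage is too small.} You measure the growth of the $j$-th layer by transport--stretching, $\eps_j e^{\int a}$. By your own bookkeeping (doubling $a$ over a time $c/a(t_j)$), each stage contributes $\int_{t_j}^{t_{j+1}}a\,dt=O(1)$. So this factor is $O(1)$ per stage and $e^{O(j)}$ over $[0,t_{j+1}]$, independent of $N_j$. For the layer to double $a$, its gradient at activation must be $\gtrsim a(t_j)$, since a wave packet of gradient amplitude $A$ changes $\nabla u$ by about $A\,m(p)\otimes p$. This pins $\eps_j\gtrsim a(t_j)e^{-O(j)}$, so $\eps_j$ is not free, and you cannot choose it small after fixing $N_j$. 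The $C^m$ norm of the layer at $t=0$ is $\approx\eps_jN_j^{m-1}$, which is not summable for large $m$ once $N_j$ grows at least geometrically, as it must. When every layer is present from time zero, the gain per stage has to beat every power of the frequency.

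\textbf{The missing idea.} The paper gets such a gain from a different effect. The growth is not stretching of $\xi$ but the instability term in $\partial_t\mu=-m(\xi)\cdot g\,\mu$, equivalently $\frac{d}{dt}\log A=-p^\perp\cdot Hp^\perp$ (Lemma~\ref{amp:lem:pressure}). The background is an unstably stratified, nearly vertical gradient $hn$ tilted by a small angle $\theta$. Its weak shear $-h\sin\theta\,v\otimes n$ slowly rotates a nearly horizontal phase normal. Over the stage, $|\xi|$ grows only by $\csc(\theta+\vartheta)$ and the background $\int\partial_1u_1\,dt$ is $O(1)$. Yet $\log A$ gains $\Gamma=\cot\theta(\frac\pi2-\theta-\vartheta)\approx\pi/(2\theta)$ (Proposition~\ref{amp:prop:frozen}). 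Choosing $\theta_{j+1}=\theta_j^2$ and $\lambda_j=\exp(\theta_j^{-1/2})$ gives $\Gamma_j\approx(\log\lambda_j)^2$. Then the increments $C_m\bar\mu_j^{C_m}(\mathrm e^{-\Gamma_j}\lambda_j^{m-1}+\lambda_j^{-2})$ are summable for every $m$. This also rules out your seed: $\rho_0=\sin x_2$ has $u\equiv0$ and no tilt, so there is nothing to shear and no phase rotation. The paper starts instead from the tilted stationary state $\frac{h_0}{|k|}f_{\eta_0}(k\cdot x)$ with $k=(1,N_0)$.

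\textbf{Supporting steps that fail or lack a mechanism.}
\begin{enumerate}
\item Oddness gives only $u(t,0)=0$. It does not make $\nabla u(t,0)$ diagonal or $\partial_1\rho(t,0)=0$, so $\dot b=ab$ is unavailable. An extra reflection symmetry enforcing it would make the central gradient exactly vertical, whereas the paper's tilt alternates sides, $n_j=((-1)^j\sin\theta_j,\cos\theta_j)$.
\item $\int a\,dt=\infty$ alone does not give $a\to+\infty$. The paper proves pointwise lower bounds on each $[t_j,t_{j+1}]$ from the retained configuration.
\item The one-sided Hessian bound is not about contraction of the backward flow map. It bounds backward growth of $A$ via $A(s)\le A(t)\exp(\int_s^tK_+\,d\tau)$, and it enters the backward energy estimate for the phase-independent corrections. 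It survives the iteration because $f_\delta'\ge-C\delta$ and $(p_j)_2\ge0$ on $[t_j,S_{j+1}]$. There $H_{\lambda_j}\le C\delta_jh_{j+1}I$, even though at the origin it approaches $-h_{j+1}\cos\theta_{j+1}\,n_{j+1}\otimes n_{j+1}$. On $[0,t_j]$ it is at most $2\mathrm e h_{j+1}\mathrm e^{-\Gamma_j}$.
\item Without a force, ``controllable errors'' have nowhere to go. The paper needs a Gevrey expansion to order $\lfloor\lambda^\nu\rfloor$ with residual $\exp(-c\lambda^\nu\log\lambda)$. This expansion includes phase-mean corrections $\bar r_n$, since the self-interaction mean $\langle\mathcal F_2\rangle_\omega$ is generically nonzero. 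Their nonzero initial values are selected by a backward linearized problem with zero terminal data. Only then can the exact solution from the prepared datum be shown to stay within $\exp(-\lambda^{\nu/2})$ of the approximation.
\end{enumerate}
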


\subsection{Relevant prior work}
Finite-time singularities for unforced IPM have been established in different energy or regularity settings. C\'ordoba, Gancedo, and
Orive \cite{CGO} constructed singular solutions in a special infinite-energy
class. Collot, Prange, and Tan \cite{CPT} proved stability of a self-similar
singularity within an infinite-energy reduction on an unbounded strip,
including solutions periodic in one coordinate. Dembski \cite{Dembski}
constructed singular solutions from Lipschitz initial density in a wedge,
with density vanishing on the physical boundary and smooth away from the
corner. These results concern either infinite energy or a boundary domain
with lower initial regularity.

For the unforced equation on $\T^2$, Kiselev and Yao
\cite{KY} constructed smooth initial data with unbounded
Sobolev growth under the assumption that the evolution remains globally
smooth, and established instability of stratified steady states. Their conclusions do not force breakdown at a finite time.
At critical regularity, Bianchini, C\'ordoba, and Mart\'inez-Zoroa
\cite{BCM} established nonexistence and strong ill-posedness in
$H^2(\R^2)$ near the stable profile $-x_2$. Their construction exhibits
instantaneous loss of $H^2$ regularity from small $H^2$ perturbations.

The closest smooth-data singularity constructions involve an external
source. C\'ordoba and Mart\'inez-Zoroa \cite{CMZ} obtained finite-time blowup
on $\R^2$ from smooth compactly supported initial density, with a compactly
supported source in $L^\infty_tC^\infty_x$.  Very recently, Alp\"oge, Buckmaster, and Coiculescu
\cite{ABC} subsequently obtained blowup on $\T^2$ from smooth odd
initial density with a source in $C^\infty([0,1]\times\T^2)$. 

In our earlier work for the forced electron megnetohydrodynamics (MHD) \cite{DaiEMHD}, we applied the construction strategy 
of \cite{CMZ} to show the existence of a smooth solution that blows up in finite time. 
In \cite{DaiEMHD}, a magnetic profile generating a nearly hyperbolic
current near the origin is augmented by increasingly oscillatory
perturbations. At each stage, the perturbation is prescribed at the terminal
time and recovered by solving an auxiliary linear transport--stretching
equation backward along the characteristic flow generated by the negative of a modified current
\cite[Section~2.4, Lemma~2.8]{DaiEMHD}. Each new perturbation generates the
current profile for the following stage. The present construction for the 2D IPM shares
this iterative selection of terminal geometry and earlier perturbation
histories. In \cite{DaiEMHD}, temporal cutoffs activate successive
perturbations, and their errors enter the external force. Here the entire
residual must be canceled by an additional perturbation while the initial
increments remain small.

OpenAI \cite{OpenAIEuler} recently announced a proof of finite-time blowup for the 3D Euler equation without external forcing,
where they mentioned that the construction was influenced by C\'ordoba and Mart\'inez-Zoroa's work \cite{CMZ-Euler}
for the 3D Euler equation with external forcing. We postpone to discussing the construction in Subsection~\ref{sec:eulercomparison}.

The current paper is inspired by these aforementioned works in different aspects. In particular, our construction inherits and adapts the angular amplification mechanism of
C\'ordoba and Mart\'inez-Zoroa \cite{CMZ}. A nearly vertical background
gradient amplifies a localized oscillation and changes its phase direction,
preparing the geometry for the next stage. However, additional ingredients are required to 
realize this mechanism for unforced solutions.
Section~\ref{sec:heuristics} describes these mechanisms. In particular,
Subsection~\ref{sec:forcedcomparison} explains how preparing small initial
increments and controlling the exact correction allow us to remove the
external source while preserving amplification.

Section~\ref{sec:local} records the local theory and transport identities.
Section~\ref{sec:heuristics} introduces the perturbations and their time
intervals, followed by comparisons with constructions for the forced IPM
and with the Euler construction by OpenAI. The principal
amplification and its persistence are established in
Section~\ref{sec:amplification}. Sections~\ref{sec:insertion}
and~\ref{sec:realization} contain the quantitative perturbation argument.
Section~\ref{sec:induction} carries out the iteration, and completes the limiting argument to prove Theorem \ref{thm:blowup}.

\textbf{Notation.}
Unless otherwise indicated, spatial norms are taken on $\T^2$. We abbreviate $\|\cdot\|_{L^2}$ and $\|\cdot\|_{L^\infty}$
to $\|\cdot\|_{2}$ and $\|\cdot\|_{\infty}$, respectively. 
The symbols $I$ and $\id$ denote the identity matrix and the identity map.
Constants $C,c>0$ may change from line to line; dependence on parameter is indicated by a subscript. 

We use $r$ for density perturbations and their expansion coefficients,
$R_0,R_*,R_j$ for material radii, and $\gamma$ for the dimensionless
Gevrey radius. 
These quantities are defined explicitly in the relevant sections.

\section{Local theory and elementary identities}\label{sec:local}
We first establish some standard local estimates which will be used to correct the source term and to
compare the successive solutions. We then express the transport equation and
Darcy's law along a particle map. 

The symbol \eqref{eq:periodicsymbol} implies
\[
\norm{\cT f}_{H^s} \leq \norm f_{H^s},\qquad
\partial_{x_i}\cT f = \cT\partial_{x_i}f
\]
for every $s\in\R$. In view of $\nabla\cdot u=0$, we have the standard estimate for equation \eqref{eq:ipm}
\begin{equation}\label{eq:tametransport}
\frac{d}{dt}\norm\rho_{H^m}
\leq C_m\bigl(\norm{\nabla u}_\infty+\norm{\nabla\rho}_\infty\bigr)
\norm\rho_{H^m}, \qquad \forall m\geq 3.
\end{equation}
The multiplier bound above and the embedding
$H^3(\T^2)\hookrightarrow W^{1,\infty}(\T^2)$ therefore imply
\[
\frac{d}{dt}\norm\rho_{H^m}
\leq C_m\norm\rho_{H^3}\norm\rho_{H^m}.
\]
In particular, the $H^3$ norm satisfies a closed estimate.

\begin{lemma}[Local solution and comparison]\label{lem:local}
Smooth initial data $\rho_{\rm{in}}$ admit a unique smooth solution $\rho$ to the IPM system \eqref{eq:ipm} on $[0,T]$ for some $T>0$
depending only on $\|\rho_{\rm{in}}\|_{H^3}$. The solution satisfies
\begin{equation}\label{eq:energy}
\norm{\rho(t)}_2 = \norm{\rho(0)}_2,\qquad
\norm{u(t)}_2 \leq \norm{\rho(0)}_2.
\end{equation}
Moreover, if $\rho+w$ is another smooth solution on
$[0,T']$ with $T'\leq T$, then
\begin{equation}\label{eq:differenceestimate}
\frac{d}{dt}\norm w_{H^3}
\leq C\norm\rho_{H^4}\norm w_{H^3}
+C\norm w_{H^3}^2.
\end{equation}
\end{lemma}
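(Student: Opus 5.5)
The plan has three parts: a standard energy-method existence argument, the conservation laws coming from the transport structure and the multiplier bound, and a commutator estimate for the difference equation.

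\emph{Existence and uniqueness.} We regularize, for instance by Galerkin truncation or by mollifying the transport term, $\rho^\eps_t+J_\eps\bigl((J_\eps u^\eps)\cdot\nabla J_\eps\rho^\eps\bigr)=0$ with $u^\eps=\cT\rho^\eps$. This is an ODE on $H^3$. The cancellation $\int (v\cdot\nabla\partial^\alpha f)\,\partial^\alpha f=0$ for divergence-free $v$, together with the multiplier bound $\norm{\cT f}_{H^s}\le\norm f_{H^s}$, gives the estimate \eqref{eq:tametransport} uniformly in $\eps$. Hence $\frac{d}{dt}\norm{\rho^\eps}_{H^3}\le C\norm{\rho^\eps}_{H^3}^2$, and this yields a uniform time $T\sim(C\norm{\rho_{\rm in}}_{H^3})^{-1}$ with the norm bounded by $2\norm{\rho_{\rm in}}_{H^3}$.

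For higher norms, the estimate $\frac{d}{dt}\norm\rho_{H^m}\le C_m\norm\rho_{H^3}\norm\rho_{H^m}$ and Gronwall propagate every $H^m$ norm on the same interval $[0,T]$. Compactness (Aubin--Lions) then gives a limit solution. Smoothness follows from the uniform $H^m$ bounds, and time regularity follows from the equation. Uniqueness follows from the difference estimate below applied with $w(0)=0$; an $L^2$-level version also suffices.

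\emph{Conservation laws \eqref{eq:energy}.} Multiply the equation by $\rho$ and integrate. Since $\diver u=0$, $\int u\cdot\nabla(\rho^2/2)=0$, so $\norm{\rho(t)}_2$ is constant. The bound $\norm{u}_2=\norm{\cT\rho}_2\le\norm\rho_2$ follows from Plancherel and $|m|\le1$.

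\emph{Difference estimate \eqref{eq:differenceestimate}.} Let $v=\cT w$. Subtracting the two equations gives
\[
w_t+(u+v)\cdot\nabla w+v\cdot\nabla\rho=0 .
\]
Apply $\partial^\alpha$ for $|\alpha|\le3$ and pair with $\partial^\alpha w$. The top-order transport term $\int (u+v)\cdot\nabla\partial^\alpha w\,\partial^\alpha w$ vanishes because $\diver(u+v)=0$. The remaining pieces are estimated as follows.
\begin{itemize}
\item The commutator $[\partial^\alpha,u\cdot\nabla]w$ is bounded by the Kato--Ponce inequality by $C(\norm{\nabla u}_\infty\norm w_{H^3}+\norm u_{H^3}\norm{\nabla w}_\infty)$, which is at most $C\norm\rho_{H^3}\norm w_{H^3}$.
\item The commutator $[\partial^\alpha,v\cdot\nabla]w$ is bounded similarly by $C\norm w_{H^3}^2$.
\item The forcing term satisfies $\norm{\partial^\alpha(v\cdot\nabla\rho)}_2\le C(\norm v_{H^3}\norm{\nabla\rho}_\infty+\norm v_\infty\norm{\rho}_{H^4})\le C\norm\rho_{H^4}\norm w_{H^3}$. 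Here we use $\norm v_{H^3}\le\norm w_{H^3}$ and the Sobolev embedding $H^3\hookrightarrow L^\infty$.
\end{itemize}
Summing over $\alpha$ and dividing by $\norm w_{H^3}$ gives \eqref{eq:differenceestimate}.

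The main obstacle is this last forcing term: it requires one extra derivative on $\rho$, which is exactly why $\norm\rho_{H^4}$ appears. Everything else is routine commutator bookkeeping.
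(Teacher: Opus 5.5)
Your proposal is correct and follows essentially the same route as the paper. The conservation laws come from $\diver u=0$ and $|m|\le1$. The difference estimate is the same $H^3$ energy estimate on $w_t+(u+v)\cdot\nabla w+v\cdot\nabla\rho=0$: you bound the forcing term by $C(\norm v_\infty\norm\rho_{H^4}+\norm v_{H^3}\norm{\nabla\rho}_\infty)$, which is exactly where $\norm\rho_{H^4}$ enters. The only difference is that the paper simply cites the standard local theory of C\'ordoba--Gancedo--Orive and Kiselev--Yao for existence, whereas you sketch the mollified energy method and deduce uniqueness from the difference estimate; both are fine.
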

\begin{proof}
The local existence of smooth solution is standard, for instance, see \cite{CGO} for $\mathbb R^2$ case and
\cite{KY} for the periodic case. The conservation in \eqref{eq:energy} follows from $\nabla\cdot u=0$, 
\[
\frac12\frac{d}{dt}\int_{\T^2}\rho^2\,dx
= -\frac12\int_{\T^2}u\cdot\nabla(\rho^2)\,dx = 0.
\]
And $\norm{u(t)}_2 \leq \norm{\rho(t)}_2= \norm{\rho(0)}_2$.

Denote $u = \cT\rho$ and $v = \cT w$. Subtracting the two
equations of $\rho$ and $\rho+w$ yields
\[
w_t+(u+v)\cdot\nabla w+v\cdot\nabla\rho = 0.
\]
It follows from the above difference equation and the fact $\nabla\cdot (u+v)=0$ that
\[
\begin{split}
\frac12\frac{d}{dt}\|w\|_{H^3}^2 &\leq C\|u+v\|_{H^3}\|w\|_{H^3}^2 +\|v\cdot\nabla\rho\|_{H^3}\|w\|_{H^3}\\
&\leq C\bigl(\norm\rho_{H^3}+\norm w_{H^3}\bigr)\norm w_{H^3}^2+\|v\cdot\nabla\rho\|_{H^3}\|w\|_{H^3}
\end{split}
\]
where we also used Leibniz’s rule and the embeddings $H^3\hookrightarrow W^{1,\infty}$ and $H^1\hookrightarrow L^4$. 
On the other hand, we have
\[
\norm{v\cdot\nabla\rho}_{H^3}
\leq C\bigl(\norm v_\infty\norm\rho_{H^4}
+\norm v_{H^3}\norm{\nabla\rho}_\infty\bigr)
\leq C\norm w_{H^3}\norm\rho_{H^4}.
\]
Combining the last two inequalities yields \eqref{eq:differenceestimate}. 

\end{proof}

We note oddness is preserved for \eqref{eq:ipm} in the sense that the reflected triple
\[
\bigl(-\rho(t,-x),-u(t,-x),\pi(t,-x)\bigr)
\]
satisfies \eqref{eq:ipm} as well.
If $\rho_{\rm in}$ is odd, uniqueness identifies the solution with
$(\rho,u,\pi)$. Thus the density and velocity remain odd,  while the pressure
is even, and $u(t,0)=0$.

We now introduce the particle coordinates used in the analysis throughout the text.
Let $(\bar\rho,U,\bar\pi)$ be a smooth background solution, and let $X$ be the particle map satisfying
\[
\partial_tX(t,y) = U(t,X(t,y)),\qquad X(0,y) = y.
\]
Here $y\in\T^2$ labels a particle by its initial position, while
$x = X(t,y)$ is its physical position at time $t$. We fix the notation $q\circ X=(q\circ X)(t,y) = q(t,X(t,y))$. Denote
\[
g = (\nabla_x\bar\rho)\circ X,\qquad
M = (\nabla_xU)\circ X,\qquad H = (\nabla_x^2\bar\pi)\circ X.
\]
Thus $g$, $M$, and $H$ are physical derivatives evaluated along trajectories. Their time derivatives
are taken at fixed $y$. We use the conventions 
\[
M_{ij}(t,y) = \partial_{x_j}U_i(t,X(t,y)),\qquad
(a\otimes b)_{ij} = a_i b_j.
\]
One can verify
\begin{equation}\label{eq:basicjets}
\begin{split}
&\det(D_yX) = 1,\qquad \partial_t(D_yX) = M(D_yX),\qquad
g = (D_yX)^{-T}\nabla_y\bar\rho(0,y),\\
&M = -H-e_2\otimes g,\qquad
H = H^T,\qquad \tr H = -g_2.
\end{split}
\end{equation}
Indeed, since $D_yX(0,y) = I$,
Jacobi's formula and incompressibility imply $\det(D_yX) = 1$.
The transport equation implies $\bar\rho(t,X(t,y)) = \bar\rho(0,y)$, while the chain rule reads
\[
\nabla_y(q\circ X) = (D_yX)^T((\nabla_xq)\circ X).
\]
Taking $q = \bar\rho$ yields the formula for $g$. The last three identities in \eqref{eq:basicjets}
follow from Darcy's law and the symmetry of the physical pressure
Hessian. In particular, differentiation of
$(D_yX)^Tg = \nabla_y\bar\rho(0,y)$ also yields $g_t = -M^Tg$.

\section{Heuristics of the construction}\label{sec:heuristics}
The construction begins with a smooth stationary background solution and
proceeds by iterative perturbations. At each stage, the perturbation has
four components: a principal oscillation with rapidly growing frequncy, phase-independent higher-order
corrections, higher-order oscillatory corrections, and a final nonlinear
correction that removes the remaining residual. We highlight a few features
of the construction.
\begin{enumerate}[label=(\arabic*),leftmargin=*,itemsep=3pt,topsep=4pt]
\item At the end of each stage we obtain a smooth solution of the unforced IPM
system~\eqref{eq:ipm}, which serves as the background solution for the next stage.
\item The principal oscillation generates a large gradient through the
angular amplification mechanism pioneered in~\cite{CMZ}. The two families
of higher-order corrections cancel large residual terms
generated by this oscillation. The oscillation frequency increases triple-exponentially from one stage to the next.
\item At stage $j$, the principal phase and amplitude are prescribed at
$t_{j+1}$ and continued backward to $t=0$. The phase-independent corrections
are selected through backward linearized equations with zero terminal
condition at $S_{j+1}$. The principal oscillation
and the phase-independent corrections generally have nonzero initial values.
\item The perturbation is defined throughout $[0,S_{j+1}]$, with three
regimes: the earlier history $[0,t_j]$, the amplification interval
$[t_j,t_{j+1}]$, and the retention interval $[t_{j+1},S_{j+1}]$, during which
the configuration needed for the next stage persists.
\item A global one-sided upper bound for the pressure Hessian controls
backward growth. Together with the correction estimates and compatible
choices of frequencies and amplification parameters, this bound ensures
small gradient increments on $[0,t_j]$ and in particular the limiting initial density is
smooth. This use of pressure control for the earlier stage
is inspired by the Euler blowup construction of OpenAI~\cite{OpenAIEuler}.
\end{enumerate}
More detail on the construction of perturbations is provided below.

\subsection{The principal oscillation}
Let $(\bar\rho,U,\bar\pi)$ be a smooth solution on $[0,S]$, and let
$X$ be its particle map with $X(0,y)=y$, as in Section~\ref{sec:local}. The quantities
$D_yX,g,M,H$ are evaluated in the particle coordinates, and time
derivatives are taken at fixed $y$.

Choose a smooth local phase $\phi_0$ and a cutoff $0\leq\chi\leq1$ compactly
supported in its domain, with $\nabla_y\phi_0\ne0$ near $\supp\chi$. Define
\[
\phi(t,X(t,y)) = \phi_0(y),\qquad
\xi(t,y) = (\nabla_x\phi)(t,X(t,y)) = (D_yX(t,y))^{-T}\nabla_y\phi_0(y).
\]
For a smooth zero-mean $2\pi$-periodic function $f$ and a amplitude
$\mu>0$, consider the principal oscillation
\[
r_\lambda(t,X(t,y))
= \lambda^{-1}\mu(t,y)\chi(y)f(\lambda\phi_0(y)), \qquad \lambda\geq1,
\]
extended by zero outside the transported support. We will choose the principal data so that the dynamics
amplify $\nabla r_\lambda$ and prepare its direction for the next stage.
The higher-order and nonlinear corrections introduced in the next subsection remove the residual while
preserving this leading effect.

The transported phase determines the evolution of $\xi$, and we choose
$\mu$ to cancel the leading linearized residual. Along each background
trajectory, the resulting principal system is
\[
\partial_t\xi = -M^T\xi,\qquad
\partial_t\mu = -m(\xi)\cdot g\,\mu.
\]
Indeed, with $D_t=\partial_t+U\cdot\nabla_x$, it follows from the transport equation
\[
D_t\phi=0,\qquad
D_t\nabla_x\phi=-(\nabla_xU)^T\nabla_x\phi.
\]
Composition with $X$ yields the equation for $\xi$. On the other hand, we have from multiplier
expansion
\[
\bigl(D_t r_\lambda+(\cT r_\lambda)\cdot
 \nabla_x\bar\rho\bigr)\circ X=\lambda^{-1}\chi
 \bigl[\partial_t\mu+m(\xi)\cdot g\,\mu\bigr]
 f(\lambda\phi_0)+O(\lambda^{-2}).
\]
The equation for $\mu$ follows from cancelling the leading term coefficient. Moreover,
the potentially leading nonlinear self-interaction vanishes because
$m(\xi)\cdot\xi=0$.
The remaining terms are handled by the corrections in the next subsection. 
It is noteworthy that the application of such phase-amplitude system of the leading linearized dynamics
was pioneered by Friedlander and Vishik \cite{FV91, VF93} and Lifschitz and Hameiri \cite{LH91, LH93} independently to study
hydrodynamic instability in the early 1990s. 

Denote the unit phase normal by $p=\xi/|\xi|$ and the principal gradient
amplitude by $A=\mu|\xi|$. In view of 
\[
(\nabla_xr_\lambda)\circ X
=\mu\chi\xi f'(\lambda\phi_0)
+\lambda^{-1}(D_yX)^{-T}\nabla_y(\mu\chi)f(\lambda\phi_0),
\]
at a label where $\chi=1$ and $\phi_0=0$, the leading gradient increment
is $Ap$ if $f'(0)=1$. If also $f(0)=0$, as for the odd profile used later,
the second term vanishes there, so the gradient increment is $Ap$.
We discuss the growth of $A$ from the principal dynamics in the following

Lemma~\ref{amp:lem:pressure} derives the pressure equation from the phase and amplitude system above
\[
\partial_t\log A = -p^\perp\cdot Hp^\perp,\qquad p^\perp=(-p_2,p_1).
\]
Thus negative pressure curvature in the direction $p^\perp$ 
produces growth of $A$. The accumulated growth depends on the pressure
curvature by the evolving phase direction.

To describe the angular amplification, we consider the constant-coefficient
reference system of Subsection~\ref{sec:ampstart} along one trajectory,
measuring time from the start of this reference evolution. For illustration,
take $h>0$ and $0<\theta\leq1/10$, and denote
\[
n=(\sin\theta,\cos\theta),\qquad
v=(-\cos\theta,\sin\theta).
\]
Here $n$ is the nearly vertical background gradient direction and $v\perp n$
is the initial phase direction. We then have the reference coefficients 
\[
g_*=hn,\qquad M_*=-h\sin\theta\,v\otimes n,\qquad
H_*=-h\cos\theta\,n\otimes n.
\]
For $\xi(0)=v$, it follows from the phase and pressure equations that
\[
\xi(t)=v+h\sin\theta\,t\,n,\qquad
\log\frac{A(t)}{A(0)}
=\cot\theta\,\arctan(h\sin\theta\,t),
\]
where the arctangent is the angle through which the phase direction has rotated
from $v$ toward $n$. An order-one rotation therefore yields a logarithmic
gain of size $\theta^{-1}$, and hence an exponential gradient
amplification as $\theta$ decreases. At the stopping time chosen in
Proposition~\ref{amp:prop:frozen}, the phase direction is nearly upward
vertical on the opposite side from $n$, preparing the background geometry for the next
stage. 

Figure~\ref{fig:angular-amplification} illustrates this reference evolution.
\begin{figure}[tbp]
\centering
\includegraphics[width=\textwidth]{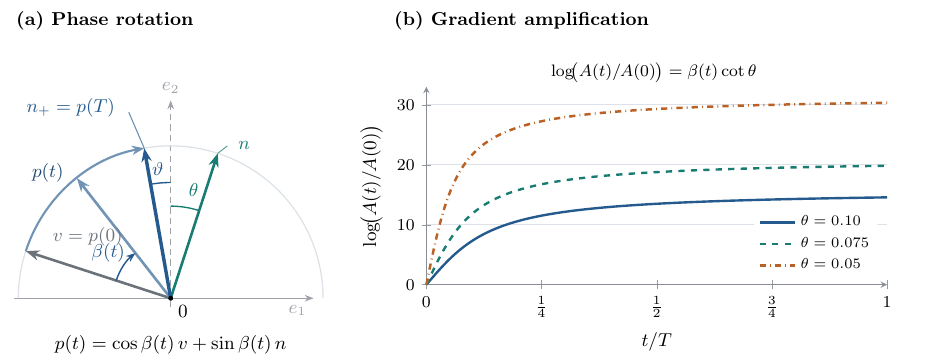}
\caption{Angular amplification for the constant-coefficient system.
(a) The phase normal rotates from $v$ to
$n_+=(-\sin\vartheta,\cos\vartheta)$, $0<\vartheta\leq\theta$,
stopping before the vertical. The angles are enlarged here for illustration.
(b) Reference curves show logarithmic gain for $\vartheta=\theta^2$, with $T$ from
\eqref{amp:eq:targettime} and
$\beta(t)=\arctan(h\sin\theta\,t)$.
Each curve has finite terminal gain $\Gamma$.}
\label{fig:angular-amplification}
\end{figure}

Proposition~\ref{amp:prop:robust} generalizes this amplification to the
variable-coefficient system with prescribed terminal phase gradient and
principal gradient amplitude. 
The terminal data select the outgoing direction needed for the next stage. 
Finally, the pressure equation also controls the earlier stage through an upper bound $H\leq K_+(t)I$ which implies
\[
A(s)\leq A(t)\exp\left(\int_s^tK_+(\tau)\,d\tau\right),\qquad s\leq t.
\]

\subsection{Approximate and exact perturbations}\label{ss:perturbation}
The principal oscillation produces the intended gradient growth, but it
solves the IPM equation only to leading order. The
higher-order corrections will be introduced to cancel large residual terms while
preserving the leading gradient. A final correction then removes
the remaining residual. We explain the two steps and how their
estimates preserve the amplification.

To cancel the large errors generated by the principal oscillation, we introduce
\[
\begin{aligned}
\rho^{a,N}(t,X(t,y)) ={}&\bar\rho(0,y)
+\lambda^{-1}\mu(t,y)\chi(y)f(\lambda\phi_0(y))
+\sum_{n=2}^{N}\lambda^{-n}
\bigl[\bar r_n(t,y)+r_n(t,y,\lambda\phi_0(y))\bigr].
\end{aligned}
\]
Here $\bar r_n$ is independent of the phase variable $\omega$, whereas
$r_n(t,y,\omega)$ is periodic and has zero mean in $\omega$. The
phase-independent term cancels the phase average of the residual, and the oscillatory term cancels the remaining zero-mean part.

The coefficients $\bar r_n$ and $r_n$ are constructed successively in $n$. At each order, the
averaged equation is the linearized IPM equation, with a source
determined by lower-order coefficients. Lemma~\ref{a:lem:slow} solves this
equation through a backward problem with vanishing data at time $S<1$. This choice determines $\bar r_n$ on the whole interval and
selects its generally nonzero initial value. The velocity induced by
$\bar r_n$ then enters the scalar equation for $r_n$, which is solved with
$r_n(0,y,\omega)=0$. 

The estimates in Proposition~\ref{a:prop:majorant} control both spatial
derivatives and expansion order $N$. 
Choose a fixed sufficiently small $\nu>0$ and truncate at
$N=\lfloor\lambda^\nu\rfloor$. 
Denoting $\rho^a=\rho^{a,N}$ and $u^a=\cT\rho^a$, we obtain for some fixed $\bar{\mu}\geq 2$
\[
\partial_t\rho^a+u^a\cdot\nabla_x\rho^a=\mathcal R,
\qquad
\sup_{0\leq t\leq S}\norm{\mathcal R(t)}_{H^4_{1/\lambda,x}}
\leq \bar{\mu}^C\exp(-c\lambda^\nu\log\lambda),
\]
which is the estimate \eqref{a:eq:Gresidual} converted to physical
coordinates. 
Thus the finite expansion in $\rho^{a,N}$ greatly reduces the error while retaining the principal amplification.

To remove the remaining residual, let $\rho^+$ be the local solution to the IPM equation \eqref{eq:ipm} with
initial density $\rho^a(0)$, and denote $u^+=\cT\rho^+$ and
$e=\rho^+-\rho^a$. Subtracting the approximate
equation from the exact one, we obtain
\[
e_t+(u^a+\cT e)\cdot\nabla_x e
+(\cT e)\cdot\nabla_x\rho^a=-\mathcal R,\qquad e(0)=0.
\]
To show that $\rho^+$ exists on $[0,S]$ and stays close enough to preserve the desired gradients, weighted energy
estimate \eqref{a:eq:finiteenergy} is applied, which results in the estimate
\eqref{a:eq:finitecorrection} for appropriate choice of parameters
\[
\sup_{0\leq t\leq S}\norm{e(t)}_{H^4_{1/\lambda,x}}
\leq\exp(-\lambda^{\nu/2}).
\]
For each fixed $\lambda$, this also controls the $H^3$ norm, so
Lemma~\ref{lem:local} continues $\rho^+$ through $S$.
Combining the approximate-gradient estimates with the correction estimate above 
implies
\[
\sup_{0\leq t\leq S}
\norm{(\nabla_x\rho^+-\nabla_x\bar\rho)\circ X
-\mu\chi\xi f'(\lambda\phi_0)}_{L^\infty_y}
\leq\lambda^{-1/2}.
\]
which indicates that the higher-order and exact corrections
preserve the principal increments to the accuracy required by the
iteration. 

Since $e(0)=0$ and
$r_n(0)=0$ for $n\geq2$, the initial density is determined as
\[
\rho^+(0)-\bar\rho(0)=r_\lambda(0)+b_\lambda,\qquad
b_\lambda=\sum_{n=2}^N\lambda^{-n}\bar r_n(0).
\]
Theorem~\ref{thm:main} provides estimates to show that the limiting initial data is smooth.


\subsection{Iteration mechanism}
Now we argue that the one stage construction above can be iterated.
At stage $j$, the solution $\rho_j$ (renamed $\rho^+$ from the construction above) on $[0,S_j]$ serves as the
background for a new perturbation in next stage $j+1$. 

We first explain how the geometry passes from stage $j$ to $j+1$. On a
material neighborhood of the origin, the induction controls the density
gradient, velocity gradient, and pressure Hessian of $\rho_j$ near
\[
\bigl(h_jn_j,\ h_jm(n_j)\otimes n_j,
-h_j\cos\theta_j\,n_j\otimes n_j\bigr),\qquad
n_j=((-1)^j\sin\theta_j,\cos\theta_j),
\]
throughout $[t_j,S_j]$, with the error in \eqref{geo:eq:IHgeometry}.
Thus the background gradient is nearly upward vertical, with its tilt
alternating from one stage to the next. The separate global upper pressure
bound \eqref{geo:eq:IHH} controls the earlier history and the backward
inverse.

At the origin, let $p_j$ and $A_j$ denote the phase direction and principal
gradient amplitude of the new oscillation. Prescribe its terminal phase
and amplitude as in \eqref{geo:eq:terminaldata}, so that
\[
p_j(t_{j+1})=n_{j+1},\qquad A_j(t_{j+1})=h_{j+1}.
\]
The phase and amplitude are then determined backward to time zero.
By Proposition~\ref{amp:prop:robust}, the incoming direction satisfies
\[
p_j(t_j)\approx v_j,\qquad
v_j=\bigl((-1)^{j+1}\cos\theta_j,\sin\theta_j\bigr)\perp n_j.
\]
The incoming phase normal is nearly horizontal, whereas its outgoing
direction $n_{j+1}$ lies closer to the vertical on the opposite side from
$n_j$. At the origin, the outgoing principal density-gradient increment
is $h_{j+1}n_{j+1}$. Indeed, we have
\[\nabla\rho_{j+1}(t_{j+1},0)=\nabla\rho_j(t_{j+1},0)+h_{j+1}n_{j+1}+\text{small error}.\]
Since $h_{j+1}\gg h_j$,
this increment determines the dominant direction of the new background.
The following stage introduces a different oscillation, with its own
phase transported by the velocity of $\rho_{j+1}$. Its incoming direction
satisfies
\[
p_{j+1}(t_{j+1})\approx v_{j+1}\perp n_{j+1},
\qquad p_{j+1}(t_{j+2})=n_{j+2}.
\]
For the next oscillation, terminal preparation at $t_{j+2}$ again determines the phase and amplitude back to time zero.
The resulting solution $\rho_{j+1}$ is constructed on $[0,S_{j+1}]\subset[0,S_j]$. 

Figure~\ref{fig:angular-stage-transfer} summarizes this transfer.
\begin{figure}[tbp]
\centering
\includegraphics[width=\textwidth]{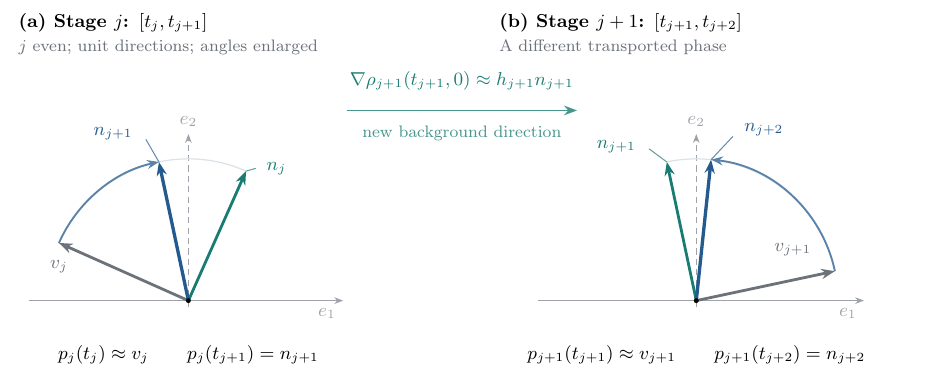}
\caption{Successive angular amplification stages, shown for even $j$.
The outgoing gradient increment $h_{j+1}n_{j+1}$, with $h_{j+1}\gg h_j$,
determines the dominant direction of the next background.
The next oscillation has its own phase and incoming direction
$p_{j+1}(t_{j+1})\approx v_{j+1}\perp n_{j+1}$;
both oscillations are prepared from time zero.
Arrows represent unit directions; angles are enlarged, whereas the
construction uses $\theta_{j+1}=\theta_j^2$.}
\label{fig:angular-stage-transfer}
\end{figure}

Next we emphasize the roles of the three time regimes on each stage $j$: the principal gradient is small on the earlier
stage $[0,t_j]$, grows fast on the amplification interval $[t_j,t_{j+1}]$,
and retains the configuration needed for the next stage on $[t_{j+1},S_{j+1}]$. 

We iteratively choose the angle and gradient amplitude parameters 
\[
\theta_{j+1}=\theta_j^2,\qquad
h_{j+1}=Kh_j\theta_j^{-14},\qquad h_0=\theta_0^{-4},
\]
where $K\geq1$ is sufficiently large and $0<\theta_0\leq1/10$ is small. We then choose the time iteratively
\[
t_{j+1}-t_j=
\frac{\cot(\theta_j+\theta_j^2)}{h_j\sin\theta_j},\qquad
S_j=t_j+\frac4{h_j\theta_j^2},\qquad t_0=0.
\]
The amplification time and the remaining existence interval shrink since
\[
t_{j+1}-t_j\leq\frac2{h_j\theta_j^2},\qquad
\frac{h_j\theta_j^2}{h_{j+1}\theta_{j+1}^2}
=K^{-1}\theta_j^{12}\leq\frac14.
\]
It is clear that $S_{j+1}<S_j$ and $S_j-t_j\to0$. The tail estimate following \eqref{geo:eq:finiteT}
implies, with $T_\infty=\lim_jt_j$,
\[
t_j<t_{j+1}<T_\infty<S_{j+1}<S_j,\qquad
S_j\to T_\infty<\infty.
\]
In particular, the preparation of geometry configuration interval $[t_{j+1},S_{j+1}]$ contains the
entire amplification interval $[t_{j+1},t_{j+2}]$ of next stage.

In the end, we discuss the rapid growth of oscillation frequency in the iteration process.
The prescribed frequency and amplification exponent are 
\[
\lambda_j=\exp(\theta_j^{-1/2}),\qquad
\Gamma_j=\cot\theta_j\left(\frac\pi2-\theta_j-\theta_j^2\right).
\]
It follows the frequency recurrence
\[
\log\lambda_{j+1}=(\log\lambda_j)^2,\qquad
\lambda_{j+1}=\lambda_j^{\log\lambda_j},
\]
and equivalently,
\[
\lambda_j=\exp\!\left((\log\lambda_0)^{2^j}\right).
\]
The frequencies therefore grow triple-exponentially with the iteration index.

This frequency separation accommodates the derivative and localization costs
inherited from the preceding solution. Let $\bar{\mu}_j$ be the data constant for
the $j$th perturbation, constructed from $\rho_j$ before using $\lambda_j$.
Subsections~\ref{sec:inductionstep} and~\ref{geo:sec:budgets} prove the bound
$\log \bar{\mu}_{j+1}\leq C\log\lambda_j$, together with
$\lambda_j\geq\max(\underline{\lambda},\bar{\mu}_j^Q)$, where
$\underline{\lambda}\geq1$ is the fixed absolute frequency threshold
in the perturbation construction. Moreover,
$\Gamma_j\approx \theta_j^{-1}=(\log\lambda_j)^2$ and 
\eqref{geo:eq:Pwindow}--\eqref{geo:eq:allthresholds} imply
\[
\log \bar{\mu}_j=o(\log\lambda_j),\qquad
\log\lambda_j=o(\Gamma_j)
\qquad\text{as }j\to\infty.
\]
Consequently, the initial-increment estimate
\[
\norm{\rho_{j+1}(0)-\rho_j(0)}_{H^m}
\leq C_m\bar{\mu}_j^{C_m}
\bigl(\mathrm e^{-\Gamma_j}\lambda_j^{m-1}+\lambda_j^{-2}\bigr)
\]
is summable for every fixed $m$, as proved in
\eqref{geo:eq:summableinitial}. This shows that a sequence of initial densities converges to a smooth
$\rho_{\rm in}$, while the retained central gradients tend to infinity.

\subsection{Comparison with constructions for the forced IPM}\label{sec:forcedcomparison}

In \cite{CMZ}, Taylor approximations of the background transport and localized velocity expansions lead to a
correction scheme. Temporal activation, phase-independent interactions,
and the remaining errors contribute to the external source. In
\cite{ABC}, the oscillation is transported by a Fourier-truncated background. The cutoff commutator and perturbation
residual enter the force, which is space--time smooth. In both \cite{CMZ,ABC}, their estimates control the source terms, but do
not establish a correction that removes them while preserving amplification.

In contrast, to avoid the external source in our construction,
the first requirement is that perturbation starts from the initial time and has
controlled earlier stage estimates.
Suppose a smooth background has a prescribed source and an added
perturbation $r$ satisfies $r(0)=0$. If a correction $e$, with $e(0)=0$,
removes the additional residual while preserving that source, the corrected
density and the background solve the same classical equation with the same
initial density. Uniqueness on their common smooth interval implies
$e=-r$. Thus the correction cancels the activated perturbation.

We therefore prepare a small, generally nonzero initial increment and extend its phase
and amplitude throughout the earlier interval. The phase is transported by
the actual background particle map. Terminal preparation selects the later
geometry, as in the strategy of \cite{CMZ, DaiEMHD}, while the exact
nonlinear solution starts from the prepared initial density. 

The second requirement concerns nonlinear interactions of oscillations.
Conjugate pair of phase profiles generate a non-oscillatory coefficient. The leading self-interaction cancels through
$m(\xi)\cdot\xi=0$, but lower-order interactions leave the generally
nonzero phase average in \eqref{a:eq:quadraticmean}. We incorporate this
term into the density through the linearized equation
\eqref{a:eq:linearized}. Lemma~\ref{a:lem:slow} selects its initial value
by a zero terminal condition. 
The backward estimate depends on the time integral of an upper
bound for the pressure Hessian, as in \eqref{a:eq:slowbound}.
Consequently, large negative pressure curvature can coexist with controlled
backward growth.

The third requirement is the removal of the remaining residual.
Simultaneous estimates in the derivative order and the expansion order
permit a finite Gevrey expansion with the exponentially small residual
\eqref{a:eq:Gresidual}, which makes it possible to construct an unforced solution, as explained in Subsection \ref{ss:perturbation}.

\subsection{Comparison with the Euler construction by OpenAI}\label{sec:eulercomparison}
We first describe the similarity and difference between the 2D IPM system and the 3D Euler equations.
The IPM system \eqref{eq:ipm} formulated for $\nabla^\perp\rho$ becomes
\[
(\partial_t+u\cdot\nabla)\nabla^\perp\rho=(\nabla^\perp\rho\cdot\nabla)u;
\]
while for an Euler velocity $u_E$ in 3D, its vorticity
$\omega_E=\nabla\times u_E$ satisfies
\[(\partial_t+u_E\cdot\nabla)\omega_E=(\omega_E\cdot\nabla)u_E.\]
Thus $\nabla^\perp\rho$ satisfies the same transport-stretching equation,
and $|\nabla^\perp\rho|=|\nabla\rho|$. Along the respective particle flows,
both fields evolve by multiplication by the deformation matrix.
However, the velocity recovery is different for the two equations, i.e.
\[
u=\partial_1(-\Delta)^{-1}\nabla^\perp\rho,
\qquad
u_E=\nabla\times(-\Delta)^{-1}\omega_E.
\]
Both operators have order $-1$ in the stretched field, but the IPM
operator retains a distinguished direction. In particular,
$\partial_1u_2-\partial_2u_1=-\partial_1\rho$, so $\nabla^\perp\rho$ is a different quantity from the physical vorticity of the IPM
velocity.

Since the work of \cite{OpenAIEuler} for the Euler equations is also inspired by the pioneering work of \cite{CMZ}, our construction shares 
some common features with that of \cite{OpenAIEuler}. 
For instance, each background amplifies a localized oscillation whose resulting gradient has the geometry required
for the next stage iteration; later geometric requirements determine a
small earlier history extending to time zero; nonlinear interactions generate
non-oscillatory corrections in addition to the principal oscillation. In particular, the argument of applying an 
upper bound for the pressure Hessian to control earlier stage of the perturbations in our construction was
adapted from \cite{OpenAIEuler}.

Nevertheless, there are several major distinctions between our construction and that of \cite{OpenAIEuler}.
First, the amplification mechanisms are different. The IPM construction follows a
scalar density amplitude and a phase direction in the plane. For the
constant-coefficient reference, the phase-amplitude-pressure equation \eqref{amp:eq:pressuregrowth} yields the gain
\eqref{amp:eq:frozenray}; the terminal direction lies closer to the vertical, on the opposite side
from the background density gradient, and prepares the next background configuration.
This angular mechanism is inherited from \cite{CMZ}. The Euler construction follows a phase normal and an independent transverse
velocity amplitude in a moving three-dimensional frame. The large shear couples to the older background, and the new normal and transverse velocity
define the next frame. 

Second, the backward preparation also has a different structure. Darcy's law
yields the first-order terminal displacement equation \eqref{a:eq:slowdisplacement}, while the Euler displacement
problem is second order. This distinction is visible in the following equations respectively for the IPM and Euler equations 
\[
\begin{aligned}
(\partial_t+u\cdot\nabla)\nabla^\perp\rho
&=-(\nabla^2\pi)\nabla^\perp\rho,\\
(\partial_t+u_E\cdot\nabla)^2\omega_E
&=-(\nabla^2\pi_E)\omega_E
\end{aligned}
\]
where $\pi_E$ denotes the Euler pressure.

Third, the oscillatory frequency grows at different rates.
Our choice of $\theta_{j+1}=\theta_j^2$ and
$\lambda_j=\exp(\theta_j^{-1/2})$ indicates a triple-exponentially growth of $\lambda_j$ with the stage index $j$.
In the Euler construction, the frequency $k_j$ grows as $k_j=\exp(\exp(2j\log j+O(j)))$ which is slightly faster than 
double exponential growth. 
In both constructions, the large separation of frequencies makes it possible to realize amplification with controlled 
errors and compatible derivative costs of the oscillatory initial increments. 
The different growth rates arise from different geometric structures and amplification mechanisms in the two systems. 

\section{Principal amplification and persistence}\label{sec:amplification}
To carry on the iteration of the construction, we need to establish two important properties of the principal system: amplification of
a small oscillation and persistence of the resulting gradient
configuration during the next interval. We first obtain these properties for
constant coefficients in the principal system and then study their stability under perturbation.

\subsection{The phase-amplitude-pressure equation and the constant-coefficient system}\label{sec:ampstart}
Fix a background trajectory $x = X(t,y)$ and suppress the particle label $y$.
We recall the principal phase-amplitude system
\begin{equation}\label{eq:principal}
\dot\xi = -M^T\xi,\qquad
\dot\mu = -m(\xi)\cdot g\,\mu,\qquad \mu>0,\quad\xi\ne0,
\end{equation}
with coefficients $g$ and $M$ (hence $H$) along this trajectory,
where the dot denotes differentiation at fixed $y$. 
Nonzero initial $\xi$ and positive initial $\mu$ remain nonzero and positive,
respectively. 

Denote $A=\mu|\xi|$ and $p=\xi/|\xi|$.

\begin{lemma}\label{amp:lem:pressure}
The triplet $(A, p, H)$ satisfies
\begin{equation}\label{amp:eq:pressuregrowth}
\frac{d}{dt}\log A = -p^\perp\cdot Hp^\perp.
\end{equation}
Assume $H(\tau)\leq K_+(\tau)I$ for $s\leq\tau\leq t$ with
$K_+\in L^1(s,t)$. Then $A(t)$ satisfies
\begin{equation}\label{amp:eq:historybasic}
A(s)\leq A(t)\exp\left(\int_s^tK_+(\tau)\,d\tau\right).
\end{equation}
\end{lemma}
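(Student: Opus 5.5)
The plan is a direct computation along the fixed trajectory. I will split $\log A=\log\mu+\log|\xi|$, differentiate each piece using the principal system \eqref{eq:principal}, and then use the Darcy-law identities \eqref{eq:basicjets} to reduce the sum to a single pressure term. Since $\xi\ne0$ and $\mu>0$ persist along the flow and the coefficients are smooth, $\log A$ is differentiable and these manipulations are legitimate.

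First step, the phase factor. From $\dot\xi=-M^T\xi$ we get
\[
\frac{d}{dt}\log|\xi|=\frac{\xi\cdot\dot\xi}{|\xi|^2}=-p\cdot M^Tp=-p\cdot Mp.
\]
Inserting $M=-H-e_2\otimes g$ gives $p\cdot Mp=-p\cdot Hp-p_2\,(g\cdot p)$. Hence
\[
\frac{d}{dt}\log|\xi|=p\cdot Hp+p_2\,(g\cdot p).
\]

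Second step, the amplitude factor. Since $m$ is homogeneous of degree zero, $m(\xi)=m(p)=p_2p-e_2$, using $|p|=1$. Therefore
\[
\frac{d}{dt}\log\mu=-m(p)\cdot g=-p_2\,(p\cdot g)+g_2.
\]

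Third step, combining the two. Adding the previous two displays, the cross terms $\pm p_2(p\cdot g)$ cancel. This leaves
\[
\frac{d}{dt}\log A=p\cdot Hp+g_2 .
\]
Now use $\tr H=-g_2$ from \eqref{eq:basicjets}. Because $\{p,p^\perp\}$ is an orthonormal basis, $\tr H=p\cdot Hp+p^\perp\cdot Hp^\perp$. Substituting $g_2=-\tr H$ yields \eqref{amp:eq:pressuregrowth}.

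For the history bound \eqref{amp:eq:historybasic}, the assumption $H(\tau)\le K_+(\tau)I$ applied to the unit vector $p^\perp$ gives
\[
\frac{d}{d\tau}\log A(\tau)=-p^\perp\cdot Hp^\perp\ \ge\ -K_+(\tau)\qquad\text{for } s\le\tau\le t.
\]
Integrating from $s$ to $t$, which is valid since $K_+\in L^1$ and $\log A$ is $C^1$, gives $\log A(t)-\log A(s)\ge-\int_s^tK_+$. Exponentiating gives \eqref{amp:eq:historybasic}.

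There is no real obstacle here. The only point needing care is the sign bookkeeping with the convention $M_{ij}=\partial_jU_i$ and $(e_2\otimes g)_{ij}=\delta_{i2}g_j$, so that $p\cdot(e_2\otimes g)p=p_2(g\cdot p)$. It is also the identity $\tr H=-g_2$ that converts $g_2$ into the transverse pressure curvature.
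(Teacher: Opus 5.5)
Your proposal is correct and follows essentially the same route as the paper: differentiate $\log A=\log\mu+\log|\xi|$ using the principal system, cancel the $p_2(p\cdot g)$ terms via $m(p)=p_2p-e_2$ and $M=-H-e_2\otimes g$, and convert $g_2$ into $-p^\perp\cdot Hp^\perp$ using $\tr H=-g_2$ in the orthonormal frame $\{p,p^\perp\}$. The history bound then comes from integrating $\frac{d}{d\tau}\log A\geq -K_+$ from $s$ to $t$ and exponentiating, which is exactly what the paper means by ``follows immediately.''
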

\begin{proof}
Since $m(p) = p_2p-e_2$ and $M^T = -H-g\otimes e_2$, we have
\begin{align*}
\frac{d}{dt}\log A
& = -m(p)\cdot g-p\cdot M^Tp\\
& = -p_2(p\cdot g)+g_2+p\cdot Hp+p_2(p\cdot g)\\
& = -\tr H+p\cdot Hp = -p^\perp\cdot Hp^\perp,
\end{align*}
where in the last equality we used the orthonormal basis $p,p^\perp$ in two dimensions. The estimate in \eqref{amp:eq:historybasic} 
follows immediately from the equation above and the upper bound assumption of $H$.
\end{proof}

Take
\begin{equation}\label{amp:eq:parameters}
0<\vartheta\leq \theta\leq \tfrac1{10},\qquad h>0,\qquad \sigma\in\{-1,1\},
\end{equation}
and define
\begin{equation}\label{amp:eq:frame}
n = (\sigma\sin\theta,\cos\theta),\quad
v = (-\sigma\cos\theta,\sin\theta),\quad
n_+ = (-\sigma\sin\vartheta,\cos\vartheta).
\end{equation}
We note $n,v$ are orthonormal. Define the reference coefficients
\begin{equation}\label{amp:eq:frozenjets}
g_* = hn,\qquad M_* = -h\sin\theta\,v\otimes n,
\qquad H_* = -h\cos\theta\,n\otimes n,
\end{equation}
which satisfy
\[
M_* = -H_*-e_2\otimes g_*,\qquad
\tr H_* = -(g_*)_2,\qquad M_*^2 = 0,\qquad M_*^Tg_* = 0.
\]

Choose
\begin{equation}\label{amp:eq:targettime}
T = \frac{\cot(\theta+\vartheta)}{h\sin\theta},\qquad
\Gamma = \cot\theta\left(\frac\pi2-\theta-\vartheta\right).
\end{equation}

\begin{proposition}\label{amp:prop:frozen}
For the coefficients \eqref{amp:eq:frozenjets}, the solution of
\eqref{eq:principal} with $\xi(0)=v$ and any $\mu(0)>0$ satisfies,
\begin{equation}\label{amp:eq:frozenray}
\xi(t) = v+h\sin\theta\,t\,n,\qquad
\frac{A(t)}{A(0)}
= \exp\bigl(\cot\theta\arctan(h\sin\theta\,t)\bigr), \quad t\geq0.
\end{equation}
In particular, we have
\begin{equation}\label{amp:eq:exacttransfer}
p(T) = n_+,\quad |\xi(T)| = \csc(\theta+\vartheta),\quad
A(T) = \mathrm e^\Gamma A(0),
\end{equation}
\begin{equation}\label{amp:eq:timegainestimates}
\frac1{3h\theta^2}\leq T\leq \frac2{h\theta^2},
\qquad \frac1\theta\leq \Gamma\leq \frac2\theta,
\end{equation}
\begin{equation}\label{amp:eq:signestimates}
p_2(t)\geq\sin\theta,\quad
-\sigma p_1(t)\geq\sin\vartheta, \qquad 0\leq t\leq T.
\end{equation}
\end{proposition}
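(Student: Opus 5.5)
The proof is a direct computation in the orthonormal frame $\{n,v\}$. Every assertion follows from the explicit form of the frozen coefficients \eqref{amp:eq:frozenjets} together with the pressure equation of Lemma~\ref{amp:lem:pressure}.

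\emph{Step 1: solve the phase equation.} Since $M_*=-h\sin\theta\,v\otimes n$, we have $-M_*^T=h\sin\theta\,n\otimes v$. Hence $\dot\xi=h\sin\theta\,(v\cdot\xi)\,n$. Write $\xi=a v+b n$. Then $\dot a=0$ and $\dot b=h\sin\theta\,a$. With $\xi(0)=v$ this gives $a\equiv1$ and $b=h\sin\theta\,t$, which is the first identity in \eqref{amp:eq:frozenray}. Equivalently, $M_*^2=0$, so $e^{-tM_*^T}=I-tM_*^T$.

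\emph{Step 2: integrate the gain.} Set $\beta(t)=\arctan(h\sin\theta\,t)$. Then $p=\cos\beta\,v+\sin\beta\,n$, and $p^\perp$ is, up to sign, $\cos\beta\,n-\sin\beta\,v$. Lemma~\ref{amp:lem:pressure} gives
\[
\frac{d}{dt}\log A=h\cos\theta\,(n\cdot p^\perp)^2=h\cos\theta\cos^2\beta=\frac{h\cos\theta}{1+h^2\sin^2\theta\,t^2}.
\]
Integrating yields $\cot\theta\,\arctan(h\sin\theta\,t)$, which is the second identity in \eqref{amp:eq:frozenray}.

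\emph{Step 3: evaluate at $t=T$.} At $t=T$ we have $h\sin\theta\,T=\cot(\theta+\vartheta)$, so $\beta(T)=\pi/2-\theta-\vartheta$. Then $|\xi(T)|=\sqrt{1+\cot^2(\theta+\vartheta)}=\csc(\theta+\vartheta)$, and $A(T)=\mathrm e^{\Gamma}A(0)$.

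For the direction, note that $v$ is $n$ rotated by $\pi/2$ in the direction given by $\sigma$. Writing $n$ at angle $\alpha$ from $e_2$ with tilt $\sigma\theta$, the vector $p(\beta)$ is the unit vector at angle $\beta$ from $v$ toward $n$. Its angular distance from $n$ is therefore $\pi/2-\beta$ on the $v$-side. Concretely, in coordinates,
\[
p=\bigl(-\sigma\cos\theta\cos\beta+\sigma\sin\theta\sin\beta,\ \sin\theta\cos\beta+\cos\theta\sin\beta\bigr)=\bigl(-\sigma\cos(\theta+\beta),\ \sin(\theta+\beta)\bigr).
\]
At $\beta=\pi/2-\theta-\vartheta$ this equals $(-\sigma\sin\vartheta,\cos\vartheta)=n_+$.

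\emph{Step 4: the sign estimates \eqref{amp:eq:signestimates}.} These come from the formula $p=(-\sigma\cos(\theta+\beta),\sin(\theta+\beta))$ with $\beta\in[0,\pi/2-\theta-\vartheta]$. Then $\theta+\beta$ ranges over $[\theta,\pi/2-\vartheta]\subset(0,\pi/2)$. Consequently $p_2=\sin(\theta+\beta)\ge\sin\theta$ and $-\sigma p_1=\cos(\theta+\beta)\ge\cos(\pi/2-\vartheta)=\sin\vartheta$.

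\emph{Step 5: the bounds \eqref{amp:eq:timegainestimates}.} These are elementary trigonometric inequalities for $0<\vartheta\le\theta\le1/10$. For the upper bound on $T$, use $\cot(\theta+\vartheta)\le\cot\theta\le1/\theta$ and $\sin\theta\ge\theta/2$, so $T\le 2/(h\theta^2)$. For the lower bound, use $\theta+\vartheta\le2\theta$, hence $\cot(2\theta)\ge 1/(2\theta)-2\theta/3\ge 1/(3\theta)$ for small $\theta$, together with $\sin\theta\le\theta$.

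For $\Gamma$, note that $\cot\theta\in[1/\theta-\theta/3,\,1/\theta]$ and $\pi/2-\theta-\vartheta\in[\pi/2-0.2,\,\pi/2]$. The upper bound $\Gamma\le(\pi/2)/\theta\le2/\theta$ is immediate. The lower bound needs $(1/\theta-\theta/3)(\pi/2-2\theta)\ge1/\theta$, i.e.\ $\pi/2-2\theta-\theta^2(\pi/2)/3\ge1$, which holds since $\pi/2-0.2-0.006>1$.

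The only mildly delicate point is the sign bookkeeping for $\sigma$ in Steps 3 and 4, and the coordinate formula for $p$ settles it. The rest is routine.
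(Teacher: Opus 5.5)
Your proof follows the paper's route. You solve the nilpotent phase equation to get $v\cdot\xi\equiv1$, integrate the pressure equation using $(p^\perp\cdot n)^2=1/(1+(h\sin\theta\,t)^2)$, write $p=(-\sigma\cos(\theta+\beta),\sin(\theta+\beta))$ with $\beta=\arctan(h\sin\theta\,t)$, and read off the transfer and sign estimates from $\theta\le\theta+\beta\le\pi/2-\vartheta$. All of this is correct.

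The one error is in Step 5. The inequalities $\cot(2\theta)\ge1/(2\theta)-2\theta/3$ and $\cot\theta\ge1/\theta-\theta/3$ are false. The expansion $\cot x=1/x-x/3-x^3/45-\cdots$ has all later coefficients negative, so $\cot x<1/x-x/3$ on $(0,\pi)$. Use $\cot x\ge\cos x/x$ (from $\sin x\le x$) instead, as the paper does. This gives $T\ge\cos(1/5)/(2h\theta^2)\ge1/(3h\theta^2)$ and $\Gamma\ge\cos(1/10)(\pi/2-1/5)/\theta\ge1/\theta$. Both lower bounds in \eqref{amp:eq:timegainestimates} hold with room to spare.
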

\begin{proof}
With \eqref{amp:eq:frozenjets}, the phase equation becomes $\dot\xi = h\sin\theta\,n(v\cdot\xi)$.
Taking its scalar product with $v$ implies $v\cdot\xi(t)=v\cdot\xi(0)=1$, and hence we have $\xi(t) = v+h\sin\theta\,t\,n$.
Consequently,
\[
|\xi(t)|^2 = 1+(h\sin\theta\,t)^2,\qquad
(p^\perp\cdot n)^2 = \frac1{1+(h\sin\theta\,t)^2}.
\]
The phase-amplitude-pressure equation \eqref{amp:eq:pressuregrowth} turns into
\[
\frac{d}{dt}\log A
= \frac{h\cos\theta}{1+(h\sin\theta\,t)^2},
\]
which implies \eqref{amp:eq:frozenray}.

Denote $\beta(t) = \arctan(h\sin\theta\,t)$. We have
\[
p(t) = \cos\beta(t)\,v+\sin\beta(t)\,n
= \bigl(-\sigma\cos(\theta+\beta(t)),\sin(\theta+\beta(t))\bigr),
\]
which combined with $\beta(T)=\pi/2-\theta-\vartheta$ and
\eqref{amp:eq:frozenray} implies \eqref{amp:eq:exacttransfer}. Moreover, we note
\[
\theta\leq\theta+\beta(t)\leq\frac\pi2-\vartheta,
\qquad 0\leq t\leq T,
\]
which implies \eqref{amp:eq:signestimates}.
For $\theta<\theta+\vartheta\leq2\theta\leq1/5$, elementary estimates show
\[
\begin{split}
&\frac{\cos(1/5)}{2h\theta^2}\leq T\leq\frac2{h\theta^2},\\
&\frac{\cos(1/10)(\pi/2-1/5)}{\theta}
\leq\Gamma\leq\frac{\pi}{2\theta},
\end{split}
\]
which verify \eqref{amp:eq:timegainestimates}.
\end{proof}

\subsection{Stability of the principal amplification}\label{amp:sec:robust}
Along a fixed background trajectory, take the reference coefficients
\eqref{amp:eq:frozenjets} and parameters
\eqref{amp:eq:parameters}, \eqref{amp:eq:targettime}, and assume
\begin{equation}\label{amp:eq:coefficienterror}
\sup_{0\leq t\leq T}
\bigl(\norm{M(t)-M_*}+\norm{H(t)-H_*}\bigr)\leq\eps h
\end{equation}
with
\begin{equation}\label{amp:eq:smallness}
0\leq\eps\leq c_0\min\{\theta^6,\vartheta\theta^4\}.
\end{equation}
Denote $\bar\xi(t)=v+h\sin\theta\,t\,n$ for the reference solution.

\begin{proposition}\label{amp:prop:robust}
Under the assumptions \eqref{amp:eq:coefficienterror}--\eqref{amp:eq:smallness},
there are absolute constants $c_0,C>0$ such that the solution of \eqref{eq:principal} with terminal data
\[
\xi(T) = \csc(\theta+\vartheta)n_+,\qquad A(T) = h_+>0
\]
satisfies
\begin{equation}\label{amp:eq:rayerror}
\sup_{[0,T]}\left(
\frac{|\xi-\bar\xi|}{|\bar\xi|}
+\left|p-\frac{\bar\xi}{|\bar\xi|}\right|\right)
\leq C\eps\theta^{-4},
\end{equation}
\begin{equation}\label{amp:eq:amplitudeerror}
\left|\log\frac{A(t)}{h_+}+\Gamma
-\cot\theta\arctan(h\sin\theta\,t)\right|
\leq C\eps\theta^{-6}, \qquad 0\leq t\leq T.
\end{equation}
Consequently, we have
\begin{equation}\label{amp:eq:robustconclusions}
p(T) = n_+,\qquad
\tfrac12 h_+\mathrm e^{-\Gamma}\leq A(0)\leq2h_+\mathrm e^{-\Gamma},
\qquad \sup_{[0,T]}A\leq2h_+,
\end{equation}
\begin{equation}\label{amp:eq:robustsigns}
p_2\geq\tfrac12\sin\theta,\qquad
-\sigma p_1\geq\tfrac12\sin\vartheta
\quad\text{on }[0,T].
\end{equation}
\end{proposition}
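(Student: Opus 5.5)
The plan is to treat the variable-coefficient phase equation $\dot\xi=-M^T\xi$ as a perturbation of the reference equation $\dot{\bar\xi}=-M_*^T\bar\xi$, solved backward from the terminal data. The key observation is that the terminal data are exactly the reference terminal values. By \eqref{amp:eq:exacttransfer}, $\bar\xi(T)=\csc(\theta+\vartheta)n_+$, so $\xi(T)=\bar\xi(T)$, and $p(T)=n_+$ holds by definition. Set $w=\xi-\bar\xi$. Then
\[
\dot w=-M_*^Tw-(M-M_*)^T\xi,\qquad w(T)=0.
\]
Since $M_*^2=0$, the reference propagator is explicit: $\Phi(t,s)=I-(t-s)M_*^T$. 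Its norm is at most $1+h\sin\theta\,T\le 1+\cot\theta\le 2\theta^{-1}$ on $[0,T]$.

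By Duhamel and \eqref{amp:eq:coefficienterror},
\[
|w(t)|\le 2\theta^{-1}\eps h\int_t^T|\xi|\,d\tau .
\]
Write $|\xi|\le|\bar\xi|+|w|$ and note $|\bar\xi|\le|\bar\xi(T)|\le C\theta^{-1}$. A Gronwall/bootstrap argument then gives
\[
\sup_{[0,T]}|w|\le C\theta^{-1}\cdot\eps h T\cdot\theta^{-1}\le C\eps\theta^{-4}.
\]
Here $hT\le 2\theta^{-2}$ from \eqref{amp:eq:timegainestimates}. The Gronwall factor is $\exp(C\eps hT\theta^{-1})=\exp(C\eps\theta^{-3})\le 2$ by \eqref{amp:eq:smallness}. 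Since $|\bar\xi|\ge1$, this gives the first term of \eqref{amp:eq:rayerror}. The direction bound follows from the elementary inequality $|a/|a|-b/|b||\le 2|a-b|/|b|$. One could refine this using $|\bar\xi(\tau)|$ inside the integral, but the crude version already gives the $\theta^{-4}$ rate.

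For the amplitude, I integrate \eqref{amp:eq:pressuregrowth} backward from $A(T)=h_+$:
\[
\log\frac{A(t)}{h_+}=\int_t^T p^\perp\cdot Hp^\perp\,d\tau .
\]
I compare this with the reference integrand $\bar p^\perp\cdot H_*\bar p^\perp$. Its integral from $t$ to $T$ equals $\cot\theta\bigl(\arctan(h\sin\theta\,T)-\arctan(h\sin\theta\,t)\bigr)=\Gamma-\cot\theta\arctan(h\sin\theta\,t)$, by \eqref{amp:eq:frozenray} and \eqref{amp:eq:exacttransfer}. The difference of integrands is at most $\|H-H_*\|+2\|H_*\||p-\bar p|\le\eps h+2h\cdot C\eps\theta^{-4}$. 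Integrating over an interval of length $T\le 2h^{-1}\theta^{-2}$ gives the error $C\eps\theta^{-6}$, which is \eqref{amp:eq:amplitudeerror}. This estimate is where the powers of $\theta$ are tight: it uses the full $\theta^{-4}$ direction error times $hT\sim\theta^{-2}$. No sharper estimate is needed, but I expect the main care to be in checking that the bootstrap in the first step genuinely yields $\theta^{-4}$ and not worse. If necessary, I would use the weighted quantity $|w|/|\bar\xi|$ and the explicit form of $\Phi$.

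The consequences then follow. At $t=0$, \eqref{amp:eq:amplitudeerror} gives $|\log(A(0)e^\Gamma/h_+)|\le C\eps\theta^{-6}\le Cc_0\le\log2$ for $c_0$ small, which is the two-sided bound on $A(0)$. The reference function $\Gamma-\cot\theta\arctan(h\sin\theta\,t)$ is nonnegative and decreasing, so $\log(A/h_+)\le\log 2$ on $[0,T]$. This gives $\sup A\le 2h_+$. For the signs, \eqref{amp:eq:signestimates} gives $\bar p_2\ge\sin\theta$ and $-\sigma\bar p_1\ge\sin\vartheta$. The perturbation satisfies $|p-\bar p|\le C\eps\theta^{-4}\le Cc_0\vartheta\le\frac12\sin\vartheta\le\frac12\sin\theta$, using $\eps\le c_0\vartheta\theta^4$. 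This yields \eqref{amp:eq:robustsigns}.
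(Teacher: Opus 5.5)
Your argument is correct and reaches all four conclusions. The phase step is organized differently from the paper's; the amplitude and sign steps are the same.

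\textbf{Phase step.} The paper conjugates by the reference propagator. It writes $\xi=EZ$ with $E(t)=I+h\sin\theta\,t\,n\otimes v$, so that $\dot Z=-E^{-1}(M-M_*)^TEZ$ and $Z(T)=v$. Gronwall with $\|E^{-1}\|\,\|E\|\le C\theta^{-2}$ gives $|Z-v|\le C\eps\theta^{-4}$. The relative bound then comes in one stroke from $\xi-\bar\xi=E(Z-v)$ and $\|E(t)\|\le C|\bar\xi(t)|$.

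You instead apply additive Duhamel to $w=\xi-\bar\xi$. You pay the propagator norm $2\theta^{-1}$ once and $\sup|\xi|\le C\theta^{-1}$ once. This gives the uniform \emph{absolute} bound $|\xi-\bar\xi|\le C\eps\theta^{-4}$, which is slightly sharper near $t=T$: there $|\bar\xi|\sim\theta^{-1}$, and the paper's estimate only yields $C\eps\theta^{-5}$ in absolute terms. You then recover the relative bound from $|\bar\xi|\ge1$.

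The step you flagged as needing care is harmless. The equation is linear, so direct absorption closes it:
\[
\sup|w|\le 4\eps\theta^{-3}\bigl(C\theta^{-1}+\sup|w|\bigr),\qquad 4\eps\theta^{-3}\le\tfrac12 .
\]
No bootstrap is needed. It is worth adding one line that $\xi\neq0$ on $[0,T]$, since $\xi$ solves a linear ODE with nonzero terminal value. This ensures $m(\xi)$ and the $\mu$-equation are defined.

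\textbf{Sign slip in the amplitude step.} Since $H_*=-h\cos\theta\,n\otimes n\le0$, the reference integral is
\[
\int_t^T\bar p^\perp\cdot H_*\bar p^\perp\,d\tau=-\Gamma+\cot\theta\arctan(h\sin\theta\,t),
\]
not $\Gamma-\cot\theta\arctan(h\sin\theta\,t)$. With your sign, $\log(A/h_+)$ would be close to a nonnegative quantity, which contradicts \eqref{amp:eq:amplitudeerror}. Your deduction of $\sup A\le2h_+$ from the nonnegativity of $\Gamma-\cot\theta\arctan(h\sin\theta\,t)$ would then fail. With the correct sign, you get
\[
\log(A/h_+)\le-\bigl(\Gamma-\cot\theta\arctan(h\sin\theta\,t)\bigr)+C\eps\theta^{-6}\le\log2,
\]
which is exactly what you need. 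The estimate of the integrand difference is unaffected, since it only involves absolute values.
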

\begin{proof}
The terminal problem \eqref{eq:principal}, with
$\mu(T)=h_+\sin(\theta+\vartheta)$, has a unique solution on $[0,T]$
with $\xi\ne0$ and $\mu>0$.

Denote $E$ by the reference phase propagator, which satisfies $\dot E=-M_*^TE$. In view of $(n\otimes v)^2=0$, we have
\[
E(t) = I+h\sin\theta\,t\,n\otimes v, \qquad E(t)^{-1}=I-h\sin\theta\,t\,n\otimes v.
\]
We also have $\bar\xi(t)=E(t)v$ and $|\bar\xi(t)|^2=1+(h\sin\theta\,t)^2$. Hence
\begin{equation}\label{amp:eq:nilpotentestimates}
\sup_{[0,T]}\bigl(\norm E+\norm{E^{-1}}\bigr)\leq C\theta^{-1},
\qquad \norm{E(t)}\leq C|\bar\xi(t)|
\end{equation}
where we used $h\sin\theta\,T=\cot(\theta+\vartheta)\leq\theta^{-1}$ for the first inequality.

With the substitution $\xi=EZ$, we have the terminal problem for $Z$
\[
\dot Z = -E^{-1}(M-M_*)^TEZ,\qquad Z(T) = v,
\]
where the terminal condition follows from $\xi(T)=\bar\xi(T)=E(T)v$.
It follows from assumption \eqref{amp:eq:coefficienterror} and \eqref{amp:eq:nilpotentestimates} that
\begin{equation}\label{amp:eq:interactionbound}
\int_0^T\norm{E^{-1}(M-M_*)^TE}\,dt
\leq C\theta^{-2}\eps hT
\leq C\eps\theta^{-4}.
\end{equation}
Backward Gronwall's inequality and \eqref{amp:eq:smallness} imply
\[
\sup_{[0,T]}|Z-v|\leq\exp(C\eps\theta^{-4})-1
\leq C\eps\theta^{-4}.
\]
Since $\xi-\bar\xi=E(Z-v)$ and
$\bigl|p-\bar\xi/|\bar\xi|\bigr|\leq2|\xi-\bar\xi|/|\bar\xi|$,
\eqref{amp:eq:nilpotentestimates} yields \eqref{amp:eq:rayerror}.

Let $\bar A$ be the reference amplitude with the same terminal value
\[
\bar A(t) = h_+\exp\bigl(-\Gamma+
\cot\theta\arctan(h\sin\theta\,t)\bigr).
\]
The equation \eqref{amp:eq:pressuregrowth} for $A$ and $\bar A$ implies
\begin{equation}\notag
\left|\frac{d}{dt}\log\frac{A}{\bar A}\right|
=\left|p^\perp\cdot Hp^\perp
-\frac{\bar\xi^\perp\cdot H_*\bar\xi^\perp}{|\bar\xi|^2}\right|
\leq\eps h+2\norm{H_*}
\left|p-\frac{\bar\xi}{|\bar\xi|}\right|
\leq C\eps h\theta^{-4}.
\end{equation}
Estimate \eqref{amp:eq:amplitudeerror} then follows from the equation above and the facts
$A(T)=\bar A(T)$ and $T\leq2/(h\theta^2)$.
Choose $c_0$ so that $C\eps\theta^{-6}\leq\log2$ and hence
$\tfrac12\bar A\leq A\leq2\bar A$. Since
$\bar A(0)=h_+\mathrm e^{-\Gamma}$ and $\bar A(t)\leq h_+$ on
$[0,T]$, this establishes \eqref{amp:eq:robustconclusions}; the terminal
direction follows directly from the prescribed data.

Since $\eps\leq c_0\vartheta\theta^4$ and
$\sin\vartheta\geq\vartheta/2$, decreasing $c_0$ ensures
\[
C\eps\theta^{-4}\leq\tfrac12\sin\vartheta
\leq\tfrac12\sin\theta.
\]
Combining \eqref{amp:eq:rayerror} with the reference estimates \eqref{amp:eq:signestimates} yields \eqref{amp:eq:robustsigns}.
\end{proof}

\subsubsection{Backward continuation and construction of the principal oscillation}
Denote $t_1=t_0+T$ with $t_0\geq0$. Suppose the background solution is smooth on $[0,t_1]$. Apply Proposition~\ref{amp:prop:robust} on $[t_0,t_1]$ with
$t$ replaced by $t-t_0$. On the earlier stage of the trajectory $[0,t_0]$, assume
\begin{equation}\label{early-hypo}
H(t)\leq K_+(t)I, \quad 0\leq t\leq t_0; \qquad
K_+\geq0,\qquad K_+\in L^1(0,t_0).
\end{equation}
The principal system continues backward to time zero, with $\xi\ne0$ and
$\mu>0$. Lemma~\ref{amp:lem:pressure} and the estimate for $A(t_0)$ imply
\begin{equation}\label{amp:eq:fullhistory}
\sup_{0\leq t\leq t_0} A(t)
\leq2h_+\exp\left(-\Gamma+\int_0^{t_0}K_+(t)\,dt\right).
\end{equation}

Let $\mathcal O$ be a label neighborhood such that
$X(t_1,\mathcal O)$ lies in a torus coordinate chart, and choose
$y_*\in\mathcal O$. In a lift of this chart, define
\[
\phi_0(y)=\csc(\theta+\vartheta)n_+\cdot
\bigl(X(t_1,y)-X(t_1,y_*)\bigr)
\]
and denote $\phi(t,X(t,y))=\phi_0(y)$.
In particular, its physical gradient along the flow is
\[
\xi(t,y) = (D_yX(t,y))^{-T}\nabla_y\phi_0(y)
= \csc(\theta+\vartheta)(D_yX(t,y))^{-T}(D_yX(t_1,y))^Tn_+.
\]
We note $\xi$ satisfies the phase equation with $\xi\neq 0$, and it has the prescribed terminal value for every
$y\in\mathcal O$.
Choose the terminal amplitude
$\mu(t_1,y)=h_+\sin(\theta+\vartheta)>0$ on $\mathcal O$ and solve the amplitude
equation backward. Then $A(t_1,y)=h_+$ throughout this neighborhood.

Take $0\leq\chi\leq1$ compactly supported in $\mathcal O$ with $\chi\equiv 1$ on a smaller neighborhood of
$y_*$. We choose the principal oscillation  
\[
r_\lambda(t,X(t,y))
= \lambda^{-1}\mu(t,y)\chi(y)f(\lambda\phi_0(y)),
\]
extended by zero outside $X(t,\mathcal O)$, where the profile function $f$ is to be determined in next subsection.

\subsection{Choice of the periodic profile}
We choose a profile with unit slope near phase zero and a small negative
part of its derivative to control the pressure increment from above.

\begin{lemma}\label{amp:lem:profile}
There are absolute constants $\delta_{\max},C>0$ such that, for every
$0<\delta\leq\delta_{\max}$, there is an odd zero-mean and $2\pi$-periodic
Gevrey-two function $f_\delta$ satisfying $f_\delta(s)=s$ for $|s|\leq\delta/2$ and
\begin{equation}\label{amp:eq:profile}
\begin{split}
&f_\delta'(0) = 1,\qquad -C\delta\leq f_\delta'\leq1,
\qquad \norm{f_\delta}_\infty\leq C\delta,\\
&\norm{f_\delta^{(k)}}_\infty
\leq C^{k+1}((k-1)!)^2\delta^{1-k}, \qquad k\geq1.
\end{split}
\end{equation}
\end{lemma}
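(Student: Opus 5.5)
We will build $f_\delta$ through its derivative. Take a fixed Gevrey-two bump $\psi\in C^\infty_c(-1,1)$ with $0\leq\psi\leq1$, $\psi\equiv1$ on $[-1/2,1/2]$, and $\|\psi^{(k)}\|_\infty\leq C^{k+1}(k!)^2$. Standard constructions, for instance convolution-type products of $\exp(-1/s)$ cutoffs, give such a bump with Gevrey index $2$. Rescale it to $\psi_\delta(s)=\psi(s/\delta)$. Then
\[
\|\psi_\delta^{(k)}\|_\infty\leq C^{k+1}(k!)^2\delta^{-k}.
\]

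Define the $2\pi$-periodic derivative
\[
f_\delta'(s)=\sum_{m\in\mathbb Z}\Bigl[\psi_\delta(s-2\pi m)-a_\delta\,\eta(s-2\pi m)\Bigr].
\]
Here $\eta$ is a fixed even Gevrey-two function supported in $[\pi/2,3\pi/2]$ on one period and normalized to $\int_0^{2\pi}\eta=1$. The constant $a_\delta=\int_\R\psi_\delta=\delta\int\psi$ is chosen so that $f_\delta'$ has zero mean. Set $f_\delta(s)=\int_0^sf_\delta'$. This is periodic because $f_\delta'$ has zero mean.

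The properties then follow in order.
\begin{enumerate}[label=(\roman*),leftmargin=*]
\item $f_\delta'$ is even, so $f_\delta$ is odd.
\item An odd periodic function automatically has zero mean.
\item For $|s|\leq\delta/2$ we have $\psi_\delta=1$ and $\eta=0$ (taking $\delta_{\max}<\pi$). Hence $f_\delta'=1$ there, so $f_\delta(s)=s$ and $f_\delta'(0)=1$.
\item The supports of $\psi_\delta$ and $\eta$ are disjoint, so $f_\delta'\leq1$ and $f_\delta'\geq-a_\delta\|\eta\|_\infty\geq-C\delta$.
\item For the sup bound, $|f_\delta(s)|\leq\int_{-\delta}^{\delta}\psi_\delta+a_\delta\int_0^{2\pi}\eta\leq C\delta$.
\item For $k\geq1$, $f_\delta^{(k)}=(f_\delta')^{(k-1)}$, and
\[
\|f_\delta^{(k)}\|_\infty\leq C^k((k-1)!)^2\delta^{1-k}+C\delta\,C^k((k-1)!)^2\leq C^{k+1}((k-1)!)^2\delta^{1-k},
\]
using $\delta\leq1$ for the $\eta$ term.
\end{enumerate}

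The only non-routine ingredient is the existence of a compactly supported Gevrey-two cutoff with exactly the factorial rate $(k!)^2$ and plateau $\psi\equiv1$ on $[-1/2,1/2]$. We take this as classical, e.g. from the Denjoy--Carleman theory, or explicitly $\psi=\Phi_1/(\Phi_1+\Phi_2)$ built from $\exp(-1/s)$-type functions, each of which is Gevrey two. With such $\psi$ and $\eta$ in hand, everything else is bookkeeping of the scaling $\delta^{-k}$. The main care point is that the correction $a_\delta\eta$ carries the small factor $a_\delta\lesssim\delta$ and has derivatives uniform in $\delta$. This is what keeps both the negative part of $f_\delta'$ and the sup norm $\|f_\delta\|_\infty$ of order $\delta$ without spoiling the derivative bounds.
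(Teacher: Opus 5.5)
Your argument is correct and is essentially the paper's construction: $f_\delta'$ is a $\delta$-rescaled Gevrey-two plateau bump minus an $O(\delta)$ correction that removes its mean, and every bound then follows from scaling. The only difference is that the paper subtracts the constant $a_\delta$ and divides by $1-a_\delta$ (with periodicity coming from $f_\delta'$ being constant near $\pm\pi$), while you subtract a localized bump $a_\delta\eta$ near $\pi$, which keeps $f_\delta'=\psi_\delta$ near the origin without renormalizing, at the price of a second Gevrey function; for step (i) you should also state that $\psi$ is even (symmetrize it if necessary) and that $\eta\geq0$ is symmetric about $\pi$, so that its periodization is even.
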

\begin{proof}
Fix an even bump function $b\in C_c^\infty((-1,1))$ such that $0\leq b\leq1$,
$b=1$ on $[-1/2,1/2]$, and
\[
\norm{b^{(n)}}_\infty\leq C_b^{n+1}(n!)^2, \qquad n\geq0.
\]
The constant $C_b$ is independent of $\delta$.

Denote
\[
a_\delta = \frac{\delta}{2\pi}\int_\R b(s)\,ds.
\]
Choose $\delta_{\max}\leq1$ so that $a_\delta\leq1/2$, and define on
$[-\pi,\pi]$
\[
f_\delta'(s) = \frac{b(s/\delta)-a_\delta}{1-a_\delta},
\qquad f_\delta(s) = \int_0^s f_\delta'(\tau)\,d\tau.
\]
The normalization yields
\[
\int_{-\pi}^{\pi}f_\delta'(s)\,ds
=\frac{\delta\int_\R b(s)\,ds-2\pi a_\delta}{1-a_\delta}=0.
\]
Since $f_\delta'$ is even and constant near $\pm\pi$, its primitive
extends to an odd smooth $2\pi$-periodic function with zero mean.
Since $f_\delta'=1$ on $|s|\leq\delta/2$, we have $f_\delta(s)=s$ there.
Moreover, we have
\[
-\frac{a_\delta}{1-a_\delta}\leq f_\delta'\leq1,
\qquad \norm{f_\delta'}_\infty\leq1,
\]
and
\[
\norm{f_\delta}_\infty
\leq\int_{-\pi}^{\pi}|f_\delta'(s)|\,ds
\leq\frac{2\delta\int_\R b(s)\,ds}{1-a_\delta}
\leq C\delta.
\]
It follows for $k\geq2$
\[
f_\delta^{(k)}(s)
= \frac{\delta^{1-k}}{1-a_\delta}b^{(k-1)}(s/\delta),
\]
which combined with estimate for $b$ implies \eqref{amp:eq:profile}.
\end{proof}

Use $f=f_\delta$ in the principal oscillation and denote
$\omega=\lambda\phi_0(y)$. Its principal gradient increments in label
coordinates are
\begin{equation}\label{amp:eq:leadingjets}
\begin{aligned}
g_\lambda& = A\chi p f_\delta'(\omega),\\
M_\lambda& = A\chi m(p)\otimes p f_\delta'(\omega),\\
H_\lambda& = -A\chi p_2\,p\otimes p f_\delta'(\omega)
\end{aligned}
\end{equation}
These tensors are extended by zero outside $\supp\chi$.

Let the amplification interval be $[t_0,t_1]$ with $t_1=t_0+T$, and
use the terminal data from the preceding subsection. At a terminal point
with $\phi_0=0$ and $\chi=1$, we have
\begin{equation}\label{amp:eq:outgoingtriple}
(g_\lambda, M_\lambda, H_\lambda)(t_1)=
\left(h_+n_+,\quad h_+m(n_+)\otimes n_+,\quad
-h_+\cos\vartheta\,n_+\otimes n_+\right),
\end{equation}
which has the form \eqref{amp:eq:frozenjets}, with parameters
$(h,\theta,\sigma)$ replaced by $(h_+,\vartheta,-\sigma)$.

Suppose the hypotheses of Proposition~\ref{amp:prop:robust} hold uniformly
on the material neighborhood supporting the oscillation. Then $p_2>0$
and $A\leq2h_+$ on $[t_0,t_1]$, and consequently,
\begin{equation}\label{amp:eq:hessianupper}
H_\lambda(t)\leq C\delta h_+ I, \quad t_0\leq t\leq t_1.
\end{equation}

On $[0,t_0]$, assume \eqref{early-hypo} holds for every
$y\in\supp\chi$ with the same nonnegative $K_+\in L^1(0,t_0)$.
Since $\norm{f_\delta'}_\infty\leq1$ and $0\leq\chi\leq1$ imply
$\norm{H_\lambda}\leq A$, \eqref{amp:eq:fullhistory} yields
\begin{equation}\label{amp:eq:historypressure}
\sup_{0\leq t\leq t_0,\,y}\norm{H_\lambda(t,y)}
\leq2h_+\exp\left(-\Gamma+\int_0^{t_0}K_+(\tau)\,d\tau\right).
\end{equation}

\subsection{Persistence on the next interval}\label{geo:sec:retention}
Translate the amplification interval to $[0,T]$ and continue the principal
system along the background flow.

Recall $0<\vartheta\leq\theta\leq1/10$ and the reference coefficients
\eqref{amp:eq:frozenjets}. Along a fixed background trajectory, suppose
\eqref{eq:basicjets} holds and
\begin{equation}\label{geo:eq:retbackground}
\norm{M-M_*}+\norm{H-H_*}\leq\eps h,
\qquad 0\leq\eps\leq\theta^6,
\end{equation}
on $[T,T+4/(h_+\vartheta^2)]$. Continue \eqref{eq:principal} with $p(T)=n_+$ and $A(T)=h_+$.
The reference triple for the next step is \eqref{amp:eq:outgoingtriple}.

\begin{lemma}\label{geo:lem:retention}
There are absolute constants $c,C>0$ such that if $\frac{h}{h_+}\left(\frac{\theta}{\vartheta}\right)^2\leq c$, then
\begin{equation}\label{geo:eq:retphase}
|p-n_+|\leq C\frac h{h_+}\frac{\theta^3}{\vartheta^2},
\qquad
|\log(A/h_+)|\leq C\frac h{h_+}\frac{\theta^2}{\vartheta^2},
\qquad p_2\geq\frac12
\end{equation}
on $[T, T+\frac{4}{h_+\vartheta^2}]$ and $h_+/2\leq A\leq2h_+$. If the fixed background trajectory has label $y$ with
$\phi_0(y)=0$, $\chi(y)=1$, and $f'(0)=1$, then
\[
\begin{aligned}
\sup_{T\leq t\leq T+4/(h_+\vartheta^2)}\Bigl(
&|g(t)+g_\lambda(t,y)-h_+n_+|
+\norm{M(t)+M_\lambda(t,y)-h_+m(n_+)\otimes n_+}\\
&+\norm{H(t)+H_\lambda(t,y)+h_+\cos\vartheta\,n_+\otimes n_+}\Bigr)
\leq Ch(\theta/\vartheta)^2,
\end{aligned}
\]
where $g$, $M$, and $H$ are evaluated along $X(t,y)$.
\end{lemma}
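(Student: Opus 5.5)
The plan is to treat the retention interval $[T,T+4/(h_+\vartheta^2)]$ as a perturbative regime. On this interval the background coefficients are of size $h$, much smaller than the scale $h_+$ set by the terminal data. The principal system therefore moves $p$ and $A$ only by amounts controlled by $h\cdot(\text{length})=4h/(h_+\vartheta^2)$, with extra factors of $\theta$ to be extracted. Write $L=4/(h_+\vartheta^2)$. By \eqref{geo:eq:retbackground}, $\norm{M}\leq\norm{M_*}+\eps h\leq Ch\theta$, since $\norm{M_*}=h\sin\theta$ and $\eps\leq\theta^6$, and $\norm{H}\leq Ch$.

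\textbf{Phase direction.} From $\dot\xi=-M^T\xi$ we get $\dot p=-(I-p\otimes p)M^Tp$. Integrating over the interval gives the crude bound $|p-n_+|\leq CLh\theta=Ch\theta/(h_+\vartheta^2)$, which is off by a factor $\theta^{-2}$ from \eqref{geo:eq:retphase}. To gain $\theta^2$, use the structure $M_*^T=-h\sin\theta\,n\otimes v$, so that $M_*^Tp=-h\sin\theta\,(v\cdot p)\,n$. Near $p=n_+$, the factor $v\cdot n_+=\sigma(\cos\theta\sin\vartheta+\sin\theta\cos\vartheta)=O(\theta)$ contributes one extra $\theta$, while $\eps h\leq h\theta^6$ is harmless. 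This gives $|\dot p|\leq Ch\theta^2+Ch\theta|p-n_+|$. A continuity (bootstrap) argument on $[T,T+L]$, using the hypothesis $\frac h{h_+}(\theta/\vartheta)^2\leq c$ to keep $|p-n_+|\leq\theta$ throughout, yields $|p-n_+|\leq CLh\theta^2=Ch\theta^2/(h_+\vartheta^2)$. This is already better than the stated $\theta^3/\vartheta^2$ bound because $\theta\leq1$. The bootstrap also gives $p_2\geq\cos\vartheta-C\theta\geq1/2$.

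\textbf{Amplitude.} By Lemma~\ref{amp:lem:pressure}, $\frac{d}{dt}\log A=-p^\perp\cdot Hp^\perp$, and $\norm H\leq Ch$. Hence $|\log(A/h_+)|\leq ChL=C\frac h{h_+}\vartheta^{-2}$, which is still a factor $\theta^{-2}$ too large. The refinement uses $H_*=-h\cos\theta\,n\otimes n$, so $p^\perp\cdot H_*p^\perp=-h\cos\theta(p^\perp\cdot n)^2$. Since $p\approx n_+$ and $n_+$ is within angle $\theta+\vartheta\leq2\theta$ of $n$, we have $|p^\perp\cdot n|\leq C\theta$. This gives $|\frac{d}{dt}\log A|\leq Ch\theta^2+\eps h$ and therefore $|\log(A/h_+)|\leq C\frac h{h_+}\frac{\theta^2}{\vartheta^2}\leq Cc$. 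Choosing $c$ small makes this at most $\log 2$, which gives $h_+/2\leq A\leq2h_+$.

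\textbf{Combined jets.} At a label with $\phi_0=0$, $\chi=1$, we have $\omega=0$ and $f'(0)=1$, so \eqref{amp:eq:leadingjets} gives $g_\lambda=Ap$, $M_\lambda=A\,m(p)\otimes p$, and $H_\lambda=-Ap_2\,p\otimes p$. Each of these is Lipschitz in $(A,p)$ with constant $O(h_+)$ near $(h_+,n_+)$, and at $(h_+,n_+)$ they equal the outgoing triple \eqref{amp:eq:outgoingtriple}. The deviation is therefore bounded by
\[
Ch_+\bigl(|p-n_+|+|\log(A/h_+)|\bigr)\leq Ch(\theta/\vartheta)^2.
\]
The background terms $g,M,H$ contribute at most $Ch$, which is also bounded by $Ch(\theta/\vartheta)^2$ since $\theta\geq\vartheta$. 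Adding the two contributions gives the final estimate.

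The main obstacle is the bootstrap for $p$: extracting the factor $\theta$ from $v\cdot p$ requires $p$ to stay $O(\theta)$-close to $n_+$ over the whole interval, which has length $L\sim 1/(h_+\vartheta^2)$. The smallness hypothesis $\frac h{h_+}(\theta/\vartheta)^2\leq c$ is exactly what closes this continuity argument.
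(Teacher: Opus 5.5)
\textbf{There is a genuine gap in your phase-direction step.} You bound the leading term $h\sin\theta\,(v\cdot p)(I-p\otimes p)n$ by $h\sin\theta\,|v\cdot p|$, which extracts only one factor of $\theta$, and you obtain $|p-n_+|\leq C\frac h{h_+}\frac{\theta^2}{\vartheta^2}$.

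Your remark that this beats the stated bound is backwards. Since $\theta\leq 1/10$, the quantity $\theta^2/\vartheta^2$ exceeds $\theta^3/\vartheta^2$ by the factor $\theta^{-1}$. So the first estimate in \eqref{geo:eq:retphase} is not proved.

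More seriously, your bootstrap does not close at this rate. Under $|p-n_+|\leq\theta$ you get $|\dot p|\leq Ch\theta^2$. Integrating over the interval of length $4/(h_+\vartheta^2)$ gives only
\[
|p-n_+|\leq 4C\frac h{h_+}\frac{\theta^2}{\vartheta^2}\leq 4Cc,
\]
which is an absolute constant. Recovering $|p-n_+|\leq\theta/2$ would need $c\leq\theta/(8C)$, and no absolute $c$ provides that, since $\theta$ is arbitrarily small. A Gronwall argument on $\dot w\leq Ch\theta^2+Ch\theta w$ does not rescue it either, because $h\theta\cdot 4/(h_+\vartheta^2)\leq 4c\,\theta^{-1}$ is unbounded. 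Your amplitude refinement $|p^\perp\cdot n|\leq C\theta$, the bound $p_2\geq 1/2$, and the jet comparison all require $p$ to stay within $O(\theta)$ of $n_+$ on the whole interval. So the argument as written does not establish any of the conclusions.

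\textbf{The missing observation} is that the projection factor is also $O(\theta)$. In two dimensions $(I-p\otimes p)n=(p^\perp\cdot n)\,p^\perp$. Since $v$ is a unit vector orthogonal to $n$, this gives
\[
|v\cdot p|=|(I-p\otimes p)n|=|p^\perp\cdot n|.
\]
On the bootstrap set, $|p-n|\leq|p-n_+|+|n_+-n|\leq 3\theta$, so both factors are at most $3\theta$. Hence $|\dot p|\leq 9h\theta^3+\eps h\leq Ch\theta^3$. This is the same quadratic structure you already exploited in $\frac{d}{dt}\log A=h\cos\theta\,(p^\perp\cdot n)^2-p^\perp\cdot(H-H_*)p^\perp$. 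Integrating gives
\[
|p-n_+|\leq 4C\theta\cdot\frac h{h_+}\frac{\theta^2}{\vartheta^2}\leq 4Cc\,\theta\leq\theta/2
\]
for $c$ small. This simultaneously proves the stated $\theta^3/\vartheta^2$ bound and closes the continuity argument, which is exactly how the paper proceeds.

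With that correction, the rest of your argument goes through as in the paper: the amplitude bound, $h_+/2\leq A\leq 2h_+$, $p_2\geq 1/2$, and the jet estimate. The jet estimate uses Lipschitz dependence of $(Ap,\,A\,m(p)\otimes p,\,-Ap_2\,p\otimes p)$ on $(A,p)$ together with the background bound $Ch\leq Ch(\theta/\vartheta)^2$.
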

\begin{proof}
It follows from \eqref{eq:principal}, \eqref{amp:eq:pressuregrowth} and \eqref{amp:eq:frozenjets}
\[
\begin{aligned}
p_t &= h\sin\theta\,(v\cdot p)(I-p\otimes p)n
       -(I-p\otimes p)(M-M_*)^Tp,\\
(\log A)_t &= h\cos\theta\,(p^\perp\cdot n)^2
       -p^\perp\cdot(H-H_*)p^\perp.
\end{aligned}
\]
On a bootstrap interval where $|p-n_+|\leq\theta$, we have
\[
|p-n|\leq |p-n_+|+|n_+-n| \leq\theta+2\sin\frac{\theta+\vartheta}{2}\leq3\theta.
\]
Since $n,v$ are orthonormal and $|p|=1$, we have
\[
|v\cdot p| = |(I-p\otimes p)n|
= |p^\perp\cdot n|\leq3\theta
\]
and hence
\begin{equation}\label{geo:eq:retderivatives}
|p_t|\leq Ch\theta^3+\eps h\leq Ch\theta^3,
\qquad |(\log A)_t|\leq Ch\theta^2+\eps h\leq Ch\theta^2.
\end{equation}
Integrating \eqref{geo:eq:retderivatives} from $T$ yields the first two
estimates in \eqref{geo:eq:retphase} on the bootstrap interval.
Choose $c$ so that $Cc\leq\min\{1/2,\log2\}$. The first estimate
improves to $|p-n_+|\leq\theta/2$, so it holds on the
whole interval by continuity. In particular, we have
\[
p_2\geq\cos\vartheta-\theta/2\geq1/2.
\]
The second estimate implies $h_+/2\leq A\leq2h_+$ and
\[
|A-h_+|+h_+|p-n_+|\leq Ch(\theta/\vartheta)^2.
\]

At $\phi_0(y)=0$ with $\chi(y)=1$ and $f'(0)=1$, we have
\[
g_\lambda=Ap,\qquad M_\lambda=Am(p)\otimes p,\qquad
H_\lambda=-Ap_2p\otimes p.
\]
Since $|p|=|n_+|=1$, $(n_+)_2=\cos\vartheta$, and
$m(p)=p_2p-e_2$, we have
\[
\norm{m(p)\otimes p-m(n_+)\otimes n_+}
+\norm{p_2p\otimes p-\cos\vartheta\,n_+\otimes n_+}
\leq6|p-n_+|,
\]
which implies
\[
\begin{aligned}
&|g_\lambda-h_+n_+|
+\norm{M_\lambda-h_+m(n_+)\otimes n_+}+\norm{H_\lambda+h_+\cos\vartheta\,n_+\otimes n_+}\\
\leq& C\bigl(|A-h_+|+h_+|p-n_+|\bigr)
\leq Ch(\theta/\vartheta)^2.
\end{aligned}
\]
For the background, it follows from \eqref{eq:basicjets} and
\eqref{geo:eq:retbackground}
\[
|g-g_*|\leq\norm{M-M_*}+\norm{H-H_*}\leq\eps h.
\]
Hence \eqref{amp:eq:frozenjets} and $\vartheta\leq\theta$ imply
\[
|g|+\norm M+\norm H\leq Ch\leq Ch(\theta/\vartheta)^2.
\]
The claim of the lemma follows from this estimate and the principal-triple estimate.
\end{proof}

If the coefficient estimates and terminal data hold uniformly on the
supporting material neighborhood, then $A\leq2h_+$ and $p_2\geq1/2$.
Thus \eqref{amp:eq:profile} extends \eqref{amp:eq:hessianupper} to
$[T,T+4/(h_+\vartheta^2)]$.

\begin{corollary}\label{geo:cor:exactretention}
Assume the hypotheses of Lemma~\ref{geo:lem:retention} hold along the
background trajectory through the origin. Let $(\rho^+,u^+,\pi^+)$ be
a smooth solution of \eqref{eq:ipm} on $[0,T+4/(h_+\vartheta^2)]$, and suppose
$X(t,0)=X^+(t,0)=0$, $\phi_0(0)=0$, $\chi(0)=1$, and $f'(0)=1$.
Assume for some $E\geq0$,
\[
\begin{aligned}
\sup_{0\leq t\leq T+4/(h_+\vartheta^2)}\Bigl(
&|\nabla_x\rho^+(t,0)-g(t)-g_\lambda(t,0)|
+\norm{\nabla_x u^+(t,0)-M(t)-M_\lambda(t,0)}\\
&+\norm{\nabla_x^2\pi^+(t,0)-H(t)-H_\lambda(t,0)}\Bigr)
\leq E,
\end{aligned}
\]
where $g$, $M$, and $H$ are evaluated along $X(t,0)=0$. Then
\begin{equation}\label{geo:eq:exactretention}
\begin{aligned}
\sup_{T\leq t\leq T+4/(h_+\vartheta^2)}\Bigl(
&|\nabla_x\rho^+(t,0)-h_+n_+|
+\norm{\nabla_x u^+(t,0)-h_+m(n_+)\otimes n_+}\\
&+\norm{\nabla_x^2\pi^+(t,0)+h_+\cos\vartheta\,n_+\otimes n_+}\Bigr)
\leq Ch(\theta/\vartheta)^2+E.
\end{aligned}
\end{equation}
\end{corollary}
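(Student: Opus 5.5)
This is a triangle inequality on top of Lemma~\ref{geo:lem:retention}. Fix $t\in[T,T+4/(h_+\vartheta^2)]$. Evaluate the background quantities along the trajectory through the origin; by hypothesis $X(t,0)=0$, so $g(t)=\nabla_x\bar\rho(t,0)$ and similarly for $M$ and $H$. We then split each exact quantity into three pieces: the exact minus the approximate jet, the approximate jet minus the outgoing reference triple, and nothing else. For the density gradient this reads
\[
\nabla_x\rho^+(t,0)-h_+n_+
=\bigl(\nabla_x\rho^+(t,0)-g(t)-g_\lambda(t,0)\bigr)
+\bigl(g(t)+g_\lambda(t,0)-h_+n_+\bigr).
\]
The analogous splittings hold for $\nabla_xu^+(t,0)$, with reference $h_+m(n_+)\otimes n_+$, and for $\nabla_x^2\pi^+(t,0)$, with reference $-h_+\cos\vartheta\,n_+\otimes n_+$.

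The first brackets are bounded, summed over the three components, by $E$, directly from the assumed estimate. That estimate holds on all of $[0,T+4/(h_+\vartheta^2)]$, and hence on the retention subinterval.

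For the second brackets we invoke Lemma~\ref{geo:lem:retention} at the label $y=0$. Its hypotheses along this trajectory are assumed, and the label conditions $\phi_0(0)=0$, $\chi(0)=1$, $f'(0)=1$ are given. Here $g_\lambda(t,0)$ denotes the label-coordinate jet \eqref{amp:eq:leadingjets} at $y=0$, and it coincides with the physical evaluation because $X(t,0)=0$. The lemma's final display then bounds the sum of the three second brackets by $Ch(\theta/\vartheta)^2$, uniformly on the interval. Summing the two contributions and taking the supremum over $t$ gives \eqref{geo:eq:exactretention}.

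The only point needing care is identification: the approximate jets $g+g_\lambda$, $M+M_\lambda$, $H+H_\lambda$ in the hypothesis are evaluated at the same label and point as in the lemma. This is ensured by $X(t,0)=X^+(t,0)=0$, which follows from oddness of both solutions. Apart from this, no genuine obstacle arises.
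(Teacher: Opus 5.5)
Your proposal is correct and is exactly the paper's argument: Lemma~\ref{geo:lem:retention} at the label $y=0$ bounds the jet part, the assumed $E$-estimate bounds the rest, and the triangle inequality combines them. One small correction: $X(t,0)=X^+(t,0)=0$ is a hypothesis of the corollary, so it does not need to be derived from oddness here.
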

\begin{proof}
It follows from Lemma~\ref{geo:lem:retention} at $y=0$ and the triangle inequality.
\end{proof}

The upper bound in \eqref{geo:eq:exactretention} is small relative to $h_+$
provided both $\frac h{h_+}(\theta/\vartheta)^2$ and $E/h_+$ are small.

\section{The main iterative theorem}\label{sec:iterative}
In this section we establish a perturbation theorem that can be applied iteratively to produce a new smooth solution 
of the IPM system \eqref{eq:ipm} from a background smooth solution. We emphasize that since the IPM equation is unforced, 
the initial data of the new solution is perturbed from that of the background solution.  

\subsection{The iterative perturbation theorem}\label{a:sec:statement}
Let $(\bar\rho,U,\bar\pi)$ be a smooth solution of
\eqref{eq:ipm} on $[0,S]$, where $0<S\leq1$, and let $X$ be its
particle map with $X(0,y)=y$.
Let $\mathcal O$ be a label neighborhood contained in a lifted torus
chart, and choose $\chi\in C_c^\infty(\mathcal O)$ with $0\leq\chi\leq1$.
Let $\phi_0$ be a smooth real phase on $\mathcal O$, and let
$\mu=\mu(t,y)>0$ on $[0,S]\times\mathcal O$ satisfy
\begin{equation}\label{a:eq:phasecarrier}
\xi = (D_yX)^{-T}\nabla_y\phi_0,\qquad
\partial_t\mu+m(\xi)\cdot g\,\mu = 0,\qquad \mu>0.
\end{equation}

Let $f\in C^\infty(\R/(2\pi\mathbb Z))$ satisfy
$\int_0^{2\pi}f(\omega)\,d\omega=0$.
For $n\in\mathbb N_0$ and a Banach space $B$, denote
\[
\norm{D^nw}_B=\sum_{|I|=n}\norm{\partial^I w}_B,
\qquad \norm{D^0w}_B=\norm w_B.
\]
Fix $\bar{\mu}\geq2$. All time-dependent data estimates below are uniform
on $[0,S]$. The $W^{8,\infty}$ norms are taken on $\T^2$ for background
quantities and on $\mathcal O$ for phase and amplitude quantities. For
\[
w\in \{D_yX,\ (D_yX)^{-1},\ M,\ H,\ \nabla_y\bar\rho(0),\ \nabla_y\phi_0,
\ \xi,\ |\xi|^{-2},\ \nabla_y\log\mu\},
\]
we assume
\begin{equation}\label{a:eq:data}
\norm{D^n w}_{W^{8,\infty}}\leq \bar{\mu}^{n+1}(n!)^2,
\qquad n\in\mathbb N_0,
\end{equation}
while
\begin{equation}\label{a:eq:data2}
\norm{D^n w}_{H^8}\leq \bar{\mu}^{n+1}(n!)^2,
\qquad n\in\mathbb N_0, \qquad w\in \{\bar\rho(0), \chi, f\}.
\end{equation}
In \eqref{a:eq:data2}, the norms are on $\T^2$ for $\bar\rho(0)$ and the
zero extension of $\chi$, and on $\R/(2\pi\mathbb Z)$ for $f$.
We also assume
\begin{equation}\label{a:eq:carriersize}
0<\mu\leq \bar{\mu},\qquad
\sup_{y\in\mathcal O}\mu(0,y)\leq \bar{\mu}\mathrm e^{-\Gamma},\qquad \Gamma\geq0,
\end{equation}
and let $K_+\in L^1(0,S)$ be nonnegative with
\begin{equation}\label{a:eq:positiveH}
H(t,y)\leq K_+(t)I, \quad y\in\T^2; \qquad
\exp\left(\int_0^S K_+(t)\,dt\right)\leq \bar{\mu}.
\end{equation}

The principal increments evaluated in background particle coordinates are
\begin{equation}\label{a:eq:principaljets}
\begin{aligned}
g_\lambda &= \mu\chi\xi f'(\lambda\phi_0),\\
M_\lambda &= \mu\chi m(\xi)\otimes\xi f'(\lambda\phi_0),\\
H_\lambda &= -\mu\chi\frac{\xi_2}{|\xi|^2}\xi\otimes\xi f'(\lambda\phi_0).
\end{aligned}
\end{equation}
In label coordinates, they are extended by zero outside $\supp\chi$. Note
$M_\lambda=-H_\lambda-e_2\otimes g_\lambda$.

\begin{theorem}\label{thm:main}
Under assumptions
\eqref{a:eq:data}--\eqref{a:eq:positiveH}, there are absolute constants
$C,Q\geq1$ such that, for every
$\lambda\geq\bar{\mu}^Q$, there is a smooth solution
$(\rho^+,u^+,\pi^+)$ of \eqref{eq:ipm} on $[0,S]$ satisfying
\begin{equation}\label{a:eq:jeterror}
\begin{aligned}
\sup_{0\leq t\leq S}\Bigl(
&\norm{(\nabla_x\rho^+-\nabla_x\bar\rho)\circ X-g_\lambda}_{L^\infty_y}
+\norm{(\nabla_xu^+-\nabla_xU)\circ X-M_\lambda}_{L^\infty_y}\\
&+\norm{(\nabla_x^2\pi^+-\nabla_x^2\bar\pi)\circ X-H_\lambda}_{L^\infty_y}
\Bigr)\leq\lambda^{-1/2}.
\end{aligned}
\end{equation}
Its initial density is $\rho^+(0)=\bar\rho(0)+r_\lambda(0)+b_\lambda$, where
\[
r_\lambda(0,y)=\lambda^{-1}\mu(0,y)\chi(y)f(\lambda\phi_0(y))
\]
is extended by zero outside $\mathcal O$, and
\begin{equation}\label{a:eq:initial}
\begin{split}
&\supp b_\lambda\subset\supp\nabla_y\bar\rho(0),\\
&\norm{r_\lambda(0)}_{H^m}
\leq C_m\bar{\mu}^{C_m}\mathrm e^{-\Gamma}\lambda^{m-1},\qquad
\norm{b_\lambda}_{H^m}\leq C_m\bar{\mu}^{C_m}\lambda^{-2},
\quad m\in\mathbb N_0
\end{split}
\end{equation}
for some constant $C_m$ depending only on $m$.
Its particle map $X^+$ starting from the identity satisfies
\begin{equation}\label{a:eq:mapzero}
\sup_{0\leq t\leq S}\left(
\norm{X^+-X}_{L^\infty_y}
+\norm{(X^+)^{-1}-X^{-1}}_{L^\infty_x}
\right)\leq \bar{\mu}^C\lambda^{-1},
\end{equation}
\begin{equation}\label{a:eq:maphigh}
\sup_{0\leq t\leq S}\left(
\norm{D^n(D_yX^+)}_{L^\infty_y}+\norm{D^n((D_yX^+)^{-1})}_{L^\infty_y}
\right)\leq \bar{\mu}^C(\lambda \bar{\mu}^C)^n(n!)^2, \quad n\in\mathbb N_0.
\end{equation}
For every $n\in\mathbb N_0$, the physical inverse map and gradients satisfy
\begin{equation}\label{a:eq:inversehigh}
\sup_{0\leq t\leq S}
\norm{D_x^n\bigl(D_x(X^+)^{-1}\bigr)}_{L^\infty_x}
\leq\bar{\mu}^C(\lambda\bar{\mu}^C)^n(n!)^2,
\end{equation}
\begin{equation}\label{a:eq:fieldhigh}
\begin{aligned}
\sup_{0\leq t\leq S}\Bigl(
&\norm{D_y^n((\nabla_x\rho^+)\circ X^+)}_{L^\infty_y}
+\norm{D_y^n((\nabla_xu^+)\circ X^+)}_{L^\infty_y}\\
&+\norm{D_y^n((\nabla_x^2\pi^+)\circ X^+)}_{L^\infty_y}\Bigr)
\leq\bar{\mu}^C(\lambda\bar{\mu}^C)^n(n!)^2.
\end{aligned}
\end{equation}
If $\mathcal O=-\mathcal O$, $\bar\rho(0),\phi_0,f$ are odd, and
$\chi,\mu(0)$ are even, the solution can be chosen so that
\[
\begin{aligned}
\rho^+(t,-x)&=-\rho^+(t,x),& u^+(t,-x)&=-u^+(t,x),\\
\pi^+(t,-x)&=\pi^+(t,x),& X^+(t,0)&=0.
\end{aligned}
\]
\end{theorem}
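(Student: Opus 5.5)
The proof has two stages: an approximate solution $\rho^{a,N}$ built as a multi-scale (WKB-type) expansion in particle coordinates of the background, and an exact correction $e=\rho^+-\rho^a$ with $e(0)=0$ controlled by a weighted energy estimate. First, compose the IPM equation with $X$. Write $\rho^{a,N}\circ X$ as in Subsection~\ref{ss:perturbation}: the terms $\bar\rho(0,y)+\lambda^{-1}\mu\chi f(\lambda\phi_0)$ plus $\sum_{n=2}^N\lambda^{-n}[\bar r_n(t,y)+r_n(t,y,\lambda\phi_0)]$. Treat $\omega=\lambda\phi_0$ as an independent periodic variable.

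The velocity of an oscillatory term $a(y)F(\lambda\phi_0)$ is expanded by a symbol expansion of $m$ around $\xi$. The $k$-th term involves $\partial_\xi^k m(\xi)$, which is homogeneous of degree $-k$, paired with $k$ derivatives of $a$ in $(D_yX)^{-T}\nabla_y$. Because of the cutoff $\chi$ and the nondegeneracy $|\xi|^{-2}$ in \eqref{a:eq:data}, each such term has Gevrey-two bounds. Collecting powers of $\lambda^{-n}$ gives a hierarchy at each order $n$:
\begin{itemize}
\item the phase average is the linearized IPM equation $\partial_t\bar r_n+(\cT\bar r_n)\circ X\cdot g=\bar F_n$, where $\bar F_n$ contains the nonzero quadratic mean of lower-order oscillations;
\item the zero-mean part is a transport equation in $t$ at fixed $(y,\omega)$, $\partial_tr_n+m(\xi)\cdot g\,r_n=F_n$, with $r_n(0)=0$ (solvable once $\mu$ solves \eqref{a:eq:phasecarrier}; the $O(\lambda^{-1})$ equation is exactly that amplitude law, and the leading self-interaction vanishes since $m(\xi)\cdot\xi=0$).
\end{itemize}

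The averaged equation is solved backward from $\bar r_n(S)=0$. In terms of the displacement $\nabla^\perp$-formulation, the linearized operator reads $\partial_t w=-Hw+\dots$ along the flow. So backward propagation from $S$ to $t$ grows at most like $\exp(\int_t^SK_+)\le\bar\mu$ by \eqref{a:eq:positiveH}, exactly as in Lemma~\ref{amp:lem:pressure}. This yields $\bar r_n(0)$ supported where $\nabla_y\bar\rho(0)\neq0$, and $b_\lambda$ with the stated $\lambda^{-2}$ bound. The bound on $r_\lambda(0)$ in \eqref{a:eq:initial} follows from $\mu(0)\le\bar\mu e^{-\Gamma}$ and $m$ derivatives each costing $\lambda\bar\mu$.

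The main obstacle is the majorant estimate uniform in both the derivative order and the expansion order $n$. The target is
\[
\sup_t\norm{D^k(\bar r_n,r_n)}\le \bar\mu^{C(n+k+1)}((n+k)!)^2 .
\]
I would organize it by a generating-function (majorant series) argument in a Gevrey-two norm with radius $\gamma\sim\bar\mu^{-C}$. Each order loses one derivative through the symbol expansion and one through the quadratic terms, and this is absorbed by a Cauchy–Kovalevskaya/Nirenberg–Nishida shrinking-radius step in time. With $N=\lfloor\lambda^\nu\rfloor$, optimal truncation gives a residual of size $\bar\mu^{C}((N)!)^2\lambda^{-N}\lesssim\exp(-c\lambda^\nu\log\lambda)$ in $H^4_{1/\lambda}$, provided $\lambda\ge\bar\mu^Q$.

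Next, the error $e$ satisfies the equation in Subsection~\ref{ss:perturbation} with $e(0)=0$. A semiclassically weighted $H^4_{1/\lambda}$ energy estimate (as in Lemma~\ref{lem:local}, but with $\lambda^{-k}$ weights so that $\norm{\nabla\rho^a}_\infty\lesssim\bar\mu^C$ controls the commutators) gives a growth rate $\lambda\bar\mu^C$ at worst. Hence $\norm e\le e^{C\lambda\bar\mu^C S}\exp(-c\lambda^\nu\log\lambda)$. This is not small if the rate is genuinely $\lambda$, so the weights must be chosen so that the stretching term costs only $\bar\mu^C$. Then $\norm e\le\exp(-\lambda^{\nu/2})$, which continues $\rho^+$ to $[0,S]$ via Lemma~\ref{lem:local}.

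Finally, the jet estimate \eqref{a:eq:jeterror} follows: the principal jets are \eqref{a:eq:principaljets}, all higher terms are $O(\bar\mu^C\lambda^{-1})$, and $e$ is negligible in $W^{2,\infty}$ by Sobolev embedding with a loss of $\lambda^3$. The map estimates \eqref{a:eq:mapzero}--\eqref{a:eq:fieldhigh} follow from Gronwall applied to $\partial_tX^+=u^+(X^+)$, using $\norm{u^+-U}_{W^{1,\infty}}$ small in $L^\infty$ (of order $\bar\mu^C\lambda^{-1}$) and the Gevrey bounds with frequency $\lambda\bar\mu^C$ via Faà di Bruno. Symmetry follows from the oddness of each constructed coefficient together with uniqueness, as in Section~\ref{sec:local}.
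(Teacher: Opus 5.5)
Most of your plan matches the paper:
\begin{itemize}
\item the two-scale expansion in background particle coordinates;
\item the displacement formulation of the averaged equation, with backward growth $\exp(\int K_+)\le\bar\mu$;
\item truncation at $N=\lfloor\lambda^\nu\rfloor$;
\item the $H^4_{1/\lambda}$ estimate for $e$ with linear rate $\bar\mu^C$.
\end{itemize}
The last point you can assert outright: $\nabla_x\rho^a$ and $\nabla_xu^a$ are $O(\bar\mu^C)$, and each further derivative costs $\lambda$.

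\textbf{The map estimates.} The genuine gap is in \eqref{a:eq:mapzero}--\eqref{a:eq:fieldhigh}. Their constants must be polynomial in $\bar\mu$: the statement asserts this, only $\lambda\ge\bar\mu^Q$ is available, and in the induction these bounds become the next data constant. You treat $u^+-U$ as a small perturbation, but only $\norm{u^+-U}_\infty$ is $O(\bar\mu^C\lambda^{-1})$. By \eqref{a:eq:jeterror}, $\nabla_x(u^+-U)\circ X=M_\lambda+O(\lambda^{-1/2})$, which is the amplified gradient, of size up to $\mu|\xi|$. Gronwall for $D_yX^+$, or for $D_yY$ with $Y=X^{-1}\circ X^+$, therefore gives only $\exp(\int_0^S\norm{\nabla_xu^+}_\infty\,dt)\le\exp(\bar\mu^C)$. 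So \eqref{a:eq:maphigh} fails already at $n=0$. (The order-zero bound \eqref{a:eq:mapzero} can be rescued by integrating the label velocity instead of using physical Gronwall.)

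The true deformation is polynomial only because of a shear structure you do not use. The leading relative label velocity is $V^0=\lambda^{-1}bf(\lambda\phi_0)$ with $b=\mu\chi(D_yX)^{-1}m(\xi)$. It satisfies $b\cdot\nabla_y\phi_0=\mu\chi\,m(\xi)\cdot\xi=0$, so it is tangent to the phase level sets, and the large part $(b\otimes\nabla_y\phi_0)f'$ of its gradient is nilpotent. The paper writes its flow as $Y^0(t,y)=\Psi(t,y,\lambda\phi_0(y))$. Here $\Psi$ is the flow, at frozen $\omega$, of $\lambda^{-1}b(t,y)f(\omega)$, a field of size $\lambda^{-1}\bar\mu^C$ with derivative scale $\bar\mu^C$. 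It then factors $X^+=X\circ Y^0\circ Z^a\circ Z^e$, where $Z^a$ and $Z^e$ are flows of fields that are small relative to their derivative scale $\lambda\bar\mu^C$ (Lemmas~\ref{a:lem:smallflow} and~\ref{a:lem:approxmap}).

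\textbf{All-order control of $e$.} The bounds \eqref{a:eq:maphigh}, \eqref{a:eq:inversehigh}, and \eqref{a:eq:fieldhigh} are Gevrey-two bounds for every $n$. The correction $e$ enters $D_y^n((\nabla_x\rho^+)\circ X^+)$ and the flow $Z^e$ at every order. Your $H^4_{1/\lambda}$ bound controls four derivatives and cannot yield them. The paper pulls the error equation back by $X^a$ and proves a same-radius Gevrey estimate for the pulled-back Darcy operator (Proposition~\ref{a:prop:elliptic}). It then propagates $\G_{\gamma(t),\kappa}(e\circ X^a)$ with the shrinking radius $\gamma(t)=1-te^{-\lambda^{\nu/2}}$, which absorbs the derivative loss in $V^e\cdot\nabla_y\widetilde e$ (Lemma~\ref{a:lem:transportG}, Proposition~\ref{a:prop:exactG}).

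This is where a Cauchy--Kovalevskaya-type radius loss is actually needed. In the hierarchy it is not: $r_n/\mu$ is a time integral at fixed $(y,\omega)$, and the backward inverse of Lemma~\ref{a:lem:slow} costs a fixed shift, so the loss per order is pure shift bookkeeping.

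\textbf{A smaller point.} Your terminal condition $\bar r_n(S)=0$ contradicts your support claim. The paper imposes $z(S)=0$ on the displacement, with $b(t)=\int_0^t\mathfrak f\circ X\,ds$, so $\bar r_n(0)=-a\cdot z(0)$ is supported in $\supp\nabla_y\bar\rho(0)$. With $\bar r_n(S)=0$, $\bar r_n(0)$ also contains $-\int_0^S\av{\mathcal F_n}\,ds$. That term is supported on $\supp\chi\cup\supp\nabla_y\bar\rho(0)$, not in $\supp\nabla_y\bar\rho(0)$ in general.
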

The proof is given in Subsections~\ref{sec:insertion} and~\ref{sec:realization}.

\subsection{The approximate solution}\label{sec:insertion}
Starting from the principal oscillation, we construct an approximation to \eqref{eq:ipm} with an exponentially small residual. 

\subsubsection{Gevrey estimates}\label{a:sec:calculus}
Physical differentiation expressed in particle coordinates is given by
\begin{equation}\label{a:eq:d}
d_i = \sum_a((D_yX)^{-1})_{ai}\partial_{y_a},\qquad
\mathbf d = (d_1,d_2).
\end{equation}
Thus $d_i(q\circ X)=(\partial_{x_i}q)\circ X$ and $d_1d_2=d_2d_1$.
Since $\det(D_yX)=1$, the Piola identity takes the form
\begin{equation}\label{a:eq:piola}
(\diver_x w)\circ X = \diver_y\bigl((D_yX)^{-1}(w\circ X)\bigr).
\end{equation}

For $0<\kappa\leq1$, $\gamma>0$, and an integer $s\geq0$, 
we define the semiclassical Sobolev norms and Gevrey norms
\begin{equation}\label{a:eq:scalednorm}
\begin{split}
&\norm w_{H^s_{\kappa}}^2 =
\sum_{|I|\leq s}\kappa^{2|I|}\norm{\partial_y^Iw}_2^2,\qquad
\norm w_{W^{s,\infty}_{\kappa}} =
\sum_{|I|\leq s}\kappa^{|I|}\norm{\partial_y^Iw}_\infty,\\
&\G_{\gamma,\kappa}(w) =
\sum_{m\geq0}\frac{\gamma^m\kappa^m}{(m!)^2}
\norm{D^m w}_{H^4_{\kappa}},\qquad
\mathcal A_{\gamma,\kappa}(w)=
\sum_{m=0}^{\infty}
\frac{\gamma^m\kappa^m}{(m!)^2}
\|D^m w\|_{W^{4,\infty}_{\kappa}}.
\end{split}
\end{equation}
An omitted coordinate subscript here denotes $y$; we will also use the norms with the subscript $x$ 
which specifies physical derivatives.

For $\Lambda_{\mathrm G}\geq2$ and an integer $d\geq0$, we say that
$w$ has shift $d$ if
\begin{equation}\label{a:eq:shift}
\norm{D^n w}_{H^8}\leq \Lambda_{\mathrm G}^{n+d}((n+d)!)^2, \qquad n\geq0.
\end{equation}
For coefficients depending on $(y,\omega)$, we use $D^n=D_{y,\omega}^n$
and $H^8=H^8(\T^2\times(\R/(2\pi\mathbb Z)))$, extending compactly
supported local coefficients by zero in $y$. For multipliers, we replace
$H^8$ by $W^{8,\infty}$ on their domains. We choose
$\Lambda_{\mathrm G}=\bar{\mu}^{C_0}$.

The radius can be chosen independently of $d$, since
$((n+d)!)^2\leq4^{n+d}(n!)^2(d!)^2$ yields
\[
\sum_{n\geq0}\frac{(8\Lambda_{\mathrm G})^{-n}}{(n!)^2}\norm{D^n w}_{H^8}
\leq2(4\Lambda_{\mathrm G})^d(d!)^2.
\]

\begin{lemma}\label{a:lem:calculus}
The following estimates hold with constants independent of the nonnegative
integer shifts.

\smallskip\noindent
(a) For the spaces $H^8$ and $W^{8,\infty}$, a product of coefficients with shifts $d_1,d_2$ has shift $d_1+d_2+q$
for a fixed integer $q$. A coordinate derivative adds one shift. Multiplication
by a coefficient satisfying \eqref{a:eq:data} adds a fixed shift when
$\Lambda_{\mathrm G}=\bar{\mu}^{C_0}$ is sufficiently large. 

\smallskip\noindent
(b) Let $c\geq0$ be fixed. Suppose $Z_n,B_n\geq0$ satisfy
\begin{equation}\label{a:eq:triangularabstract}
Z_n\leq \bar{\mu}^c\left(B_n+\sum_{l=1}^n\binom nl\bar{\mu}^{c(l+1)}(l!)^2Z_{n-l}\right)
\end{equation}
and $B_n\leq \Lambda_{\mathrm G}^{n+d}((n+d)!)^2$. For sufficiently large $C_0$ and an integer $q$, we have
\[
Z_n\leq \Lambda_{\mathrm G}^{n+d+q}((n+d+q)!)^2, \qquad n\geq0.
\]

\smallskip\noindent
(c) Let $A,L\geq0$, $\Lambda\geq1$, and suppose
\[
\norm{D^lw}_\infty\leq A\Lambda^l((l+d)!)^2, \quad l\geq0; \qquad
\norm{D^k\Psi}_\infty\leq L\Lambda^{k-1}(k!)^2, \quad k\geq1.
\]
The image of $\Psi$ is assumed to lie in the domain of $w$. Then
\begin{equation}\label{a:eq:composition}
\norm{D^n(w\circ\Psi)}_\infty
\leq A[C(1+L)\Lambda]^n((n+d)!)^2,\qquad n\geq0.
\end{equation}
Suppose $\Psi:\Omega\to\Omega'$ is a diffeomorphism, and denote
$J=\norm{\det D\Psi^{-1}}_{L^\infty(\Omega')}<\infty$.
If $\norm{D^lw}_2\leq A\Lambda^l((l+d)!)^2$ for $l\geq0$, then
\[
\norm{D^n(w\circ\Psi)}_{L^2(\Omega)}
\leq J^{1/2}A[C(1+L)\Lambda]^n((n+d)!)^2.
\]
For every integer $s\geq0$, it follows that
\[
\norm{D^n(w\circ\Psi)}_{H^s(\Omega)}
\leq C_sJ^{1/2}A[C(1+L)\Lambda]^{n+s}((n+d+s)!)^2.
\]
In (c), the constants depend only on the fixed dimensions and on $s$ where indicated.
\end{lemma}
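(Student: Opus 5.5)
The three parts of Lemma~\ref{a:lem:calculus} are independent, and each comes down to a combinatorial inequality for Gevrey-two sequences. The basic tool throughout is
\[
\sum_{l=0}^n\binom nl \bigl((l+d_1)!\bigr)^2\bigl((n-l+d_2)!\bigr)^2\leq C\bigl((n+d_1+d_2)!\bigr)^2 .
\]
To prove it, write $\binom nl(l+d_1)!(n-l+d_2)!\leq (n+d_1+d_2)!$. This follows from $\binom nl\leq\binom{n+d_1+d_2}{l+d_1}$. The remaining factor $(l+d_1)!(n-l+d_2)!/(n+d_1+d_2)!=\binom{n+d_1+d_2}{l+d_1}^{-1}$ is then summed over $l$. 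The sum of inverse binomial coefficients is bounded by an absolute constant uniformly in $n,d_1,d_2$, because only the two endpoint terms are of order one and the rest decay geometrically.

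For part (a), apply Leibniz's rule to $\partial^I(w_1w_2)$. The $H^8$ norm of each product term is handled by the algebra property of $H^8$ in two or three dimensions, and $W^{8,\infty}\cdot H^8\subset H^8$ covers the multiplier case. The lost derivatives (at most $8$ on each factor) are absorbed into a fixed integer $q$: we have $((n+8+d)!)^2\leq C^{n}\cdots$, and the surplus factors of $n$ are exactly paid by raising the shift. The constant $C$ is absorbed into $\Lambda_G$ once $\Lambda_G\geq 2$. A coordinate derivative turns $D^n\partial w$ into part of $D^{n+1}w$, which gives shift $d+1$ immediately. For multiplication by a coefficient obeying \eqref{a:eq:data}, note that $\bar\mu^{n+1}(n!)^2\leq \Lambda_G^{n+q}((n+q)!)^2$ once $C_0\geq 1$, so the coefficient has a fixed shift and the product rule applies.

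For part (b), argue by strong induction on $n$ with the ansatz $Z_k\leq \Lambda_G^{k+d+q}((k+d+q)!)^2$ for $k<n$. Inserting this into \eqref{a:eq:triangularabstract} bounds the sum by
\[
\sum_{l\geq1}\binom nl\bar\mu^{c(l+1)}(l!)^2\Lambda_G^{n-l+d+q}((n-l+d+q)!)^2 .
\]
By the combinatorial inequality, this is at most
\[
\Lambda_G^{n+d+q}((n+d+q)!)^2\sum_{l\geq1}\bar\mu^{c(l+1)}\Lambda_G^{-l}\,C_l ,
\]
where $C_l$ is a rapidly decaying ratio. Choosing $\Lambda_G=\bar\mu^{C_0}$ with $C_0\gg c$ makes this sum at most $\tfrac12\bar\mu^{-c}$. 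The source term satisfies $\bar\mu^cB_n\leq \bar\mu^c\Lambda_G^{-q}\Lambda_G^{n+d+q}((n+d+q)!)^2$, which is at most half the target once $q\geq1$ and $C_0\geq c+1$. Together these close the induction, and the base case is the $n=0$ instance of the same estimate. The main obstacle here is checking that the decay in $l$ is uniform in $d$. For this I would use $(n-l+d+q)!\,l!\leq (n+d+q)!\binom{n+d+q}{l}^{-1}$ together with $\binom nl\leq\binom{n+d+q}{l}$, which leaves a single factor $l!\,\binom{n+d+q}{l}^{-1}\cdot(l!)^{-1}\cdots$. That factor is at most $1$, so the geometric factor $\bar\mu^{c}\Lambda_G^{-1}$ supplies the decay.

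For part (c), use the Faà di Bruno formula. Write $D^n(w\circ\Psi)$ as a sum over set partitions of $\{1,\dots,n\}$ into $l$ blocks of sizes $k_1,\dots,k_l$, each term being $(D^lw)\circ\Psi$ contracted with $\prod D^{k_i}\Psi$. This gives the bound
\[
\|D^n(w\circ\Psi)\|_\infty\leq A\sum_{l}\sum_{\text{partitions}}((l+d)!)^2\Lambda^l\prod_i L\Lambda^{k_i-1}(k_i!)^2 .
\]
Since $\sum_i(k_i-1)=n-l$, the powers of $\Lambda$ combine to $\Lambda^n$. The standard Gevrey-two counting estimate (the Gevrey class $s=2$ is closed under composition) gives
\[
\sum_{\text{partitions into }l}\prod_i (k_i!)^2\,((l+d)!)^2\leq C^n((n+d)!)^2 .
\]
I would prove this by the generating-function majorant method: compare with the composition of the series $\sum ((l+d)!)^2 t^l/(l!)$-type majorants, or proceed by induction as in Gevrey composition lemmas. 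This yields \eqref{a:eq:composition} with constant $C(1+L)$ per derivative. The $L^2$ version is the same computation, using $\|(D^lw)\circ\Psi\|_{L^2(\Omega)}\leq J^{1/2}\|D^lw\|_{L^2(\Omega')}$ from the change of variables. The $H^s$ version then follows by applying the $L^2$ bound to $D^{n+j}$ for $j\leq s$ and summing.
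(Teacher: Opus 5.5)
Your parts (a) and (b) follow the paper's argument. Both rest on the termwise bound $\binom nl (l+d_1)!\,(n-l+d_2)!\le (n+d_1+d_2)!$, so each Leibniz ratio is at most $\binom nl^{-1}$ (or your $\binom{n+d_1+d_2}{l+d_1}^{-1}$), with sum at most $3$. Part (b) then uses strong induction, with the ratio bounded by $1$ and the decay supplied by $(\bar\mu^c/\Lambda_{\mathrm G})^l$ once $C_0\ge 3c+2$. Your remark about ``lost derivatives'' in (a) is unnecessary, since the $H^8$ algebra and $W^{8,\infty}\cdot H^8$ product estimates lose none.

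\textbf{The gap is in (c).} The only nontrivial point there is the counting estimate you display, with $C$ independent of $d$, and you assert it as the ``standard Gevrey-two counting estimate'' without proof. Standard closure of Gevrey classes under composition covers $d=0$. Reducing to that case via $((l+d)!)^2\le 4^{l+d}(l!)^2(d!)^2$ produces $4^d(d!)^2(n!)^2$, which exceeds $((n+d)!)^2$ by the factor $4^d\binom{n+d}{d}^{-2}$; this is unbounded in $d$ for fixed $n$. The generating-function majorant you suggest does not work as stated either. The Gevrey-two series $\sum_l((l+d)!)^2t^l/l!$ has zero radius of convergence, and extracting coefficients of the formal composition brings you back to the same combinatorics.

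The missing step is the elementary termwise inequality the paper uses. Take each ordered composition $k_1+\cdots+k_l=n$ with $k_i\ge1$, weighted by the Fa\`a di Bruno coefficient $n!/(l!\prod_ik_i!)$. Then
\[
\frac{n!}{l!\prod_i k_i!}\,\frac{((l+d)!)^2\prod_i(k_i!)^2}{((n+d)!)^2}
=\frac{l!\prod_i k_i!}{n!}\Bigl(\prod_{j=1}^d\frac{l+j}{n+j}\Bigr)^2\le1 .
\]
This uses $l!\prod_ik_i!\le n!$, valid because all $k_i\ge1$, together with $l\le n$. There are $\binom{n-1}{l-1}$ ordered compositions, and
\[
\sum_{l=1}^n\binom{n-1}{l-1}L^l=L(1+L)^{n-1}\le(1+L)^n .
\]
This yields \eqref{a:eq:composition} with $C$ depending only on the dimension. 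With this inequality in place, your $L^2$ and $H^s$ extensions go through as written.
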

\begin{proof}
For (a), the algebra property of $H^8$ in dimensions two and three and
Leibniz's rule reduce the product estimate to
\begin{equation}\label{a:eq:factorialproduct}
\binom nl\frac{((l+d_1)!)^2((n-l+d_2)!)^2}
{((n+d_1+d_2)!)^2}\leq\binom nl^{-1},\qquad
\sum_{l=0}^n\binom nl^{-1}\leq3.
\end{equation}
More generally, suppose a multiplier $b$ satisfies
$\norm{D^l b}_{W^{8,\infty}}\leq \bar{\mu}^{c(l+1)}(l!)^2$ with fixed $c\geq0$.
For a coefficient $w$ with shift $d$, Leibniz's rule yields, whenever
$\Lambda_{\mathrm G}\geq2\bar{\mu}^c$,
\[
\begin{aligned}
\frac{\norm{D^n(bw)}_{H^8}}
{\Lambda_{\mathrm G}^{n+d}((n+d)!)^2}
&\leq C\bar{\mu}^c\sum_{l=0}^n
\left(\frac{\bar{\mu}^c}{\Lambda_{\mathrm G}}\right)^l
\binom nl\frac{(l!)^2((n-l+d)!)^2}{((n+d)!)^2}\\
&\leq C\bar{\mu}^c\sum_{l=0}^n
\left(\frac{\bar{\mu}^c}{\Lambda_{\mathrm G}}\right)^l
\leq\frac{C\bar{\mu}^c}{1-\bar{\mu}^c/\Lambda_{\mathrm G}}
\leq2C\bar{\mu}^c.
\end{aligned}
\]
For $C_0\geq c+1$ and a fixed integer $q\geq1$ with $2^q\geq2C$,
we have $2C\bar{\mu}^c\leq\Lambda_{\mathrm G}^q$.
This proves the assertion in both spaces, uniformly in $n$, $d$, and $\bar{\mu}$.

For (b), choose $C_0\geq3c+2$ and $q\geq1$.
Assuming the conclusion at orders below $n$, apply
\eqref{a:eq:factorialproduct} with $d_1=0$ and $d_2=d+q$ to
\eqref{a:eq:triangularabstract} to obtain
\[
\begin{aligned}
\frac{Z_n}{\Lambda_{\mathrm G}^{n+d+q}((n+d+q)!)^2}
&\leq \bar{\mu}^c\Lambda_{\mathrm G}^{-q}
+\bar{\mu}^{2c}\sum_{l=1}^n
\left(\frac{\bar{\mu}^c}{\Lambda_{\mathrm G}}\right)^l\\
&\leq \bar{\mu}^c\Lambda_{\mathrm G}^{-q}
+\bar{\mu}^{2c}\frac{\bar{\mu}^c/\Lambda_{\mathrm G}}
{1-\bar{\mu}^c/\Lambda_{\mathrm G}}
\leq\frac14+\frac13<1.
\end{aligned}
\]
This also covers $n=0$, when the sum is empty, and completes the induction.

For (c), the case $n=0$ follows from $\norm{w\circ\Psi}_\infty\leq\norm w_\infty$.
For $n\geq1$, Fa\`a di Bruno's formula yields
\[
\norm{D^n(w\circ\Psi)}_\infty
\leq C^n\sum_{l=1}^n
\sum_{\substack{k_1+\cdots+k_l=n\\ k_i\geq1}}
\frac{n!}{l!\prod_{i=1}^l k_i!}
\norm{D^lw}_\infty
\prod_{i=1}^l\norm{D^{k_i}\Psi}_\infty,
\]
where $C\geq1$ depends only on the dimensions.
For each ordered composition $k_1+\cdots+k_l=n$ with $k_i\geq1$,
\begin{equation}\label{a:eq:onefrequency}
\Lambda^l\prod_{i=1}^l\Lambda^{k_i-1}
=\Lambda^{l+\sum_{i=1}^l(k_i-1)}=\Lambda^n.
\end{equation}
Since $l!\prod_{i=1}^l k_i!\leq n!$, we have
\[
\frac{n!}{l!\prod_{i=1}^l k_i!}
\frac{((l+d)!)^2\prod_{i=1}^l(k_i!)^2}{((n+d)!)^2}
=\frac{l!\prod_{i=1}^l k_i!}{n!}
\left(\prod_{j=1}^d\frac{l+j}{n+j}\right)^2
\leq1.
\]
The derivative estimates therefore yield
\[
\norm{D^n(w\circ\Psi)}_\infty
\leq A(C\Lambda)^n((n+d)!)^2
\sum_{l=1}^n\binom{n-1}{l-1}L^l.
\]
Together with
\[
\sum_{l=1}^n\binom{n-1}{l-1}L^l
=L(1+L)^{n-1}\leq(1+L)^n,
\]
this verifies \eqref{a:eq:composition}.

For the $L^2$ version, we infer
\[
\norm{(D^lw)\circ\Psi}_{L^2(\Omega)}
\leq J^{1/2}\norm{D^lw}_{L^2(\Omega')}.
\]
The same partition estimate applies. Finally, $\norm{D^n(w\circ\Psi)}_{H^s}\leq C_s\sum_{j=0}^s\norm{D^{n+j}(w\circ\Psi)}_{L^2}$
yields the stated estimate.
\end{proof}

In (c), the derivative scale becomes $C(1+L)\Lambda$ which is independent of $d$.
By \eqref{a:eq:data}, we have
\[
|D_y^k\log\mu|\leq\bar{\mu}^k((k-1)!)^2,\qquad k\geq1.
\]
We thus infer from the same factorial and composition-count estimates as in (c), for a fixed $c$,
\begin{equation}\label{a:eq:murelative}
\begin{split}
\mu^{-1}|D_y^n\mu|
&\leq C^n\sum_{l=1}^n
\sum_{\substack{k_1+\cdots+k_l=n\\ k_i\geq1}}
\frac{n!}{l!\prod_{i=1}^l k_i!}
\prod_{i=1}^l|D_y^{k_i}\log\mu|\\
&\leq(C\bar{\mu})^n(n!)^2
\leq\bar{\mu}^c(\bar{\mu}^c)^n(n!)^2,\qquad n\geq1.
\end{split}
\end{equation}
Applying $\mu\leq\bar{\mu}$ and these estimates through order $n+8$,
part (a) shows that multiplication by $\mu$ adds a fixed shift for sufficiently large $C_0$.
The pulled-back physical derivatives obey
\begin{equation}\label{a:eq:normalizedderivative}
\mu^{-1}d_i(\mu a)=d_i a+(d_i\log\mu)a.
\end{equation}
The coefficients $(D_yX)^{-1}$ and $\nabla_y\log\mu$ satisfy the data estimates, so
part (a) shows that this normalized derivative also adds a fixed shift. 

\subsubsection{Backward inverse for the linearized equation}\label{a:sec:slow}
The phase-independent coefficients satisfy the linearized equation
\begin{equation}\label{a:eq:linearized}
r_t+U\cdot\nabla_x r+(\cT r)\cdot\nabla_x\bar\rho = \mathfrak f.
\end{equation}
We select initial data $r(0)$ through a terminal condition
on an auxiliary displacement. 

Let $0<S\leq1$, and suppose
\begin{equation}\label{source-est}
\int_0^S\norm{D^n(\mathfrak f\circ X)}_{H^8}\,dt
\leq \Lambda_{\mathrm G}^{n+d}((n+d)!)^2, \qquad n\geq0.
\end{equation}

\begin{lemma}\label{a:lem:slow}
Under the background hypotheses \eqref{a:eq:data}, \eqref{a:eq:positiveH} and \eqref{source-est}, 
for a sufficiently large fixed $C_0$ there
is a linear choice of solution $r(t)$ to \eqref{a:eq:linearized} such that
$\supp r(0)\subset\supp\nabla_y\bar\rho(0)$ and
\[
\sup_{0\leq t\leq S}\bigl\{
\norm{D^n(r\circ X)}_{H^8}+\norm{D^n((\cT r)\circ X)}_{H^8}\bigr\}
\leq \Lambda_{\mathrm G}^{n+d+q}((n+d+q)!)^2, \qquad n\geq0,
\]
where the integer $q$ and $C_0$ are independent of $d$, $n$, $S$, and
$\bar{\mu}$. Moreover, the solution $r$ is smooth if the source $\mathfrak f$ is smooth.
\end{lemma}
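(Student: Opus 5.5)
The plan is to reduce \eqref{a:eq:linearized} to a first-order transport--stretching system for a displacement, in the spirit of the IPM identity $(\partial_t+u\cdot\nabla)\nabla^\perp\rho=-(\nabla^2\pi)\nabla^\perp\rho$ recalled in Subsection~\ref{sec:eulercomparison}. I then pose a zero terminal condition at $t=S$ and solve backward. Concretely, I seek $r$ in the form $r=-\eta\cdot\nabla_x\bar\rho+r_{\mathfrak f}$. Here $\eta$ is a divergence-free displacement field (the linearization of a volume-preserving rearrangement of $\bar\rho$), and $r_{\mathfrak f}$ collects the part of the source not produced by rearrangement. Linearizing the transport along $X$ gives, in particle coordinates, $\partial_t(\eta\circ X)=M(\eta\circ X)+(\cT r)\circ X$ up to terms generated by $\mathfrak f$. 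Writing $\zeta=(D_yX)^{-1}(\eta\circ X)$ removes the stretching by $M$. Then $\partial_t\zeta=(D_yX)^{-1}\bigl[(\cT r)\circ X+\mathfrak f\text{-terms}\bigr]$, and $\zeta$ stays divergence free in $y$ by \eqref{a:eq:piola}. The alternative with $r_{\mathfrak f}=0$ and $\mathfrak f$ absorbed into a displacement source is equivalent. In either version I impose $\zeta(S)=0$, integrate backward to get $\zeta(0)$, and set $r(0)=-\zeta(0)\cdot\nabla_y\bar\rho(0)$ plus the source contribution. This explains $\supp r(0)\subset\supp\nabla_y\bar\rho(0)$ and gives linearity in $\mathfrak f$. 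Smoothness follows since every step preserves smoothness for smooth $\mathfrak f$.

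\emph{Step one: a closed equation in $\zeta$.} Using $g=(D_yX)^{-T}\nabla_y\bar\rho(0)$, the relation $r\circ X=-\zeta\cdot\nabla_y\bar\rho(0)$ makes $(\cT r)\circ X$ a zeroth-order pseudodifferential operator of $\zeta$ conjugated by the flow. The system is therefore an ODE in a Banach scale, linear and with bounded coefficients. Pointwise I expect $\tfrac{d}{dt}$ of the relevant energy to be governed by $-H$ acting on the displacement, through $M=-H-e_2\otimes g$. I will choose the energy $\int\zeta\cdot(\cdots)\zeta$ so that the $e_2\otimes g$ part and the Darcy projection combine into a nonpositive term. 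The $L^2$ mechanism is that $\cT$ is $-\cP(\rho e_2)$ with $\cP$ the Leray projector, a contraction. Backward in time, the only growth then comes from the upper bound $H\le K_+I$. This yields $\|\zeta(t)\|\le\exp(\int_t^SK_+)\int_t^S\|\text{source}\|\le\bar\mu\int_0^S\|\text{source}\|$ by \eqref{a:eq:positiveH}, matching the history bound in Lemma~\ref{amp:lem:pressure}.

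\emph{Step two: Gevrey derivatives.} I apply $D^n$ in labels and estimate commutators with the coefficients $D_yX$, $(D_yX)^{-1}$, $M$, $H$, $\nabla_y\bar\rho(0)$ via \eqref{a:eq:data} and Lemma~\ref{a:lem:calculus}(a),(c). The nonlocal operator $\cT$ is expressed in labels by composing with $X$ and $X^{-1}$. This produces a triangular recursion of the form \eqref{a:eq:triangularabstract} with $B_n$ the time-integrated source bound \eqref{source-est}. Lemma~\ref{a:lem:calculus}(b) then gives shift $d+q$. Finally, $r\circ X$ and $(\cT r)\circ X$ inherit the bound by one more application of part (a).

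\emph{Main obstacle.} The hard step is the energy structure in step one. A crude Gronwall bound would give growth $\exp(C\int\|M\|)$, which is huge because $\|H\|$ is large and negative in the amplification directions. That growth is not affordable, so I need an identity in which only the positive part of $H$ appears. I would first try the quantity $\nabla^\perp$ of the linearized density, i.e. the linearized version of $(\partial_t+u\cdot\nabla)\nabla^\perp\rho=-(\nabla^2\pi)\nabla^\perp\rho$. Its $L^2$ norm evolves by $-\langle\nabla^2\pi\,w,w\rangle$ plus the perturbation-pressure term. Backward this is bounded by $K_+$, but I must check that the perturbation pressure enters with a favorable sign or is lower order at the $H^8$ level. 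A second difficulty is making the commutators of $\cT$ with the flow map Gevrey-controlled uniformly. I would handle this by writing $\cT$ in physical coordinates and using Lemma~\ref{a:lem:calculus}(c) with $\Psi=X^{\pm1}$, whose bounds are given by \eqref{a:eq:data}.
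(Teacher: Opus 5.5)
Your Step one is the paper's construction. Your $\zeta$ is its $z$, and you should take $r_{\mathfrak f}\circ X=b=\int_0^t\mathfrak f\circ X\,ds$. This choice, with $b(0)=0$, is what gives $r(0)=-\nabla_y\bar\rho(0)\cdot z(0)$ and hence the support claim. The variant with $r_{\mathfrak f}=0$ is not equivalent: $\int\eta\cdot\nabla\bar\rho\,dx=0$ for divergence-free $\eta$, whereas $\frac{d}{dt}\int r\,dx=\int\mathfrak f\,dx$.

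With this choice, your relations $r\circ X=b-a\cdot z$ and $(\cT r)\circ X=(D_yX)z_t$ are exactly the label system $Gz_t-Bz+\nabla_y\varpi=-(D_yX)^Te_2b$, $\diver_yz=0$, $z(S)=0$, where $G=(D_yX)^T(D_yX)$, $B=(D_yX)^Te_2\otimes a$, $a=\nabla_y\bar\rho(0)$.

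The energy you are after is $\tfrac12\norm{(D_yX)z}_2^2$, and no sign argument is needed. Set $w=(D_yX)z$. The pressure drops out because $\diver_yz=0$. The buoyancy term $-\langle w_2,r\circ X\rangle=\langle w_2,g\cdot w\rangle-\langle w_2,b\rangle$ cancels the term $-\langle w_2,g\cdot w\rangle$ in $\langle w,Mw\rangle$ exactly, leaving $-\langle w,Hw\rangle-\langle w_2,b\rangle$. What is used is $\langle\eta,\cP(re_2)\rangle=\langle\eta_2,r\rangle$ for divergence-free $\eta$, not that $\cP$ is a contraction.

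Drop the $\nabla^\perp r$ fallback. Its linearization contains $(\nabla^2\pi_v)\nabla^\perp\bar\rho$ and $(v\cdot\nabla)\nabla^\perp\bar\rho$. These are of the same order as $\nabla^\perp r$, have no sign, and carry the large factors $|\nabla\bar\rho|$ and $|\nabla^2\bar\rho|$. That is precisely the Gronwall growth you must avoid.

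\textbf{The genuine gap is Step two.} Differentiating in labels an equation containing the conjugated operator $w\mapsto[\cT(w\circ X^{-1})]\circ X$ requires commutator bounds for $\partial_y^I$ against a flow-conjugated Calder\'on--Zygmund operator. Lemma~\ref{a:lem:calculus}(c) does not supply them. It is a composition bound whose top Fa\`a di Bruno term is $(D^nw)\circ\Psi\,(D\Psi)^{\otimes n}$. Applied for $X^{-1}$ and then $X$, it therefore returns $\norm{D^nz}$ multiplied by a factor of size $\bar\mu^{Cn}$; this is the scale change $\Lambda\mapsto C(1+L)\Lambda$. Such a top-order term can neither be absorbed by the energy identity nor placed in $B_n$. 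So no recursion of the form \eqref{a:eq:triangularabstract} results, and a Gronwall argument at order $n$ would cost roughly $\exp(\bar\mu^{Cn})$.

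The paper avoids nonlocal commutators altogether by keeping the label system above:
\begin{itemize}
\item the flat Helmholtz projection $\cP_y$ commutes with $\partial_y^I$;
\item $\cP_yG\cP_y$ is invertible on divergence-free fields, with inverse norm at most $\bar\mu^2$;
\item $\partial^I$ produces only the local commutators $(\partial^JB)\partial^{I-J}z$ and $(\partial^JG)\partial^{I-J}z_t$ of \eqref{a:eq:slowcommutators}, while $\partial^Iz$ stays divergence free with $\partial^Iz(S)=0$.
\end{itemize}
At each order one then applies the vector-source estimate: the $K_+$-energy bound together with $\bar\mu^{-2}\norm{z_t}_2^2\leq(\bar\mu^2\norm{z}_2+\norm{\mathfrak f_0}_2)\norm{z_t}_2$. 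Apply it to $Z_n=\norm{D^nz}_{L^\infty_tL^2_y}+\norm{D^nz_t}_{L^\infty_tL^2_y}$; you must track $z_t$ because of the $G$-commutators. This gives \eqref{a:eq:slowtriangular}. After that, Lemma~\ref{a:lem:calculus}(b) and the recovery formulas finish as you planned.
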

\begin{proof}
Denote $a(y)=\nabla_y\bar\rho(0,y)$ and define
\begin{equation}\label{a:eq:b}
b(t,y) = \int_0^t \mathfrak f(s,X(s,y))\,ds.
\end{equation}
We seek a divergence-free vector field $z$ in label coordinates satisfying
\begin{equation}\label{a:eq:slowdisplacement}
((D_yX)^T(D_yX))z_t+\nabla_y\varpi = (D_yX)^Te_2(a\cdot z-b),\qquad
\diver_y z = 0,\qquad z(S) = 0
\end{equation}
with a pressure function $\varpi$.

Let $G=(D_yX)^T(D_yX)$, $B=(D_yX)^Te_2\otimes a$, and let $\cP_y$ be the periodic Helmholtz
projection onto the closed subspace
$\mathcal V=\{w\in L^2(\T^2;\R^2):\diver_y w=0\}$. On $\mathcal V$, we have
\[
\bar{\mu}^{-2}\norm w_2^2\leq\langle w,\cP_y G\cP_y w\rangle
=\norm{(D_yX)w}_2^2\leq \bar{\mu}^2\norm w_2^2.
\]
Thus $\cP_y G\cP_y$ is invertible on $\mathcal V$, with inverse norm at most
$\bar{\mu}^2$. Projecting \eqref{a:eq:slowdisplacement} yields the linear equation
\[
z_t=(\cP_y G\cP_y|_{\mathcal V})^{-1}\cP_y(Bz-(D_yX)^Te_2b).
\]
The solution with zero terminal value is unique. Helmholtz decomposition recovers a
periodic pressure $\varpi$, which we normalize to have zero mean. 

Define physical fields $r,v$ by
\begin{equation}\label{a:eq:slowrecovery}
r\circ X = b-a\cdot z,\qquad v\circ X = (D_yX)z_t.
\end{equation}
By \eqref{a:eq:slowdisplacement} and the Piola identity \eqref{a:eq:piola},
\[
v=-\nabla_x(\varpi\circ X^{-1})-re_2,\qquad
(\diver_xv)\circ X=\diver_yz_t=0,
\]
so $v=\cT r$. Since $g=(D_yX)^{-T}a$,
\[
\partial_t(r\circ X)+(v\circ X)\cdot g
=\mathfrak f\circ X-a\cdot z_t+((D_yX)z_t)\cdot(D_yX)^{-T}a
=\mathfrak f\circ X,
\]
which verifies \eqref{a:eq:linearized}. At $t=0$,
\begin{equation}\label{a:eq:slowsupport}
r(0)=-a\cdot z(0),
\end{equation}
which implies the initial support inclusion. Since
$\int_{\T^2}a\cdot z\,dy=-\int_{\T^2}\bar\rho(0)\diver_yz\,dy=0$,
\[
\int_{\T^2}r(t,x)\,dx=\int_{\T^2}b(t,y)\,dy
=\int_0^t\int_{\T^2}\mathfrak f(s,x)\,dx\,ds;
\]
in particular, $r(0)$ has zero spatial mean.
If a closed set $K\subset\T^2$ contains $\supp(\mathfrak f(s)\circ X(s))$
for almost every $s\in[0,t]$, then
\[
\supp(r(t)\circ X(t))\subset K\cup\supp a.
\]
If the background density and source are odd in physical coordinates,
reflection and uniqueness in \eqref{a:eq:slowdisplacement} imply that $z$, $r$, and $v$ are odd.

For the derivative estimates, consider the same operator with a vector source:
\begin{equation}\label{a:eq:slowgeneral}
Gz_t-Bz+\nabla_y\varpi=\mathfrak f_0,\qquad
\diver_y z=0,\qquad z(S)=0.
\end{equation}
The preceding existence argument applies. Using
$\partial_t(D_yX)=M(D_yX)$ and testing with $z$ yields
\begin{equation}\label{a:eq:slowenergy}
\frac12\frac{d}{dt}\norm{(D_yX)z}_2^2+
\int_{\T^2}((D_yX)z)\cdot H((D_yX)z)\,dy
=\int_{\T^2}z\cdot\mathfrak f_0\,dy.
\end{equation}
Indeed, the pressure term vanishes, while $a\cdot z=g\cdot(D_yX)z$ and
$M=-H-e_2\otimes g$ imply
\[
((D_yX)z)\cdot M((D_yX)z)+((D_yX)z)_2(a\cdot z)
=-((D_yX)z)\cdot H((D_yX)z).
\]
Since $H\leq K_+I$ and $z(S)=0$, backward integration yields
\[
\norm{(D_yX(t))z(t)}_2\leq
\int_t^S\exp\left(\int_t^sK_+(\tau)\,d\tau\right)
\norm{(D_yX(s))^{-T}\mathfrak f_0(s)}_2\,ds,
\]
where the exponential is at most $\bar{\mu}$ by \eqref{a:eq:positiveH}.
For $\mathfrak f_0=-(D_yX)^Te_2b$, this yields
\begin{equation}\label{a:eq:slowbound}
\norm{(D_yX(t))z(t)}_2\leq
\int_t^S\exp\left(\int_t^sK_+(\tau)\,d\tau\right)\norm{b(s)}_2\,ds.
\end{equation}
The general vector-source bound and $S\leq1$ imply
$\norm z_{L^\infty_tL^2_y}\leq\bar{\mu}^3\norm{\mathfrak f_0}_{L^\infty_tL^2_y}$.
Testing the projected equation with $z_t$ and using $\norm B_\infty\leq\bar{\mu}^2$ yields
\[
\bar{\mu}^{-2}\norm{z_t}_2^2
\leq(\bar{\mu}^2\norm z_2+\norm{\mathfrak f_0}_2)\norm{z_t}_2,
\]
and hence
\[
\norm z_{L^\infty_tL^2_y}+\norm{z_t}_{L^\infty_tL^2_y}
\leq C\bar{\mu}^7\norm{\mathfrak f_0}_{L^\infty_tL^2_y}.
\]

Let $I\in\mathbb N_0^2$ with $|I|=n$.
It follows from \eqref{a:eq:slowgeneral} that
\begin{equation}\label{a:eq:slowcommutators}
G\partial_t\partial^Iz-B\partial^Iz+\nabla_y\partial^I\varpi
=\partial^I\mathfrak f_0
+\sum_{0<J\leq I}\binom IJ
\Bigl[(\partial^JB)\partial^{I-J}z-(\partial^JG)\partial^{I-J}z_t\Bigr].
\end{equation}
Moreover, $\diver_y\partial^Iz=0$ and $\partial^Iz(S)=0$.
By \eqref{a:eq:data} and \eqref{a:eq:factorialproduct},
\[
\norm{D^lG}_{L^\infty_tL^\infty_y}
+\norm{D^lB}_{L^\infty_tL^\infty_y}
\leq3C\bar{\mu}^{l+2}(l!)^2
\leq\bar{\mu}^{c(l+1)}(l!)^2,\qquad l\geq0,
\]
for a fixed $c\geq0$. Define
\[
Z_n=\norm{D^nz}_{L^\infty_tL^2_y}
+\norm{D^nz_t}_{L^\infty_tL^2_y}.
\]
Applying the vector-source estimate to \eqref{a:eq:slowcommutators}, summing over $|I|=n$,
and enlarging $c$ by a fixed amount yields
\begin{equation}\label{a:eq:slowtriangular}
Z_n\leq\bar{\mu}^c\left[
\norm{D^n\mathfrak f_0}_{L^\infty_tL^2_y}
+\sum_{l=1}^n\binom nl\bar{\mu}^{c(l+1)}(l!)^2Z_{n-l}\right],
\qquad n\geq0.
\end{equation}

For the scalar inverse, take $\mathfrak f_0=-(D_yX)^Te_2b$. By \eqref{a:eq:b} and \eqref{source-est},
\[
\norm{D^nb}_{L^\infty_tH^8_y}
\leq\int_0^S\norm{D^n(\mathfrak f\circ X)}_{H^8}\,dt
\leq\Lambda_{\mathrm G}^{n+d}((n+d)!)^2.
\]
Lemma~\ref{a:lem:calculus}(a) therefore yields a fixed integer $q_0\geq0$ with
\[
\norm{D^n\mathfrak f_0}_{L^\infty_tL^2_y}
\leq\norm{D^n\mathfrak f_0}_{L^\infty_tH^8_y}
\leq\Lambda_{\mathrm G}^{n+d+q_0}((n+d+q_0)!)^2.
\]
Applying Lemma~\ref{a:lem:calculus}(b) to \eqref{a:eq:slowtriangular},
with input shift $d+q_0$ and one additional shift, yields
\[
Z_n=\norm{D^nz}_{L^\infty_tL^2_y}
+\norm{D^nz_t}_{L^\infty_tL^2_y}
\leq\Lambda_{\mathrm G}^{n+d+q_0+1}((n+d+q_0+1)!)^2.
\]
Finally, we obtain
\[
\norm{D^nz}_{L^\infty_tH^8_y}
+\norm{D^nz_t}_{L^\infty_tH^8_y}
\leq C\sum_{j=0}^8 Z_{n+j}
\leq9C\Lambda_{\mathrm G}^{n+d+q_0+9}((n+d+q_0+9)!)^2.
\]
The recovery formulas \eqref{a:eq:slowrecovery} and Lemma~\ref{a:lem:calculus}(a)
then imply the asserted bound, after absorbing the coefficient products and fixed constants into a fixed total shift $q$.
\end{proof}

\subsubsection{Expansion of the localized multiplier}\label{a:sec:multiplier}
We construct local pressure coefficients and estimate the nonlocal velocity remainder.

Let $r(t,y,\omega)$ be smooth, $2\pi$-periodic with zero mean in $\omega$,
and supported in a fixed compact label subset of $\mathcal O$. For any such coefficient $a$, denote
\[
a^\lambda(t,X(t,y))=a(t,y,\lambda\phi_0(y)),
\]
with zero extension outside $X(t,\mathcal O)$. Compact label support makes
$a^\lambda$ smooth and periodic for every real $\lambda\geq1$.

We have
\[
(\partial_{x_i}a^\lambda)\circ X
=\bigl(d_i a+\lambda\xi_i\partial_\omega a\bigr)
(t,y,\lambda\phi_0(y)).
\]
Define
\begin{equation}\label{a:eq:ellipticoperators}
L_0 = -|\xi|^2\partial_\omega^2,\qquad
L_1 = -(2\xi\cdot\mathbf d+\mathbf d\cdot\xi)\partial_\omega,\qquad
L_2 = -\mathbf d\cdot\mathbf d
\end{equation}
Thus we have
\[
\begin{aligned}
-\Delta_x a^\lambda
&=\bigl(\lambda^2L_0a+\lambda L_1a+L_2a\bigr)^\lambda,\\
\partial_{x_2}r^\lambda
&=\bigl(\lambda\xi_2\partial_\omega r+d_2r\bigr)^\lambda.
\end{aligned}
\]

Let $\partial_\omega^{-1}$ denote the zero-mean periodic primitive, and
$L_0^{-1}=-|\xi|^{-2}\partial_\omega^{-2}$.
Matching powers of $\lambda$ in $-\Delta_x\pi=\partial_{x_2}r^\lambda$ we define
\begin{equation}\label{a:eq:Qrecursion}
\begin{aligned}
Q_0[r] &= L_0^{-1}(\xi_2\partial_\omega r),\\
Q_1[r] &= L_0^{-1}(d_2r-L_1Q_0[r]),\\
Q_j[r] &= -L_0^{-1}(L_1Q_{j-1}[r]+L_2Q_{j-2}[r]),\qquad j\geq2.
\end{aligned}
\end{equation}
We then obtain the velocity coefficients from Darcy's law 
\begin{equation}\label{a:eq:Trecursion}
\cT_0r = m(\xi)r,\qquad
\cT_jr = -\xi\partial_\omega Q_j[r]-\mathbf d Q_{j-1}[r], \quad j\geq1.
\end{equation}
The same recursion implies the divergence-free identities
\begin{equation}\label{a:eq:divhierarchy}
\xi\cdot\partial_\omega\cT_0r = 0,\qquad
\mathbf d\cdot\cT_{j-1}r+\xi\cdot\partial_\omega\cT_jr = 0, \quad j\geq1.
\end{equation}

\begin{lemma}\label{a:lem:localcosts}
There is a fixed integer $q$ such that $Q_j$, $\cT_j$ and their conjugates
$\mu^{-1}Q_j(\mu\,\cdot)$, $\mu^{-1}\cT_j(\mu\,\cdot)$ increase the derivative shift
by at most $q(j+1)$ for $j\geq0$. The pressure-defect operators
$L_1Q_L+L_2Q_{L-1}$ and $L_2Q_L$ increase it by at most $q(L+1)$ for $L\geq1$.
\end{lemma}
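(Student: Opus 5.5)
The plan is an induction on $j$ through the recursion \eqref{a:eq:Qrecursion}, with each elementary operation charged a fixed shift by Lemma~\ref{a:lem:calculus}(a). First I record the costs of the building blocks, all acting on coefficients in $(y,\omega)$ with zero mean in $\omega$ and compact label support.

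\emph{Multipliers.} Multiplication by $\xi_i$, $|\xi|^2$, $|\xi|^{-2}$, or by entries of $(D_yX)^{-1}$ adds a fixed shift, since these satisfy \eqref{a:eq:data}. Consequently $m(\xi)=\xi_2\xi/|\xi|^2-e_2$ also adds a fixed shift.

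\emph{Derivatives.} $\partial_\omega$ and $\partial_{y_a}$ each add one shift. Hence $d_i$ adds a fixed shift, and so do $L_1$ (first order, including the term $(\mathbf d\cdot\xi)\partial_\omega$) and $L_2$ (second order).

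\emph{Inverse of $\partial_\omega$.} The zero-mean primitive does not increase any $H^8_{y,\omega}$ norm. Differentiating $\partial_\omega^{-1}w$ in $y$ or $\omega$ commutes with $\partial_\omega^{-1}$, so $\|D^n\partial_\omega^{-1}w\|_{H^8}\le\|D^nw\|_{H^8}$ by Parseval in $\omega$. It therefore adds shift $0$, and $L_0^{-1}$ adds a fixed shift $q_1$.

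I must also check that each $Q_j[r]$ keeps zero $\omega$-mean, so that $L_0^{-1}$ is applicable. This holds because each right-hand side in \eqref{a:eq:Qrecursion} is a $y$-dependent differential operator applied to zero-mean functions, or contains a $\partial_\omega$. Compact label support is preserved throughout.

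\emph{Induction for $Q_j$.} Let $q_*$ dominate the fixed shifts of $L_0^{-1}\xi_2\partial_\omega$, $L_0^{-1}d_2$, $L_0^{-1}L_1$ and $L_0^{-1}L_2$. The inductive claim is that $Q_j$ adds shift at most $q_*(j+1)$.
\begin{itemize}
\item For $j=0$ the claim is immediate.
\item For $j=1$, $L_0^{-1}d_2r$ has shift $d+q_*$, and $L_0^{-1}L_1Q_0r$ has shift $d+2q_*$, within $q_*\cdot 2$.
\item For $j\ge2$, $L_0^{-1}L_1Q_{j-1}$ has shift $d+q_*j+q_*$, and $L_0^{-1}L_2Q_{j-2}$ has shift $d+q_*(j-1)+q_*\le d+q_*(j+1)$.
\end{itemize}
A sum of two terms both bounded by $\Lambda_{\mathrm G}^{n+d'}((n+d')!)^2$ is bounded by twice that. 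The factor $2$ must not accumulate in $j$, and this is the one point needing care. I would absorb it by increasing the shift by one, using $2\Lambda_{\mathrm G}^{n+d'}((n+d')!)^2\le\Lambda_{\mathrm G}^{n+d'+1}((n+d'+1)!)^2$ since $\Lambda_{\mathrm G}\ge2$. This unit is added once per step, so it is linear in $j$ and is harmlessly included in $q_*$.

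\emph{Consequences.} $\cT_0=m(\xi)$ has a fixed shift. $\cT_j$ is $-\xi\partial_\omega Q_j-\mathbf dQ_{j-1}$, with shift $\le q_*(j+1)+q_2+1$, so enlarging $q$ gives $q(j+1)$. The pressure-defect operators $L_1Q_L+L_2Q_{L-1}$ and $L_2Q_L$ likewise cost $q_*(L+1)+O(1)\le q(L+1)$ for $L\ge1$.

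\emph{Conjugated operators.} For $\mu^{-1}Q_j(\mu\,\cdot)$, multiplicative factors and $\partial_\omega$, $\partial_\omega^{-1}$ commute with multiplication by $\mu(t,y)$. The only change is that each $d_i$ becomes $\mu^{-1}d_i\mu=d_i+(d_i\log\mu)$ by \eqref{a:eq:normalizedderivative}. This adds a fixed shift because $\nabla_y\log\mu$ satisfies \eqref{a:eq:data}. Hence $\mu^{-1}L_k\mu$ has the same fixed-shift property, and the identical induction applies to it.

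The main obstacle is ensuring that constants do not compound multiplicatively across the $j$ recursive steps. Each step may produce a multiplicative constant $C$ or $\bar\mu^c$, which must be converted into a shift; Lemma~\ref{a:lem:calculus}(a) does exactly this, with $\Lambda_{\mathrm G}=\bar\mu^{C_0}$. The resulting total shift is additive, and hence linear in $j$.
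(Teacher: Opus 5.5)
Your proposal is correct and follows essentially the same route as the paper. The paper assigns a fixed cost $s$ to each elementary operator via Lemma~\ref{a:lem:calculus}(a), adds one unit per sum using $\Lambda_{\mathrm G}\geq2$, inducts through \eqref{a:eq:Qrecursion} to get cost $(s+1)(j+1)$ for $Q_j$, and handles the conjugates via \eqref{a:eq:normalizedderivative}; you should state explicitly, as the paper does, that your $q_*$ must be the elementary cost plus one for the inductive claim $q_*(j+1)$ to close.
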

\begin{proof}
Let $\mathcal C_d$ denote the supported zero-phase-mean coefficients
satisfying \eqref{a:eq:shift} uniformly on $[0,S]$. Since $\Lambda_{\mathrm G}\geq2$,
\[
\mathcal C_d\subset\mathcal C_{d+1},\qquad
\mathcal C_d+\mathcal C_d\subset\mathcal C_{d+1}.
\]
For $a\in\mathcal C_d$ and $p\in\{1,2\}$,
\[
\norm{D^m\partial_\omega^{-p}a}_{H^8}\leq\norm{D^ma}_{H^8}.
\]
As $\mu$ is independent of $\omega$, we also have
\[
\mu^{-1}\partial_\omega^{-p}(\mu a)=\partial_\omega^{-p}a,\qquad
\mu^{-1}d_i(\mu a)=d_i a+(d_i\log\mu)a.
\]
By \eqref{a:eq:data}, \eqref{a:eq:normalizedderivative}, and
Lemma~\ref{a:lem:calculus}(a), there is a fixed integer $s\geq1$ such that
\[
A(\mathcal C_d),\quad[\mu^{-1}A(\mu\,\cdot)](\mathcal C_d)
\subset\mathcal C_{d+s}
\]
for every
\[
A\in\left\{
L_0^{-1}\xi_2\partial_\omega,\ L_0^{-1}d_2,\ L_0^{-1}L_1,\ L_0^{-1}L_2,
\ \xi\partial_\omega,\ \mathbf d,\ m(\xi),\ L_1,\ L_2
\right\}.
\]
Induction in \eqref{a:eq:Qrecursion}, with cost at most $s+1$ per order
including the sum, yields
\[
Q_j(\mathcal C_d),\quad[\mu^{-1}Q_j(\mu\,\cdot)](\mathcal C_d)
\subset\mathcal C_{d+(s+1)(j+1)},\qquad j\geq0.
\]
The ordinary and conjugated recursions have the same elementary costs.
It follows from \eqref{a:eq:Trecursion} that
\[
\cT_j(\mathcal C_d),\quad[\mu^{-1}\cT_j(\mu\,\cdot)](\mathcal C_d)
\subset\mathcal C_{d+(s+1)(j+2)},\qquad j\geq0.
\]
Similarly, for $L\geq1$,
\[
(L_1Q_L+L_2Q_{L-1})(\mathcal C_d),\quad(L_2Q_L)(\mathcal C_d)
\subset\mathcal C_{d+(s+1)(L+2)}.
\]
The choice $q=2(s+1)$ proves the lemma.
\end{proof}

\begin{lemma}\label{a:lem:graph}
If a supported local coefficient $r$ has shift $d$, then for every 
integer $0\leq s\leq8$,
\begin{equation}\label{a:eq:graphestimate}
\norm{D_y^m(r^\lambda\circ X)}_{H^s_{1/\lambda,y}}
\leq (\lambda \bar{\mu}^c)^m \bar{\mu}^{c(d+1)}((m+d+c)!)^2.
\end{equation}
The same estimate holds for $\norm{D_x^m r^\lambda}_{H^s_{1/\lambda,x}}$.
The integer $c$ is independent of $d,m,\lambda$. 
\end{lemma}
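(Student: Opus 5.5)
The estimate should follow by expanding $D_y^m$ of $(r^\lambda\circ X)(t,y)=r(t,y,\lambda\phi_0(y))$ with the chain rule and Fa\`a di Bruno's formula. The map $\Psi(y)=(y,\lambda\phi_0(y))$ sends the compact label support into $\T^2\times(\R/2\pi\mathbb Z)$.

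\textbf{Step 1: bounds on $\Psi$.} By \eqref{a:eq:data} applied to $\nabla_y\phi_0$,
\[
\norm{D^k\Psi}_\infty\leq \lambda\bar{\mu}^{k}((k-1)!)^2\leq L\Lambda^{k-1}(k!)^2,\qquad k\geq1,
\]
with $\Lambda=\lambda\bar{\mu}$ and $L=\bar{\mu}$; we may assume $\lambda\geq1$. The first derivative is the only place where a factor $\lambda$ enters without a companion power of $\Lambda$. One therefore bounds every factor $D^{k_i}\Psi$ by $\lambda\bar{\mu}^{k_i}(k_i!)^2$. Since $\sum_i k_i=m$ and the number of factors $l$ is at most $m$, the total power of $\lambda$ is at most $\lambda^m$. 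This is exactly the count \eqref{a:eq:onefrequency}.

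\textbf{Step 2: $L^2$ bounds on $D^m(r\circ\Psi)$.} The shift hypothesis \eqref{a:eq:shift} gives
\[
\norm{D^l r}_{L^2}\leq\norm{D^l r}_{H^8}\leq\Lambda_{\mathrm G}^{l+d}((l+d)!)^2,\qquad \Lambda_{\mathrm G}=\bar{\mu}^{C_0}.
\]
We need $L^2$ in $y$ after restricting to the graph $\omega=\lambda\phi_0(y)$. A graph restriction is not a diffeomorphism, so the $L^2$ part of Lemma~\ref{a:lem:calculus}(c) does not apply directly. Instead we use a trace/Sobolev bound in $\omega$: by the one-dimensional embedding $H^1(\R/2\pi\mathbb Z)\hookrightarrow L^\infty$,
\[
\norm{w(\cdot,\lambda\phi_0(\cdot))}_{L^2_y}\leq C\norm{w}_{L^2_yH^1_\omega}\leq C\norm w_{H^8}.
\]
Apply this to each term $(D^l r)\circ\Psi$ appearing in Fa\`a di Bruno's formula. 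Then repeat the partition estimate of Lemma~\ref{a:lem:calculus}(c) with $A=\Lambda_{\mathrm G}^{d}$ and scale $\Lambda_{\mathrm G}$ replacing $\Lambda$ in the $w$-bounds. This yields
\[
\norm{D_y^m(r^\lambda\circ X)}_{L^2}\leq C\,\Lambda_{\mathrm G}^{d}\,(C\bar{\mu}^{C_0+1}\lambda)^m((m+d)!)^2 .
\]
The factor $((m+d)!)^2$ arises from the factorial ratio bound $\le1$ already proved in (c).

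\textbf{Step 3: the semiclassical $H^s$ norm, $s\leq8$.} Each extra derivative is weighted by $\lambda^{-1}$. Applying Step 2 at order $m+j$, $j\leq s$, gives
\[
\lambda^{-j}(\lambda\bar{\mu}^{c})^{m+j}((m+j+d)!)^2\leq(\lambda\bar{\mu}^c)^m\bar{\mu}^{8c}((m+d+8)!)^2 .
\]
Absorb $\Lambda_{\mathrm G}^d=\bar{\mu}^{C_0d}$ and the fixed constants into $\bar{\mu}^{c(d+1)}$ and a fixed integer $c\geq8$. This requires $\bar{\mu}\geq2$, so that constants like $C^m$ become $\bar{\mu}^{cm}$.

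\textbf{Step 4: the physical-derivative version.} Here the derivatives are $D_x^m r^\lambda$ measured in $H^s_{1/\lambda,x}$. Write $r^\lambda=(r^\lambda\circ X)\circ X^{-1}$. The inverse particle map has Gevrey bounds with $\Lambda=\bar{\mu}^{c}$ and Jacobian $1$, from \eqref{a:eq:data} for $(D_yX)^{-1}$ composed via part (c). Since $r^\lambda\circ X$ has exactly the Gevrey-type bounds of Step 3 at scale $\lambda\bar{\mu}^c$, the $L^2$ part of Lemma~\ref{a:lem:calculus}(c) (with $J=1$) finishes the argument after enlarging $c$.

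\textbf{Main obstacle.} The delicate point is Step 2. We must keep the $\lambda$-power at exactly $\lambda^m$ while using $L^2$ rather than $L^\infty$ for $D^l r$. The Sobolev-in-$\omega$ trace step handles this, at the cost of one $\omega$-derivative. That cost is absorbed because the norms used are $H^8$ in $(y,\omega)$.
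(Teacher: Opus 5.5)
Your Steps 1--3 follow the paper's argument. The paper also writes $r^\lambda\circ X=r\circ\Psi_\lambda$ and replaces the diffeomorphism form of Lemma~\ref{a:lem:calculus}(c) by the trace bound $\norm{a(\cdot,\lambda\phi_0(\cdot))}_{L^2_y}\leq C\norm{a}_{L^2_yH^1_\omega}$. It bounds each inner factor by $\lambda\bar\mu^{c_0k_i}(k_i!)^2$, so only $\lambda^l\leq\lambda^m$ appears, and passes to $H^s_{1/\lambda}$ by the weighted sum over $j\leq s$.

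There is one slip. The displayed chain in Step~1 is false at $k=1$, since it would assert $\lambda\bar\mu\leq\bar\mu$. If you want to reuse the partition count of (c), the right assignment is $L=2\lambda\bar\mu$ and $\Lambda=\bar\mu^{C_0+1}$, so that $\lambda$ enters only through $(1+L)^m$. Your direct count afterwards is correct, so nothing is lost.

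The genuine gap is in Step~4. You assert that $Y=X(t,\cdot)^{-1}$ satisfies $\norm{D_x^kY}_\infty\leq\bar\mu^{c}(\bar\mu^{c})^{k-1}(k!)^2$ by combining \eqref{a:eq:data} for $(D_yX)^{-1}$ with part (c). This does not follow. Hypothesis \eqref{a:eq:data} controls only label derivatives of $h=(D_yX)^{-1}$. Physical derivatives of $Y$ are determined implicitly by $D_xY=h\circ Y$, so applying (c) to $h\circ Y$ presupposes exactly the bounds on $D_x^jY$ you want.

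A straightforward induction on the ansatz $\norm{D^jY}\leq L\Lambda^{j-1}(j!)^2$ does not close either. The composition count produces a factor $(1+AL/\Lambda)^{k-2}/k^2$, which no fixed $\Lambda$ absorbs. Any dimensional $C^k$ from Fa\`a di Bruno in derivative-sum norms also compounds through the recursion.

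The paper supplies this step. It measures $u_k=\norm{D_x^kY}_\infty/(k!)^2$ in multilinear operator norm, which avoids the $C^k$ loss. Fa\`a di Bruno together with $l!\prod_ik_i!\leq(k-1)!$ then gives
\[
u_k\leq\frac{A}{k^2}\sum_{l=1}^{k-1}A^l\sum_{k_1+\cdots+k_l=k-1}\prod_{i=1}^lu_{k_i},\qquad u_1\leq A.
\]
This recursion is majorized by the coefficients of $\mathcal U=A\zeta/(1-A\mathcal U)$, which are Catalan numbers bounded by $A(4A^2)^{k-1}$. Converting back to derivative sums costs $C^k$ only once, at the end. A quantitative Gevrey inverse function theorem would also do.

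With that bound in hand, the rest of your Step~4 is exactly what the paper does: the $L^2$ composition estimate of (c) with $\det D_xY=1$ at scale $\lambda\bar\mu^c$. The omission is not cosmetic, since these inverse-map bounds are established in this proof and reused in the proof of Lemma~\ref{a:lem:approxmap}.
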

\begin{proof}
For any coefficient $a$, periodic Sobolev embedding in the phase variable
implies
\begin{equation}\label{est-trace}
\norm{a(y,\lambda\phi_0(y))}_{L^2_y}
\leq C\norm a_{L^2_yH^1_\omega}.
\end{equation}
Fix $t$ and, for $y\in\mathcal O$, denote
$\Psi_\lambda(y)=(y,\lambda\phi_0(y))$, so that
$r^\lambda\circ X=r\circ\Psi_\lambda$. By \eqref{a:eq:data}, we infer
\[
\norm{D_y^j\Psi_\lambda}_\infty
\leq C\lambda \bar{\mu}^j((j-1)!)^2
\leq\lambda \bar{\mu}^{c_0j}(j!)^2, \quad j\geq 1
\]
where $c_0$ is fixed. The shift bound \eqref{a:eq:shift} and the estimate \eqref{est-trace} imply for
$l\geq0$,
\[
\norm{(D_{y,\omega}^lr)\circ\Psi_\lambda}_{L^2_y}
\leq C\norm{D_{y,\omega}^lr}_{L^2_yH^1_\omega}
\leq C\norm{D_{y,\omega}^lr}_{H^8_{y,\omega}}
\leq C\Lambda_{\mathrm G}^{l+d+1}((l+d+1)!)^2.
\]
For $k_1+\cdots+k_l=k$ with $k_i\geq1$, the frequency factors satisfy
\[
\Lambda_{\mathrm G}^{l+d+1}\prod_{i=1}^l\lambda\bar{\mu}^{c_0k_i}
\leq\lambda^k\bar{\mu}^{(c_0+C_0)k+C_0(d+1)}.
\]
It follows from the Fa\`a di Bruno formula and the factorial and composition-count
estimates in Lemma~\ref{a:lem:calculus}(c), with outer shift $d+1$, that
\[
\norm{D_y^k(r^\lambda\circ X)}_{L^2_y}
\leq(\lambda\bar{\mu}^{c_1})^k\bar{\mu}^{c_1(d+1)}((k+d+1)!)^2,
\qquad k\geq0.
\]
Here $k=0$ follows directly from \eqref{est-trace}.

To pass to physical coordinates, let $Y=X(t,\cdot)^{-1}$ and
$h=(D_yX)^{-1}$, so that $D_xY=h\circ Y$.
In the following inverse-map estimate, $\|D^k\cdot\|_{\mathrm{op},\infty}$
denotes the supremum of the multilinear operator norm; conversion to the
derivative sums used elsewhere costs at most $C^k$ for $k\geq1$.
By \eqref{a:eq:data}, a fixed power $A=\bar{\mu}^C$ satisfies
\[
\|D^l h\|_{\infty}\leq A^{l+1}(l!)^2,
\qquad
u_k=\frac{\|D_x^kY\|_{\infty}}{(k!)^2}.
\]
We have $u_1\leq A$. For $k\geq2$, differentiating $D_xY=h\circ Y$
and using $l!\prod_i k_i!\leq(k-1)!$ for positive $k_i$ with
$\sum_i k_i=k-1$ yields
\[
u_k\leq\frac{A}{k^2}\sum_{l=1}^{k-1}A^l
\sum_{\substack{k_1+\cdots+k_l=k-1\\ k_i\geq1}}
\prod_{i=1}^l u_{k_i}.
\]
These quantities are majorized by the coefficients of the formal series
\[
\begin{split}
&\mathcal U(\zeta)=\frac{A\zeta}{1-A\mathcal U(\zeta)},\qquad
\mathcal U(0)=0,\\
&[\zeta^k]\mathcal U
=\frac1k\binom{2k-2}{k-1}A^{2k-1}
\leq A(4A^2)^{k-1}.
\end{split}
\]
Converting back to derivative sums and increasing a fixed exponent yields
\[
\norm{D_x^kY}_\infty
\leq\bar{\mu}^{c_2}(\bar{\mu}^{c_2})^{k-1}(k!)^2,
\qquad k\geq1.
\]
Since $r^\lambda=(r^\lambda\circ X)\circ Y$ and $\det D_xY=1$,
the $L^2$ composition estimate in Lemma~\ref{a:lem:calculus}(c),
with derivative scale $\lambda\bar{\mu}^C$, yields the same unweighted
derivative bound in physical coordinates, after increasing the exponent.

Finally, we have in either coordinate system
\[
\norm{D^m w}_{H^s_{1/\lambda}}
\leq C_s\sum_{j=0}^s\lambda^{-j}\norm{D^{m+j} w}_{L^2}
\]
which concludes the proof of the lemma.
\end{proof}

\begin{lemma}\label{a:lem:globalremainder}
For every integer $L\geq1$ and every supported coefficient $r$ with zero
phase mean, we have
\begin{equation}\label{a:eq:velocityexpansion}
\cT r^\lambda = \sum_{j=0}^{L-1}\lambda^{-j}(\cT_jr)^\lambda+\mathcal E_L[r].
\end{equation}
If $r$ has shift $d$, then
\begin{equation}\label{a:eq:velocityremainder}
\norm{D^m(\mathcal E_L[r]\circ X)}_{H^4_{1/\lambda}}
\leq \lambda^{-L}(\lambda \bar{\mu}^c)^m
\bar{\mu}^{c(d+L+1)}((m+d+c(L+1))!)^2,
\end{equation}
where the integer $c$ is independent of $m,d,L,\lambda$. 
\end{lemma}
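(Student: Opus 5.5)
The plan is to define the remainder by the exact identity
\[
\mathcal E_L[r]=\cT r^\lambda-\sum_{j=0}^{L-1}\lambda^{-j}(\cT_jr)^\lambda ,
\]
which gives \eqref{a:eq:velocityexpansion} for free, and then to bound it through a pressure defect. Put
\[
\Pi_L=\sum_{j=0}^{L-1}\lambda^{-j-1}(Q_j[r])^\lambda
\qquad\text{and}\qquad
\tilde u=-\nabla_x\Pi_L-r^\lambda e_2 .
\]
We first compute $-\Delta_x\Pi_L-\partial_{x_2}r^\lambda$ with the formulas for $-\Delta_xa^\lambda$ and $\partial_{x_2}r^\lambda$ stated before \eqref{a:eq:Qrecursion}. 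The recursion \eqref{a:eq:Qrecursion} cancels all powers $\lambda^{1},\lambda^0,\dots,\lambda^{-L+2}$. What survives is
\[
-\Delta_x\Pi_L-\partial_{x_2}r^\lambda
=\lambda^{-L+1}\bigl(L_1Q_{L-1}+L_2Q_{L-2}\bigr)^\lambda+\lambda^{-L}\bigl(L_2Q_{L-1}\bigr)^\lambda=:\lambda^{-L+1}F_L ,
\]
a combination of exactly the pressure-defect operators of Lemma~\ref{a:lem:localcosts}. The case $L=1$ has to be checked directly.

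Next we compare $\tilde u$ with the truncated velocity sum. By \eqref{a:eq:Trecursion}, $-\nabla_x\Pi_L-r^\lambda e_2$ equals $\sum_{j<L}\lambda^{-j}(\cT_jr)^\lambda$ up to one leftover term $-\lambda^{-L}(\mathbf dQ_{L-1}[r])^\lambda$. Here $\cT_0r=m(\xi)r$ absorbs $-re_2$ and $-\xi_2\xi\partial_\omega Q_0$, using $\xi\partial_\omega Q_0=-\xi\xi_2|\xi|^{-2}\partial_\omega^{-1}\dots$. Since $\cT r^\lambda=-\nabla_x\pi-r^\lambda e_2$ with $-\Delta_x\pi=\partial_{x_2}r^\lambda$, we get
\[
\cT r^\lambda-\tilde u=-\nabla_x(\pi-\Pi_L)=-\nabla_x(-\Delta_x)^{-1}\bigl(\lambda^{-L+1}F_L\bigr).
\]
Together with the leftover term this gives
\[
\mathcal E_L[r]=\lambda^{-L}(\mathbf dQ_{L-1}[r])^\lambda-\lambda^{-L+1}\nabla_x(-\Delta_x)^{-1}F_L .
\]
The mean of $F_L$ vanishes because it is $-\Delta_x\Pi_L-\partial_{x_2}r^\lambda$ with $\Pi_L$ and $r^\lambda$ periodic. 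So the inverse Laplacian is legitimate on the torus.

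For the estimates, Lemma~\ref{a:lem:localcosts} puts $\mathbf dQ_{L-1}[r]$ and the defect coefficients in shift $d+q(L+1)$. Lemma~\ref{a:lem:graph} then bounds the local term directly with the stated form, the factor $\bar\mu^{c(d+L+1)}$ coming from $\bar\mu^{c(d+qL+1)}$ after enlarging $c$. The hard part is the nonlocal term, because $\nabla(-\Delta)^{-1}$ only gains one power of $\lambda$ in the semiclassical norm in general. I would handle it as follows.

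\begin{itemize}
\item Work in physical coordinates, where $D_x^m$ commutes with $\nabla_x(-\Delta_x)^{-1}$. This operator is bounded $H^{s}\to H^{s+1}$ on zero-mean functions, so $\|\nabla_x(-\Delta_x)^{-1}w\|_{H^4_{1/\lambda,x}}\le\lambda\|w\|_{H^3_{1/\lambda,x}}$ up to constants, using $\kappa=1/\lambda\le1$.
\item Applying the physical-coordinate version of Lemma~\ref{a:lem:graph} to $F_L$ gives $\lambda\cdot\lambda^{-L+1}$, which is one power of $\lambda$ too many.
\item To remove it, write the defect coefficient, which has zero phase mean, as $\partial_\omega$ of its primitive $G=\partial_\omega^{-1}(\cdot)$. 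Then use
\[
\lambda^{-1}\partial_{x_i}G^\lambda=(\xi_i\partial_\omega G)^\lambda+\lambda^{-1}(d_iG)^\lambda ,
\]
contracted with $\xi/|\xi|^2$, to express $F_L=\lambda^{-1}\diver_x(\dots)^\lambda+\lambda^{-1}(\dots)^\lambda$.
\item The divergence part is handled by the zeroth-order operator $\nabla(-\Delta)^{-1}\diver$. For the second part one repeats once more, or accepts it, since it already carries $\lambda^{-1}$.
\item This costs a fixed number of extra shifts, absorbed into $c(L+1)$.
\end{itemize}

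Finally, pull back by $X$ with the inverse-map and composition bounds from the proof of Lemma~\ref{a:lem:graph} and Lemma~\ref{a:lem:calculus}(c). This converts $\|D_x^m\cdot\|_{H^4_{1/\lambda,x}}$ into $\|D_y^m(\cdot\circ X)\|_{H^4_{1/\lambda}}$ at derivative scale $\lambda\bar\mu^c$, giving \eqref{a:eq:velocityremainder}. The main obstacle I expect is this last power-of-$\lambda$ bookkeeping in the nonlocal term. If the $\diver$ rewriting proves awkward, the fallback is to run the expansion to order $L+1$, that is, to apply the above crude bound to $\mathcal E_{L+1}$ and add back $\lambda^{-L}(\cT_Lr)^\lambda$, which is estimated by Lemma~\ref{a:lem:graph}.
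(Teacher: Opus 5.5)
Your proposal is correct in substance, and your fallback is exactly the paper's proof. The main route you favour is a genuinely different and unnecessary detour, prompted by a miscount of the cost of $\nabla_x(-\Delta_x)^{-1}$.

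On zero-mean periodic functions this operator has symbol of size $|k|^{-1}\le1$. Hence $\|\nabla_x\Delta_x^{-1}h\|_2\le\|h\|_2$ and $\|D_x^k\nabla_x\Delta_x^{-1}h\|_2\le C^k\|D_x^{k-1}h\|_2$. In particular $\|\nabla_x\Delta_x^{-1}h\|_{H^4_{1/\lambda,x}}\le C\|h\|_{H^3_{1/\lambda,x}}$, with no factor $\lambda$ as claimed in your first bullet. The paper proceeds as follows:
\begin{itemize}
\item It runs the pressure ansatz one order further, $\Pi_L=\sum_{j=0}^{L}\lambda^{-j-1}(Q_j[r])^\lambda$. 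The defect is then $h_L=\lambda^{-L}(L_1Q_L+L_2Q_{L-1})^\lambda+\lambda^{-L-1}(L_2Q_L)^\lambda$, which is already $O(\lambda^{-L})$.
\item It writes $\mathcal E_L[r]=\lambda^{-L}(\cT_Lr)^\lambda-\lambda^{-L-1}(\mathbf dQ_L)^\lambda-\nabla_x\Delta_x^{-1}h_L$.
\item It bounds all three terms by Lemma~\ref{a:lem:localcosts}, Lemma~\ref{a:lem:graph} and the estimate above.
\item It pulls back by $X$ using Lemma~\ref{a:lem:calculus}(c) and $\det D_yX=1$.
\end{itemize}
This fallback closes only with the $O(1)$ bound. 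With the factor $\lambda$ you wrote, the crude estimate for your truncation would be two powers short, not one, and the fallback would still leave $\lambda^{-L+1}$.

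Your main route also works. You truncate at $Q_{L-1}$ and integrate by parts in the phase: for the zero-phase-mean defect coefficient $a$, set $G=\xi|\xi|^{-2}\partial_\omega^{-1}a$, so that $a^\lambda=\lambda^{-1}\diver_x G^\lambda-\lambda^{-1}(\mathbf d\cdot G)^\lambda$. The zeroth-order multiplier $\nabla_x(-\Delta_x)^{-1}\diver_x$ handles the first piece. The leftover has zero $x$-mean because $F_L$ and $\diver_xG^\lambda$ do, and it already carries $\lambda^{-1}$, so with the $O(1)$ bound you can simply accept it. Under your $\lambda$-loss bound you would need the extra iteration you mention.

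What this route buys is a genuine $\lambda^{-1}$ from the nonlocal operator, obtained by exploiting the oscillation of the defect. The price is extra bookkeeping: the primitive, the multiplier $\xi|\xi|^{-2}$, and the mean-zero check. The paper's route reaches the same final bound, with the same shift structure $d+c(L+1)$, using only one more term of the explicit recursion.

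A minor point: since $\pi-\Pi_L=-(-\Delta_x)^{-1}(\lambda^{-L+1}F_L)$, both signs in your formula for $\mathcal E_L[r]$ are reversed. This does not affect the estimate.
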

\begin{proof}
Denote
\[
\Pi_L=\sum_{j=0}^{L}\lambda^{-j-1}(Q_j[r])^\lambda,\qquad
h_L=-\Delta_x\Pi_L-\partial_{x_2}r^\lambda,\qquad
w_L=-\nabla_x\Pi_L-r^\lambda e_2
\]
which are smooth and periodic. It follows from \eqref{a:eq:Qrecursion} that
\begin{equation}\label{a:eq:pressuredefect}
h_L = \lambda^{-L}(L_1Q_L+L_2Q_{L-1})^\lambda
+\lambda^{-L-1}(L_2Q_L)^\lambda.
\end{equation}
We also have
\[
w_L=\sum_{j=0}^{L}\lambda^{-j}(\cT_jr)^\lambda
-\lambda^{-L-1}(\mathbf d Q_L)^\lambda.
\]
The periodic Helmholtz projection $\cP_x=I-\nabla_x\Delta_x^{-1}\diver_x$
satisfies $\cP_xw_L=\cT r^\lambda$ and preserves
$\langle w_L\rangle_x=-e_2\langle r^\lambda\rangle_x$. Hence
\begin{equation}\label{a:eq:explicitvelocityerror}
\mathcal E_L[r] = \lambda^{-L}(\cT_Lr)^\lambda
-\lambda^{-L-1}(\mathbf d Q_L)^\lambda-\nabla_x\Delta_x^{-1}h_L.
\end{equation}
Since $h_L=\diver_xw_L$, we have $\langle h_L\rangle_x=0$ and
\begin{equation}\label{a:eq:lowfrequency}
\norm{\nabla\Delta^{-1}h_L}_2\leq\norm{h_L}_2,
\qquad
\norm{D_x^k\nabla\Delta^{-1}h_L}_2\leq C^k\norm{D_x^{k-1}h_L}_2,
\quad k\geq1.
\end{equation}
By Lemma~\ref{a:lem:localcosts}, the coefficients in
\eqref{a:eq:pressuredefect} and \eqref{a:eq:explicitvelocityerror}
have shifts at most $d+C(L+1)$.
Lemma~\ref{a:lem:graph} estimates $h_L$ and the two local terms in
\eqref{a:eq:explicitvelocityerror}; \eqref{a:eq:lowfrequency} yields the same
bound for $\nabla_x\Delta_x^{-1}h_L$. Consequently, we have for some integer $c_3$,
\[
\norm{D_x^k\mathcal E_L[r]}_{L^2_x}
\leq\lambda^{-L}(\lambda\bar{\mu}^{c_3})^k
\bar{\mu}^{c_3(d+L+1)}((k+d+c_3(L+1))!)^2,\qquad k\geq0.
\]
We apply Lemma~\ref{a:lem:calculus}(c) to $\mathcal E_L[r]\circ X$ with
$\det D_yX=1$, and together with
\[
\norm{D_y^m(\mathcal E_L[r]\circ X)}_{H^4_{1/\lambda,y}}
\leq C\sum_{j=0}^4\lambda^{-j}
\norm{D_y^{m+j}(\mathcal E_L[r]\circ X)}_{L^2_y},
\]
we conclude the proof of \eqref{a:eq:velocityremainder}.
\end{proof}

\subsubsection{Construction of the expansion coefficients}\label{a:sec:hierarchy}
At each order, we first solve for the phase average and then for the oscillatory coefficient.
Consider the approximation
\begin{equation}\label{a:eq:rhoapprox}
\rho^{a,N}\circ X = \bar\rho(0)+
\sum_{n=1}^N\lambda^{-n}(\bar r_n+r_n)(t,y,\lambda\phi_0(y)),
\end{equation}
where $\bar r_n$ is independent of $\omega$, $\av{r_n}=0$, $\bar r_1=0$,
and $r_1=\mu\chi f(\omega)$. 

Denote $\bar v_n=[\cT(\bar r_n\circ X^{-1})]\circ X$. In particular,
$\bar v_1=0$. The velocity coefficient at order $n$ is
\begin{equation}\label{a:eq:RnVn}
V_n = \bar v_n+\sum_{l=1}^{n}\cT_{n-l}r_l.
\end{equation}
For $n\geq2$, the coefficient of $\lambda^{-n}$ in the density equation,
after the local velocity expansion, is
\begin{equation}\label{a:eq:fullcoefficient}
\partial_t(\bar r_n+r_n)+V_n\cdot g+
\sum_{i+k=n}V_i\cdot \mathbf d(\bar r_k+r_k)+
\sum_{i+k=n+1}(V_i\cdot\xi)\partial_\omega r_k,
\end{equation}
where time derivative is taken at fixed $(y,\omega)$, and $\mathbf d$ differentiates
in $y$ with $\omega$ fixed. Since $m(\xi)\cdot\xi=0$,
\[
V_1\cdot\xi=0,\qquad (\cT_0r_n)\cdot\xi=0.
\]
Define the lower-order source
\begin{equation}\label{a:eq:source}
\mathcal F_n=-\sum_{l=1}^{n-1}\Bigl[
(\cT_{n-l}r_l)\cdot g+
(\cT_{n-l}r_l\cdot\xi)\partial_\omega r_1\Bigr]
-\sum_{i+k=n}V_i\cdot \mathbf d(\bar r_k+r_k)
-\mathop{\sum_{i+k=n+1}}_{2\leq i,k\leq n-1}
(V_i\cdot\xi)\partial_\omega r_k.
\end{equation}
Equating \eqref{a:eq:fullcoefficient} to zero, we first solve its phase average:
\begin{equation}\label{a:eq:meaneq}
\partial_t\bar r_n+\bar v_n\cdot g = \av{\mathcal F_n}
\end{equation}
Apply Lemma~\ref{a:lem:slow} to the physical source
$\av{\mathcal F_n}\circ X^{-1}$; this selects the generally nonzero
initial value $\bar r_n(0)$. We then solve
\begin{equation}\label{a:eq:osceq}
\begin{split}
&\partial_t r_n+m(\xi)\cdot g\,r_n
= \mathcal F_n-\av{\mathcal F_n}
-(\bar v_n\cdot\xi)\partial_\omega r_1,\\
&r_n(0,y,\omega)=0.
\end{split}
\end{equation}
The right side has zero phase average, so $\av{r_n}=0$ for all time.
Since $\mu_t+m(\xi)\cdot g\,\mu=0$, the oscillatory equation \eqref{a:eq:osceq} is equivalent to
\begin{equation}\label{a:eq:normalizedinverse}
\frac{r_n(t)}{\mu(t)} = \int_0^t
\mu(s)^{-1}\left[
\mathcal F_n-\av{\mathcal F_n}
-(\bar v_n\cdot\xi)\partial_\omega r_1
\right](s)\,ds.
\end{equation}

At second order, it follows from \eqref{a:eq:divhierarchy} and integration by parts in $\omega$ 
\begin{equation}\label{a:eq:quadraticmean}
\begin{aligned}
\av{\mathcal F_2}
&=-\av{(\cT_1r_1\cdot\xi)\partial_\omega r_1
 +(\cT_0r_1)\cdot\mathbf d r_1}\\
&=-\mathbf d\cdot\av{(\cT_0r_1)r_1}
=-\mathbf d\cdot\left(m(\xi)(\mu\chi)^2\av{f^2}\right).
\end{aligned}
\end{equation}
This average is generally nonzero, so the slow correction $\bar r_2$ is needed.


\begin{proposition}\label{a:prop:majorant}
For a sufficiently large $C_0$ in $\Lambda_{\mathrm G}=\bar{\mu}^{C_0}$, we have
\begin{equation}\label{a:eq:majorant}
\begin{aligned}
\norm{D^m(r_n/\mu)}_{H^8}
&\leq \Lambda_{\mathrm G}^{m+10q+140q(n-1)}\bigl((m+10q+140q(n-1))!\bigr)^2,
&& n\geq1,\\
\norm{D^m(\bar r_n,\bar v_n)}_{H^8}
&\leq \Lambda_{\mathrm G}^{m+30q+140q(n-2)}\bigl((m+30q+140q(n-2))!\bigr)^2,
&& n\geq2,
\end{aligned}
\end{equation}
for any $m\geq0$ and uniformly on $[0,S]$. Moreover, $r_n$ and the zero extension
of $r_n/\mu$ are supported in $\supp\chi$ in the label variable. Moreover,
$\supp\bar r_n(0)\subset\supp\nabla_y\bar\rho(0)$.
Under the reflection hypotheses of Theorem~\ref{thm:main}, we have
\[
r_n(t,-y,-\omega)=-r_n(t,y,\omega),\qquad 
(\bar r_n,\bar v_n)(t,-y)=-(\bar r_n,\bar v_n)(t,y).
\]
\end{proposition}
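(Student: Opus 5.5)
The proof goes by strong induction on $n$. Both families of bounds are proved at order $n$ assuming them at all orders below $n$, and the bounded shift costs of Lemmas~\ref{a:lem:calculus}, \ref{a:lem:slow} and \ref{a:lem:localcosts} are tracked. For the base case $n=1$ we have $r_1/\mu=\chi f(\omega)$. By \eqref{a:eq:data2} and Lemma~\ref{a:lem:calculus}(a), this product has a fixed shift. Taking $q$ large, that shift is at most $10q$, which gives the first line at $n=1$. The support statement is immediate. Since $\bar r_1=\bar v_1=0$, the second line starts at $n=2$. It is convenient to work throughout with the normalized coefficients $\tilde r_l=r_l/\mu$. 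The conjugated operators $\mu^{-1}\cT_j(\mu\,\cdot)$ and $\mu^{-1}d_i(\mu\,\cdot)$ cost only $q(j+1)$ and a fixed shift, respectively, and multiplying back by $\mu$ costs a fixed shift. So $r_l$ itself has shift $10q+140q(l-1)+O(1)$.

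For the induction step, each term of $\mathcal F_n$ in \eqref{a:eq:source} is a product of at most two lower-order coefficients, an operator $\cT_{n-l}$, and multipliers $g,\xi,\mathbf d,\partial_\omega$. Take the term $(\cT_{n-l}r_l)\cdot g$. The coefficient $r_l$ has shift $d_l\approx 10q+140q(l-1)$, and $\cT_{n-l}$ adds $q(n-l+1)$. The total is well below $30q+140q(n-2)$ once $n\geq 2$, because $140q(l-1)+q(n-l+1)\leq 140q(n-2)+2q$ for $l\leq n-1$. The quadratic terms $V_i\cdot\mathbf d(\bar r_k+r_k)$ with $i+k=n$ and $(V_i\cdot\xi)\partial_\omega r_k$ with $i+k=n+1$, $2\leq i,k$, have shift $d_i+d_k+q_{\rm fixed}$. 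The key arithmetic is that the sum of two indices drops one factor $140q$. For example, $140q(i-2)+140q(k-2)=140q(n-4)$ when $i+k=n$, and $140q(n-5)$ when $i+k=n+1$. The spare $140q$ absorbs the constants $30q+30q$, the product shift, and one derivative. The terms with $i=1$ or $k=1$ need separate checking: one has $V_1=\cT_0r_1$, and $\bar r_1=0$. In the third sum, $i,k\geq 2$, and $i=1$ is absent because $V_1\cdot\xi=0$. The terms in $\sum_{i+k=n}$ with $k=1$ give shift $d_{n-1}+10q+O(q)$, and $140q(n-3)+30q+10q+O(q)\leq 140q(n-2)$. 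This is exactly where the gap of $140q$ per order is needed. A binomial sum over the index splittings is absorbed by Lemma~\ref{a:lem:calculus}(a), noting that $\mathcal C_d+\mathcal C_d\subset\mathcal C_{d+1}$ costs one shift per addition. Since there are $O(n)$ terms, this costs only $O(\log n)$ shifts, or can be absorbed into $\Lambda_{\mathrm G}^{\cdot}$ by enlarging $C_0$. Hence $\langle\mathcal F_n\rangle$ and $\mathcal F_n$ have shift at most $140q(n-2)+20q$.

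Next, $\bar r_n$ is obtained from Lemma~\ref{a:lem:slow} applied to $\langle\mathcal F_n\rangle\circ X^{-1}$, integrated in time over $S\leq1$. This adds its shift $q$ and gives the second line with $30q$. The lemma bounds $\bar v_n$ simultaneously. It also gives $\supp\bar r_n(0)\subset\supp\nabla_y\bar\rho(0)$ and, by its reflection clause, oddness. The oscillatory coefficient then comes from \eqref{a:eq:normalizedinverse}. The integrand is $\mu^{-1}[\mathcal F_n-\langle\mathcal F_n\rangle-(\bar v_n\cdot\xi)\partial_\omega r_1]$. Every term of $\mathcal F_n$ contains a factor $r_l$, $\mathbf d r_k$ or $\partial_\omega r_k$, and hence a factor $\mu$. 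So $\mu^{-1}$ is harmless in the conjugated form. The term $(\bar v_n\cdot\xi)\partial_\omega r_1/\mu=(\bar v_n\cdot\xi)\chi f'$ has shift $30q+140q(n-2)+O(1)\leq 10q+140q(n-1)$. Time integration over $[0,t]\subset[0,1]$ costs nothing, which closes the first line.

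Supports follow because every term of the integrand carries a factor supported in $\supp\chi$. The exceptions are the $\bar r_k$ and $\bar v_i$ factors, but these are always multiplied by an oscillatory factor $r_l$, with the only purely slow term $\bar v_i\cdot\mathbf d\bar r_k$ entering $\langle\mathcal F_n\rangle$, not $r_n$. The reflection symmetries propagate through the recursion because $\phi_0$ is odd, $\mu$ and $\chi$ are even, and the operators $Q_j$, $\cT_j$ respect the parity $(y,\omega)\mapsto(-y,-\omega)$.

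I expect the main obstacle to be the careful shift bookkeeping in the quadratic terms. The step needing the most care is verifying, uniformly in $n$, that the summation over splittings and the conjugation by $\mu$ do not consume more than the $140q$ margin.
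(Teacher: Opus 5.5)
Your architecture is the paper's: induction on $n$, the normalized coefficients $r_l/\mu$ with the conjugated operators of Lemma~\ref{a:lem:localcosts}, Lemma~\ref{a:lem:slow} for $(\bar r_n,\bar v_n)$, the normalized inverse \eqref{a:eq:normalizedinverse} for $r_n$, and the same support and parity propagation. The gap is in the uniform-in-$n$ shift bookkeeping, which is exactly the step you flagged. Write $d_l=10q+140q(l-1)$ and $\bar d_n=30q+140q(n-2)$.

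Your margins are miscounted. The ``spare $140q$'' computation uses the slow offsets for both factors. It therefore only covers $\bar v_i\cdot\mathbf d\bar r_k$, which cancels from the oscillatory equation anyway. For the terms that dominate the mean, the margin is only a fixed small multiple of $q$, modulo fixed product costs:
\begin{itemize}
\item $(\cT_0r_l)\cdot\mathbf d r_k$ with $l+k=n$ has shift $d_l+d_k+q=\bar d_n-9q$;
\item $(\cT_1r_l\cdot\xi)\partial_\omega r_k$ with $l+k=n$ has shift $\bar d_n-8q$.
\end{itemize}
Worse, your stated bound $d_i+d_k+q_{\rm fixed}$ for $(V_i\cdot\xi)\partial_\omega r_k$ with $i+k=n+1$ equals $\bar d_n+130q+q_{\rm fixed}$, which overshoots both targets. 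That sum closes only because $(\cT_0r_i)\cdot\xi=m(\xi)\cdot\xi\,r_i=0$ for \emph{every} $i$, not merely $V_1\cdot\xi=0$. With this, $V_i\cdot\xi$ involves only $\bar v_i$ and $\cT_jr_l$ with $j\geq1$, $l\leq i-1$, which lowers the effective order by one. You must invoke this cancellation explicitly. Also, only $\langle\mathcal F_n\rangle$ obeys your bound $140q(n-2)+20q$. The zero-mean terms $(\bar v_i\cdot\xi)\partial_\omega r_k$ reach $d_n-109q$, which is why the two targets must differ.

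With a margin of only about $8q$, your treatment of the sum over splittings fails. There are $O(n^2)$ summands, not $O(n)$, because each $V_i$ is itself a sum over $l$, and of order $n$ of them sit at the top shift. Summing them via $\mathcal C_d+\mathcal C_d\subset\mathcal C_{d+1}$ costs about $\log_2 n$ shifts, which eventually exceeds any fixed margin. Enlarging $C_0$ does not help either. $\Lambda_{\mathrm G}$ is independent of $n$, while the proposition must hold for all $n$; in the application $n$ runs up to $N=\lfloor\lambda^\nu\rfloor$ with $\lambda\geq\bar\mu^Q$ arbitrary.

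The missing observation is that one unit of shift is worth $\Lambda_{\mathrm G}(m+D+1)^2$, not just $\Lambda_{\mathrm G}$, and the targets satisfy $D\gtrsim n$. The paper uses this as follows: every summand has shift at most $D-4$, with $D-3\geq\bar d_n-3\geq 9(n+1)$, so
\[
C(n+1)^2\Lambda_{\mathrm G}^{-4}\Bigl(\frac{(m+D-4)!}{(m+D)!}\Bigr)^2\leq C\Lambda_{\mathrm G}^{-4}(n+1)^{-6}\leq 1 .
\]
The factorial gain from a fixed margin of four shifts thus absorbs the polynomially many terms, uniformly in $n$ and $m$. Without it, your induction does not close with the affine shifts claimed in \eqref{a:eq:majorant}.
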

\begin{proof}
In this proof, we denote
\[
d_n=q(140n-130)\quad n\geq1;\qquad
\bar d_n=q(140n-250),\quad n\geq2.
\]
It follows from \eqref{a:eq:data2} and \eqref{a:eq:factorialproduct} that
\[
\norm{D^m(r_1/\mu)}_{H^8}
=\norm{D^m(\chi f)}_{H^8}\leq3C\bar{\mu}^{m+2}(m!)^2
\leq\Lambda_{\mathrm G}^{m+d_1}((m+d_1)!)^2
\]
for sufficiently large $C_0$. The phase projections satisfy
\[
\norm{D_y^m\av{w}}_{H^8_y}
+\norm{D^m(w-\av{w})}_{H^8_{y,\omega}}
\leq C\norm{D^mw}_{H^8_{y,\omega}},
\]
with $C$ independent of $m$.

Fix $n\geq2$ and assume the estimates for every lower order.
The identities
\[
\av{\cT_jr_l}=0,\qquad
\av{\mathbf d r_l}=\mathbf d\av{r_l}=0,\qquad
\av{\partial_\omega r_l}=0,
\]
\[
(\cT_0r_l)\cdot\xi=m(\xi)\cdot\xi\,r_l=0
\]
remove the fast interactions with $j=0$ and determine the terms surviving
phase averaging. The purely slow terms cancel in the oscillatory equation:
\[
\sum_{i+k=n}\bar v_i\cdot\mathbf d\bar r_k
-\av{\sum_{i+k=n}\bar v_i\cdot\mathbf d\bar r_k}=0.
\]
We normalize the remaining terms in \eqref{a:eq:normalizedinverse} using \eqref{a:eq:normalizedderivative} and Lemma~\ref{a:lem:localcosts}.

For a term $h$ of $\av{\mathcal F_n}$ with shift $\delta$, denote
$\mathfrak f=h\circ X^{-1}$. Since $S\leq1$, we have
\[
\int_0^S\norm{D_y^m(\mathfrak f\circ X)}_{H^8_y}\,dt
\leq S\sup_{0\leq t\leq S}\norm{D_y^mh(t)}_{H^8_y}
\leq\Lambda_{\mathrm G}^{m+\delta}((m+\delta)!)^2
\]
which implies \eqref{source-est}.

We summarize the shift after the inverse for each source term. The indices $l,k\geq1$ below are those of \eqref{a:eq:source} and the additional
coupling in \eqref{a:eq:osceq}.
\begin{center}\small
\begin{tabular}{@{}p{.40\textwidth}p{.56\textwidth}@{}}
\toprule
Source and index relation & Shift after the inverse \\
\midrule
$(\cT_jr_l)\cdot\mathbf d r_k$,  $l+k+j=n$, $j\geq0$
& $d_l+d_k+q(j+1)=\bar d_n-(9+139j)q$ \\[3pt]
$(\cT_jr_l\cdot\xi)\partial_\omega r_k$, $l+k+j=n+1$, $j\geq1$
& $d_l+d_k+q(j+1)=\bar d_n-(139j-131)q$ \\[3pt]
$\bar v_i\cdot\mathbf d\bar r_k$, $i+k=n$, $i,k\geq2$
& $\bar d_i+\bar d_k+q=\bar d_n-249q$ \\[3pt]
$(\cT_jr_{n-j})\cdot g$, $1\leq j\leq n-1$
& $d_{n-j}+q(j+1)=d_n-(139j-1)q$ \\[3pt]
$(\cT_jr_l)\cdot\mathbf d\bar r_k$, $l+k+j=n$, $k\geq2$, $j\geq0$
& $d_l+\bar d_k+q(j+1)=d_n-(249+139j)q$ \\[3pt]
$\bar v_i\cdot\mathbf d r_k$, $i+k=n$, $i\geq2$
& $\bar d_i+d_k+q=d_n-249q$ \\[3pt]
$(\bar v_i\cdot\xi)\partial_\omega r_k$, $i+k=n+1$, $2\leq i,k\leq n-1$
& $\bar d_i+d_k+q=d_n-109q$ \\[3pt]
$(\bar v_n\cdot\xi)\partial_\omega r_1$
& $\bar d_n+d_1+q=d_n-109q$ \\
\bottomrule
\end{tabular}
\end{center}
Only the first three rows contribute to the mean, with shifts at most
$\bar d_n-8q$. In the normalized oscillatory equation, the third row cancels;
since $d_n-\bar d_n=120q$, the remaining shifts are at most $d_n-109q$.

For either target $D\in\{\bar d_n,d_n\}$, each summand has shift at most $D-4$ and
\[
D-3\geq\bar d_n-3\geq140n-253\geq9(n+1),\qquad n\geq2.
\]
It then follows
\[
\begin{aligned}
\frac{\displaystyle\sup_{0\leq t\leq S}\norm{D^mw(t)}_{H^8}}
{\Lambda_{\mathrm G}^{m+D}((m+D)!)^2}
&\leq C(n+1)^2\Lambda_{\mathrm G}^{-4}
\left(\frac{(m+D-4)!}{(m+D)!}\right)^2\\
&\leq C\Lambda_{\mathrm G}^{-4}(n+1)^{-6}\leq1
\end{aligned}
\]
for sufficiently large $C_0$.
Applying this first to $w=(\bar r_n,\bar v_n)$ with $D=\bar d_n$,
and then to $w=r_n/\mu$ with $D=d_n$ and applying the established bound for
$\bar v_n$ in the last row implies \eqref{a:eq:majorant}.

For the support assertions, denote
$a=\nabla_y\bar\rho(0)$ and $K=\supp a\cup\supp\chi$.
The local operators preserve label support:
\[
\supp_y(\cT_jw),\quad \supp_y(d_iw),\quad
\supp_y(\partial_\omega^{\pm1}w)\subset\supp_yw.
\]
It follows from induction in \eqref{a:eq:source} that the purely slow
terms are supported in $K$, while the terms containing an oscillatory
coefficient are supported in $\supp\chi$. The purely slow terms cancel upon subtracting the phase
average, and hence we have 
\[
\supp_y\av{\mathcal F_n(t)}\subset K,\qquad
\supp_y(\mathcal F_n-\av{\mathcal F_n})\subset\supp\chi.
\]
Applying the support and mean properties of the inverse in Lemma~\ref{a:lem:slow}, we obtain
\[
\supp_y\bar r_n(t)\subset K,\qquad
\supp\bar r_n(0)\subset\supp a,\qquad
\int_{\T^2}\bar r_n(0)\,dy=0.
\]
Since $(\bar v_n\cdot\xi)\partial_\omega r_1$ is supported in
$\supp\chi$, \eqref{a:eq:normalizedinverse} yields
\[
\supp_y(r_n/\mu),\ \supp_y r_n\subset\supp\chi,
\]
where $r_n/\mu$ is extended by zero outside $\mathcal O$.

Under the reflection hypotheses of Theorem~\ref{thm:main}, define
$\mathscr Rw(y,\omega)=w(-y,-\omega)$.
Since $D_yX,g,\xi,\mu$ are even in $y$, we have
\[
\mathscr R d_i=-d_i\mathscr R,\qquad
\mathscr R\partial_\omega=-\partial_\omega\mathscr R,\qquad
\mathscr RL_k=L_k\mathscr R,\quad k=0,1,2.
\]
The recursions \eqref{a:eq:Qrecursion}--\eqref{a:eq:Trecursion} imply
\[
\mathscr RQ_j=-Q_j\mathscr R,\qquad
\mathscr R\cT_j=\cT_j\mathscr R,\quad j\geq0.
\]
Starting with $\mathscr Rr_1=-r_1$, induction in
\eqref{a:eq:source} yields $\mathscr R\mathcal F_n=-\mathcal F_n$.
Thus $\av{\mathcal F_n}$ is odd in $y$. Applying Lemma~\ref{a:lem:slow} and
\eqref{a:eq:normalizedinverse} with $\mu$ even, we obtain
\[
(\bar r_n,\bar v_n)(t,-y)=-(\bar r_n,\bar v_n)(t,y),\qquad
\mathscr Rr_n=-r_n.
\]
\end{proof}

\subsubsection{The residual estimate}\label{a:sec:remainder}
For the approximation \eqref{a:eq:rhoapprox}, denote
$u^{a,N}=\cT\rho^{a,N}$ and
$\mathcal R_N=\partial_t\rho^{a,N}+u^{a,N}\cdot\nabla_x\rho^{a,N}$.

\begin{proposition}\label{a:prop:joint}
For every integer $N\geq2$, every $\lambda\geq1$, and every integer $m\geq0$,
\begin{equation}\label{a:eq:joint}
\norm{D^m(\mathcal R_N\circ X)}_{H^4_{1/\lambda}}
\leq \lambda^{-N+c}\bar{\mu}^{c(N+1)}
(\lambda \bar{\mu}^c)^m((m+c(N+1))!)^2,
\end{equation}
where $c$ is a positive integer independent of $m,N,\lambda,\Gamma,\bar{\mu}$
and $S$.
\end{proposition}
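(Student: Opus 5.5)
The plan is to write $\mathcal R_N\circ X$ as an exact finite sum and check that all coefficients of $\lambda^{-n}$ with $n\leq N$ cancel by construction. What remains are the orders $n>N$ and the nonlocal velocity remainders from Lemma~\ref{a:lem:globalremainder}. Expand $u^{a,N}=\cT\rho^{a,N}$ using $\cT\bar\rho(0)\circ X^{-1}\cdots$, that is, the background velocity $U$ together with the slow velocities $\bar v_n$. For each oscillatory coefficient $r_l$ apply \eqref{a:eq:velocityexpansion} with $L=L_l:=N+1-l$:
\[
\cT r_l^\lambda=\sum_{j=0}^{N-l}\lambda^{-j}(\cT_jr_l)^\lambda+\mathcal E_{N+1-l}[r_l].
\]
Then $u^{a,N}\circ X=U\circ X+\sum_{n=1}^N\lambda^{-n}V_n^{\rm trunc}+\sum_l\lambda^{-l}\mathcal E_{N+1-l}[r_l]\circ X$.

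Next, insert this into $\partial_t\rho^{a,N}+u^{a,N}\cdot\nabla_x\rho^{a,N}$, using $(\partial_{x_i}a^\lambda)\circ X=(d_ia+\lambda\xi_i\partial_\omega a)^\lambda$. The result has three parts.

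First, the terms of total order $\lambda^{-n}$ with $n\leq N$ are exactly \eqref{a:eq:fullcoefficient}. These vanish for $2\leq n\leq N$ by \eqref{a:eq:meaneq}--\eqref{a:eq:osceq}. For $n=1$ they vanish by the amplitude equation in \eqref{a:eq:phasecarrier}, and the potential $\lambda^{0}$ self-interaction vanishes by $V_1\cdot\xi=0$. One has to verify that the source $\mathcal F_n$ collects precisely those products with all indices at most $N$.

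Second, there are leftover products of order $\lambda^{-n}$ with $N<n\leq 2N+1$. They come from $V_i\cdot\mathbf d(\bar r_k+r_k)$ with $i+k>N$ and from $(V_i\cdot\xi)\partial_\omega r_k$ with $i+k>N+1$, where the $\lambda$ from $\partial_\omega$ lowers the order by one. Each such term is a local coefficient evaluated at $\omega=\lambda\phi_0$. By Proposition~\ref{a:prop:majorant} and Lemma~\ref{a:lem:localcosts}, its shift is at most $C q n\leq C(N+1)$. Lemma~\ref{a:lem:graph} then bounds its $D^m$ in $H^4_{1/\lambda}$ by $(\lambda\bar\mu^c)^m\bar\mu^{c(N+1)}((m+c(N+1))!)^2$, where we use $\Lambda_{\rm G}^{d}=\bar\mu^{C_0d}$. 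The prefactor is $\lambda^{-n+1}\leq\lambda^{-N}$. There are $O(N^2)$ such terms, and this count is absorbed into $\bar\mu^{c(N+1)}$ after enlarging $c$.

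Third, there are the remainder terms $\lambda^{-l}\mathcal E_{N+1-l}[r_l]\cdot(\nabla_x\rho^{a,N})\circ X$. By \eqref{a:eq:velocityremainder} with $d=d_l\leq Cl$, the remainder factor is of size $\lambda^{-l-(N+1-l)}=\lambda^{-N-1}$, with shift $c(N+2)$. The gradient $\nabla_x\rho^{a,N}\circ X$ costs one power of $\lambda$, coming from $\xi\partial_\omega r_1$. It is bounded in the $W^{4,\infty}_{1/\lambda}$ Gevrey sense by $\lambda\bar\mu^{cN}(\lambda\bar\mu^c)^m(m!)^2$. This bound follows from Lemma~\ref{a:lem:graph} together with Sobolev embedding, which costs two semiclassical derivatives and hence a factor $\lambda^{2}$ absorbed into $\lambda^{c}$, and from the data bounds \eqref{a:eq:data} for the background part $(D_yX)^{-T}\nabla_y\bar\rho(0)$. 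The products are then estimated by Leibniz's rule combined with \eqref{a:eq:factorialproduct}. Since $H^4_{1/\lambda}$ is an algebra only up to factors of $\lambda$ in two dimensions, we multiply an $L^\infty$-type semiclassical norm against $H^4_{1/\lambda}$. This produces the extra $\lambda^{c}$ in the stated bound $\lambda^{-N+c}$.

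The main obstacle is the bookkeeping in the first part. One must show that truncating each velocity expansion at $L_l=N+1-l$ and truncating the density sum at $N$ leaves no uncancelled term of order at most $N$. The delicate case is the fast interaction $(V_i\cdot\xi)\partial_\omega r_k$ with $i+k=N+1$: it is formally of order $\lambda^{-N}$, and it is cancelled only because $\mathcal F_N$ includes exactly the index range used in \eqref{a:eq:source}, while the $i=1$ and $k=1$ pieces are handled separately by $V_1\cdot\xi=0$ and by the coupling term in \eqref{a:eq:osceq}. A secondary difficulty is keeping the factorial shifts linear in $N$ with constants independent of $m$; for this I would rely directly on the linear-in-$n$ shifts of Proposition~\ref{a:prop:majorant}.
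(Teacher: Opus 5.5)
Your proposal is correct and follows essentially the same route as the paper: you expand each $\cT r_l^\lambda$ by Lemma~\ref{a:lem:globalremainder}, cancel all coefficients of order at most $N$ through the hierarchy, bound the leftover polynomial terms by Lemma~\ref{a:lem:graph}, and bound the remainder times $(\nabla_x\rho^{a,N})\circ X$ by a Leibniz $H^4_{1/\lambda}\times W^{4,\infty}_{1/\lambda}$ product; the paper truncates at $L=N+2-l$, which puts $\lambda^{-N-1}\widetilde V_{N+1}\cdot g$ into the local residual, but your $L=N+1-l$ works equally well. One small correction: because $N$ and $\lambda\geq1$ are arbitrary here, the gradient factor should be bounded by $\bar\mu^{c(N+1)}(\lambda\bar\mu^c)^m((m+c(N+1))!)^2$, as Lemma~\ref{a:lem:graph} gives (it is $O(1)$ in $\lambda$, so your extra $\lambda$ is only wasteful, but the factorial shift cannot be dropped to $(m!)^2$), and this is harmless since \eqref{a:eq:factorialproduct} simply adds the shifts.
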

\begin{proof}
For $1\leq i\leq N+1$, denote
\begin{equation}\label{a:eq:truncatedvelocitycoeff}
\widetilde V_i = \mathbf1_{i\leq N}\bar v_i+
\sum_{l=1}^{\min(i,N)}\cT_{i-l}r_l,
\end{equation}
with $\bar v_1=0$. Thus $\widetilde V_i=V_i$ for $i\leq N$. 

Applying Lemma~\ref{a:lem:globalremainder} to $r_l$ with truncation
$L=N+2-l$, we obtain
\begin{equation}\label{a:eq:split}
u^{a,N}-U =
\sum_{i=1}^{N+1}\lambda^{-i}\widetilde V_i^\lambda+\mathfrak E_N,
\qquad
\mathfrak E_N = \sum_{l=1}^N\lambda^{-l}\mathcal E_{N+2-l}[r_l].
\end{equation}
Since $l+L=N+2$ and $r_l$ has shift at most $C(l+1)$,
it follows from \eqref{a:eq:majorant} and Lemma~\ref{a:lem:globalremainder} that
\begin{equation}\label{a:eq:Ejoint}
\norm{D^m(\mathfrak E_N\circ X)}_{H^4_{1/\lambda}}
\leq \lambda^{-N-2}\bar{\mu}^{c(N+1)}
(\lambda \bar{\mu}^c)^m((m+c(N+1))!)^2.
\end{equation}

The order-one coefficient vanishes by \eqref{a:eq:phasecarrier} and
$V_1\cdot\xi=0$; the coefficients of orders $2\leq n\leq N$ vanish
by the hierarchy. The remaining terms are
\begin{equation}\label{a:eq:polyresidual}
\begin{split}
\mathscr P_N=&\lambda^{-N-1}\widetilde V_{N+1}\cdot g
+\sum_{\substack{1\leq i\leq N+1,\ 1\leq k\leq N\\i+k\geq N+1}}
\lambda^{-i-k}\widetilde V_i\cdot\mathbf d(\bar r_k+r_k)\\
&+\sum_{\substack{1\leq i\leq N+1,\ 1\leq k\leq N\\i+k-1\geq N+1}}
\lambda^{1-i-k}(\widetilde V_i\cdot\xi)\partial_\omega r_k.
\end{split}
\end{equation}
We then have 
\begin{equation}\label{a:eq:residualdecomposition}
\mathcal R_N = \mathscr P_N^\lambda+
\mathfrak E_N\cdot\nabla_x\rho^{a,N}.
\end{equation}
A term in $\widetilde V_i$ has shift at most $C(i+1)$; multiplication by a density
coefficient of order $k$ and one derivative costs at most $C(i+k+1)$.
Hence $\mathscr P_N^\lambda\circ X$ is bounded by the right side of \eqref{a:eq:joint}.

The proof of Lemma~\ref{a:lem:graph}, using $H^8\hookrightarrow L^\infty$
for coefficient derivatives, also yields
\begin{equation}\label{a:eq:graphsup}
\norm{D^m(a^\lambda\circ X)}_{W^{4,\infty}_{1/\lambda}}
\leq(\lambda\bar{\mu}^c)^m\bar{\mu}^{c(d+1)}((m+d+c)!)^2
\end{equation}
for a supported coefficient $a$ of shift $d$. Since
\[
(\nabla_x\rho^{a,N})\circ X
=g+\sum_{k=1}^N\lambda^{-k}
\bigl[\mathbf d(\bar r_k+r_k)+\lambda\xi\partial_\omega r_k\bigr]
(t,y,\lambda\phi_0(y)),
\]
it then follows
\[
\norm{D^m((\nabla_x\rho^{a,N})\circ X)}_{W^{4,\infty}_{1/\lambda}}
\leq \bar{\mu}^{c(N+1)}(\lambda \bar{\mu}^c)^m((m+c(N+1))!)^2.
\]
We infer from \eqref{a:eq:factorialproduct}
\[
\begin{aligned}
&\norm{D^m\bigl((\mathfrak E_N\cdot\nabla_x\rho^{a,N})\circ X\bigr)}_{H^4_{1/\lambda}}\\
&\quad\leq C\sum_{l=0}^m\binom ml
\norm{D^l(\mathfrak E_N\circ X)}_{H^4_{1/\lambda}}
\norm{D^{m-l}((\nabla_x\rho^{a,N})\circ X)}_{W^{4,\infty}_{1/\lambda}}\\
&\quad\leq\lambda^{-N-2}\bar{\mu}^{c(N+1)}
(\lambda\bar{\mu}^c)^m((m+c(N+1))!)^2
\end{aligned}
\]
which together with the estimate for $\mathscr P_N^\lambda\circ X$ and
\eqref{a:eq:residualdecomposition} verifies \eqref{a:eq:joint}.
\end{proof}

The factorial inequality
\begin{equation}\label{a:eq:factorialseparation}
(m+d)!\leq2^{m+d}m!d!
\end{equation}
separates the derivative order from the truncation order. Let $c_*$ be the
fixed exponent in \eqref{a:eq:joint}, and take
$\kappa=(\lambda \bar{\mu}^{C_1})^{-1}$. With $D=c_*(N+1)$, we obtain
\[
\begin{aligned}
\G_{1,\kappa}(\mathcal R_N\circ X)
&\leq\lambda^{-N+c_*}\bar{\mu}^{c_*(N+1)}4^D(D!)^2
\sum_{m\geq0}(4\bar{\mu}^{c_*-C_1})^m\\
&\leq2\lambda^{-N+c_*}\bar{\mu}^{c_*(N+1)}4^D(D!)^2,
\end{aligned}
\]
provided $C_1\geq c_*+3$. Since $D!\leq D^D\leq(CN)^{CN}$, we have
\begin{equation}\label{a:eq:untruncatedG}
\G_{1,\kappa}(\mathcal R_N\circ X)
\leq \lambda^{-N+c}(\bar{\mu}^cN^c)^{cN+c}.
\end{equation}

Denote
\begin{equation}\label{a:eq:N}
N = \lfloor\lambda^\nu\rfloor.
\end{equation}
For each integer $c\geq1$, choose $\nu>0$ small and $Q$ large so that
\[
c^2(\nu+Q^{-1})\leq\tfrac14,\qquad
\lfloor2^{Q\nu}\rfloor\geq4c+1.
\]
We impose these conditions for the finitely many fixed exponents used below.
If $\lambda\geq\bar{\mu}^Q$, then $N\geq4c+1$ and
$\tfrac12\lambda^\nu\leq N\leq\lambda^\nu$. Consequently,
\begin{equation}\label{a:eq:truncationbudget}
\begin{split}
&\log\!\left[\lambda^{-N+c}(\bar{\mu}^cN^c)^{cN+c}\right]\\
&\quad\leq[-N+c+c^2(N+1)(\nu+Q^{-1})]\log\lambda\\
&\quad\leq-\tfrac12N\log\lambda
\leq-\tfrac14\lambda^\nu\log\lambda.
\end{split}
\end{equation}
Applying \eqref{a:eq:truncationbudget} to \eqref{a:eq:untruncatedG} yields
\begin{equation}\label{a:eq:Gresidual}
\G_{1,\kappa}(\mathcal R_N\circ X)
\leq \exp(-c_1\lambda^\nu\log\lambda)
\end{equation}
for some $c_1>0$. 

With $N$ given by \eqref{a:eq:N}, denote
$(\rho^a,u^a,\mathcal R)=(\rho^{a,N},u^{a,N},\mathcal R_N)$.

\begin{lemma}\label{a:lem:approximation}
Under \eqref{a:eq:data}--\eqref{a:eq:positiveH}, there are constants
$\nu>0$ and $C,Q\geq1$ such that, for
$\lambda\geq\bar{\mu}^Q$ and $N=\lfloor\lambda^\nu\rfloor\geq2$,
the approximation \eqref{a:eq:rhoapprox} satisfies
\begin{equation}\label{a:eq:approxjetestimates}
\begin{split}
&\norm{(\nabla_x\rho^a-\nabla_x\bar\rho)\circ X-g_\lambda}_\infty\\
&\quad+\norm{(\nabla_xu^a-\nabla_xU)\circ X-M_\lambda}_\infty
\leq \bar{\mu}^C\lambda^{-1},\\
&\norm{D^m((\nabla_x\rho^a,\nabla_xu^a)\circ X)}_{L^\infty}
\leq \bar{\mu}^C(\lambda \bar{\mu}^C)^m(m!)^2
\end{split}
\end{equation}
and 
\begin{equation}\label{a:eq:relativevelocity}
\begin{split}
&(D_yX)^{-1}\bigl((u^a-U)\circ X\bigr)=V^0+R^a,\\
&V^0(t,y)=\lambda^{-1}\mu(t,y)\chi(y)(D_yX(t,y))^{-1}m(\xi(t,y))
f(\lambda\phi_0(y)),
\end{split}
\end{equation}
with
\begin{equation}\label{a:eq:Ra}
\norm{D^mR^a}_{L^\infty}
\leq\lambda^{-2}\bar{\mu}^C(\lambda \bar{\mu}^C)^m(m!)^2,
\end{equation}
where the constants are independent of $m,N,\lambda,\bar{\mu},\Gamma,S$.
\end{lemma}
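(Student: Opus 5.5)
The approach is to read every claimed bound off the expansion \eqref{a:eq:rhoapprox} and the split \eqref{a:eq:split} of $u^{a,N}-U$, with coefficient bounds from Proposition~\ref{a:prop:majorant} and the graph estimate \eqref{a:eq:graphsup}. I would first fix $\nu,Q$ exactly as in \eqref{a:eq:truncationbudget}, so that $\lambda\geq\bar\mu^Q$ gives $N\leq\lambda^\nu$ and every factor $\bar\mu^{cN}N^{cN}$ is dominated by $\lambda^{1/4}$. Then I would write
\[
(\nabla_x\rho^a-\nabla_x\bar\rho)\circ X
=\mu\chi\xi f'(\lambda\phi_0)+\lambda^{-1}\mathbf d r_1^\lambda
+\sum_{k=2}^N\lambda^{-k}\bigl[\mathbf d(\bar r_k+r_k)+\lambda\xi\partial_\omega r_k\bigr],
\]
using $g=(\nabla_x\bar\rho)\circ X$ and $\bar r_1=0$. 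The first term is $g_\lambda$. Each remaining term is $\lambda^{-1}$ times a coefficient of shift at most $C(k+1)$, so \eqref{a:eq:graphsup} with $m=0$ bounds it by $\lambda^{-1-(k-2)}\bar\mu^{C(k+1)}((Ck)!)^2$. Summing over $2\leq k\leq N$ with the budget from \eqref{a:eq:truncationbudget}, where $\bar\mu^{Ck}(Ck)!^2\leq\lambda^{(k-2)/2}\bar\mu^{C}$ for $k\leq N$, gives $\bar\mu^C\lambda^{-1}$.

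For the velocity, the $\lambda^{0}$ term in \eqref{a:eq:split} after differentiation is $\mu\chi m(\xi)\otimes\xi f'=M_\lambda$. The remaining local terms $\lambda^{-i}\widetilde V_i^\lambda$ are handled as above. The nonlocal remainder $\mathfrak E_N$ is controlled by \eqref{a:eq:Ejoint} after converting $H^4_{1/\lambda}$ to $L^\infty$ via Sobolev embedding, at a cost of $\lambda^{C}$. This is harmless against $\lambda^{-N-2}$.

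The derivative bounds in the second line of \eqref{a:eq:approxjetestimates} follow in the same way from \eqref{a:eq:graphsup} and \eqref{a:eq:Ejoint}, with general $m$. The $N$-dependent factorials $((m+cN)!)^2$ are the issue here. I would separate them with \eqref{a:eq:factorialseparation}, writing $((m+d)!)^2\leq 4^{m+d}(m!)^2(d!)^2$, and absorb $4^{d}(d!)^2\bar\mu^{cd}$ against the $\lambda^{-k}$ prefactors as before. This leaves $\bar\mu^C(4\lambda\bar\mu^c)^m(m!)^2$. For $\nabla_x u^a$ I would also add $\nabla_x U\circ X$, which is bounded by \eqref{a:eq:data} since $(\nabla_xU)\circ X=M$.

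For \eqref{a:eq:relativevelocity}, the $i=1$ term of \eqref{a:eq:split} is $\lambda^{-1}(\cT_0 r_1)^\lambda=\lambda^{-1}\mu\chi m(\xi)f(\lambda\phi_0)$, and $(D_yX)^{-1}$ applied to it gives $V^0$. So I set
\[
R^a=(D_yX)^{-1}\Bigl(\sum_{i\geq2}\lambda^{-i}\widetilde V_i^\lambda+\mathfrak E_N\Bigr)\circ X .
\]
Its $D^m$ bounds come from the Leibniz rule with \eqref{a:eq:data} for $(D_yX)^{-1}$, together with the same absorption argument, now starting from $\lambda^{-2}$. I expect the main obstacle to be bookkeeping rather than substance: making sure the factorial growth in the truncation order $N$ never meets an $m$-factorial without a matching $\lambda^{-k}$. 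The smallness condition $c^2(\nu+Q^{-1})\leq 1/4$ has to be arranged for the finitely many exponents appearing here, possibly enlarging $Q$ and shrinking $\nu$ once more.
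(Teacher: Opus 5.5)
Your proposal follows the paper's proof essentially step by step. It uses the same expansion via \eqref{a:eq:rhoapprox} and \eqref{a:eq:split}, the order-one identities producing $g_\lambda$, $M_\lambda$ and $V^0$, and a geometric tail sum over orders $2\leq n\leq N+1$ controlled by taking $\nu$ small and $Q$ large. It likewise uses Sobolev embedding plus the truncation budget for $\mathfrak E_N$, and factorial separation \eqref{a:eq:factorialseparation} for the derivative bounds and for $R^a$.

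There are two small corrections. First, the budget gives $(\bar\mu^cN^c)^{c(N+1)}\leq\lambda^{(N+1)/4}$, not $\lambda^{1/4}$; your per-order inequality $\bar\mu^{Ck}((Ck)!)^2\leq\lambda^{(k-2)/2}\bar\mu^C$ is what the sum actually needs, and it is correct. Second, bound the phase-independent terms $\bar r_k$ and $\bar v_k$ by Sobolev embedding from \eqref{a:eq:majorant}, not by \eqref{a:eq:graphsup}, which is stated only for coefficients supported in $\mathcal O$.
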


\begin{proof}
Let $w_n$ denote either
$(\bar r_n+r_n)^\lambda$, for $1\leq n\leq N$, or
$\widetilde V_n^\lambda$, for $1\leq n\leq N+1$.
Using Sobolev embedding for the phase-independent terms and
\eqref{a:eq:graphsup} for the oscillatory terms, together with
Proposition~\ref{a:prop:majorant}, Lemma~\ref{a:lem:localcosts}, and
the derivative identity
$\partial_{x_i}w^\lambda=(d_iw+\lambda\xi_i\partial_\omega w)^\lambda$,
factorial separation \eqref{a:eq:factorialseparation} yields positive integers
$a,b$ such that
\[
\begin{split}
&B_n=\bar{\mu}^{b(n+1)}\bigl((a(n+1))!\bigr)^2,\\
&\norm{D_y^m(w_n\circ X)}_\infty
+\lambda^{-1}\norm{D_y^m((\nabla_xw_n)\circ X)}_\infty
\leq B_n(\lambda\bar{\mu}^b)^m(m!)^2.
\end{split}
\]
Since $N+2\leq3\lambda^\nu$ and $\bar{\mu}\leq\lambda^{1/Q}$, choosing
$2a\nu\leq1/8$ and $b/Q\leq1/8$ yields
\[
\begin{aligned}
\frac{\lambda^{-n-1}B_{n+1}}{\lambda^{-n}B_n}
&=\lambda^{-1}\bar{\mu}^b\prod_{j=1}^a(a(n+1)+j)^2\\
&\leq(3a)^{2a}\lambda^{-1+b/Q+2a\nu}
\leq\lambda^{-1/2}\leq\frac12,
\qquad 1\leq n\leq N,
\end{aligned}
\]
after increasing the absolute threshold for $\lambda$. Consequently,
\begin{equation}\label{a:eq:approximationtails}
\begin{split}
&\sum_{n=2}^{N+1}\lambda^{-n}B_n
\leq\frac{\lambda^{-2}B_2}{1-\lambda^{-1/2}}
\leq\lambda^{-2}\bar{\mu}^C,\\
&\sum_{n=2}^{N+1}\lambda^{1-n}B_n\leq\lambda^{-1}\bar{\mu}^C.
\end{split}
\end{equation}

At order one, $r_1=\mu\chi f(\omega)$ and
$\cT_0r_1=m(\xi)r_1$. Applying \eqref{a:eq:principaljets} yields
\[
\begin{aligned}
\bigl(\nabla_x[\lambda^{-1}r_1^\lambda]\bigr)\circ X
&=g_\lambda+\lambda^{-1}(\mathbf d r_1)^\lambda\circ X,\\
\bigl(\nabla_x[\lambda^{-1}(\cT_0r_1)^\lambda]\bigr)\circ X
&=M_\lambda+
\lambda^{-1}\bigl(d_j(\cT_0r_1)_i\bigr)_{i,j}^\lambda\circ X.
\end{aligned}
\]
The coefficient estimates imply
\[
\begin{aligned}
&\norm{(\mathbf d r_1)^\lambda\circ X}_\infty
+\norm{\bigl(d_j(\cT_0r_1)_i\bigr)_{i,j}^\lambda\circ X}_\infty
\leq \bar{\mu}^C,\\
&\norm{D_y^m(g,M,g_\lambda,M_\lambda)}_\infty
\leq \bar{\mu}^C(\lambda \bar{\mu}^C)^m(m!)^2.
\end{aligned}
\]

For the nonlocal remainder in \eqref{a:eq:split}, the basic estimate
\[
\sum_{k\in\mathbb Z^2}
\frac{|k|^{2\ell}}{(1+|k|^2/\lambda^2)^4}
\leq C\lambda^{2+2\ell},\qquad \ell=0,1,
\]
and Cauchy--Schwarz in Fourier variables imply
\[
\norm{\nabla_y^\ell w}_\infty
\leq C\lambda^{1+\ell}\norm w_{H^4_{1/\lambda}},
\qquad \ell=0,1.
\]
Since
\[
(\nabla_x\mathfrak E_N)\circ X
=D_y(\mathfrak E_N\circ X)(D_yX)^{-1},
\]
it follows from \eqref{a:eq:Ejoint}, \eqref{a:eq:data}, and
\eqref{a:eq:factorialproduct} that, after increasing a fixed integer $c\geq2$,
\[
\norm{D_y^m((\mathfrak E_N,\nabla_x\mathfrak E_N)\circ X)}_\infty
\leq\lambda^{-N}\bar{\mu}^{c(N+1)}
(\lambda\bar{\mu}^c)^m((m+c(N+1))!)^2,
\qquad m\geq0.
\]
Factorial separation \eqref{a:eq:factorialseparation} yields
\[
\begin{split}
&\norm{D_y^m((\mathfrak E_N,\nabla_x\mathfrak E_N)\circ X)}_\infty
\leq H_N(\lambda\bar{\mu}^C)^m(m!)^2,\\
&H_N=\lambda^{-N+C}(\bar{\mu}^CN^C)^{CN+C}.
\end{split}
\]
For the common choice of $\nu,Q$ and a sufficiently large fixed frequency
threshold, \eqref{a:eq:truncationbudget} yields
\[
H_N\leq\exp\!\left(-\tfrac14\lambda^\nu\log\lambda\right)
\leq\lambda^{-2}.
\]

It follows from \eqref{a:eq:rhoapprox} and \eqref{a:eq:split} that
\[
\begin{aligned}
\rho^a-\bar\rho
&=\lambda^{-1}r_1^\lambda+
\sum_{n=2}^N\lambda^{-n}(\bar r_n+r_n)^\lambda,\\
u^a-U
&=\lambda^{-1}(\cT_0r_1)^\lambda+
\sum_{n=2}^{N+1}\lambda^{-n}\widetilde V_n^\lambda+\mathfrak E_N.
\end{aligned}
\]
Combining these decompositions with \eqref{a:eq:approximationtails}, the
order-one gradient identities, and the bound for $H_N$ proves
\eqref{a:eq:approxjetestimates}.
We also obtain \eqref{a:eq:relativevelocity} with
\[
R^a=(D_yX)^{-1}\left[
\sum_{n=2}^{N+1}\lambda^{-n}(\widetilde V_n^\lambda\circ X)
+\mathfrak E_N\circ X\right].
\]
By \eqref{a:eq:data}, \eqref{a:eq:factorialproduct},
\eqref{a:eq:approximationtails}, and $H_N\leq\lambda^{-2}$,
\[
\norm{D_y^mR^a}_\infty
\leq\lambda^{-2}\bar{\mu}^C(\lambda\bar{\mu}^C)^m(m!)^2,
\]
which verifies \eqref{a:eq:Ra}.
\end{proof}

\begin{lemma}[Initial phase-independent correction]\label{a:lem:initialslow}
Under the hypotheses of Lemma~\ref{a:lem:approximation}, the 
constants $\nu,C,Q$ can be chosen so that, for
$\lambda\geq\bar{\mu}^Q$ and $N=\lfloor\lambda^\nu\rfloor\geq2$,
the correction
\[
b_\lambda=\sum_{n=2}^N\lambda^{-n}\bar r_n(0)
\]
satisfies
\[
\supp b_\lambda\subset\supp\nabla_y\bar\rho(0)
\]
and
\begin{equation}\label{a:eq:initialslowstrong}
\norm{D^m b_\lambda}_{H^8}
\leq\lambda^{-2}\bar{\mu}^C(\bar{\mu}^C)^m(m!)^2, \quad m\in\mathbb N_0.
\end{equation}
More precisely, at the common spatial radius
$(8\Lambda_{\mathrm G})^{-1}$, we have
\[
\sum_{m\geq0}\frac{(8\Lambda_{\mathrm G})^{-m}}{(m!)^2}
\norm{D^m b_\lambda}_{H^8}\leq\lambda^{-2}\bar{\mu}^C
\]
and consequently
\[
\norm{b_\lambda}_{H^k}\leq C_k\bar{\mu}^{C_k}\lambda^{-2}, \quad k\in\mathbb N_0.
\]
\end{lemma}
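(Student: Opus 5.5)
The plan is to read the bound off Proposition~\ref{a:prop:majorant} at $t=0$ and sum the resulting geometric-type series in $n$, much as in the proof of Lemma~\ref{a:lem:approximation}.

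\textbf{Support.} The support inclusion is immediate. By Proposition~\ref{a:prop:majorant}, $\supp\bar r_n(0)\subset\supp\nabla_y\bar\rho(0)$ for every $n\geq2$, and $X(0)=\id$, so label and physical supports agree at $t=0$. A finite sum keeps the support in this closed set.

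\textbf{Single-term bound.} For each $n\geq2$, Proposition~\ref{a:prop:majorant} at $t=0$ gives
\[
\norm{D^m\bar r_n(0)}_{H^8}\leq\Lambda_{\mathrm G}^{m+D_n}((m+D_n)!)^2,\qquad D_n=30q+140q(n-2).
\]
Using $((m+D_n)!)^2\leq 4^{m+D_n}(m!)^2(D_n!)^2$, this becomes
\[
\norm{D^m\bar r_n(0)}_{H^8}\leq (4\Lambda_{\mathrm G})^m(m!)^2\,\beta_n,\qquad \beta_n=(4\Lambda_{\mathrm G})^{D_n}(D_n!)^2.
\]
Since $\Lambda_{\mathrm G}=\bar\mu^{C_0}$, this already gives \eqref{a:eq:initialslowstrong} for each fixed term, with $(\bar\mu^C)^m$ absorbing $(4\Lambda_{\mathrm G})^m$.

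\textbf{Summation in $n$.} It remains to show $\sum_{n=2}^N\lambda^{-n}\beta_n\leq\lambda^{-2}\bar\mu^C$. Consider the ratio of consecutive terms,
\[
\frac{\lambda^{-n-1}\beta_{n+1}}{\lambda^{-n}\beta_n}
=\lambda^{-1}(4\Lambda_{\mathrm G})^{140q}\prod_{j=1}^{140q}(D_n+j)^2 .
\]
For $n\leq N\leq\lambda^\nu$ we have $D_n+j\leq C\lambda^\nu$, so the ratio is at most
\[
C\lambda^{-1}\bar\mu^{C_0 140q}\lambda^{280q\nu}\leq C\lambda^{-1+140qC_0/Q+280q\nu}\leq\tfrac12,
\]
using $\lambda\geq\bar\mu^Q$. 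This holds once $\nu$ is chosen small and $Q$ large relative to the fixed integers $q,C_0$, and $\lambda$ exceeds an absolute threshold. These are compatible with the finitely many constraints already imposed on $\nu,Q$ earlier. The series is then dominated by twice its first term $\lambda^{-2}\beta_2=\lambda^{-2}(4\Lambda_{\mathrm G})^{30q}((30q)!)^2\leq\lambda^{-2}\bar\mu^C$. This step, arranging that the $n$-dependent shifts growing linearly in $n$ are beaten by $\lambda^{-1}$ per order, is the only real obstacle. It is exactly where the truncation $N=\lfloor\lambda^\nu\rfloor$ and the constraint $\lambda\geq\bar\mu^Q$ are used.

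\textbf{Weighted sum at radius $(8\Lambda_{\mathrm G})^{-1}$.} For the Gevrey statement, multiply the bound for $\norm{D^m b_\lambda}_{H^8}$ by $(8\Lambda_{\mathrm G})^{-m}/(m!)^2$. The factor $(4\Lambda_{\mathrm G})^m$ combines with it into $2^{-m}$, and summing over $m$ costs a factor $2$. Equivalently, one applies the radius computation stated after \eqref{a:eq:shift} termwise, giving $2(4\Lambda_{\mathrm G})^{D_n}(D_n!)^2$ per term, and then the same summation in $n$.

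\textbf{$H^k$ norms.} For $k\leq8$ the bound $\norm{b_\lambda}_{H^k}\leq C_k\bar\mu^{C_k}\lambda^{-2}$ is the case $m=0$. For $k>8$, bound $\norm{b_\lambda}_{H^k}\leq C_k\sum_{m\leq k-8}\norm{D^mb_\lambda}_{H^8}$ and use \eqref{a:eq:initialslowstrong} with $m\leq k$, which gives $C_k\bar\mu^{C(k+1)}\lambda^{-2}$.
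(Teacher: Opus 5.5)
Your proposal is correct and follows the paper's proof essentially step for step. You take the $t=0$ case of Proposition~\ref{a:prop:majorant}, separate the factorials via $(m+d)!\leq 2^{m+d}m!\,d!$, and show the ratio of consecutive terms is at most $\lambda^{-1+140C_0q/Q+280q\nu}\leq\tfrac12$ using $N\leq\lambda^\nu$ and $\lambda\geq\bar\mu^Q$. You then sum the geometric series from the $\bar d_2=30q$ term and weight by $(8\Lambda_{\mathrm G})^{-m}/(m!)^2$ to get the factor $2^{-m}$, exactly as the paper does.
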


\begin{proof}
It follows from Proposition~\ref{a:prop:majorant} that
$\supp b_\lambda\subset\supp\nabla_y\bar\rho(0)$.
Denote $\bar d_n=30q+140q(n-2)$. By \eqref{a:eq:majorant} and
\eqref{a:eq:factorialseparation},
\[
\norm{D^m\bar r_n(0)}_{H^8}
\leq(4\Lambda_{\mathrm G})^m(m!)^2
(4\Lambda_{\mathrm G})^{\bar d_n}(\bar d_n!)^2,
\qquad m\geq0,\ n\geq2.
\]
Using $\bar\mu\leq\lambda^{1/Q}$ and $N\leq\lambda^\nu$, for
$2\leq n<N$ we obtain
\[
\begin{aligned}
\frac{\lambda^{-n-1}(4\Lambda_{\mathrm G})^{\bar d_{n+1}}
(\bar d_{n+1}!)^2}
{\lambda^{-n}(4\Lambda_{\mathrm G})^{\bar d_n}(\bar d_n!)^2}
&=\lambda^{-1}(4\Lambda_{\mathrm G})^{140q}
\prod_{j=1}^{140q}(\bar d_n+j)^2\\
&\leq C_q\lambda^{-1+140C_0q/Q+280q\nu}
\leq\lambda^{-1/2}\leq\tfrac12,
\end{aligned}
\]
provided
\[
\nu\leq\frac1{1120q},\qquad
\frac{140C_0q}{Q}\leq\frac18,\qquad
\lambda\geq\max(4,C_q^8).
\]
These conditions are compatible with the preceding choices; the fixed
frequency threshold can be absorbed into $Q$ since $\bar\mu\geq2$.
Since $\bar d_2=30q$, the geometric sum yields
\[
\begin{aligned}
\norm{D^m b_\lambda}_{H^8}
&\leq(4\Lambda_{\mathrm G})^m(m!)^2
\sum_{n=2}^N\lambda^{-n}(4\Lambda_{\mathrm G})^{\bar d_n}
(\bar d_n!)^2\\
&\leq2\lambda^{-2}(4\Lambda_{\mathrm G})^{30q}((30q)!)^2
(4\Lambda_{\mathrm G})^m(m!)^2\\
&\leq\lambda^{-2}\bar\mu^C(\bar\mu^C)^m(m!)^2.
\end{aligned}
\]
This proves \eqref{a:eq:initialslowstrong}. Multiplying the second bound by
$(8\Lambda_{\mathrm G})^{-m}/(m!)^2$ and summing, with
$\sum_{m\geq0}2^{-m}=2$, yields
\[
\sum_{m\geq0}\frac{(8\Lambda_{\mathrm G})^{-m}}{(m!)^2}
\norm{D^m b_\lambda}_{H^8}
\leq4\lambda^{-2}(4\Lambda_{\mathrm G})^{30q}((30q)!)^2
\leq\lambda^{-2}\bar\mu^C.
\]
For every $k\in\mathbb N_0$, \eqref{a:eq:initialslowstrong} also yields
$\norm{b_\lambda}_{H^k}\leq C_k\bar\mu^{C_k}\lambda^{-2}$,
with the same truncation and frequency threshold for all $k$.\qedhere
\end{proof}

\subsection{The new solution and its particle map}\label{sec:realization}
We complete the proof of Theorem~\ref{thm:main} by removing the residual
and estimating the correction and its particle map in Gevrey norms.

\subsubsection{The approximate particle map}\label{a:sec:maps}
Let $X^a$ be the flow of $u^a$ and denote $Y^a=X^{-1}\circ X^a$, so that
\[
\partial_tX^a=u^a\circ X^a,\qquad X^a(0)=\id.
\]
It follows from \eqref{a:eq:relativevelocity} that
\[
\partial_tY^a=(V^0+R^a)\circ Y^a,\qquad Y^a(0)=\id.
\]

\begin{lemma}\label{a:lem:smallflow}
Let $d\in\{2,3\}$, $0<S\leq1$, and
$V\in C([0,S];C^\infty(\mathbb T^d;\mathbb R^d))$. Suppose that, for some
$C_0\geq0$ and $\Lambda\geq1$,
\[
\norm{D^nV(t)}_\infty\leq C_0\Lambda^n(n!)^2,
\quad n\geq0,\ 0\leq t\leq S,\qquad C_0\Lambda\leq c_d,
\]
where $c_d>0$ is sufficiently small and depends only on $d$.
The flow $\Phi$ of $V$ starting from the identity satisfies
\begin{equation}\label{a:eq:smallflow}
\sup_{0\leq t\leq S}\left(
\norm{D^n(\Phi-\id)}_\infty+
\norm{D^n(\Phi^{-1}-\id)}_\infty\right)
\leq CC_0(C\Lambda)^n(n!)^2,\qquad n\geq0,
\end{equation}
where $C$ depends only on $d$.
\end{lemma}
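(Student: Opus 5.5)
We prove Lemma~\ref{a:lem:smallflow} by working with the displacement $W=\Phi-\id$. It satisfies
\[
\partial_tW=V(t,\id+W),\qquad W(0)=0,
\]
and we estimate its Gevrey coefficients by a majorant argument. This runs parallel to the composition estimate of Lemma~\ref{a:lem:calculus}(c) and to the inverse-map recursion in the proof of Lemma~\ref{a:lem:graph}.

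Set
\[
w_n(t)=\frac{\norm{D^nW(t)}_\infty}{(n!)^2}.
\]
Since $S\le1$, we have $w_0\le C_0$. For $n\ge1$, differentiate the equation $n$ times with Fa\`a di Bruno. The map $\id+W$ has first derivative $I+DW$ and higher derivatives $D^kW$. This gives
\[
\frac{d}{dt}\norm{D^nW}_\infty\le \sum_{l=1}^{n}\norm{D^lV}_\infty\sum_{k_1+\dots+k_l=n}\frac{n!}{l!\prod k_i!}\prod_{i}\norm{D^{k_i}(\id+W)}_\infty .
\]
Divide by $(n!)^2$ and use the factorial inequality $l!\prod k_i!\le n!$, exactly as in Lemma~\ref{a:lem:calculus}(c). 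Integrating in time over $[0,S]$ then yields the recursive bound
\[
w_n\le C_0\sum_{l=1}^n (C\Lambda)^l\sum_{k_1+\dots+k_l=n}\prod_i \bigl(\delta_{k_i1}+w_{k_i}\bigr).
\]

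We dominate this recursion by a formal series. Let $\mathcal W(\zeta)=\sum_{n\ge1}W_n\zeta^n$ solve the algebraic equation
\[
\mathcal W=C_0\sum_{l\ge1}\bigl(C\Lambda(\zeta+\mathcal W)\bigr)^l=\frac{C_0C\Lambda(\zeta+\mathcal W)}{1-C\Lambda(\zeta+\mathcal W)}.
\]
By induction, $w_n\le W_n$, since the recursion is triangular: the term $l=1$, $k_1=n$ involves $w_n$ itself only through $C_0C\Lambda w_n$. Because $C_0C\Lambda\le 1/2$ by the smallness $C_0\Lambda\le c_d$, that term is absorbed into the left side. The implicit function theorem shows $\mathcal W$ is analytic in $|\zeta|<(C'\Lambda)^{-1}$ with $|\mathcal W|\le C C_0\Lambda|\zeta|$. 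Cauchy estimates then give
\[
W_n\le CC_0(C\Lambda)^n,
\]
hence $\norm{D^n(\Phi-\id)}_\infty\le CC_0(C\Lambda)^n(n!)^2$. (Alternatively, the coefficients are explicit Catalan-type numbers, as in the proof of Lemma~\ref{a:lem:graph}.)

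For the inverse, note $\Phi^{-1}(t)$ is the time-$t$ value of the backward flow of the time-reversed field $-V(t-s)$ on $[0,t]$. That field satisfies the same hypotheses with the same $C_0$ and $\Lambda$, so the same argument applies verbatim. Alternatively, one can write $\Phi^{-1}-\id=-W\circ\Phi^{-1}$ and use $D\Phi^{-1}=(I+DW)^{-1}\circ\Phi^{-1}$, which is invertible with bounded Neumann series since $\norm{DW}_\infty\le CC_0\Lambda\le1/2$, and run the recursion of Lemma~\ref{a:lem:graph}. The first route is cleaner. Torus periodicity is harmless because $W$ is periodic and the flow exists globally by smoothness of $V$.

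\textbf{The main obstacle} is the bookkeeping in the majorant step. The constant must stay of the form $CC_0$, not $C$, uniformly in $n$. This requires that every term in the Fa\`a di Bruno expansion carries exactly one factor of $V$, and that the $\delta_{k_i1}$ terms from the identity are counted correctly. It also requires that the factorial ratio is at most $1$ with only the geometric loss $C^n$. The smallness $C_0\Lambda\le c_d$ is exactly what keeps the singularity of the majorant series at radius comparable to $\Lambda^{-1}$.
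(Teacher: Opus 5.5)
Your argument is correct, but it takes a different route from the paper. The paper writes no derivative recursion for the flow. It places $V(t)$ in the Gevrey norm $|\cdot|_{2,s}$ of the external Lemma~B.11 it invokes, with $s=(2\Lambda)^{-1/2}$, and notes that $\sup_t|V(t)|_{2,s}\le CC_0$. It treats $C_0\Lambda\le c_d$ as that lemma's smallness hypothesis and runs a Picard iteration for $\Phi=\id+\int_0^tV\circ\Phi\,d\tau$ in the Gevrey space, with radius loss $s\to s/2$. The identity $(s/2)^{-2}=8\Lambda$ then turns the norm bound into the coefficient bound.

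You instead derive the triangular Fa\`a di Bruno recursion for $w_n$ and dominate it by the explicit majorant $\mathcal W=a(\zeta+\mathcal W)/(1-b(\zeta+\mathcal W))$, with $a=C_0C\Lambda$ and $b=C\Lambda$. This is a quadratic whose nearest branch point is at $\zeta=(1-\sqrt a)^2/b$. For $a\le 1/4$, a one-line bootstrap gives $\mathcal W(r)\le 2ar$ at $r=(8b)^{-1}$. Hence $W_n\le 2a\,r^{1-n}\le CC_0(C\Lambda)^n$, with the prefactor proportional to $C_0$ as required.

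Your version is self-contained and shows exactly where the smallness enters. It absorbs the single self-referential term $l=1$, $k_1=n$, and it keeps the singularity at distance $\sim\Lambda^{-1}$. It also matches the paper's own majorant treatment of the inverse map in the proof of Lemma~\ref{a:lem:graph}. The paper's version is shorter, but it rests on an outside lemma whose adaptation to time-dependent periodic fields is only asserted. Both obtain the inverse the same way, by time reversal.

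One point you should make explicit: the loss per step of your recursion must be $C^l$, as you wrote. It must not be the crude factor $C^n$ displayed in Lemma~\ref{a:lem:calculus}(c). A factor $C^n$ at every step compounds through the recursion into growth of order $C^{cn^2}$, which destroys the Gevrey bound. Run the recursion in multilinear operator norms, where Fa\`a di Bruno is lossless and $\norm{D\,\id}=1$. Convert to the coordinate sums $\norm{D^n\cdot}_\infty$ only once at the end, at cost $C^n$.

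Also write the derivative bound in integral form, with $w_n:=\sup_{[0,S]}\norm{D^nW}_\infty/(n!)^2$ rather than a time derivative of a sup norm. This quantity is finite since $V\in C([0,S];C^\infty)$, and that finiteness is what justifies absorbing the $l=1$ term.
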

\begin{proof}
Use the Gevrey norm of \cite[Lemma~B.11]{BF} on the $2\pi$-periodic torus,
with $\alpha=2$, $s=(2\Lambda)^{-1/2}$ and $\sigma=s/2$. Then
\[
\sup_{0\leq t\leq S}|V(t)|_{2,s}
\leq CC_0\sup_{n\geq0}(n+1)^2 2^{-n}\leq CC_0.
\]
Since $\sigma^2=(8\Lambda)^{-1}$, the condition $C_0\Lambda\leq c_d$
implies the smallness hypothesis of that lemma. Its Picard proof applies to
\[
\Phi(t)=\id+\int_0^tV(\tau)\circ\Phi(\tau)\,d\tau:
\]
the spatial estimates are uniform in time, and the geometric derivative
tail above implies continuity in this Gevrey norm. Hence
$\sup_{t\leq S}|\Phi(t)-\id|_{2,s/2}\leq CC_0$.
Since $(s/2)^{-2}=8\Lambda$, this yields the direct estimate in
\eqref{a:eq:smallflow}. For each $t_*\leq S$, apply the same argument to
$-V(t_*-\tau,\cdot)$ on $[0,t_*]$; its terminal map is
$\Phi(t_*,\cdot)^{-1}$.
\end{proof}

\begin{lemma}\label{a:lem:approxmap}
Under the hypotheses of Lemma~\ref{a:lem:approximation}, there are constants
$C,Q,L\geq1$ and $c>0$ such that, for $\lambda\geq\bar{\mu}^Q$ and
$n\in\mathbb N_0$,
\begin{equation}\label{a:eq:approxmapestimates}
\begin{split}
&\sup_{0\leq t\leq S}\left(
\norm{X^a-X}_{L^\infty_y}+
\norm{(X^a)^{-1}-X^{-1}}_{L^\infty_x}\right)
\leq \bar{\mu}^C\lambda^{-1},\\
&\sup_{0\leq t\leq S}\left(
\norm{D^n(D_yX^a)}_{L^\infty_y}+\norm{D^n((D_yX^a)^{-1})}_{L^\infty_y}\right)
\leq \bar{\mu}^C(\lambda \bar{\mu}^C)^n(n!)^2,
\end{split}
\end{equation}
where the second estimate also holds for
$\sup_{0\leq t\leq S}\norm{D_x^n(D_x(X^a)^{-1})}_{L^\infty_x}$.
Moreover,
\begin{equation}\label{a:eq:detapprox}
\det(D_yX^a)=1, \quad t\in[0,S],
\end{equation}
\begin{equation}\label{a:eq:approxflowcoeff}
\sup_{0\leq t\leq S}\norm{D^n((\nabla_x\rho^a)\circ X^a)}_{L^\infty_y}
\leq \bar{\mu}^C(\lambda \bar{\mu}^C)^n(n!)^2.
\end{equation}
For $\kappa_1=\lambda^{-1}\bar{\mu}^{-L}$ and $0<\kappa\leq\kappa_1$,
\[
\sup_{0\leq t\leq S}\G_{1,\kappa}(\mathcal R\circ X^a)
\leq\exp(-c\lambda^\nu\log\lambda).
\]
The constants are independent of $n,N,\lambda,\bar{\mu},\Gamma,S$.
\end{lemma}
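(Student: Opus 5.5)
The approach is to work in the relative coordinate $Y^a=X^{-1}\circ X^a$, whose equation $\partial_tY^a=(V^0+R^a)\circ Y^a$, $Y^a(0)=\id$, was derived from \eqref{a:eq:relativevelocity}. The principal part $V^0$ has size $\lambda^{-1}\bar\mu^C$ but oscillates at frequency $\lambda$. Its derivative bounds therefore have the form $\norm{D^nV^0}_\infty\leq\lambda^{-1}\bar\mu^C(\lambda\bar\mu^C)^n(n!)^2$, from \eqref{a:eq:data} and Lemma~\ref{a:lem:calculus}(c) applied to $f(\lambda\phi_0)$. The remainder $R^a$ obeys \eqref{a:eq:Ra}, which is even better. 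The field $V=V^0+R^a$ thus satisfies the hypotheses of Lemma~\ref{a:lem:smallflow} with $C_0=\lambda^{-1}\bar\mu^{C}$ and $\Lambda=\lambda\bar\mu^C$. The smallness requirement $C_0\Lambda\leq c_2$ would then read $\bar\mu^{2C}\leq c_2$, which \emph{fails}. This is the main obstacle.

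To get around it, I would rescale. Since $V$ is only locally supported in the chart, I would work on a dilated torus, or apply Lemma~\ref{a:lem:smallflow} to $\tilde V(t,z)=\lambda\bar\mu^{C'}V(t,z/(\lambda\bar\mu^{C'}))$ after lifting, together with a time rescaling $\tau=t\,\bar\mu^{-C''}$ subdividing $[0,S]$ into $\bar\mu^{C''}$ pieces. Each piece has an effective $C_0\Lambda\leq c_2$. Composing the resulting flows then gives the Gevrey bounds $\norm{D^n(Y^a-\id)}_\infty\leq\bar\mu^C\lambda^{-1}(\lambda\bar\mu^C)^n(n!)^2$, using the composition estimate of Lemma~\ref{a:lem:calculus}(c). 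The number of compositions is $\bar\mu^{C''}$, independent of $\lambda$. Each composition multiplies the derivative scale by at most $C(1+L)$ with $L=O(1)$, so the scale grows only by a $\bar\mu$-power.

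An alternative, if composition counting is awkward, is a direct Gronwall argument on $\partial_t D^n Y^a$ with the majorant-series method used for $D_xY$ in Lemma~\ref{a:lem:graph}. The key point is $\norm{DV}_\infty\leq\bar\mu^C$, so the linear growth factor is $e^{\bar\mu^C S}$. This is not polynomial in $\bar\mu$, so I would first check whether $\norm{DV^0}_\infty$ is in fact small. It is: $DV^0=\mu\chi(D_yX)^{-1}m(\xi)\otimes\nabla\phi_0 f'+O(\lambda^{-1})$, and the transverse structure $m(\xi)\cdot\xi=0$ makes $(D_yX)^{-1}m(\xi)\cdot\nabla_y\phi_0=0$. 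Hence the flow moves along level sets of the phase, and $V^0\cdot\nabla_y(\lambda\phi_0)=0$ to leading order. This yields polynomial control after straightening $\phi_0$.

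With $Y^a$ controlled, $X^a=X\circ Y^a$ gives the $L^\infty$ estimate $\bar\mu^C\lambda^{-1}$ since $\norm{DX}\leq\bar\mu$. The bound for $(X^a)^{-1}=(Y^a)^{-1}\circ X^{-1}$ follows the same way. The Gevrey bounds for $D_yX^a=(D_yX)\circ Y^a\,D_yY^a$ and its inverse follow from Lemma~\ref{a:lem:calculus}(a),(c). The physical inverse estimate is proved as in Lemma~\ref{a:lem:graph}. The identity \eqref{a:eq:detapprox} is Jacobi's formula together with $\diver u^a=0$. Next, $(\nabla_x\rho^a)\circ X^a=((\nabla_x\rho^a)\circ X)\circ Y^a$, and \eqref{a:eq:approxflowcoeff} follows from \eqref{a:eq:approxjetestimates} and composition. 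Finally, $\mathcal R\circ X^a=(\mathcal R\circ X)\circ Y^a$ has $\det DY^a=1$. The $L^2$ composition estimate then transfers \eqref{a:eq:joint} with scale enlarged by a $\bar\mu$-power. Summing as in the derivation of \eqref{a:eq:Gresidual} with $\kappa\leq\lambda^{-1}\bar\mu^{-L}$ gives the Gevrey residual bound, since $\G_{1,\kappa}$ is monotone in $\kappa$.
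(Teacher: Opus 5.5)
Your reduction to $Y^a=X^{-1}\circ X^a$ matches the paper, and so does everything after the flow estimate: composition with $X$, the inverse, Jacobi's formula, $(\nabla_x\rho^a)\circ X^a=((\nabla_x\rho^a)\circ X)\circ Y^a$, and the residual via $\det D_yY^a=1$ and monotonicity in $\kappa$. The gap is the estimate for $Y^a$ itself.

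\textbf{Why the main route fails.} Spatial dilation does not help, because $C_0\Lambda$ is dilation-invariant: it measures $\norm{DV}_\infty$, which is scale-free. Subdividing $[0,S]$ into $\bar\mu^{C''}$ pieces makes each piece fine, but it only moves the loss into the composition step. Each use of Lemma~\ref{a:lem:calculus}(c) multiplies the derivative scale by $C(1+L)\geq2$, so $\bar\mu^{C''}$ compositions cost a factor $2^{\bar\mu^{C''}}$, not a power of $\bar\mu$. Already at first order, the best your scheme gives is $\norm{D_yY^a}\leq\prod_k(1+C\bar\mu^{2C-C''})\approx\mathrm e^{C\bar\mu^{2C}}$. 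That is exactly the Gronwall factor you flag in your next paragraph. No bookkeeping can avoid this, because your only input, $\norm{D^nV}_\infty\leq\lambda^{-1}\bar\mu^C(\lambda\bar\mu^C)^n(n!)^2$, is also satisfied by the divergence-free cellular field $V=\lambda^{-1}\bar\mu^C(-\sin\lambda y_1\cos\lambda y_2,\ \cos\lambda y_1\sin\lambda y_2)$. Its flow fixes the origin with $\norm{D\Phi(t,0)}=\mathrm e^{\bar\mu^Ct}$. Some structure of $V^0$ has to be used.

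\textbf{Why the fallback is incomplete.} You do find the right structure, $V^0\cdot\nabla_y\phi_0=0$. It holds exactly, not just to leading order, since $(D_yX)^{-1}m(\xi)\cdot\nabla_y\phi_0=m(\xi)\cdot\xi=0$; this is \eqref{a:eq:tangency}. But you draw the wrong conclusion from it. $DV^0$ is not small: its leading part $\mu\chi f'(\lambda\phi_0)(D_yX)^{-1}m(\xi)\otimes\nabla_y\phi_0$ has size $\bar\mu^C$ and is merely nilpotent. Also, ``polynomial control after straightening $\phi_0$'' is not an argument that closes at all Gevrey orders, and it does not say how $R^a$ is absorbed.

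\textbf{The missing idea is to lift the phase.}
\begin{enumerate}
\item Write $V^0=\lambda^{-1}b(t,y)f(\lambda\phi_0(y))$ with the slow coefficient $b=\mu\chi(D_yX)^{-1}m(\xi)$. Solve \eqref{a:eq:liftedflow} on $\T^2\times(\R/2\pi\mathbb Z)$ with $\omega$ frozen.
\item That lifted field has derivative scale $\bar\mu^C$ with no factor of $\lambda$, so Lemma~\ref{a:lem:smallflow} applies with $C_0\Lambda=\lambda^{-1}\bar\mu^{2C}$, which is small.
\item Tangency gives $\phi_0\circ\Psi=\phi_0$, hence $Y^0(t,y)=\Psi(t,y,\lambda\phi_0(y))$ is the flow of $V^0$, as in \eqref{a:eq:Lgraph}. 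The frequency $\lambda$ then enters only through a single composition with $y\mapsto(y,\lambda\phi_0(y))$, at polynomial cost in $\bar\mu$.
\item The remainder is split off by $Y^a=Y^0\circ Z^a$, as in \eqref{a:eq:approxflowfactor}. The driving field $(D_yY^0)^{-1}(R^a\circ Y^0)$ has size $\lambda^{-2}\bar\mu^C$ at scale $\lambda\bar\mu^C$, so $C_0\Lambda=\lambda^{-1}\bar\mu^{2C}$ is small again.
\end{enumerate}
Treating $V^0+R^a$ as a single field, as you do, forfeits this split. With $Y^a$ controlled this way, your remaining steps go through as written.
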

\begin{proof}
Denote $b=\mu\chi(D_yX)^{-1}m(\xi)$ on $\mathcal O$ and extend it by zero.
It follows from \eqref{a:eq:data}--\eqref{a:eq:carriersize},
Lemma~\ref{a:lem:calculus}(a), and \eqref{a:eq:murelative} that
\[
\supp_y b\subset\supp\chi\Subset\mathcal O,\qquad
\norm{D_y^n b}_\infty\leq\bar\mu^C(\bar\mu^C)^n(n!)^2.
\]
Moreover,
\begin{equation}\label{a:eq:tangency}
b\cdot\nabla_y\phi_0=\mu\chi\,m(\xi)\cdot\xi=0.
\end{equation}
For an independent $\omega\in\R/(2\pi\mathbb Z)$, consider the lifted flow
\begin{equation}\label{a:eq:liftedflow}
\begin{aligned}
\partial_t\Psi(t,y,\omega)
&=\lambda^{-1}b(t,\Psi(t,y,\omega))f(\omega),\\
\Psi(0,y,\omega)&=y.
\end{aligned}
\end{equation}
The field $(\lambda^{-1}bf,0)$ satisfies Lemma~\ref{a:lem:smallflow}
with $C_0=\lambda^{-1}\bar\mu^C$ and $\Lambda=\bar\mu^C$;
increasing $Q$ ensures $C_0\Lambda=\lambda^{-1}\bar\mu^{2C}\leq c_3$.
Denoting by $\Psi^-$ the inverse in $y$ at fixed $\omega$, we obtain
\[
\norm{D_{y,\omega}^n(\Psi-y)}_\infty+
\norm{D_{y,\omega}^n(\Psi^--y)}_\infty
\leq\lambda^{-1}\bar\mu^C(\bar\mu^C)^n(n!)^2,\qquad n\geq0.
\]
Both displacements vanish outside $\supp\chi$ by uniqueness, so the maps preserve $\mathcal O$.
It follows from \eqref{a:eq:tangency} that, for $y\in\mathcal O$,
\[
\phi_0(\Psi(t,y,\omega))
=\phi_0(\Psi^-(t,y,\omega))=\phi_0(y).
\]
Thus the flow of $V^0$ and its inverse are
\begin{equation}\label{a:eq:Lgraph}
\begin{split}
Y^0(t,y)&=\Psi(t,y,\lambda\phi_0(y)),\\
(Y^0)^{-1}(t,y)&=\Psi^-(t,y,\lambda\phi_0(y)),
\end{split}
\end{equation}
on $\mathcal O$, with identity extension outside.
Applying Lemma~\ref{a:lem:calculus}(c) to the displacements yields
\begin{equation}\label{a:eq:Lestimates}
\begin{split}
&\norm{Y^0-\id}_\infty+\norm{(Y^0)^{-1}-\id}_\infty
\leq \bar{\mu}^C\lambda^{-1},\\
&\norm{D_y^kY^0}_\infty+\norm{D_y^k(Y^0)^{-1}}_\infty
\leq \bar{\mu}^C(\lambda \bar{\mu}^C)^{k-1}(k!)^2, \quad k\geq1.
\end{split}
\end{equation}
We factor the full flow as
\begin{equation}\label{a:eq:approxflowfactor}
\begin{split}
&Y^a(t,y)=Y^0(t,Z^a(t,y)),\qquad Z^a(0)=\id,\\
&\partial_tZ^a=
\bigl[(D_yY^0)^{-1}(R^a\circ Y^0)\bigr]\circ Z^a.
\end{split}
\end{equation}
Differentiating $Y^0\circ Z^a$ verifies the equation for $Y^a$.
Using $(D_yY^0)^{-1}=D_y((Y^0)^{-1})\circ Y^0$,
\eqref{a:eq:Lestimates}, and Lemma~\ref{a:lem:calculus}(a),(c), we obtain
\[
\norm{D_y^n(D_yY^0)^{-1}}_\infty
\leq\bar\mu^C(\lambda\bar\mu^C)^n(n!)^2.
\]
Together with \eqref{a:eq:Ra}, the same product and composition estimates yield
\[
\norm{D_y^n[(D_yY^0)^{-1}(R^a\circ Y^0)]}_\infty
\leq\lambda^{-2}\bar\mu^C(\lambda\bar\mu^C)^n(n!)^2.
\]
Lemma~\ref{a:lem:smallflow}, now with
$C_0=\lambda^{-2}\bar\mu^C$ and $\Lambda=\lambda\bar\mu^C$,
applies after increasing $Q$ so that $C_0\Lambda\leq c_2$, and yields
\[
\norm{D_y^n(Z^a-\id)}_\infty+
\norm{D_y^n((Z^a)^{-1}-\id)}_\infty
\leq\lambda^{-2}\bar\mu^C(\lambda\bar\mu^C)^n(n!)^2.
\]
Since $Y^a=Y^0\circ Z^a$ and
$(Y^a)^{-1}=(Z^a)^{-1}\circ(Y^0)^{-1}$, the positive-order composition
argument preserves the frequency factor:
\[
\lambda^{l-1}\prod_{i=1}^l\lambda^{k_i-1}
=\lambda^{k-1},\qquad
k_i\geq1,\quad\sum_{i=1}^l k_i=k.
\]
Together with the displacement estimates, this proves
\eqref{a:eq:Lestimates} for $Y^a$ and $(Y^a)^{-1}$.
The physical maps satisfy
\[
X^a=X\circ Y^a,\qquad
(X^a)^{-1}=(Y^a)^{-1}\circ X^{-1}.
\]
Hence
\[
\begin{aligned}
\norm{X^a-X}_{L^\infty_y}+\norm{(X^a)^{-1}-X^{-1}}_{L^\infty_x}
&\leq\norm{D_yX}_\infty\norm{Y^a-\id}_\infty
+\norm{(Y^a)^{-1}-\id}_\infty\\
&\leq\bar\mu^C\lambda^{-1}.
\end{aligned}
\]
Applying the same positive-order composition argument,
\eqref{a:eq:Lestimates} for $Y^a$ and its inverse, and the background-map
estimates from \eqref{a:eq:data} and the proof of Lemma~\ref{a:lem:graph}, we infer
\[
\norm{D_y^kX^a}_\infty+\norm{D_x^k(X^a)^{-1}}_\infty
\leq\bar\mu^C(\lambda\bar\mu^C)^{k-1}(k!)^2,\qquad k\geq1.
\]
Since $\diver_xu^a=0$, we have $\det D_yX^a=1$, proving
\eqref{a:eq:detapprox}, and consequently $\det D_yY^a=1$.
In two dimensions,
\[
(D_yX^a)^{-1}=\cof(D_yX^a)^T,\qquad
\norm{D_y^n((D_yX^a)^{-1})}_\infty
\leq C\norm{D_y^{n+1}X^a}_\infty.
\]
Taking $k=n+1$ and absorbing $(n+1)^2\leq4^{n+1}$ proves
\eqref{a:eq:approxmapestimates} and the physical inverse-map estimate.

The identity
$(\nabla_x\rho^a)\circ X^a=((\nabla_x\rho^a)\circ X)\circ Y^a$,
\eqref{a:eq:approxjetestimates}, and Lemma~\ref{a:lem:calculus}(c) prove
\eqref{a:eq:approxflowcoeff}.
For the residual, apply the same composition estimate to
$\mathcal R\circ X^a=(\mathcal R\circ X)\circ Y^a$.
By \eqref{a:eq:Gresidual}, \eqref{a:eq:Lestimates}, and $\det D_yY^a=1$,
there is a fixed sufficiently large $B\geq C_1$ such that
\[
\norm{D_y^k(\mathcal R\circ X^a)}_{L^2_y}
\leq\exp(-c_1\lambda^\nu\log\lambda)
(\lambda\bar\mu^B)^k(k!)^2,\qquad k\geq0.
\]
Choose $L=B+3$ and $\kappa_1=\lambda^{-1}\bar\mu^{-L}$, so
$\kappa_1\lambda\bar\mu^B\leq1/8$. It follows that
\[
\begin{aligned}
\G_{1,\kappa_1}(\mathcal R\circ X^a)
&\leq C\exp(-c_1\lambda^\nu\log\lambda)
\sum_{m\geq0}\sum_{j=0}^4
8^{-(m+j)}\left(\frac{(m+j)!}{m!}\right)^2\\
&\leq C\exp(-c_1\lambda^\nu\log\lambda)
\leq\exp\!\left(-\frac{c_1}{2}\lambda^\nu\log\lambda\right)
\end{aligned}
\]
after increasing $Q$.
Monotonicity in $\kappa$ yields the estimate for $0<\kappa\leq\kappa_1$.
\end{proof}

\subsubsection{The remaining residual correction}\label{a:sec:finite}
\begin{lemma}\label{a:lem:finite}
Let $(\rho^a,u^a)$ be the approximation from Lemma~\ref{a:lem:approximation}.
There is a fixed $Q\geq1$ such that, for $\lambda\geq\bar\mu^Q$, the unique
smooth solution $\rho^+$ of \eqref{eq:ipm} with $\rho^+(0)=\rho^a(0)$ has
maximal smooth lifespan $T_+>S$. The correction $e=\rho^+-\rho^a$ satisfies
\begin{equation}\label{a:eq:finitecorrection}
\sup_{0\leq t\leq S}\norm e_{H^4_{1/\lambda,x}}
\leq\exp(-\lambda^{\nu/2}).
\end{equation}
\end{lemma}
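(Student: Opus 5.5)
The plan is a bootstrap energy estimate for the error equation
\[
e_t+(u^a+\cT e)\cdot\nabla_x e+(\cT e)\cdot\nabla_x\rho^a=-\mathcal R,\qquad e(0)=0,
\]
carried out in the semiclassical norm $\norm{\cdot}_{H^4_{1/\lambda,x}}$ (equivalently, weighted $L^2$ norms of $\lambda^{-|I|}\partial_x^Ie$ for $|I|\le4$). Since $\cT$ is a Fourier multiplier bounded by one and commuting with derivatives, $\norm{\cT e}_{H^4_{1/\lambda}}\le\norm e_{H^4_{1/\lambda}}$. By local theory (Lemma~\ref{lem:local}) $\rho^+$ exists on some interval, and $e$ is smooth there. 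Let $T_1\le\min(S,T_+)$ be the maximal time on which $\norm e_{H^4_{1/\lambda}}\le\exp(-\lambda^{\nu/4})$. The goal is to improve this to the bound \eqref{a:eq:finitecorrection} on $[0,T_1]$.

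\textbf{Energy inequality.} Apply $\lambda^{-|I|}\partial^I$ and test with $\lambda^{-|I|}\partial^Ie$. The transport term $u^a\cdot\nabla e$ contributes nothing at top order because $\diver u^a=0$. Its commutators cost $\norm{\nabla u^a}_\infty$ plus terms $\lambda^{-k}\norm{\nabla^{k+1}u^a}_\infty\lambda^{k}$. By \eqref{a:eq:approxjetestimates}, which gives $\norm{D^m\nabla_x u^a}_\infty\le\bar\mu^C(\lambda\bar\mu^C)^m(m!)^2$ in particle coordinates and transfers to physical coordinates via the $X$-estimates, each such commutator is bounded by $\bar\mu^{C}\norm e_{H^4_{1/\lambda}}$. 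The linear term $(\cT e)\cdot\nabla\rho^a$ is estimated by Leibniz's rule in the same way, again giving $\bar\mu^C\norm e_{H^4_{1/\lambda}}$.

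The nonlinear term $(\cT e)\cdot\nabla e$ needs $\norm{\nabla e}_\infty$. Semiclassical Sobolev embedding (the Fourier bound from the proof of Lemma~\ref{a:lem:approximation}) gives $\norm{\nabla^\ell w}_\infty\le C\lambda^{1+\ell}\norm w_{H^4_{1/\lambda}}$, so this term is at most $C\lambda^{2}\norm e_{H^4_{1/\lambda}}^2$. For the forcing, $\norm{\mathcal R}_{H^4_{1/\lambda,x}}$ is controlled by converting \eqref{a:eq:Gresidual} (the $m=0$ term of $\G_{1,\kappa}$ with $\kappa\ge(\lambda\bar\mu^{C_1})^{-1}$) from $y$ to $x$. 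This conversion uses $\det D_yX=1$ and the derivative bounds on $X^{-1}$ from Lemma~\ref{a:lem:graph}, and costs at most $\bar\mu^{C}$. The result is $\norm{\mathcal R}_{H^4_{1/\lambda,x}}\le\bar\mu^C\exp(-c_1\lambda^\nu\log\lambda)$.

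\textbf{Closing the bootstrap.} Altogether,
\[
\frac{d}{dt}\norm e_{H^4_{1/\lambda}}\le \bar\mu^{C}\bigl(1+\lambda^{2}\norm e_{H^4_{1/\lambda}}\bigr)\norm e_{H^4_{1/\lambda}}+\bar\mu^C e^{-c_1\lambda^\nu\log\lambda}.
\]
On the bootstrap interval $\lambda^2\norm e\le1$, so Gronwall with $S\le1$ gives
\[
\norm e_{H^4_{1/\lambda}}\le \bar\mu^Ce^{2\bar\mu^C}e^{-c_1\lambda^\nu\log\lambda}.
\]
Since $\bar\mu\le\lambda^{1/Q}$, we have $\bar\mu^C\le\lambda^{C/Q}\ll\lambda^{\nu/2}$ once $Q$ is large compared to $1/\nu$. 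This is the \emph{main delicate point}: the Gronwall exponent $e^{\bar\mu^C}$ must be beaten by the Gevrey residual, which requires $\bar\mu^C\le c\lambda^\nu\log\lambda$. The bound is therefore $\le\exp(-\lambda^{\nu/2})$, which strictly improves the bootstrap assumption, so $T_1=\min(S,T_+)$.

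\textbf{Continuation past $S$.} Finally, $\norm e_{H^3}\le\lambda^{3}\norm e_{H^4_{1/\lambda}}$ is tiny, and $\rho^a$ is smooth on $[0,S]$, so $\norm{\rho^+}_{H^3}$ stays bounded on $[0,\min(S,T_+))$. The continuation criterion of Lemma~\ref{lem:local} then forces $T_+>S$. Uniqueness is given by Lemma~\ref{lem:local}.
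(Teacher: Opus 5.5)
Your proposal is correct and follows the paper's proof essentially step for step. The shared steps are the weighted $H^4_{1/\lambda,x}$ energy inequality $\frac{d}{dt}E\leq C\bar\mu^CE+C\lambda^2E^2+\norm{\mathcal R}_{H^4_{1/\lambda,x}}$, the conversion of \eqref{a:eq:Gresidual} to physical coordinates at cost $\bar\mu^C$, a bootstrap closed by taking $Q\nu$ large so that $\exp(C\bar\mu^C)$ is beaten by $\exp(-c_1\lambda^\nu\log\lambda)$, and the $H^3$ continuation criterion of Lemma~\ref{lem:local} to get $T_+>S$. The differences are minor. Your bootstrap threshold $\exp(-\lambda^{\nu/4})$ replaces the paper's $C\lambda^2E\leq1$, which is cosmetic. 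Your nonlinear commutator bound should invoke the $\ell=0,1$ embedding for both $e$ and $\cT e$ (as the paper's Fourier/Young estimate does), not only $\norm{\nabla e}_\infty$.
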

\begin{proof}
Lemma~\ref{lem:local} yields a unique maximal smooth solution $\rho^+$ with
$\rho^+(0)=\rho^a(0)$. On $0\leq t<\min(S,T_+)$, denote
$v=\cT e=u^+-u^a$. Subtracting the approximate equation from \eqref{eq:ipm} yields
\begin{equation}\label{a:eq:errorphysical}
e_t+(u^a+\cT e)\cdot\nabla_x e+(\cT e)\cdot\nabla_x\rho^a
= -\mathcal R,\qquad e(0)=0.
\end{equation}
Denote $E(t)=\norm{e(t)}_{H^4_{1/\lambda,x}}$ as in \eqref{a:eq:scalednorm}.
Since $|m(\ell)|\leq1$, we have $\norm v_{H^4_{1/\lambda,x}}\leq E$.
Applying \eqref{a:eq:d} with the coefficient estimates \eqref{a:eq:data} to
\eqref{a:eq:approxjetestimates} yields
\begin{equation}\label{a:eq:finitecoeff}
\sum_{l=1}^5\lambda^{1-l}\norm{\nabla_x^l\rho^a}_\infty+
\sum_{l=1}^4\lambda^{1-l}\norm{\nabla_x^lu^a}_\infty\leq \bar{\mu}^C.
\end{equation}

\medskip\noindent\emph{The weighted energy estimate.}
For $|I|=k\leq4$,  \eqref{a:eq:finitecoeff} and
$\norm v_{H^4_{1/\lambda,x}}\leq E$ imply
\[
\lambda^{-k}\left(
\norm{[\partial_x^I,u^a\cdot\nabla_x]e}_2
+\norm{\partial_x^I(v\cdot\nabla_x\rho^a)}_2\right)
\leq C\bar\mu^CE.
\]
Fourier Cauchy--Schwarz and the lattice estimate in the proof of
Lemma~\ref{a:lem:approximation} yield
\[
\sum_{\ell\in\mathbb Z^2}|\ell|\,|\widehat e(\ell)|
\leq CE\left(\sum_{\ell\in\mathbb Z^2}
\frac{|\ell|^2}{(1+|\ell|^2/\lambda^2)^4}\right)^{1/2}
\leq C\lambda^2E.
\]
It follows from Young's convolution inequality and $|m(\ell)|\leq1$ that
\[
\begin{aligned}
\lambda^{-k}\norm{[\partial_x^I,v\cdot\nabla_x]e}_2
&\leq C\lambda^{-k}\left(\sum_{\ell\in\mathbb Z^2}
|\ell|\,|\widehat e(\ell)|\right)\norm{D_x^ke}_2\\
&\leq C\lambda^2E^2,\qquad 1\leq k\leq4.
\end{aligned}
\]
The commutator vanishes for $k=0$.
Applying $\lambda^{-|I|}\partial_x^I$ to \eqref{a:eq:errorphysical},
pairing with $\lambda^{-|I|}\partial_x^Ie$, and summing yields, since
$\diver_x(u^a+v)=0$,
\[
\begin{aligned}
\frac12\frac{d}{dt}E^2
&=-\sum_{|I|\leq4}\lambda^{-2|I|}
\bigl\langle[\partial_x^I,(u^a+v)\cdot\nabla_x]e,
\partial_x^Ie\bigr\rangle\\
&\quad-\sum_{|I|\leq4}\lambda^{-2|I|}
\bigl\langle\partial_x^I(v\cdot\nabla_x\rho^a),
\partial_x^Ie\bigr\rangle
-\sum_{|I|\leq4}\lambda^{-2|I|}
\bigl\langle\partial_x^I\mathcal R,\partial_x^Ie\bigr\rangle\\
&\leq C\bar\mu^CE^2+C\lambda^2E^3+
\norm{\mathcal R}_{H^4_{1/\lambda,x}}E,
\end{aligned}
\]
which implies
\begin{equation}\label{a:eq:finiteenergy}
\frac{d}{dt}\norm e_{H^4_{1/\lambda,x}}
\leq C\bar{\mu}^C\norm e_{H^4_{1/\lambda,x}}
+C\lambda^2\norm e_{H^4_{1/\lambda,x}}^2
+\norm{\mathcal R}_{H^4_{1/\lambda,x}}
\end{equation}
where the constants are independent of $\lambda$, $S$, $N$ and $\Gamma$.

\medskip\noindent\emph{The residual in physical coordinates.}
It follows from \eqref{a:eq:d}, \eqref{a:eq:data}, $\det D_yX=1$, and
\eqref{a:eq:Gresidual} that
\[
\norm{\mathcal R}_{H^4_{1/\lambda,x}}
\leq\bar{\mu}^C\norm{\mathcal R\circ X}_{H^4_{1/\lambda,y}}
\leq\bar{\mu}^{C+4C_1}
\G_{1,(\lambda\bar{\mu}^{C_1})^{-1}}(\mathcal R\circ X)
\leq\bar{\mu}^{C+4C_1}e^{-c_1\lambda^\nu\log\lambda}.
\]

\medskip\noindent\emph{Bootstrap and continuation.}
Choose a fixed $b>0$ so that the linear coefficient in
\eqref{a:eq:finiteenergy} and the residual prefactor above are bounded
by $C\bar{\mu}^b$ and $\bar{\mu}^b$, respectively. On the initial
bootstrap interval $C\lambda^2E\leq1$, which is nonempty since $E(0)=0$,
Gronwall's inequality yields
\[
E(t)\leq S\exp\!\left(b\log\bar{\mu}+C\bar{\mu}^b+1
-c_1\lambda^\nu\log\lambda\right).
\]
Choose $Q\nu>b$. Since $\bar{\mu}^b\leq\lambda^{b/Q}$ and
$\log\bar{\mu}\leq Q^{-1}\log\lambda$, increasing $Q$ yields
\[
\begin{aligned}
&b\log\bar{\mu}+C\bar{\mu}^b+1
\leq\tfrac12c_1\lambda^\nu\log\lambda,\\
&E(t)\leq S e^{-(c_1/2)\lambda^\nu\log\lambda}
<\min\{(2C\lambda^2)^{-1},e^{-\lambda^{\nu/2}}\}.
\end{aligned}
\]
All fixed frequency thresholds for $\lambda$ are absorbed by increasing $Q$,
since $\lambda\geq\bar{\mu}^Q\geq2^Q$. Thus continuity closes the
bootstrap on $[0,\min(S,T_+))$.

If $T_+\leq S$, then, for each fixed $\lambda$,
\[
\sup_{t<T_+}\norm{\rho^+(t)}_{H^3}
\leq\sup_{0\leq t\leq S}\norm{\rho^a(t)}_{H^3}
+C\lambda^3e^{-\lambda^{\nu/2}}<\infty.
\]
The existence time in Lemma~\ref{lem:local} depends only on the $H^3$
norm, so restarting the solution before $T_+$ contradicts maximality.
Hence $T_+>S$, and continuity at $S$ yields
\eqref{a:eq:finitecorrection}.
\end{proof}

\subsubsection{The pressure estimate in approximate particle coordinates $[0,S]$}\label{a:sec:sameradius}
Fix $t\in[0,S]$ throughout this subsection.
For a smooth scalar $\widetilde e$ on the label torus, denote
$e=\widetilde e\circ(X^a)^{-1}$ and
\begin{equation}\label{a:eq:pulledoperator}
\cT^a\widetilde e=(\cT e)\circ X^a,
\qquad V^e=(D_yX^a)^{-1}\cT^a\widetilde e.
\end{equation}
Let $\pi_e$ solve $-\Delta_x\pi_e=\partial_{x_2}e$ with zero mean, and denote
$\varpi=\pi_e\circ X^a$ and
\begin{equation}\label{a:eq:ellipticcoeff}
\mathsf A=(D_yX^a)^{-1}(D_yX^a)^{-T},\qquad
\mathsf b=(D_yX^a)^{-1}e_2.
\end{equation}
The Piola identity, $\det D_yX^a=1$, and
\eqref{a:eq:approxmapestimates} yield
\begin{equation}\label{a:eq:elliptic}
\begin{aligned}
-\diver_y(\mathsf A\nabla_y\varpi)
&=\diver_y(\mathsf b\widetilde e),\\
\cT^a\widetilde e
&=-(D_yX^a)^{-T}\nabla_y\varpi-\widetilde e\,e_2,
\end{aligned}
\end{equation}
and
\[
\begin{split}
&\bar\mu^{-C}I\leq\mathsf A\leq\bar\mu^CI,\qquad
\diver_y\mathsf b=0,\qquad \int_{\T^2}\varpi\,dy=0,\\
&V^e=-\mathsf A\nabla_y\varpi-\mathsf b\widetilde e,
\qquad \diver_yV^e=0.
\end{split}
\]
For $\widetilde e=c$ constant, $\varpi=0$, $\cT^ac=-ce_2$, and
$V^e=-c\mathsf b$; no mean-zero condition on the input is required.

For $L_\kappa>0$, take
\begin{equation}\label{a:eq:kappa}
\kappa = \lambda^{-1}\bar{\mu}^{-L_\kappa},\qquad D_\kappa=\kappa D_y.
\end{equation}
For $n_*\in\mathbb N_0$, define
\[
\G^{[n_*]}_{\gamma,\kappa}(w)
=\sum_{n=0}^{n_*}\frac{(\gamma\kappa)^n}{(n!)^2}
\norm{D^n w}_{H^4_\kappa}.
\]
\begin{proposition}\label{a:prop:elliptic}
For sufficiently large $L_\kappa$ and any $1/2\leq \gamma\leq1$, we have
\begin{equation}\label{a:eq:sameradius}
\G_{\gamma,\kappa}(\cT^a\widetilde e)+\G_{\gamma,\kappa}(V^e)
\leq \bar{\mu}^C\G_{\gamma,\kappa}(\widetilde e)
\end{equation}
The same estimate holds with $\G_{\gamma,\kappa}$ replaced by
$\G^{[n_*]}_{\gamma,\kappa}$ for every $n_*\in\mathbb N_0$.
The constant $C$ is independent of $\gamma$, $\lambda$, $n_*$, and $t$.
\end{proposition}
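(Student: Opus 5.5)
The plan is to prove \eqref{a:eq:sameradius} by a Gevrey elliptic regularity argument for the divergence-form equation \eqref{a:eq:elliptic}. We commute $D^n$ through the equation and close a triangular recursion of the type in Lemma~\ref{a:lem:calculus}(b). The key point is that the coefficients $\mathsf A,\mathsf b$ have derivative scale $\lambda\bar\mu^C$ by \eqref{a:eq:approxmapestimates}. This scale is absorbed by the weight $\kappa=\lambda^{-1}\bar\mu^{-L_\kappa}$ once $L_\kappa$ is large, so each coefficient derivative contributes only a small factor $\bar\mu^{C-L_\kappa}$.

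\emph{Base estimate.} Testing \eqref{a:eq:elliptic} with $\varpi$ and using $\bar\mu^{-C}I\leq\mathsf A$, $\|\mathsf b\|_\infty\leq\bar\mu^C$ gives $\|\nabla_y\varpi\|_2\leq\bar\mu^C\|\widetilde e\|_2$. This needs no mean-zero condition, since $\diver_y(\mathsf b\widetilde e)$ has zero mean automatically. For the semiclassical $H^4_\kappa$ norm we apply $\kappa^{|I|}\partial^I$ with $|I|\leq4$ and test with $\kappa^{|I|}\partial^I\varpi$. The commutator terms involve $\kappa^{j}\partial^{j}\mathsf A$ and $\kappa^j\partial^j\mathsf b$, which are bounded by $\bar\mu^C(\kappa\lambda\bar\mu^C)^j\leq\bar\mu^{C}$. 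Hence, inductively in $|I|$,
\[
\norm{\nabla_y\varpi}_{H^4_\kappa}\leq\bar\mu^C\norm{\widetilde e}_{H^4_\kappa}.
\]
The same argument applies to the general equation $-\diver_y(\mathsf A\nabla_y\varpi)=\diver_y F$, giving $\norm{\nabla_y\varpi}_{H^4_\kappa}\leq\bar\mu^C\norm F_{H^4_\kappa}$.

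\emph{Higher derivatives.} For $|I|=n$, $\partial^I\varpi$ solves the same equation with
\[
F=\partial^I(\mathsf b\widetilde e)+\sum_{0<J\leq I}\binom IJ(\partial^J\mathsf A)\nabla_y\partial^{I-J}\varpi .
\]
Multiplying by $(\gamma\kappa)^n/(n!)^2$, set $Z_n=\kappa^n\norm{D^n\nabla_y\varpi}_{H^4_\kappa}$. By \eqref{a:eq:approxmapestimates} and the cofactor formula we have $\kappa^l\norm{D^l\mathsf A}_{W^{4,\infty}_\kappa}\leq\bar\mu^C(\bar\mu^{C-L_\kappa})^l(l!)^2$. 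This yields
\[
Z_n\leq\bar\mu^C\Bigl(\kappa^n\norm{D^n(\mathsf b\widetilde e)}_{H^4_\kappa}+\sum_{l=1}^n\binom nl\bar\mu^{C}(\bar\mu^{C-L_\kappa})^l(l!)^2Z_{n-l}\Bigr).
\]
After weighting by $\gamma^n/(n!)^2$ and using $\binom nl(l!)^2((n-l)!)^2/(n!)^2=\binom nl^{-1}$ as in \eqref{a:eq:factorialproduct}, summing over $n\leq n_*$ gives a discrete convolution inequality. It has the form
\[
\G^{[n_*]}(\nabla\varpi)\leq\bar\mu^C\G^{[n_*]}(\mathsf b\widetilde e)+\bar\mu^{C}\sum_{l\geq1}(\bar\mu^{C-L_\kappa})^l\,\G^{[n_*]}(\nabla\varpi).
\]
For $L_\kappa$ large the last coefficient is at most $1/2$ and can be absorbed. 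This absorption is legitimate because the truncated quantity is finite; the untruncated estimate then follows by letting $n_*\to\infty$. The factor $\gamma^l\leq1$ is harmless, and $\gamma\geq1/2$ plays no essential role beyond uniformity. The product $\mathsf b\widetilde e$ and the outputs $\cT^a\widetilde e=-(D_yX^a)^{-T}\nabla_y\varpi-\widetilde e e_2$ and $V^e=-\mathsf A\nabla_y\varpi-\mathsf b\widetilde e$ are handled by the same Leibniz-plus-small-ratio argument, since multiplication by a coefficient of scale $\lambda\bar\mu^C$ is bounded on $\G_{\gamma,\kappa}$ with norm $\bar\mu^C$.

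\emph{Main obstacle.} The delicate step is making the base $H^4_\kappa$ elliptic estimate and the commutator sums uniform in $n$, with constants independent of $\lambda$. This requires that each derivative landing on a coefficient costs exactly $\kappa\lambda\bar\mu^C$ and no stray factor of $\lambda$. I would first verify this carefully for $\mathsf A=(D_yX^a)^{-1}(D_yX^a)^{-T}$ via \eqref{a:eq:approxmapestimates}. A second point is that no $\varpi$ itself (as opposed to $\nabla\varpi$) appears, so the zero-mean normalization costs nothing.
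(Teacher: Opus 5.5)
Your proposal is correct and follows essentially the same route as the paper. Both proofs rest on a base $H^4_\kappa$ estimate for $-\diver_y(\mathsf A\nabla_y\psi)=\diver_y\mathfrak f$, a Leibniz recurrence for $\kappa^n\norm{D^n\nabla_y\varpi}_{H^4_\kappa}$ in which each coefficient derivative costs a small power $\bar\mu^{C-L_\kappa}$, Gevrey summation via $\binom nl^{-1}\leq1$ with absorption at the truncated level $n_*$, and multiplier bounds for $\cT^a\widetilde e$ and $V^e$. The differences are cosmetic: the paper closes the base estimate by absorption (using an auxiliary parameter $L_0$) rather than by induction on $|I|$, and it keeps the $\mathsf b$-commutator explicit in the differentiated equation instead of treating $\mathsf b\widetilde e$ as a product source.
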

\begin{proof}
It follows from \eqref{a:eq:approxmapestimates} and Leibniz's rule that,
for any fixed $L_0>0$, sufficiently large $L_\kappa$ yields
\begin{equation}\label{a:eq:smallcoeffderivs}
\begin{split}
&\norm w_\infty\leq\bar\mu^C,\qquad
\sum_{1\leq|\beta|\leq4}\norm{D_\kappa^\beta w}_\infty
\leq\bar\mu^{C-L_0},\\
&\kappa^l\norm{D^lw}_{W^{4,\infty}_\kappa}
\leq\bar\mu^{C-L_0l}(l!)^2,\qquad l\geq1,
\end{split}
\end{equation}
for $w\in\{\mathsf A,\mathsf b,(D_yX^a)^{-T},(D_yX^a)^{-1}\}$,
with $C$ independent of $L_0$ and $L_\kappa$.
For
\[
-\diver_y(\mathsf A\nabla_y\psi)=\diver_y\mathfrak f,
\qquad \int_{\T^2}\psi\,dy=0,
\]
test the equations differentiated by $D_\kappa^\beta$, $|\beta|\leq4$,
against $D_\kappa^\beta\psi$. Ellipticity and
\eqref{a:eq:smallcoeffderivs} imply
\[
\norm{\nabla_y\psi}_{H^4_\kappa}
\leq\bar\mu^C\norm{\mathfrak f}_{H^4_\kappa}
+\bar\mu^{C-L_0}\norm{\nabla_y\psi}_{H^4_\kappa}.
\]
Absorbing the last term for sufficiently large $L_0$ proves
\begin{equation}\label{est-source}
\norm{\nabla_y\psi}_{H^4_\kappa}
\leq\bar\mu^C\norm{\mathfrak f}_{H^4_\kappa}.
\end{equation}

It follows from \eqref{a:eq:elliptic} that
\begin{equation}\label{a:eq:ellipticdiff}
-\diver_y(\mathsf A\nabla_yD_\kappa^I\varpi)
=\diver_y(\mathsf bD_\kappa^I\widetilde e)
+\sum_{0<J\leq I}\binom IJ\diver_y\Bigl[
(D_\kappa^J\mathsf b)D_\kappa^{I-J}\widetilde e
+(D_\kappa^J\mathsf A)\nabla_yD_\kappa^{I-J}\varpi\Bigr].
\end{equation}
Using \eqref{est-source}, \eqref{a:eq:smallcoeffderivs}, the scaled
$H^4$ product inequality, and $\binom IJ\leq\binom nl$ for
$|I|=n$, $|J|=l$, we obtain
\begin{equation}\label{a:eq:ellipticrecurrence}
\begin{aligned}
\kappa^n\norm{D^n(\nabla_y\varpi)}_{H^4_{\kappa}}
\leq{}&\bar{\mu}^C\kappa^n\norm{D^n \widetilde e}_{H^4_{\kappa}}\\
&+\bar{\mu}^C\sum_{l=1}^n\binom nl\bar{\mu}^{-L_0l}(l!)^2\kappa^{n-l}
\left(\norm{D^{n-l}(\nabla_y\varpi)}_{H^4_{\kappa}}
+\norm{D^{n-l} \widetilde e}_{H^4_{\kappa}}\right)
\end{aligned}
\end{equation}
for $n\geq0$. Together with
\[
\frac{\binom nl(l!)^2((n-l)!)^2}{(n!)^2}
=\binom nl^{-1}\leq1,
\]
we obtain
\[
\G^{[n_*]}_{\gamma,\kappa}(\nabla_y\varpi)
\leq\bar\mu^C\G^{[n_*]}_{\gamma,\kappa}(\widetilde e)
+\bar\mu^C\sum_{l\geq1}(\gamma\bar\mu^{-L_0})^l
\left[\G^{[n_*]}_{\gamma,\kappa}(\nabla_y\varpi)
+\G^{[n_*]}_{\gamma,\kappa}(\widetilde e)\right].
\]
Choose $L_0$ to satisfy the preceding absorption and
\[
\bar\mu^C\sum_{l\geq1}\bar\mu^{-L_0l}\leq\tfrac14
\qquad(\bar\mu\geq2),
\]
then fix $L_\kappa$ so that \eqref{a:eq:smallcoeffderivs} holds.
Absorption yields
\begin{equation}\label{a:eq:pressureG}
\G^{[n_*]}_{\gamma,\kappa}(\nabla_y\varpi)
\leq\bar\mu^C\G^{[n_*]}_{\gamma,\kappa}(\widetilde e).
\end{equation}
It follows from Leibniz's rule and the same factorial sum that for each coefficient
$w$ in \eqref{a:eq:smallcoeffderivs} and smooth $z$,
\[
\G^{[n_*]}_{\gamma,\kappa}(wz)
\leq\bar\mu^C\G^{[n_*]}_{\gamma,\kappa}(z).
\]
Apply this to $(D_yX^a)^{-T}\nabla_y\varpi$ and
$(D_yX^a)^{-1}\cT^a\widetilde e$. Equations
\eqref{a:eq:elliptic} and \eqref{a:eq:pressureG} imply the asserted
operator estimate for every $n_*$. Letting $n_*\to\infty$ proves
\eqref{a:eq:sameradius}.
\end{proof}

After increasing $L_\kappa$ if necessary, we also have
\begin{equation}\label{a:eq:inputsG}
\A_{1,\kappa}((\nabla_x\rho^a)\circ X^a)\leq \bar{\mu}^C,\qquad
\G_{1,\kappa}(\mathcal R\circ X^a)
\leq\exp(-c_1\lambda^\nu\log\lambda).
\end{equation}
For $\kappa\lambda\bar\mu^c\leq1/8$, the first bound follows from
\eqref{a:eq:approxflowcoeff} and the factorial summation at the end of
the proof of Lemma~\ref{a:lem:approxmap}.
The second follows from that lemma for $\kappa\leq\kappa_1$, after
decreasing $c_1>0$ if necessary. Both conditions hold after increasing
$L_\kappa$.

\subsubsection{Gevrey estimates for the small residual correction}\label{a:sec:exactG}
With $\kappa$ as in \eqref{a:eq:kappa} and the operators in
\eqref{a:eq:pulledoperator}, denote $\widetilde e=e\circ X^a$.
Then \eqref{a:eq:errorphysical} becomes
\begin{equation}\label{a:eq:errorlabels}
\partial_t\widetilde e+V^e\cdot\nabla_y\widetilde e
+(\cT^a\widetilde e)\cdot((\nabla_x\rho^a)\circ X^a)
=-\mathcal R\circ X^a,\qquad \widetilde e(0)=0
\end{equation}
\begin{lemma}\label{a:lem:transportG}
Let $w,v,\mathfrak f$ be smooth periodic functions satisfying
$w_t+v\cdot\nabla_yw=\mathfrak f$ and $\diver_yv=0$.
For $0<\kappa\leq1$, $\gamma\in C^1([0,S];[1/2,1])$, and
$n_*\in\mathbb N_0$, the function
$t\mapsto\G^{[n_*]}_{\gamma(t),\kappa}(w(t))$ is absolutely continuous
and satisfies, almost everywhere,
\begin{equation}\label{a:eq:transportG}
\frac{d}{dt}\G^{[n_*]}_{\gamma(t),\kappa}(w)
\leq C\kappa^{-2}\G^{[n_*]}_{\gamma,\kappa}(v)
\G^{[n_*]}_{\gamma,\kappa}(w)
+\G^{[n_*]}_{\gamma,\kappa}(\mathfrak f)
+\left(\gamma'+C\kappa^{-2}\G^{[n_*]}_{\gamma,\kappa}(v)\right)
\partial_\gamma\G^{[n_*]}_{\gamma,\kappa}(w)
\end{equation}
where the constant $C$ is independent of $n_*$, $\kappa$, and $\gamma$.
\end{lemma}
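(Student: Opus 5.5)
We prove \eqref{a:eq:transportG} by a term-by-term energy estimate on the finite sum defining $\G^{[n_*]}_{\gamma(t),\kappa}(w)$. Since $n_*$ is finite and $w,v,\mathfrak f$ are smooth, each summand $\norm{D^nw}_{H^4_\kappa}$ is a finite sum of $L^2$ norms of smooth functions. These norms are Lipschitz in $t$, being norms of $C^1$ curves, and $\gamma\in C^1$, so absolute continuity is immediate. Differentiating the weights gives
\[
\frac{d}{dt}\frac{(\gamma\kappa)^n}{(n!)^2}=\gamma'\,\partial_\gamma\frac{(\gamma\kappa)^n}{(n!)^2},
\]
which produces exactly the term $\gamma'\partial_\gamma\G^{[n_*]}_{\gamma,\kappa}(w)$. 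It remains to bound $\frac{d}{dt}\norm{D^nw}_{H^4_\kappa}$ almost everywhere.

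Apply $\kappa^{|\beta|}\partial^\beta\partial^I$, with $|I|=n$ and $|\beta|\leq4$, to the equation and pair with the same derivative of $w$. Because $\diver_yv=0$, the pure transport term $v\cdot\nabla_y(\partial^\beta\partial^Iw)$ integrates to zero. The source contributes at most $\norm{D^n\mathfrak f}_{H^4_\kappa}$. What remains are the Leibniz commutator terms
\[
\binom{I}{J}\binom{\beta}{\beta'}\,\partial^{\beta'}\partial^Jv\cdot\nabla_y\partial^{\beta-\beta'}\partial^{I-J}w,\qquad (J,\beta')\neq(0,0).
\]
Each is estimated by the scaled product inequality in $H^4_\kappa$: in dimension two, $\norm{fg}_{H^4_\kappa}\leq C\kappa^{-1}\norm f_{H^4_\kappa}\norm g_{H^4_\kappa}$, since $H^2\hookrightarrow L^\infty$ costs $\kappa^{-1}$ after rescaling. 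The gradient $\nabla_y$ falling on $w$ costs a further $\kappa^{-1}$ relative to the scaled norm. We split by which factor carries the most derivatives: use $L^\infty$ on the low factor via the $\kappa^{-1}$ embedding and $L^2$ on the high one. The two $\kappa^{-1}$ losses account for the $\kappa^{-2}$ in \eqref{a:eq:transportG}.

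The main step is the combinatorics in $n$. Write the bound for the $(n,J)$ term with $|J|=l$ as
\[
\frac{(\gamma\kappa)^n}{(n!)^2}\binom nl\norm{D^lv}\,\norm{D^{n-l+1}w},
\]
where the extra $D$ is the gradient. The case $l=0$ is absent since transport cancels, and the cases $l\geq1$ split into two regimes.

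\emph{The $l=1$ (and top-order $\beta$) terms.} The derivative count on $w$ is not reduced, and at top order $\nabla_y$ must be absorbed by shifting the index: $\norm{D^{n-l+1}w}$ is paired with the weight $(\gamma\kappa)^{n-l+1}/((n-l+1)!)^2$, using $\kappa D$ as one unit. This costs the factor $n\,\gamma^{-1}$ relative to that weight, and $n\gamma^{n-1}$ is exactly the $\partial_\gamma$ weight. So
\[
\frac{(n-l+1)^2}{\gamma}\,\binom nl\frac{((n-l)!)^2(l!)^2}{(n!)^2}\cdot(\ldots)
\]
is controlled by $\binom nl^{-1}(n-l+1)\ldots$, and summation yields the term $C\kappa^{-2}\G(v)\,\partial_\gamma\G(w)$. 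This is the standard Cauchy–Kovalevskaya loss-of-radius trick.

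\emph{The $l\geq2$ terms.} Here the factorial ratio $\binom nl^{-1}$ together with $\sum_l\binom nl^{-1}\leq3$ gives a bounded convolution, estimated by $C\kappa^{-2}\G(v)\G(w)$. When $l=n$ there is no derivative gain to exploit, and this term is also placed in the product term. Truncation at $n_*$ is harmless because every index appearing is at most $n\leq n_*$, apart from the shifted index $n-l+1\leq n$, and $\gamma\geq1/2$ keeps all constants uniform.

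The delicate point I expect is making the $\partial_\gamma$ bookkeeping precise. The weight $(n-l+1)/\gamma$ must be matched exactly to the coefficient of $\partial_\gamma\G$, and the low-order cases ($n-l+1=0$, and the $\beta$-derivatives within $H^4_\kappa$) must be separated so they fall into the product term. I would handle this by splitting commutators according to whether $|J|\leq n/2$ (where $v$ goes in $L^\infty$) or $|J|>n/2$ (where $w$ goes in $L^\infty$). In both cases the factorial ratios $\binom nl^{-1}$ are uniformly summable.
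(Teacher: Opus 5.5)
Your proposal is correct and follows the paper's structure. The shared ingredients are: the energy estimate for $D^I_\kappa w$; the scaled algebra inequality $\norm{ab}_{H^4_\kappa}\leq C\kappa^{-1}\norm a_{H^4_\kappa}\norm b_{H^4_\kappa}$, plus one more $\kappa^{-1}$ from writing $\nabla_y=\kappa^{-1}D_\kappa$; the shift of that gradient into the Gevrey index, so the $|J|=l$ term carries $W_{n-l+1}$; and absorption of the resulting loss by $\partial_\gamma$.

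The only difference is how the commutator sum is allocated. The paper sends every term with $l\geq1$ to the $\partial_\gamma$ term at once, via
\[
\binom nl\frac{(l!)^2(k!)^2}{(n!)^2}=\frac{k\,l!\,k!}{(l+k-1)!}\leq k,\qquad k=n-l+1,\quad \gamma^n=\gamma^l\gamma^{k-1}.
\]
Only the $J=0$ commutator coming from the $H^4_\kappa$ derivatives (the $\mathsf V_0W_n$ term) enters the product term.

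You instead reserve $\partial_\gamma$ for $l=1$, where the loss is exactly the factor $n$, and place $l\geq2$ in $C\kappa^{-2}\G(v)\G(w)$. That is valid, but the relevant ratio against the product weights is $\gamma^{-1}k^2\binom nl^{-1}$, not $\binom nl^{-1}$; the shifted index contributes the $k^2$. This ratio is at most $4$ for $2\leq l\leq n$, since $\gamma\geq 1/2$. Injectivity of $(n,l)\mapsto(l,k)$ then bounds the double sum by the product of the two truncated series, so $\sum_l\binom nl^{-1}\leq3$ is not what is needed.

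Your split shows the loss of radius is forced only by $|J|=1$. The paper's single inequality avoids any case distinction, and the difference is immaterial for Proposition~\ref{a:prop:exactG}.

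Two small corrections:
\begin{itemize}
\item The case $J=0$ is not absent. The terms where $H^4_\kappa$ derivatives fall on $v$ survive and require the commutator bound $\kappa^{|\beta|}\norm{[\partial^\beta,v\cdot\nabla_y]z}_2\leq C\kappa^{-2}\norm v_{H^4_\kappa}\norm z_{H^4_\kappa}$ for $1\leq|\beta|\leq4$; you acknowledge this later.
\item The split according to $|J|\leq n/2$ is unnecessary. Each factor $D^J_\kappa v$ and $D_\kappa D^{I-J}_\kappa w$ is already measured in $H^4_\kappa$ at its own Gevrey level. The $L^\infty$/$L^2$ choice therefore occurs only among the at most four $\beta$-derivatives inside the algebra inequality; putting all of them on an $L^\infty$ factor would cost extra Gevrey derivatives on $v$.
\end{itemize}
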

\begin{proof}
Applying the Fourier estimates used in the proofs of
Lemmas~\ref{a:lem:approximation} and~\ref{a:lem:finite}, we obtain
\[
\begin{aligned}
\norm{ab}_{H^4_\kappa}
&\leq C\kappa^{-1}\norm a_{H^4_\kappa}\norm b_{H^4_\kappa},\\
\kappa^{|J|}\norm{[\partial^J,v\cdot\nabla_y]z}_2
&\leq C\kappa^{-2}\norm v_{H^4_\kappa}\norm z_{H^4_\kappa},
\qquad 1\leq|J|\leq4.
\end{aligned}
\]
The commutator vanishes for $J=0$. Thus $\diver_yv=0$ yields, for
$z_t+v\cdot\nabla_yz=h$,
\[
\frac{d}{dt}\norm z_{H^4_\kappa}
\leq C\kappa^{-2}\norm v_{H^4_\kappa}\norm z_{H^4_\kappa}
+\norm h_{H^4_\kappa}.
\]
Applying it to
\[
\partial_tD_\kappa^Iw+v\cdot\nabla_yD_\kappa^Iw
=D_\kappa^I\mathfrak f
-\kappa^{-1}\sum_{0<J\leq I}\binom IJ
(D_\kappa^Jv)\cdot D_\kappa D_\kappa^{I-J}w
\]
and using $\binom IJ\leq\binom nl$ for $|I|=n$, $|J|=l$, we obtain
\begin{equation}\label{est-Wn}
W_n'\leq C\kappa^{-2}\mathsf V_0W_n+H_n
+C\kappa^{-2}\sum_{l=1}^n\binom nl \mathsf V_lW_{n-l+1}
\end{equation}
with
\[
W_n=\kappa^n\norm{D^n w}_{H^4_{\kappa}},\quad
\mathsf V_n=\kappa^n\norm{D^n v}_{H^4_{\kappa}},\quad
H_n=\kappa^n\norm{D^n \mathfrak f}_{H^4_{\kappa}}.
\]
For $k=n-l+1$ and $1\leq l\leq n\leq n_*$,
\[
\frac{\binom nl(l!)^2(k!)^2}{(n!)^2}
=\frac{k\,l!\,k!}{(l+k-1)!}\leq k,
\qquad \gamma^n=\gamma^l\gamma^{k-1}.
\]
Since $1\leq l,k\leq n_*$, it follows that
\begin{equation}\label{est-VW}
\sum_{n=1}^{n_*}\frac{\gamma^n}{(n!)^2}
\sum_{l=1}^n\binom nl \mathsf V_lW_{n-l+1}
\leq
\left(\sum_{l=1}^{n_*}\frac{\gamma^l\mathsf V_l}{(l!)^2}\right)
\left(\sum_{k=1}^{n_*}\frac{k\gamma^{k-1}W_k}{(k!)^2}\right)
\leq\G^{[n_*]}_{\gamma,\kappa}(v)
\partial_\gamma\G^{[n_*]}_{\gamma,\kappa}(w).
\end{equation}
Applying \eqref{est-Wn} and \eqref{est-VW} verifies \eqref{a:eq:transportG}, including $n_*=0$.
\end{proof}

It follows from Proposition~\ref{a:prop:elliptic} and
\eqref{a:eq:inputsG} that
\begin{equation}\label{a:eq:linearGproduct}
\G^{[n_*]}_{\gamma,\kappa}
\!\left((\cT^a\widetilde e)\cdot((\nabla_x\rho^a)\circ X^a)\right)
\leq C\A_{1,\kappa}((\nabla_x\rho^a)\circ X^a)
\G^{[n_*]}_{\gamma,\kappa}(\cT^a\widetilde e)
\leq\bar\mu^C\G^{[n_*]}_{\gamma,\kappa}(\widetilde e).
\end{equation}
Combining \eqref{a:eq:errorlabels}, \eqref{a:eq:transportG}, and
\eqref{a:eq:linearGproduct} yields, almost everywhere,
\begin{equation}\label{a:eq:fullGenergy}
\begin{split}
\frac{d}{dt}\G^{[n_*]}_{\gamma(t),\kappa}(\widetilde e)
&\leq\bar\mu^C\G^{[n_*]}_{\gamma,\kappa}(\widetilde e)
+\bar\mu^C\kappa^{-2}
\left(\G^{[n_*]}_{\gamma,\kappa}(\widetilde e)\right)^2
+e^{-c_1\lambda^\nu\log\lambda}\\
&\quad+\left(\gamma'+\bar\mu^C\kappa^{-2}
\G^{[n_*]}_{\gamma,\kappa}(\widetilde e)\right)
\partial_\gamma\G^{[n_*]}_{\gamma,\kappa}(\widetilde e).
\end{split}
\end{equation}

\begin{proposition}\label{a:prop:exactG}
There is a sufficiently large constant $Q$ such that for
$\lambda\geq\bar\mu^Q$,
\begin{equation}\label{a:eq:exactG}
\sup_{[0,S]}\left(
\G_{1/2,\kappa}(e\circ X^a)+
\G_{1/2,\kappa}(\cT^a(e\circ X^a))\right)
\leq \bar{\mu}^C\exp(-2\lambda^{\nu/2}).
\end{equation}
\end{proposition}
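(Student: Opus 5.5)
The plan is a shrinking-radius (Cauchy--Kovalevskaya type) bootstrap on the differential inequality \eqref{a:eq:fullGenergy}. We work with a truncated functional and a decreasing radius. Fix $n_*$ and set
\[
\gamma(t)=1-\tfrac{t}{2S}\in[1/2,1].
\]
Then $\gamma'=-1/(2S)\leq-1/2$ since $S\leq1$. The functional is $\Phi(t)=\G^{[n_*]}_{\gamma(t),\kappa}(\widetilde e(t))$, with $\widetilde e=e\circ X^a$. Finite truncation guarantees that $\Phi$ is finite and absolutely continuous by Lemma~\ref{a:lem:transportG}, because $e$ is smooth on $[0,S]$ by Lemma~\ref{a:lem:finite}. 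We have $\Phi(0)=0$ since $\widetilde e(0)=0$. The bounds will be uniform in $n_*$, so we can let $n_*\to\infty$ at the end by monotone convergence.

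\emph{Bootstrap.} Assume on $[0,t_*]$ that
\[
\bar\mu^C\kappa^{-2}\Phi(t)\leq\tfrac14 .
\]
Then the coefficient of $\partial_\gamma\G$ in \eqref{a:eq:fullGenergy} is $\gamma'+\bar\mu^C\kappa^{-2}\Phi\leq-1/4<0$. Since $\partial_\gamma\G^{[n_*]}\geq0$, this term can be dropped. The quadratic term is bounded by $\tfrac14\Phi$, which leaves
\[
\Phi'\leq(\bar\mu^C+1)\Phi+e^{-c_1\lambda^\nu\log\lambda}.
\]
Gronwall on $[0,S]$ with $S\leq1$ gives
\[
\Phi(t)\leq e^{\bar\mu^C+1}e^{-c_1\lambda^\nu\log\lambda}.
\]
For $\lambda\geq\bar\mu^Q$ we have $\bar\mu^C\leq\lambda^{C/Q}\ll\lambda^\nu\log\lambda$ and $\kappa^{-2}=\lambda^2\bar\mu^{2L_\kappa}$. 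Hence, with $Q$ large, $\Phi\leq\exp(-\tfrac{c_1}{2}\lambda^\nu\log\lambda)$, which is far below both $\tfrac18\bar\mu^{-C}\kappa^2$ and $\exp(-2\lambda^{\nu/2})$. The bootstrap condition therefore improves strictly. By continuity of $\Phi$ and $\Phi(0)=0$, it holds on all of $[0,S]$. Since $\gamma(t)\geq1/2$ and $\G_{\gamma,\kappa}$ is monotone in $\gamma$, this bounds $\G^{[n_*]}_{1/2,\kappa}(\widetilde e)$ uniformly in $n_*$. Letting $n_*\to\infty$ gives the bound for $\G_{1/2,\kappa}(e\circ X^a)$.

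\emph{Velocity part.} Proposition~\ref{a:prop:elliptic} at $\gamma=1/2$ gives $\G_{1/2,\kappa}(\cT^a\widetilde e)\leq\bar\mu^C\G_{1/2,\kappa}(\widetilde e)$. This yields \eqref{a:eq:exactG} with the prefactor $\bar\mu^C$.

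\emph{Main obstacle.} The delicate point is the loss term $\bar\mu^C\kappa^{-2}\Phi\,\partial_\gamma\G$, which costs one derivative. It must be absorbed by the decay of the radius, and only the exponential smallness of the residual in \eqref{a:eq:inputsG} makes the bootstrap close. One also has to make sure that all constants in \eqref{a:eq:fullGenergy}, coming from Lemma~\ref{a:lem:transportG} and \eqref{a:eq:linearGproduct}, are independent of $n_*$, $\gamma$ and $t$. This is asserted there, but it must be checked that $\A_{1,\kappa}$ dominates the multiplier norm at every $\gamma\leq1$.
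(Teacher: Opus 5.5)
Your proposal is correct and follows the paper's argument: a decreasing Gevrey radius in \eqref{a:eq:fullGenergy} so the derivative-loss term can be dropped, a bootstrap closed by Gronwall and the exponentially small residual, uniformity in $n_*$ with monotone convergence, and Proposition~\ref{a:prop:elliptic} for the velocity. The only difference is cosmetic: the paper shrinks the radius at the exponentially small rate $\gamma(t)=1-t\mathrm e^{-\lambda^{\nu/2}}$ under the bootstrap hypothesis $E_{n_*}\leq \mathrm e^{-2\lambda^{\nu/2}}$, whereas you shrink it at order-one rate to $1/2$ under the weaker hypothesis $\bar\mu^C\kappa^{-2}\Phi\leq 1/4$, which works equally well because only $\gamma\geq 1/2$ is used at the end.
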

\begin{proof}
Denote
\[
\gamma(t)=1-te^{-\lambda^{\nu/2}},\qquad
E_{n_*}(t)=\G^{[n_*]}_{\gamma(t),\kappa}(\widetilde e(t)).
\]
For sufficiently large $Q$, $1/2\leq\gamma\leq1$ on $[0,S]$.
Each $E_{n_*}$ is absolutely continuous and $E_{n_*}(0)=0$.
Fix $c_*$ dominating the exponents $C$ in \eqref{a:eq:fullGenergy}
and choose $Q\nu>c_*$. On the bootstrap interval
$E_{n_*}\leq e^{-2\lambda^{\nu/2}}$, increasing $Q$ yields
\[
\bar\mu^{c_*}\kappa^{-2}E_{n_*}
\leq\lambda^{2+(c_*+2L_\kappa)/Q}e^{-2\lambda^{\nu/2}}
\leq\tfrac12e^{-\lambda^{\nu/2}}.
\]
Hence
\[
\gamma'+\bar\mu^{c_*}\kappa^{-2}E_{n_*}
\leq-\tfrac12e^{-\lambda^{\nu/2}},
\qquad
E_{n_*}'\leq(\bar\mu^{c_*}+1)E_{n_*}
+e^{-c_1\lambda^\nu\log\lambda}
\]
almost everywhere. Since $S\leq1$ and
$\bar\mu^{c_*}\leq\lambda^{c_*/Q}$, Gronwall's inequality yields,
after increasing $Q$,
\[
E_{n_*}(t)\leq
S\exp(\bar\mu^{c_*}+1-c_1\lambda^\nu\log\lambda)
<\tfrac12e^{-2\lambda^{\nu/2}}.
\]
Continuity closes the bootstrap on $[0,S]$, uniformly in $n_*$.
Letting $n_*\to\infty$ by monotone convergence yields
\[
\sup_{[0,S]}\G_{\gamma(t),\kappa}(\widetilde e)
\leq\tfrac12e^{-2\lambda^{\nu/2}}.
\]
Since $\gamma\geq1/2$, Proposition~\ref{a:prop:elliptic} implies
\eqref{a:eq:exactG} and
$\G_{1/2,\kappa}(V^e)\leq\bar\mu^Ce^{-2\lambda^{\nu/2}}$.
\end{proof}

\subsubsection{Completion of the perturbation theorem}\label{a:sec:completion}
Let $(\rho^+,u^+)$ be the solution from Lemma~\ref{a:lem:finite},
with the notation of the preceding subsection. 

\medskip\noindent\emph{The particle map of $(\rho^+,u^+)$.}
It follows from \eqref{a:eq:exactG}, Proposition~\ref{a:prop:elliptic},
and scaled Sobolev embedding, after increasing $Q$, that
\begin{equation}\label{a:eq:exactvector}
\norm{D_y^n V^e}_\infty
\leq\exp(-\lambda^{\nu/2})(C\kappa^{-1})^n(n!)^2.
\end{equation}
Let $Z^e$ be the flow map of $V^e$. We have
\begin{equation}\label{a:eq:exactfactor}
X^+(t,y)=X^a(t,Z^e(t,y)),\qquad
Z^e_t=V^e(t,Z^e),\qquad Z^e(0)=\id,
\end{equation}
since
\[
\partial_t(X^a\circ Z^e)
=u^a\circ X^++(D_yX^a\circ Z^e)(V^e\circ Z^e)
=u^+\circ X^+.
\]
Apply Lemma~\ref{a:lem:smallflow} with
$C_0=e^{-\lambda^{\nu/2}}$ and $\Lambda=C\kappa^{-1}$;
increasing $Q$ ensures $C_0\Lambda\leq c_2$. Hence
\[
\norm{D_y^n(Z^e-\id)}_\infty+
\norm{D_y^n((Z^e)^{-1}-\id)}_\infty
\leq Ce^{-\lambda^{\nu/2}}(C\kappa^{-1})^n(n!)^2.
\]
Since $(X^+)^{-1}=(Z^e)^{-1}\circ(X^a)^{-1}$,
\[
\norm{X^+-X^a}_{L^\infty_y}
+\norm{(X^+)^{-1}-(X^a)^{-1}}_{L^\infty_x}
\leq\bar\mu^Ce^{-\lambda^{\nu/2}}.
\]
Together with \eqref{a:eq:approxmapestimates} and
$e^{-\lambda^{\nu/2}}\leq\lambda^{-1}$, this proves \eqref{a:eq:mapzero}.

Since $e^{-\lambda^{\nu/2}}\kappa^{-1}$ is small, for $k\geq1$,
\[
\norm{D_y^k Z^e}_\infty+
\norm{D_y^k(Z^e)^{-1}}_\infty
\leq C(C\kappa^{-1})^{k-1}(k!)^2.
\]
The positive-order composition argument in the proof of
Lemma~\ref{a:lem:approxmap}, applied to \eqref{a:eq:exactfactor} and the
inverse factorization, together with \eqref{a:eq:approxmapestimates}, yields
\[
\norm{D_y^kX^+}_\infty+\norm{D_x^k(X^+)^{-1}}_\infty
\leq\bar\mu^C(\lambda\bar\mu^C)^{k-1}(k!)^2.
\]
Since $\diver_xu^+=0$, we have $\det D_yX^+=1$ and
\begin{equation}\label{a:eq:cofactor}
(D_yX^+)^{-1}=\cof(D_yX^+)^T.
\end{equation}
Taking $k=n+1$, using $(n+1)^2\leq4^{n+1}$ and the linearity of the
cofactor map in two dimensions, proves \eqref{a:eq:maphigh} and
\eqref{a:eq:inversehigh}.

\medskip\noindent\emph{Physical gradient estimates.}
The chain rule yields
\[
\begin{aligned}
(\nabla_xe)\circ X^a&=(D_yX^a)^{-T}\nabla_y\widetilde e,\\
(\nabla_x(u^+-u^a))\circ X^a
&=D_y(\cT^a\widetilde e)(D_yX^a)^{-1}.
\end{aligned}
\]
By \eqref{a:eq:exactG}, \eqref{a:eq:approxmapestimates}, scaled Sobolev
embedding, and \eqref{a:eq:factorialproduct},
\[
\norm{D^n((\nabla_xe,\nabla_x(u^+-u^a))\circ X^a)}_{L^\infty_y}
\leq\bar\mu^C\kappa^{-2}e^{-2\lambda^{\nu/2}}
(C\kappa^{-1})^n(n!)^2.
\]
For $Y^a=X^{-1}\circ X^a$,
\[
((\nabla_x\rho^a,\nabla_xu^a)\circ X^a)
=((\nabla_x\rho^a,\nabla_xu^a)\circ X)\circ Y^a.
\]
The estimates \eqref{a:eq:Lestimates} for $Y^a$, established in the proof
of Lemma~\ref{a:lem:approxmap}, together with \eqref{a:eq:approxjetestimates}
and Lemma~\ref{a:lem:calculus}(c) imply
\[
\norm{D^n((\nabla_x\rho^a,\nabla_xu^a)\circ X^a)}_{L^\infty_y}
\leq\bar\mu^C(\lambda\bar\mu^C)^n(n!)^2.
\]
Adding the correction and composing with $Z^e$ yields
\begin{equation}\label{a:eq:exactjetall}
\norm{D^n((\nabla_x\rho^+,\nabla_xu^+)\circ X^+)}_{L^\infty_y}
\leq \bar{\mu}^C(\lambda \bar{\mu}^C)^n(n!)^2.
\end{equation}
Darcy's law yields
\[
(\nabla_x^2\pi^+)\circ X^+
=-(\nabla_xu^+)\circ X^+
-e_2\otimes((\nabla_x\rho^+)\circ X^+).
\]
Together with \eqref{a:eq:exactjetall}, this proves \eqref{a:eq:fieldhigh}.

\medskip\noindent\emph{Comparison with the principal tensors.}
It follows from Darcy's law 
\begin{equation}\label{a:eq:Hdependent}
\nabla_x^2\pi^+-\nabla_x^2\bar\pi
=-\nabla_x(u^+-U)-e_2\otimes\nabla_x(\rho^+-\bar\rho).
\end{equation}
Since $M_\lambda=-H_\lambda-e_2\otimes g_\lambda$, we have
\[
\norm{(\nabla_x^2\pi^+-\nabla_x^2\bar\pi)\circ X-H_\lambda}_\infty
\leq\norm{\nabla_x(u^+-U)\circ X-M_\lambda}_\infty
+\norm{\nabla_x(\rho^+-\bar\rho)\circ X-g_\lambda}_\infty.
\]
Applying \eqref{a:eq:approxjetestimates}, \eqref{a:eq:finitecorrection}, and
\[
\norm{\nabla_xe}_\infty+\norm{\nabla_x\cT e}_\infty
\leq C\lambda^2\norm e_{H^4_{1/\lambda,x}}\leq C\lambda^2e^{-\lambda^{\nu/2}},
\]
the errors in \eqref{a:eq:jeterror} are bounded by
\[
2(\bar{\mu}^C\lambda^{-1}+C\lambda^2e^{-\lambda^{\nu/2}})\leq\lambda^{-1/2},
\]
after increasing $Q$.

\medskip\noindent\emph{The initial density.}
Since $e(0)=0$, $X(0)=\id$, and $r_n(0)=0$ for $n\geq2$ by
\eqref{a:eq:osceq}, the expansion \eqref{a:eq:rhoapprox} yields
\[
\rho^+(0)=\bar\rho(0)+r_\lambda(0)+b_\lambda,
\qquad b_\lambda=\sum_{n=2}^N\lambda^{-n}\bar r_n(0).
\]
Lemma~\ref{a:lem:initialslow} supplies the support and $H^m$ estimates
for $b_\lambda$ in \eqref{a:eq:initial}.
It follows from \eqref{a:eq:data}--\eqref{a:eq:carriersize} and
\eqref{a:eq:murelative} that
\[
\norm{r_\lambda(0)}_{H^m}
\leq C_m\bar{\mu}^{C_m}\mathrm e^{-\Gamma}\lambda^{m-1},
\qquad m\in\mathbb N_0,
\]
which completes \eqref{a:eq:initial}.

\medskip\noindent\emph{Symmetry.}
Under the reflection hypotheses, $r_\lambda(0)$ is odd, and
Proposition~\ref{a:prop:majorant} implies oddness of $b_\lambda$.
Thus $\rho^+(0)=\rho^a(0)$ is odd. Reflection invariance and uniqueness
imply the symmetries in Theorem~\ref{thm:main}.
If $0\in\mathcal O$, oddness implies $\phi_0(0)=0$ and $X(t,0)=0$,
so the origin is a zero-phase trajectory. This completes the proof of
Theorem~\ref{thm:main}.

\medskip\noindent\emph{A pressure consequence.}
Take $f=f_\delta$ from Lemma~\ref{amp:lem:profile} and recall
$A=\mu|\xi|$, $p=\xi/|\xi|$.
On any interval where $A\chi\leq2h_+$ and $p_2\geq0$ on $\supp\chi$,
it follows from \eqref{a:eq:principaljets} and \eqref{amp:eq:profile} that
\[
H_\lambda=-A\chi p_2f_\delta'(\lambda\phi_0)\,p\otimes p
\leq C\delta h_+I.
\]
After zero extension, \eqref{a:eq:jeterror} and composition with $X^{-1}$ yield
\begin{equation}\label{a:eq:uppernewsolution}
\nabla_x^2\pi^+(t,x)\leq\nabla_x^2\bar\pi(t,x)+
(C\delta h_++\lambda^{-1/2})I.
\end{equation}
On $[0,t_0]\subset[0,S]$, suppose instead that, for a nonnegative
$K_+\in L^1(0,t_0)$,
\[
A(t_0,y)\leq2h_+e^{-\Gamma},\qquad
H(t,y)\leq K_+(t)I
\quad(0\leq t\leq t_0,\ y\in\supp\chi).
\]
Then \eqref{amp:eq:fullhistory} and \eqref{amp:eq:historypressure},
together with \eqref{a:eq:jeterror} imply for $0\leq t\leq t_0$,
\[
\nabla_x^2\pi^+(t,x)\leq\nabla_x^2\bar\pi(t,x)
\quad+\left[2h_+\exp\!\left(-\Gamma+\int_0^{t_0}K_+(\tau)\,d\tau\right)
+\lambda^{-1/2}\right]I.
\]
This yields \eqref{a:eq:uppernewsolution} on $[0,t_0]$ if
$\exp(-\Gamma+\int_0^{t_0}K_+)\leq\delta$.

\subsubsection{Phase and amplitude estimates for the next perturbation}\label{a:sec:carriers}
Use $(\rho^+,u^+,\pi^+)$ as the next background, with $\lambda$
denoting its construction frequency.

\medskip\noindent\emph{The transported phase.}
Fix $T\in[0,S]$ and $\xi_*\in\R^2\setminus\{0\}$. Choose a label
neighborhood $\mathcal O_*$ of zero whose terminal image lies in a
physical coordinate chart, and fix lifts of the particle maps. Define
\begin{equation}\label{a:eq:terminalphase}
\begin{aligned}
\psi_0(y)&=\xi_*\cdot(X^+(T,y)-X^+(T,0)),\\
\zeta(t,y)&=(D_yX^+(t,y))^{-T}(D_yX^+(T,y))^T\xi_*.
\end{aligned}
\end{equation}
For $\psi(t,X^+(t,y))=\psi_0(y)$,
\[
\begin{split}
&\nabla_y\psi_0=(D_yX^+(T,y))^T\xi_*,\qquad
(\nabla_x\psi)\circ X^+=\zeta,\qquad \zeta(T,y)=\xi_*,\\
&\psi(T,x)=\xi_*\cdot(x-X^+(T,0)),\qquad
\psi(t,X^+(t,0))=0.
\end{split}
\]
It follows from \eqref{a:eq:maphigh} and \eqref{a:eq:factorialproduct} that
\begin{equation}\label{a:eq:nonstationary}
\bar{\mu}^{-C}|\xi_*|\leq|\zeta(t,y)|\leq \bar{\mu}^C|\xi_*|
\end{equation}
and
\[
\norm{D^n \zeta}_{L^\infty_y}
\leq |\xi_*|\bar{\mu}^C(\lambda \bar{\mu}^C)^n(n!)^2,
\qquad n\geq0.
\]
Denote $\widehat\zeta=\zeta/|\xi_*|$. Differentiating
$|\widehat\zeta|^2|\widehat\zeta|^{-2}=1$ yields, for $|I|\geq1$,
\[
\partial^I|\widehat\zeta|^{-2}
=-|\widehat\zeta|^{-2}
\sum_{0<J\leq I}\binom{I}{J}
(\partial^J|\widehat\zeta|^2)
(\partial^{I-J}|\widehat\zeta|^{-2}).
\]
By \eqref{a:eq:nonstationary}, \eqref{a:eq:factorialproduct}, and induction,
\[
\norm{D^n(|\zeta|^{-2})}_{L^\infty_y}
\leq |\xi_*|^{-2}\bar{\mu}^C(\lambda \bar{\mu}^C)^n(n!)^2,
\qquad n\geq0.
\]

\medskip\noindent\emph{The amplitude coefficient and logarithmic derivatives.}
Denote
\[
(g^+,M^+,H^+)
=(\nabla_x\rho^+,\nabla_xu^+,\nabla_x^2\pi^+)\circ X^+.
\]
Since $m(\zeta)=m(\widehat\zeta)=\widehat\zeta_2\widehat\zeta/|\widehat\zeta|^2-e_2$,
\eqref{a:eq:fieldhigh} and the preceding product and reciprocal estimates imply
\begin{equation}\label{a:eq:nextcarriercoeff}
\norm{D^n(m(\zeta)\cdot g^+)}_{L^\infty_y}
\leq \bar{\mu}^C(\lambda \bar{\mu}^C)^n(n!)^2, \qquad n\geq0,
\end{equation}
with constants independent of $\xi_*$. For constant $\mu_T>0$, the
solution of $\partial_t\mu_*+m(\zeta)\cdot g^+\,\mu_*=0$,
$\mu_*(T,y)=\mu_T$, is positive and satisfies
\begin{equation}\label{a:eq:logcarrier}
\log\mu_*(t,y)=\log\mu_T
+\int_t^T m(\zeta(s,y))\cdot g^+(s,y)\,ds.
\end{equation}
Differentiate \eqref{a:eq:logcarrier} and use $|T-t|\leq1$ and
$(n+1)^2\leq4^{n+1}$. The pointwise Fa\`a di Bruno argument in
\eqref{a:eq:murelative}, with $\mu_*$ cancelled after division, then yields
\begin{equation}\label{a:eq:carrierderivatives}
\begin{split}
&\norm{D^n(\nabla_y\log\mu_*)}_{L^\infty_y}
\leq(\lambda \bar{\mu})^C((\lambda \bar{\mu})^C)^n(n!)^2, \qquad n\geq0,\\
&\sup_{t,y}\mu_*^{-1}|D_y^n\mu_*|
\leq((\lambda \bar{\mu})^C)^n(n!)^2\qquad n\geq1.
\end{split}
\end{equation}
The constants are independent of $\mu_T$ and $\xi_*$. 

Denote $A_*=\mu_*|\zeta|$ and $p_*=\zeta/|\zeta|$.
Since $\partial_t\zeta=-(M^+)^T\zeta$, Lemma~\ref{amp:lem:pressure} yields
\begin{equation}\label{a:eq:pressureamplitude}
\partial_t\log A_*=-p_*^\perp\cdot H^+p_*^\perp.
\end{equation}
If $K_*\in L^1(0,S)$ is nonnegative and
$H^+(t,y)\leq K_*(t)I$ for every $y\in\mathcal O_*$ and almost every $t$,
then
\begin{equation}\label{a:eq:historyamplitude}
A_*(s,y)\leq A_*(t,y)
\exp\left(\int_s^tK_*(\tau)\,d\tau\right),\qquad 0\leq s\leq t\leq S.
\end{equation}

We have $A_*(T,y)=\mu_T|\xi_*|$. Assume also that, for some
$0\leq t_0\leq T$ and $\Gamma_*\geq0$,
\[
\sup_{y\in\mathcal O_*}A_*(t_0,y)\leq C\mu_T|\xi_*|e^{-\Gamma_*},
\qquad
\sup_{[t_0,S]\times\mathcal O_*}A_*\leq C\mu_T|\xi_*|.
\]
It follows from \eqref{a:eq:nonstationary} and \eqref{a:eq:historyamplitude} that
\[
\begin{aligned}
\sup_{[0,S]\times\mathcal O_*}\mu_*
&\leq C\bar\mu^C\mu_T\exp\left(\int_0^{t_0}K_*(s)\,ds\right),\\
\sup_{y\in\mathcal O_*}\mu_*(0,y)
&\leq C\bar\mu^C\mu_T\exp\left(-\Gamma_*+\int_0^{t_0}K_*(s)\,ds\right).
\end{aligned}
\]
These imply \eqref{a:eq:carriersize} with $\Gamma=\Gamma_*$ when the
next data constant is at least
\[
C\bar\mu^C\mu_T\exp\left(\int_0^{t_0}K_*(s)\,ds\right).
\]

\medskip\noindent\emph{A uniform material neighborhood.}
Suppose $X^+(t,0)=0$ and, for $t\in I\subset[0,S]$,
\[
|g^+(t,0)-g_*|+\norm{M^+(t,0)-M_*}+\norm{H^+(t,0)-H_*}
\leq\tau/4,
\]
where $\tau>0$ and $(g_*,M_*,H_*)$ is a reference triple.
Choose $0<R_0\leq1$ with $\overline{B_{R_0}}\subset\mathcal O_*$ and denote
\begin{equation}\label{a:eq:retainedradius}
\begin{split}
R_*&=c\min\{R_0,\tau(\lambda\bar\mu)^{-C}\},\\
R_*^{-1}&\leq C\bigl(R_0^{-1}+\tau^{-1}(\lambda\bar\mu)^C\bigr).
\end{split}
\end{equation}
For sufficiently small absolute $c>0$, \eqref{a:eq:fieldhigh} with $n=1$
and the mean-value estimate imply for $t\in I$ and $y\in B_{R_*}$,
\[
\begin{split}
&\quad|g^+(t,y)-g_*|+\norm{M^+(t,y)-M_*}+\norm{H^+(t,y)-H_*}\\
&\leq\tau/4+C|y|(\lambda\bar\mu)^C\leq\tau/2.
\end{split}
\]
An even Gevrey-two cutoff $0\leq\chi_*\leq1$, supported in $B_{R_*}$
and equal to one on $B_{R_*/2}$, can be chosen by rescaling a fixed cutoff,
with
\[
\norm{D^n\chi_*}_{H^8(\T^2)}
\leq\bigl[C(1+R_*^{-1})^8\bigr]^{n+1}(n!)^2,
\qquad n\geq0.
\]

\section{The inductive construction}\label{sec:induction}
We combine principal amplification with Theorem~\ref{thm:main} iteratively to prove the main result Theorem \ref{thm:blowup}. 
The induction carries on a local gradient configuration, global upper pressure
bound, and Gevrey estimates on successively smaller material neighborhoods.

\subsection{Parameters and induction hypotheses}\label{geo:sec:setup}
Fix the constants in Theorem~\ref{thm:main}, Proposition~\ref{amp:prop:robust}, Lemma~\ref{geo:lem:retention}, and
Corollary~\ref{geo:cor:exactretention}. We choose first
$0<\eps_*<\min(c_0,1)/100$ with $c_0$ from
Proposition~\ref{amp:prop:robust}; $K\geq1$ sufficiently large for
Lemma~\ref{geo:lem:retention} and the error allocation below; the small profile small constants $d, d_0>0$ chosen below; 
and $\theta_0=\arctan(N_0^{-1})\leq1/10$, with $N_0\in\mathbb N$ sufficiently large.

For $j\in\mathbb N_0$, define
\begin{equation}\label{geo:eq:scalars}
\theta_{j+1} = \theta_j^2,\qquad
h_{j+1} = Kh_j\theta_j^{-14},\qquad
h_0 = \theta_0^{-4}.
\end{equation}
Denote
$n_j=((-1)^j\sin\theta_j,\cos\theta_j)$ and we have the reference triple
\begin{equation}\label{geo:eq:frozentriple}
(h_jn_j,\ h_jm(n_j)\otimes n_j,
-h_j\cos\theta_j\,n_j\otimes n_j),
\end{equation}
which is \eqref{amp:eq:frozenjets} with
$(h,\theta,\sigma)=(h_j,\theta_j,(-1)^j)$.

We choose
\begin{equation}\label{geo:eq:times}
\begin{split}
&t_0 = 0,\qquad t_{j+1} = t_j+
\frac{\cot(\theta_j+\theta_j^2)}{h_j\sin\theta_j},\\
&S_j = t_j+\frac{4}{h_j\theta_j^2},\qquad
\Gamma_j = \cot\theta_j\left(\frac\pi2-\theta_j-\theta_j^2\right),
\end{split}
\end{equation}
where $t_{j+1}-t_j$ and $\Gamma_j$ are the reference duration and
logarithmic gain in \eqref{amp:eq:targettime}.
It follows from \eqref{amp:eq:timegainestimates} and \eqref{geo:eq:scalars},
\begin{equation}\label{geo:eq:widthratio}
t_{j+1}-t_j\leq\frac{2}{h_j\theta_j^2},\qquad \Gamma_j\geq\theta_j^{-1},\qquad
\frac{h_j\theta_j^2}{h_{j+1}\theta_{j+1}^2}
=K^{-1}\theta_j^{12},\qquad S_{j+1}<S_j.
\end{equation}
Indeed,
\[
S_{j+1}-S_j
\leq\frac{-2+4K^{-1}\theta_j^{12}}{h_j\theta_j^2}<0.
\]
Since $t_{j+1}>t_j$ and $S_0=4\theta_0^2<1$, we have
\[
0\leq t_j<t_{j+1}<S_{j+1}<S_j\leq S_0<1.
\]

The profile concentration and prescribed frequencies are
\begin{equation}\label{geo:eq:frequencies}
\delta_j = d\,2^{-j}h_{j+1}^{-2},\qquad
\lambda_j = \exp(\theta_j^{-1/2}).
\end{equation}
Choose $d>0$ small enough for Lemma~\ref{amp:lem:profile}.

At stage $j$, let $(\rho_j,u_j,\pi_j)$ be a smooth solution on
$[0,S_j]$, with $\rho_j$ odd, $u_j=\cT\rho_j$, and normalized periodic pressure $\pi_j$.
Its particle map satisfies
\[
\partial_tX_j(t,y)=u_j(t,X_j(t,y)),\qquad
X_j(0,y)=y.
\]
Oddness implies $X_j(t,0)=0$. 

The induction maintains a label radius $R_j>0$ with the estimate
\begin{equation}\label{geo:eq:IHgeometry}
\begin{aligned}
\sup_{t_j\leq t\leq S_j,\ |y|\leq4R_j}\Bigl(
&| (\nabla_x\rho_j)\circ X_j-h_jn_j|
+\norm{(\nabla_xu_j)\circ X_j-h_jm(n_j)\otimes n_j}\\
&+\norm{(\nabla_x^2\pi_j)\circ X_j+h_j\cos\theta_j\,n_j\otimes n_j}\Bigr)
\leq\eps_*h_j\theta_j^6.
\end{aligned}
\end{equation}

We also require the physical upper bound
\begin{equation}\label{geo:eq:IHH}
\nabla_x^2\pi_j(t,x)\leq B_j I,
\qquad 0\leq t\leq S_j,\quad x\in\T^2,\qquad 0\leq B_j\leq\tfrac12,
\end{equation}
and 
\begin{equation}\label{geo:eq:nestedtubes}
X_j(t,B_{4R_j})\subset X_{j-1}(t,B_{R_{j-1}}),\qquad 0\leq t\leq S_j ,\quad j\geq 1.
\end{equation}

Finally, we require a constant $D_j\geq2$ for the Gevrey estimates
\begin{equation}\label{geo:eq:Ddata}
\begin{split}
&\sup_{0\leq t\leq S_j}\norm{D^n w}_{W^{8,\infty}(\T^2)}
\leq D_j^{n+1}(n!)^2,\\
&w\in\{D_yX_j,(D_yX_j)^{-1},(\nabla_xu_j)\circ X_j,
(\nabla_x^2\pi_j)\circ X_j,\nabla_y\rho_j(0)\},\\
&\norm{D^n(\rho_j(0))}_{H^8(\T^2)}\leq D_j^{n+1}(n!)^2,
\qquad n\in\mathbb N_0.
\end{split}
\end{equation}

If $\norm{D^n w}_{L^\infty}\leq A\Lambda^n(n!)^2$ for 
$n\in\mathbb N_0$ with $A,\Lambda\geq2$, we have on $\T^2$,
\begin{equation}\label{geo:eq:normconversion}
\norm{D^n w}_{W^{8,\infty}}+\norm{D^n w}_{H^8}
\leq CA\Lambda^8(4\Lambda)^n(n!)^2
\leq (CA\Lambda^8)^{n+1}(n!)^2
\end{equation}
where the constant $C$ is independent of $n$.
Indeed, it follows from taking sum the estimates for $D^{n+k}w$ with $0\leq k\leq8$, applying
$(n+k)!\leq2^{n+k}n!k!$ and the estimate
$\norm q_2\leq|\T^2|^{1/2}\norm q_\infty$.

\subsection{The stationary periodic solution}\label{geo:sec:stationary}
Fix $d_0>0$ sufficiently small, independently of $N_0$, and denote
\[
k=(1,N_0),\qquad n_0=k/|k|,\qquad |k|=\csc\theta_0,
\qquad \eta_0=d_0\theta_0^4.
\]
Let $f_{\eta_0}$ be the odd, zero-mean, $2\pi$-periodic profile from
Lemma~\ref{amp:lem:profile}, so that
\[
f_{\eta_0}(s)=s\quad(|s|\leq\eta_0/2),\qquad
-C\eta_0\leq f_{\eta_0}'\leq1,\qquad
\norm{f_{\eta_0}}_\infty\leq C\eta_0,
\]
with $C\eta_0\leq1$. Let $\Phi$ be its zero-mean periodic primitive and define
\begin{equation}\label{geo:eq:seed}
\rho_0(x)=\frac{h_0}{|k|}f_{\eta_0}(k\cdot x), \quad
u_0(x)=m(n_0)\rho_0(x),\quad
\pi_0(x)=-\frac{h_0\cos\theta_0}{|k|^2}\Phi(k\cdot x)
\end{equation}
which are periodic and have zero mean. We note $\rho_0,u_0$ are odd and
$\pi_0$ is even. Since $m(n_0)\cdot k=0$, we have
\[
\begin{aligned}
\nabla_x\pi_0&=-\rho_0\cos\theta_0\,n_0,
&-\nabla_x\pi_0-\rho_0e_2&=u_0,\\
\diver_xu_0&=0,
&u_0\cdot\nabla_x\rho_0&=0.
\end{aligned}
\]
Thus $(\rho_0,u_0,\pi_0)$ is a global smooth stationary solution of
\eqref{eq:ipm} with $u_0=\cT\rho_0$.

The flow and pressure Hessian are
\begin{equation}\label{geo:eq:seedformulas}
\begin{aligned}
X_0(t,y)&=y+tu_0(y),\\
\nabla_x^2\pi_0(x)&=-h_0\cos\theta_0\,n_0\otimes n_0\,
f_{\eta_0}'(k\cdot x).
\end{aligned}
\end{equation}
In the chart starting from the identity, we have
\[
k\cdot X_0(t,y)=k\cdot y,\qquad
X_0(t,\cdot)^{-1}(x)=x-tu_0(x).
\]
Since
\[
\nabla_xu_0(y)=h_0m(n_0)\otimes n_0\,f_{\eta_0}'(k\cdot y),
\qquad (\nabla_xu_0(y))^2=0,
\]
it follows that
\[
D_yX_0=I+t\nabla_xu_0(y),\qquad
(D_yX_0)^{-1}=I-t\nabla_xu_0(y),\qquad \det D_yX_0=1.
\]
Applying $|m(n_0)|=\sin\theta_0$ and $S_0=4\theta_0^2$ yields
\[
\sup_{0\leq t\leq S_0}
\bigl(\norm{D_yX_0}_\infty+\norm{(D_yX_0)^{-1}}_\infty\bigr)
\leq2(1+S_0h_0\sin\theta_0)\leq C\theta_0^{-1}.
\]
Choose $R_0=c\eta_0/|k|$ with a fixed $0<c\leq1/16$. For $|y|\leq4R_0$, we have
\[
|k\cdot X_0(t,y)|=|k\cdot y|\leq4c\eta_0\leq\eta_0/4,
\qquad f_{\eta_0}'(k\cdot X_0(t,y))=1.
\]
Hence the gradient triple $(\nabla_x\rho_0,\ \nabla_xu_0,\ \nabla_x^2\pi_0)$ is given by \eqref{geo:eq:frozentriple} with $j=0$,
and \eqref{geo:eq:IHgeometry} holds with zero error.
Since $X_0(t,0)=0$, we have
\[
\sup_{\substack{0\leq t\leq S_0\\|y|\leq4R_0}}|X_0(t,y)|
\leq C\theta_0^{-1}R_0\leq Cd_0\theta_0^4,
\]
and hence these images remain in a fixed physical chart for $N_0$ sufficiently large.
Moreover, \eqref{geo:eq:seedformulas} yields
\[
\nabla_x^2\pi_0\leq Ch_0\eta_0 I=Cd_0 I\leq\tfrac14 I,
\]
which verifies \eqref{geo:eq:IHH} with $B_0=1/4$.

For the Gevrey estimates, Lemma~\ref{amp:lem:profile} yields
\[
\norm{D^n(f_{\eta_0}'(k\cdot y))}_{L^\infty}
\leq C\Lambda_0^n(n!)^2,\qquad
\Lambda_0=C|k|\eta_0^{-1},\qquad n\geq0.
\]
Applying \eqref{geo:eq:seed}--\eqref{geo:eq:seedformulas} and phase invariance yield
\[
\sup_{0\leq t\leq S_0}\norm{D^n w}_{L^\infty}
\leq Ch_0\Lambda_0^n(n!)^2
\]
for any $w$ in \eqref{geo:eq:Ddata} at $j=0$, including $\rho_0$.
By \eqref{geo:eq:normconversion}, one constant $D_0\geq2$ therefore
satisfies \eqref{geo:eq:Ddata} for all $n\geq0$, with
\[
D_0\leq Ch_0\Lambda_0^8\leq Cd_0^{-8}\theta_0^{-44},\qquad
R_0^{-1}\leq Cd_0^{-1}\theta_0^{-5}.
\]
Since $d_0$ is independent of $N_0$, increasing the exponent
and taking $N_0$ sufficiently large yields
\begin{equation}\label{geo:eq:seedP}
D_0+R_0^{-1}\leq\theta_0^{-C}.
\end{equation}
This initializes the induction. Note \eqref{geo:eq:nestedtubes} is void at $j=0$.

\subsection{The inductive step}\label{sec:inductionstep}
Let $(\rho_j,u_j,\pi_j)$ be an odd smooth solution of \eqref{eq:ipm} on $[0,S_j]$
satisfying \eqref{geo:eq:IHgeometry}, \eqref{geo:eq:IHH}, and
\eqref{geo:eq:Ddata}, with $0<R_j\leq1$. 
The frequency conditions below are verified in Subsection~\ref{geo:sec:budgets}.
Next we show that the inductive hypotheses can be verified.
\begin{proposition}\label{prop:step}
There is a constant $\bar{\mu}_j\geq2$ such that
\begin{equation}\label{eq:Pjconstruction}
\bar{\mu}_j\leq\left[D_jR_j^{-1}\delta_j^{-1}
(1+h_{j+1})(1+\theta_j^{-1})\right]^C.
\end{equation}
For a sufficiently large $K$, suppose
$\lambda_j\geq\max(\underline{\lambda},\bar{\mu}_j^Q)$ and
\begin{equation}\label{geo:eq:Eallocation}
\lambda_j^{-1/2}\leq\frac{\eps_*}{8}
h_{j+1}\theta_{j+1}^6.
\end{equation}
Then there is an odd solution of \eqref{eq:ipm} on $[0,S_{j+1}]$ satisfying
\eqref{geo:eq:IHgeometry}, \eqref{geo:eq:nestedtubes}, and
\eqref{geo:eq:Ddata} at index $j+1$, for suitable $R_{j+1}>0$ and
$D_{j+1}\leq(\lambda_j\bar{\mu}_j)^C$. The error in the coeffient tensors at $x=0$ is at most
$\eps_*h_{j+1}\theta_{j+1}^6/4$ on $[t_{j+1},S_{j+1}]$.
The initial increment and $R_{j+1}^{-1}$ satisfy
\eqref{geo:eq:initialestimate} and \eqref{geo:eq:radiuscost}. Moreover,
\begin{equation}\label{geo:eq:Hrecursion}
\nabla_x^2\pi_{j+1}(t,x)\leq\nabla_x^2\pi_j(t,x)+
\left(2\mathrm e h_{j+1}\mathrm e^{-\Gamma_j}
+C\delta_jh_{j+1}+\lambda_j^{-1/2}\right)I
\end{equation}
for all $(t,x)\in[0,S_{j+1}]\times\T^2$. The constants and exponents are independent of $j$.
\end{proposition}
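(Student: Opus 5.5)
The plan is to apply Theorem~\ref{thm:main} with background $(\bar\rho,U,\bar\pi)=(\rho_j,u_j,\pi_j)$ on $[0,S]=[0,S_{j+1}]$, $f=f_{\delta_j}$ from Lemma~\ref{amp:lem:profile} and $\lambda=\lambda_j$. The phase and amplitude come from Subsection~\ref{a:sec:carriers} with $X^+=X_j$, $T=t_{j+1}$, $\xi_*=\csc(\theta_j+\theta_j^2)n_{j+1}$ and $\mu_T=h_{j+1}\sin(\theta_j+\theta_j^2)$. The cutoff is $\chi=\chi_*$ supported in a ball of radius $R_*\leq R_j$, built from \eqref{a:eq:retainedradius} with $\tau\sim\eps_*h_j\theta_j^6$.

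\emph{Step 1: geometry on the amplification interval.} By \eqref{geo:eq:IHgeometry} on $B_{4R_j}$, the hypotheses \eqref{amp:eq:coefficienterror}--\eqref{amp:eq:smallness} of Proposition~\ref{amp:prop:robust} hold on $[t_j,t_{j+1}]$ with $(h,\theta,\vartheta,\sigma)=(h_j,\theta_j,\theta_j^2,(-1)^j)$ and $\eps=\eps_*\theta_j^6$. This is admissible since $\vartheta\theta^4=\theta_j^6$ and $\eps_*<c_0$. Proposition~\ref{amp:prop:robust} then gives $A(t_j)\leq2h_{j+1}e^{-\Gamma_j}$, $A\leq2h_{j+1}$ and $p_2>0$ on the support. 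On $[0,t_j]$ the global bound \eqref{geo:eq:IHH}, $B_j\leq1/2$, together with $S_j\leq1$, gives $K_+=B_j$ and $\exp\int_0^{S}K_+\leq e^{1/2}$; this yields \eqref{a:eq:positiveH} and, through \eqref{amp:eq:fullhistory}, the bound $\sup_{[0,t_j]}A\leq2e^{1/2}h_{j+1}e^{-\Gamma_j}$.

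\emph{Step 2: the data constant and conclusions of Theorem~\ref{thm:main}.} Using \eqref{geo:eq:Ddata}, the phase and carrier bounds \eqref{a:eq:nonstationary}--\eqref{a:eq:carrierderivatives} (with $\lambda\bar\mu$ replaced by $D_j$, since $\rho_j$ is the background), the cutoff bound via $R_*^{-1}$, and the profile bound $\delta_j^{1-k}$, I will collect every quantity in \eqref{a:eq:data}--\eqref{a:eq:carriersize} under a single $\bar\mu_j$ of the form \eqref{eq:Pjconstruction}. The factor $h_{j+1}$ comes from $\mu_T$, and $\theta_j^{-1}$ from $|\xi_*|$ and $\tau^{-1}$. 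Oddness holds because $\psi_0$ is odd, $\chi_*$ is even, and $\mu_*(0)$ is even by the symmetry of $X_j$. Theorem~\ref{thm:main} then produces $\rho_{j+1}=\rho^+$, odd on $[0,S_{j+1}]$. I take $D_{j+1}\leq(\lambda_j\bar\mu_j)^C$ from \eqref{a:eq:maphigh}, \eqref{a:eq:fieldhigh}, \eqref{a:eq:initial} and \eqref{geo:eq:normconversion}. The initial increment bound is \eqref{a:eq:initial}.

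\emph{Step 3: retention, tubes and pressure.} The retention interval $[t_{j+1},S_{j+1}]$ has length $4/(h_{j+1}\theta_{j+1}^2)$, so Corollary~\ref{geo:cor:exactretention} applies with $E=\lambda_j^{-1/2}$. The hypothesis of Lemma~\ref{geo:lem:retention} becomes $h/h_+\cdot\theta^2/\vartheta^2=K^{-1}\theta_j^{12}\leq c$. The resulting error at $x=0$ is bounded by $Ch_j\theta_j^{-2}+\lambda_j^{-1/2}$. By \eqref{geo:eq:Eallocation} and $h_{j+1}\theta_{j+1}^6=Kh_j\theta_j^{-2}$, this is at most $\eps_*h_{j+1}\theta_{j+1}^6/4$ once $K$ is large. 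Spreading this to a label ball of radius $4R_{j+1}$ uses the mean-value argument of \eqref{a:eq:retainedradius} with \eqref{a:eq:fieldhigh}, which fixes $R_{j+1}^{-1}\leq(\lambda_j\bar\mu_j)^C\theta_{j+1}^{-C}$. Shrinking further, \eqref{a:eq:mapzero} and \eqref{a:eq:maphigh} give $X_{j+1}(t,B_{4R_{j+1}})\subset X_j(t,B_{R_j})$; this is \eqref{geo:eq:nestedtubes}. For \eqref{geo:eq:Hrecursion}, I use \eqref{a:eq:uppernewsolution} on $[t_j,S_{j+1}]$, where $A\leq2h_{j+1}$ and $p_2\geq0$ hold by Proposition~\ref{amp:prop:robust} and Lemma~\ref{geo:lem:retention}. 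On $[0,t_j]$ I use the history variant with $\int K_+\leq1$, which supplies the factor $2eh_{j+1}e^{-\Gamma_j}$; outside $\supp\chi$ I use zero extension together with \eqref{a:eq:jeterror}.

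\emph{Main obstacle.} The hardest step is Step 1's hand-off on the full neighborhood $\supp\chi_*$ rather than only at the origin. The robust proposition must hold uniformly in $y$ with the transported phase, and $p(t_{j+1},y)=n_{j+1}$ exactly for every $y$ by construction. The real difficulty is bookkeeping: making the radius $R_*$ small enough for uniformity while keeping its cost polynomial in $D_j$, $\theta_j^{-1}$ and $h_{j+1}$, so that \eqref{eq:Pjconstruction} holds.
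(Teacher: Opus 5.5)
Your proposal is correct and follows essentially the same route as the paper's proof: robust amplification on $[t_j,t_{j+1}]$ and the retention lemma on $[t_{j+1},S_{j+1}]$, uniformly on the cutoff support; the history bound from \eqref{geo:eq:IHH} with $K_+=B_j$; a single data constant $\bar\mu_j$; Theorem~\ref{thm:main}; Corollary~\ref{geo:cor:exactretention} with $E=\lambda_j^{-1/2}$; and a mean-value choice of $R_{j+1}$. The only deviations are cosmetic. Shrinking the cutoff to $R_*$ via \eqref{a:eq:retainedradius} is unnecessary, since \eqref{geo:eq:IHgeometry} already holds on $B_{4R_j}$ (the paper takes $\chi_j=1$ on $B_{R_j}$ with support in $B_{2R_j}$); and Lemma~\ref{geo:lem:retention} must be invoked before Theorem~\ref{thm:main} rather than in your Step~3, because the bound $A\le 2h_{j+1}$ on $[t_{j+1},S_{j+1}]$ is needed to verify $\mu\le\bar\mu_j$ in \eqref{a:eq:carriersize}.
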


\begin{proof}
\noindent\emph{Phase and cutoff.}
It follows from \eqref{geo:eq:widthratio} that $0<t_{j+1}<S_{j+1}<S_j<1$.
Choose an even Gevrey-two cutoff with
\[
0\leq\chi_j\leq1,\qquad \chi_j|_{B_{R_j}}=1,\qquad
\supp\chi_j\subset B_{2R_j},
\]
and regard $B_{3R_j}$ as the phase neighborhood. Define
\begin{equation}\label{geo:eq:terminaldata}
\begin{aligned}
\phi_{j,0}(y)&=\csc(\theta_j+\theta_j^2)n_{j+1}\cdot X_j(t_{j+1},y),\\
\xi_j(t,y)&=\csc(\theta_j+\theta_j^2)(D_yX_j(t,y))^{-T}(D_yX_j(t_{j+1},y))^Tn_{j+1},\\
\mu_j(t_{j+1},y)&=h_{j+1}\sin(\theta_j+\theta_j^2).
\end{aligned}
\end{equation}
Denote $\phi_j\circ X_j=\phi_{j,0}$, so that $\phi_j$ is transported and
$(\nabla_x\phi_j)\circ X_j=\xi_j$. With
$g_j=(\nabla_x\rho_j)\circ X_j$, we solve
\[
\partial_t\mu_j+m(\xi_j)\cdot g_j\,\mu_j=0
\quad\text{on }[0,S_{j+1}].
\]
Then $\mu_j>0$, and $p_j=\xi_j/|\xi_j|$, $A_j=\mu_j|\xi_j|$ satisfy
\[
p_j(t_{j+1},y)=n_{j+1},\qquad A_j(t_{j+1},y)=h_{j+1}.
\]
The deformation estimates in \eqref{geo:eq:Ddata} imply
\[
D_j^{-2}\csc(\theta_j+\theta_j^2)
\leq|\xi_j(t,y)|\leq D_j^2\csc(\theta_j+\theta_j^2).
\]
Oddness of $X_j$ implies $\phi_{j,0}$ is odd and $\xi_j,g_j,\mu_j$ are even.

\medskip\noindent\emph{Amplitude estimates.}
Apply Proposition~\ref{amp:prop:robust} after translating time by $t_j$,
with $(h,\theta,\vartheta,\sigma)=(h_j,\theta_j,\theta_j^2,(-1)^j)$.
By \eqref{geo:eq:IHgeometry}, the smallness condition
\eqref{amp:eq:smallness} holds with $\eps=\eps_*\theta_j^6$.
Thus \eqref{geo:eq:times} and \eqref{geo:eq:terminaldata} yield
\begin{equation}\label{geo:eq:activationestimates}
A_j(t_j,y)\leq2h_{j+1}\mathrm e^{-\Gamma_j},\qquad
A_j\leq2h_{j+1},\qquad (p_j)_2\geq\tfrac12\sin\theta_j
\end{equation}
on $[t_j,t_{j+1}]$. Since
\[
S_{j+1}-t_{j+1}=\frac4{h_{j+1}\theta_{j+1}^2},\qquad
\frac{h_j}{h_{j+1}}\left(\frac{\theta_j}{\theta_{j+1}}\right)^2
=K^{-1}\theta_j^{12},
\]
Lemma~\ref{geo:lem:retention} implies, for sufficiently large $K$,
\[
A_j\leq2h_{j+1},\qquad (p_j)_2\geq\tfrac12
\quad\text{on }[t_{j+1},S_{j+1}].
\]
Applying \eqref{geo:eq:IHH} and \eqref{a:eq:historyamplitude} yields
\begin{equation}\label{geo:eq:historybound} 
\sup_{0\leq t\leq t_j}A_j(t,y)
\leq2h_{j+1}\mathrm e^{-\Gamma_j+B_jt_j}
\leq2\mathrm e h_{j+1}\mathrm e^{-\Gamma_j}.
\end{equation}
Consequently, $\mu_j=A_j/|\xi_j|$ satisfies
\[
\begin{aligned}
0<\mu_j(t,y)&\leq
2\mathrm e D_j^2h_{j+1}\sin(\theta_j+\theta_j^2),
\quad 0\leq t\leq S_{j+1},\\
\mu_j(0,y)&\leq
2\mathrm e D_j^2h_{j+1}\sin(\theta_j+\theta_j^2)
\mathrm e^{-\Gamma_j}.
\end{aligned}
\]

\medskip\noindent\emph{The data constant.}
It follows from \eqref{a:eq:logcarrier} that
\[
g_j=(D_yX_j)^{-T}\nabla_y\rho_j(0),\qquad
\nabla_y\log\mu_j(t)=\int_t^{t_{j+1}}
\nabla_y\bigl(m(\xi_j)\cdot g_j\bigr)(s)\,ds.
\]
Denote $\widehat\xi_j=\sin(\theta_j+\theta_j^2)\xi_j$. Clearly, we see
$D_j^{-2}\leq|\widehat\xi_j|\leq D_j^2$ and
$m(\xi_j)=m(\widehat\xi_j)$. Applying the product and reciprocal estimates in
Subsection~\ref{a:sec:carriers} to \eqref{geo:eq:Ddata} yields
\[
\norm{D^n w}_{W^{8,\infty}}\leq D_j^{C(n+1)}(n!)^2,\qquad n\geq0,
w\in\{g_j,\widehat\xi_j,|\widehat\xi_j|^{-2},
m(\widehat\xi_j),\nabla_y\log\mu_j\}.
\]
For the remaining phase data, \eqref{geo:eq:terminaldata} and
$\csc(\theta_j+\theta_j^2)\leq C\theta_j^{-1}$ provide the same estimates with
an additional factor $C\theta_j^{-1}$ for $\nabla_y\phi_{j,0}$ and $\xi_j$;
also $|\xi_j|^{-2}=\sin^2(\theta_j+\theta_j^2)|\widehat\xi_j|^{-2}$.
Rescaling the cutoff and applying Lemma~\ref{amp:lem:profile} yields
\[
\norm{D^n\chi_j}_{H^8}\leq(CR_j^{-8})^{n+1}(n!)^2,
\norm{D^n f_{\delta_j}}_{H^8}\leq(C\delta_j^{-8})^{n+1}(n!)^2.
\]
The initial-density estimate follows from \eqref{geo:eq:Ddata}, and
\[
\exp\left(\int_0^{S_{j+1}}B_j\,dt\right)\leq\mathrm e^{1/2}.
\]
Thus a sufficiently large power of
$D_jR_j^{-1}\delta_j^{-1}(1+h_{j+1})(1+\theta_j^{-1})$ defines
$\bar{\mu}_j\geq D_j$ satisfying \eqref{eq:Pjconstruction} and
\eqref{a:eq:data}--\eqref{a:eq:positiveH} with
$(S,\Gamma,K_+)=(S_{j+1},\Gamma_j,B_j)$.

\medskip\noindent\emph{New solution.}
Theorem~\ref{thm:main} provides an odd solution
$(\rho_{j+1},u_{j+1},\pi_{j+1})$ of \eqref{eq:ipm} on $[0,S_{j+1}]$ satisfying
\eqref{a:eq:jeterror}. We have from \eqref{a:eq:initial},
\begin{equation}\label{geo:eq:initialestimate}
\norm{\rho_{j+1}(0)-\rho_j(0)}_{H^m}
\leq C_m\bar{\mu}_j^{C_m}
\left(\mathrm e^{-\Gamma_j}\lambda_j^{m-1}+\lambda_j^{-2}\right),
\qquad m\in\mathbb N_0.
\end{equation}
Since
\[
X_j(t,0)=X_{j+1}(t,0)=0,\qquad
\phi_{j,0}(0)=0,\qquad \chi_j(0)=f_{\delta_j}'(0)=1,
\]
Translated by $t_j$, Corollary~\ref{geo:cor:exactretention} yields
\begin{equation}\label{geo:eq:centernewsolution}
\begin{aligned}
\sup_{t_{j+1}\leq t\leq S_{j+1}}\Bigl(
&|\nabla_x\rho_{j+1}(t,0)-h_{j+1}n_{j+1}|
+\norm{\nabla_xu_{j+1}(t,0)-h_{j+1}m(n_{j+1})\otimes n_{j+1}}\\
&+\norm{\nabla_x^2\pi_{j+1}(t,0)+h_{j+1}\cos\theta_{j+1}\,
 n_{j+1}\otimes n_{j+1}}\Bigr)
\leq Ch_j\theta_j^{-2}+\lambda_j^{-1/2}.
\end{aligned}
\end{equation}
Since $h_{j+1}\theta_{j+1}^6=Kh_j\theta_j^{-2}$,
\eqref{geo:eq:Eallocation} and $K\geq8C/\eps_*$ imply
\[
Ch_j\theta_j^{-2}+\lambda_j^{-1/2}
\leq\left(\frac{C}{K}+\frac{\eps_*}{8}\right)
h_{j+1}\theta_{j+1}^6
\leq\frac{\eps_*}{4}h_{j+1}\theta_{j+1}^6.
\]

\medskip\noindent\emph{Global pressure.}
By \eqref{a:eq:principaljets}, the principal Hessian increment is
\[
H_{\lambda_j}(t,y)=-A_j\chi_j(p_j)_2p_j\otimes p_j\,
f_{\delta_j}'(\lambda_j\phi_{j,0}(y)),
\]
extended by zero outside $\supp\chi_j$. The amplitude estimates,
$\norm{f_{\delta_j}'}_\infty\leq1$, and
$f_{\delta_j}'\geq-C\delta_j$ implies
\[
H_{\lambda_j}\leq
\begin{cases}
2\mathrm e h_{j+1}\mathrm e^{-\Gamma_j}I,&0\leq t\leq t_j,\\
C\delta_jh_{j+1}I,&t_j\leq t\leq S_{j+1},
\end{cases}
\]
using $(p_j)_2\geq0$ on the second interval. Combining this with
\eqref{a:eq:jeterror} and composing with $X_j^{-1}$ proves
\eqref{geo:eq:Hrecursion}.

\medskip\noindent\emph{The new Gevrey data.}
Estimates~\eqref{a:eq:maphigh} and \eqref{a:eq:fieldhigh},
followed by \eqref{geo:eq:normconversion}, imply the estimates in
\eqref{geo:eq:Ddata} for the deformation matrices and physical gradients
composed with $X_{j+1}$. At $t=0$, $X_{j+1}=\id$, and the periodic
Poincar\'e inequality yields
\[
\norm{D^n(\rho_{j+1}(0))}_{H^8}
\leq C\norm{D^n(\nabla_y\rho_{j+1}(0))}_{H^7},
\qquad n\geq0,
\]
since $\rho_{j+1}(0)$ has zero mean. Thus \eqref{geo:eq:Ddata} holds
with a constant $D_{j+1}\geq2$ satisfying
\begin{equation}\label{geo:eq:Dnext}
D_{j+1}\leq(\lambda_j\bar{\mu}_j)^C.
\end{equation}

\medskip\noindent\emph{The next material neighborhood.}
Denote $Y_j=X_j^{-1}\circ X_{j+1}$. The order-zero estimate in
\eqref{a:eq:maphigh} and $\bar{\mu}_j\geq D_j$ imply
\[
Y_j(t,0)=0,\qquad
D_yY_j=((D_yX_j)^{-1}\circ Y_j)D_yX_{j+1},\qquad
\sup_t\norm{D_yY_j}_\infty\leq\bar{\mu}_j^C.
\]
Estimate~\eqref{a:eq:fieldhigh} with $n=1$ yields
\[
\sup_t\Bigl(\norm{D_y((\nabla_x\rho_{j+1})\circ X_{j+1})}_\infty
+\norm{D_y((\nabla_xu_{j+1})\circ X_{j+1})}_\infty
+\norm{D_y((\nabla_x^2\pi_{j+1})\circ X_{j+1})}_\infty\Bigr)
\leq(\lambda_j\bar{\mu}_j)^C.
\]
For a fixed exponent $C$ large enough for these estimates and a sufficiently small
 $c>0$, choose
\begin{equation}\label{geo:eq:rchoice}
R_{j+1}=c\min\left\{R_j\bar{\mu}_j^{-C},\,
\eps_*h_{j+1}\theta_{j+1}^6(\lambda_j\bar{\mu}_j)^{-C},\,
\frac1{1+D_{j+1}}\right\}.
\end{equation}
For $0\leq t\leq S_{j+1}$ and $|y|\leq4R_{j+1}$, we have
\[
|Y_j(t,y)|\leq\bar{\mu}_j^C|y|\leq4\bar{\mu}_j^CR_{j+1}\leq R_j,
\]
which verifies \eqref{geo:eq:nestedtubes}. Applying the mean-value estimate, the variation
of the new gradient triple from its central value is at most
\[
4R_{j+1}(\lambda_j\bar{\mu}_j)^C
\leq\frac{\eps_*}{4}h_{j+1}\theta_{j+1}^6.
\]
Together with \eqref{geo:eq:centernewsolution}, this verifies
\eqref{geo:eq:IHgeometry} with error at most
$\eps_*h_{j+1}\theta_{j+1}^6/2$. The images $X_{j+1}(t,B_{4R_{j+1}})$ remain in a fixed physical coordinate chart, since
\[
\sup_{t,\,|y|\leq4R_{j+1}}|X_{j+1}(t,y)|
\leq4D_{j+1}R_{j+1}\leq4c.
\]
Finally, it follows from \eqref{geo:eq:rchoice} 
\begin{equation}\label{geo:eq:radiuscost}
R_{j+1}^{-1}\leq C\left(
\bar{\mu}_j^CR_j^{-1}+
\frac{(\lambda_j\bar{\mu}_j)^C}{\eps_*h_{j+1}\theta_{j+1}^6}
+1+D_{j+1}\right)
\end{equation}
which is independent of time.
\end{proof}

We remark that adding the increment in \eqref{geo:eq:Hrecursion} to $B_j$ defines
$B_{j+1}$; we shall prove $B_{j+1}\leq1/2$ in Subsection~\ref{geo:sec:budgets}.

\subsection{Uniform choice of parameters}\label{geo:sec:budgets}
We verify the hypotheses of Proposition~\ref{prop:step} simultaneously
with the construction. The constants below are fixed before the angle $\theta_0$.

\subsubsection{The global pressure bound}
Following \eqref{geo:eq:scalars}, we have
\begin{equation}\label{geo:eq:hformula}
\theta_j=\theta_0^{2^j},\qquad
h_j=h_0K^j(\theta_0/\theta_j)^{14}.
\end{equation}
Fix $C$ from \eqref{geo:eq:Hrecursion} and choose $d>0$ satisying
Lemma~\ref{amp:lem:profile} with $2Cd\leq1/12$. Then
\eqref{geo:eq:frequencies} yields
\[
\sum_{j\geq0}C\delta_jh_{j+1}
=Cd\sum_{j\geq0}\frac{2^{-j}}{h_{j+1}}\leq2Cd\leq\frac1{12}.
\]
For sufficiently small $\theta_0$, \eqref{geo:eq:hformula} and
\eqref{geo:eq:widthratio} imply
\[
\begin{aligned}
\log h_{j+1}
&\leq2^j(28|\log\theta_0|+\log K)\leq\frac{2^j}{2\theta_0},\\
\Gamma_j&\geq\theta_j^{-1}\geq2^j\theta_0^{-1},\qquad
\theta_j^{-1/2}\geq2^j\theta_0^{-1/2}.
\end{aligned}
\]
Since $2^j\geq j+1$, decreasing $\theta_0$ further yields
\begin{equation}\label{geo:eq:smallseries}
\begin{aligned}
\sum_{j\geq0}h_{j+1}\mathrm e^{-\Gamma_j}
&\leq\frac{\mathrm e^{-1/(2\theta_0)}}{1-\mathrm e^{-1/(2\theta_0)}}
\leq\frac1{24\mathrm e},\\
\sum_{j\geq0}\lambda_j^{-1/2}
&\leq\frac{\mathrm e^{-\theta_0^{-1/2}/2}}{1-\mathrm e^{-\theta_0^{-1/2}/2}}
\leq\frac1{12}.
\end{aligned}
\end{equation}
Define
\[
B_0=\frac14,\qquad
B_{j+1}=B_j+2\mathrm e h_{j+1}\mathrm e^{-\Gamma_j}
+C\delta_jh_{j+1}+\lambda_j^{-1/2}.
\]
It follows 
\begin{equation}\label{geo:eq:uniformH}
\frac14\leq B_j\leq\frac14+\frac3{12}=\frac12,
\qquad j\geq0.
\end{equation}
Thus \eqref{geo:eq:Hrecursion} implies \eqref{geo:eq:IHH} at every step of the construction.

\subsubsection{The frequency threshold}
We apply the fixed-power product in the proof of Proposition~\ref{prop:step}
to define $\bar{\mu}_j$, so that $D_j,R_j^{-1}\leq\bar{\mu}_j$.
Constants may depend on $\eps_*,K,d,d_0$, but not on $j$ or $\theta_0$.
Once step $j$ is complete,
$h_{j+1}\theta_{j+1}^6=Kh_j\theta_j^{-2}\geq1$ and
\eqref{geo:eq:Dnext}, \eqref{geo:eq:radiuscost}, \eqref{geo:eq:hformula},
and \eqref{geo:eq:frequencies} imply
\[
\begin{aligned}
D_{j+1}+R_{j+1}^{-1}&\leq C(\lambda_j\bar{\mu}_j)^C,\\
\log h_{j+2}+\log\delta_{j+1}^{-1}
&\leq C\bigl(j+1+|\log\theta_{j+1}|\bigr).
\end{aligned}
\]
It follows
\begin{equation}\label{geo:eq:Precursion}
\log\bar{\mu}_{j+1}\leq C_{\rm r}\left(\log\bar{\mu}_j+\theta_j^{-1/2}
+j+1+|\log\theta_{j+1}|\right),
\end{equation}
with some $C_{\rm r}\geq1$.

Denote $a_j=\theta_j^{-1/2}=\log\lambda_j$, and hence $a_{j+1}=a_j^2$.
The estimate \eqref{geo:eq:seedP} and
\[
h_1=K\theta_0^{-18},\qquad
\delta_0^{-1}=d^{-1}K^2\theta_0^{-36}
\]
imply $\log\bar{\mu}_0\leq C_{\rm s}\log a_0$, with some $C_{\rm s}\geq1$.
Choose $C_*=8C_{\rm r}$ and take $a_0$ sufficiently large that
\[
\begin{split}
&a_0\geq QC_*,\qquad
\mathrm e^{a_0}\geq\underline{\lambda},\\
&1+4\log a_0\leq a_0,\qquad QC_{\rm s}\log a_0\leq a_0,\\
&\mathrm e^{-a_0/2}\leq\frac{\eps_*K}{8}a_0^{12}.
\end{split}
\]
Then we have
\[
j+1+|\log\theta_{j+1}|=j+1+4\log a_j
\leq2^j(1+4\log a_0)\leq2^ja_0\leq a_j.
\]
It follows from \eqref{geo:eq:Precursion} that
\[
\log\bar{\mu}_1\leq3C_{\rm r}a_0\leq C_*a_0,
\]
and, if $j\geq1$ and $\log\bar{\mu}_j\leq C_*a_{j-1}$, we have
\[
\log\bar{\mu}_{j+1}
\leq C_{\rm r}(C_*a_{j-1}+2a_j)
\leq\left(\frac{C_{\rm r}C_*}{a_0}+2C_{\rm r}\right)a_j
\leq\frac{C_*}{2}a_j.
\]
Moreover, $\Gamma_j\geq a_j^2$ implies
\[
\frac{\log\bar{\mu}_j}{a_j}\leq\frac{C_*}{a_{j-1}}\to0,
\qquad \frac{a_j}{\Gamma_j}\leq\frac1{a_j}\to0.
\]
Thus the sequence constructed below satisfies
\begin{equation}\label{geo:eq:Pwindow}
\log\bar{\mu}_j\leq C_*\theta_{j-1}^{-1/2}, \quad j\geq1,\qquad
\log\bar{\mu}_j=o(\theta_j^{-1/2}),\qquad
\theta_j^{-1/2}=o(\Gamma_j).
\end{equation}

The choice of $a_0$ implies $Q\log\bar{\mu}_0\leq a_0$, while for
$j\geq1$,
\[
Q\log\bar{\mu}_j\leq QC_*a_{j-1}\leq a_{j-1}^2=a_j.
\]
Hence, at each stage of the construction, we have
\begin{equation}\label{geo:eq:allthresholds}
\theta_j^{-1/2}\geq Q\log\bar{\mu}_j\quad\text{and}\quad
\lambda_j\geq\underline{\lambda}, \qquad j\geq0.
\end{equation}
Since $h_{j+1}\theta_{j+1}^6=Kh_j\theta_j^{-2}\geq Ka_0^{12}$, it follows
\[
\lambda_j^{-1/2}\leq\mathrm e^{-a_0/2}
\leq\frac{\eps_*K}{8}a_0^{12}
\leq\frac{\eps_*}{8}h_{j+1}\theta_{j+1}^6,
\]
which verifies \eqref{geo:eq:Eallocation}. These conditions above and the
preceding pressure estimates hold for sufficiently small $\theta_0$.
We fix one such angle $\theta_0=\arctan(N_0^{-1})$ before the induction.

\begin{proposition}\label{prop:sequence}
For one sufficiently small angle $\theta_0$, there is a sequence
of odd periodic solutions on $[0,S_j]$ satisfying
\eqref{geo:eq:IHgeometry}, \eqref{geo:eq:IHH}, and \eqref{geo:eq:Ddata},
and also \eqref{geo:eq:nestedtubes} for $j\geq1$. Their data constants
obey \eqref{geo:eq:Pwindow}, and their initial increments satisfy
\eqref{geo:eq:initialestimate}.
\end{proposition}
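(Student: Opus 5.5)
The proof is an induction on $j$. Fix $\theta_0=\arctan(N_0^{-1})$ small enough that every condition collected in Subsection~\ref{geo:sec:budgets} holds. These are the pressure series bounds \eqref{geo:eq:smallseries}, the choice of $a_0=\theta_0^{-1/2}$ with its listed inequalities, and the threshold $\mathrm e^{a_0}\geq\underline\lambda$. The base case $j=0$ is the stationary solution \eqref{geo:eq:seed}. Subsection~\ref{geo:sec:stationary} shows that it is odd and satisfies \eqref{geo:eq:IHgeometry} with zero error, \eqref{geo:eq:IHH} with $B_0=1/4$, and \eqref{geo:eq:Ddata} with $D_0+R_0^{-1}\leq\theta_0^{-C}$ by \eqref{geo:eq:seedP}; \eqref{geo:eq:nestedtubes} is void at $j=0$. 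The induction hypothesis at stage $j$ consists of these properties, together with $B_j\leq 1/2$ defined by the recursion, and $\log\bar\mu_j\leq C_*a_{j-1}$ for $j\geq1$ (resp. $\log\bar\mu_0\leq C_{\rm s}\log a_0$).

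For the inductive step, define $\bar\mu_j$ by the fixed power in \eqref{eq:Pjconstruction}. The hypotheses of Proposition~\ref{prop:step} are then checked as follows.
\begin{itemize}
\item The threshold $\lambda_j\geq\max(\underline\lambda,\bar\mu_j^Q)$ follows from \eqref{geo:eq:allthresholds}. This in turn uses $Q\log\bar\mu_j\leq a_j$, which is the inductive bound on $\log\bar\mu_j$ combined with $a_0\geq QC_*$ and $a_j=a_{j-1}^2$.
\item The allocation \eqref{geo:eq:Eallocation} follows from $h_{j+1}\theta_{j+1}^6\geq Ka_0^{12}$ and the choice of $a_0$.
\end{itemize}
Proposition~\ref{prop:step} then yields an odd solution $\rho_{j+1}$ on $[0,S_{j+1}]$. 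It satisfies \eqref{geo:eq:IHgeometry}, \eqref{geo:eq:nestedtubes} and \eqref{geo:eq:Ddata} at index $j+1$, as well as \eqref{geo:eq:initialestimate} and \eqref{geo:eq:Hrecursion}.

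Two parts of the hypothesis must then be re-closed at $j+1$. For the pressure bound, \eqref{geo:eq:Hrecursion} combined with \eqref{geo:eq:IHH} at stage $j$ gives $\nabla_x^2\pi_{j+1}\leq B_{j+1}I$ on $[0,S_{j+1}]\times\T^2$, using $S_{j+1}\leq S_j$. The summation \eqref{geo:eq:uniformH} then keeps $B_{j+1}\leq1/2$ uniformly. This uniformity is essential: $B_j\leq 1/2$ is exactly what gives the factor $\mathrm e^{B_jt_j}\leq\mathrm e$ in \eqref{geo:eq:historybound} and the data bound $\exp(\int B_j)\leq\mathrm e^{1/2}$ in \eqref{a:eq:positiveH} at the next step. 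For the data constant, the bounds $D_{j+1}\leq(\lambda_j\bar\mu_j)^C$ and \eqref{geo:eq:radiuscost} give $D_{j+1}+R_{j+1}^{-1}\leq C(\lambda_j\bar\mu_j)^C$. Together with the explicit sizes of $h_{j+2}$ and $\delta_{j+1}$, this yields \eqref{geo:eq:Precursion}. The computation in Subsection~\ref{geo:sec:budgets}, using $j+1+|\log\theta_{j+1}|\leq a_j$ and $C_*=8C_{\rm r}$, gives $\log\bar\mu_{j+1}\leq \tfrac{C_*}{2}a_j$. This closes the bound $\log\bar\mu_j\leq C_*\theta_{j-1}^{-1/2}$, and the two $o(\cdot)$ relations in \eqref{geo:eq:Pwindow} follow from $a_{j-1}\to\infty$ and $\Gamma_j\geq a_j^2$.

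The main obstacle is the circularity in the choice of parameters. The constant $\bar\mu_j$ depends on $\lambda_{j-1}$ through $D_j$ and $R_j$, yet the threshold requires $\lambda_j\geq\bar\mu_j^Q$ with $\lambda_j$ prescribed in advance as $\exp(\theta_j^{-1/2})$. I would resolve this by fixing all constants $C,Q,C_{\rm r},C_{\rm s},\eps_*,K,d,d_0$ independently of $j$ and $\theta_0$ before choosing $\theta_0$. The squaring $a_{j+1}=a_j^2$ then absorbs the polynomial dependence on $\lambda_{j-1}=\mathrm e^{a_{j-1}}$, since $\log\bar\mu_j\leq C_*a_{j-1}\ll a_{j-1}^2$. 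The care needed is to confirm that no constant hidden in Proposition~\ref{prop:step}, Theorem~\ref{thm:main}, or the radius choice \eqref{geo:eq:rchoice} depends on $j$ or $\theta_0$. Everything enters only through $\bar\mu_j$, $h_{j+1}$, $\delta_j$ and $\theta_j$, which are tracked explicitly.
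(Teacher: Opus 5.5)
Your proposal is correct and follows the paper's argument. You run an induction from the stationary solution and apply Proposition~\ref{prop:step} once \eqref{geo:eq:allthresholds} and \eqref{geo:eq:Eallocation} are checked. You then re-close the pressure bound through \eqref{geo:eq:Hrecursion} and \eqref{geo:eq:uniformH}, and the bound $\log\bar\mu_{j+1}\leq C_*a_j$ through \eqref{geo:eq:Precursion}, with all constants fixed before $\theta_0$; this is what the paper does, and you simply spell out bookkeeping it compresses into a few lines.
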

\begin{proof}
The stationary solution satisfies the induction hypotheses and
$Q\log\bar{\mu}_0\leq a_0$. At each stage,
\eqref{geo:eq:allthresholds} and \eqref{geo:eq:Eallocation} allow us to apply
Proposition~\ref{prop:step} to construct the next solution and its initial
increment. Equations \eqref{geo:eq:uniformH} and \eqref{geo:eq:Precursion}
propagate the pressure estimate and the estimate
$\log\bar{\mu}_{j+1}\leq C_*a_j$, while Proposition~\ref{prop:step}
propagates the geometric, Gevrey estimates and nesting hypotheses. Induction yields
the sequence, and the two ratios above prove the error assertions in
\eqref{geo:eq:Pwindow}.
\end{proof}

\subsubsection{Convergence of the initial data and gradient growth}
Fix $m\in\mathbb N_0$. Applying \eqref{geo:eq:Pwindow} and \eqref{geo:eq:widthratio}, we obtain $\log\bar{\mu}_j\leq C_*a_j^{1/2}$ and
$\Gamma_j\geq a_j^2$ for $j\geq1$. Hence \eqref{geo:eq:initialestimate} yields
\[
\norm{\rho_{j+1}(0)-\rho_j(0)}_{H^m}
\leq\mathrm e^{-\Gamma_j/2}+\mathrm e^{-a_j}\leq2\mathrm e^{-a_j}, \quad j\geq j_m.
\]
As $a_j\geq2^ja_0$, we obtain
\begin{equation}\label{geo:eq:summableinitial}
\sum_{j\geq0}\norm{\rho_{j+1}(0)-\rho_j(0)}_{H^m}<\infty
\qquad\text{for every fixed }m\in\mathbb N_0.
\end{equation}

By \eqref{geo:eq:widthratio}, we have
\begin{equation}\label{geo:eq:finiteT}
\begin{split}
&\frac{h_j\theta_j^2}{h_{j+1}\theta_{j+1}^2}
=K^{-1}\theta_j^{12}\leq\frac14,\\
&\sum_{j\geq0}(t_{j+1}-t_j)
\leq2\sum_{j\geq0}\frac1{h_j\theta_j^2}
\leq\frac8{3h_0\theta_0^2}<\infty.
\end{split}
\end{equation}
Denote $T_\infty=\lim_jt_j$. The estimate above also yields
\[
0<T_\infty-t_j\leq2\sum_{k\geq j}\frac1{h_k\theta_k^2}
\leq\frac8{3h_j\theta_j^2}<\frac4{h_j\theta_j^2}=S_j-t_j.
\]
Since $(h_j\theta_j^2)^{-1}\to0$, we have
$t_j,S_j\to T_\infty$ and $0<T_\infty<S_j\leq S_0<1$.

Finally, \eqref{geo:eq:hformula} and $\sin\theta_j\geq\theta_j/2$ imply
\[
h_j\sin\theta_j\geq\tfrac12h_0\theta_0^{14}K^j\theta_j^{-13}\to\infty.
\]
Since $\eps_*\theta_j^6\leq\tfrac12\min(1,\sin\theta_j)$,
\eqref{geo:eq:IHgeometry} at the origin implies
\begin{equation}\label{geo:eq:gradientgrowth}
\begin{aligned}
|\nabla_x\rho_j(t_j,0)|&\geq h_j(1-\eps_*\theta_j^6)
\geq\tfrac12h_j\to\infty,\\
\norm{\nabla_xu_j(t_j,0)}&\geq h_j(\sin\theta_j-\eps_*\theta_j^6)
\geq\tfrac12h_j\sin\theta_j\to\infty.
\end{aligned}
\end{equation}

\subsection{The limiting argument}\label{sec:limit}
\begin{proof}[Proof of Theorem~\ref{thm:blowup}]
Following \eqref{geo:eq:summableinitial}, the odd initial densities converge in $H^m$ to
\begin{equation}\label{eq:limitingdatum}
\rho_{\rm in}=\lim_{j\to\infty}\rho_j(0)\in C^\infty(\T^2), \forall m>0.
\end{equation}
Let $(\rho,u,\pi)$ be the corresponding maximal smooth solution, with
lifespan $T_*=T_*(\rho_{\rm in})>0$ given by Lemma~\ref{lem:local}.

\medskip\noindent\emph{Convergence on fixed intervals.}
Fix $T<\min(T_*,T_\infty)$ and denote
$L_T=C(1+\sup_{[0,T]}\norm\rho_{H^4})$.
On a bootstrap interval where $\norm{\rho_n-\rho}_{H^3}\leq1$,
\eqref{eq:differenceestimate} and Gronwall's inequality imply
\begin{equation}\label{eq:limitfixed}
\sup_{0\leq t\leq T}\norm{\rho_n(t)-\rho(t)}_{H^3}
\leq\mathrm e^{L_TT}\norm{\rho_n(0)-\rho_{\rm in}}_{H^3}\to0.
\end{equation}
For sufficiently large $n$, the right-hand side is less than $1/2$,
hence the bootstrap closes on $[0,T]$.

\medskip\noindent\emph{Estimates on earlier intervals.}
Denote
\[
\varepsilon_j=6\mathrm e h_{j+1}\mathrm e^{-\Gamma_j}
+\lambda_j^{-1/2},\qquad \sum_j\varepsilon_j<\infty
\]
by \eqref{geo:eq:smallseries}. Each principal tensor in
\eqref{a:eq:principaljets} has norm at most $A_j$, since
$|m(p_j)|,\chi_j,|f_{\delta_j}'|\leq1$. Thus
\eqref{geo:eq:historybound} and \eqref{a:eq:jeterror} imply
\begin{equation}\label{eq:exacthistory}
\begin{aligned}
\sup_{0\leq t\leq t_j}\Bigl(
&\norm{\nabla_x(\rho_{j+1}-\rho_j)(t)}_\infty
+\norm{\nabla_x(u_{j+1}-u_j)(t)}_\infty\\
&+\norm{\nabla_x^2(\pi_{j+1}-\pi_j)(t)}_\infty\Bigr)
\leq\varepsilon_j.
\end{aligned}
\end{equation}
For fixed $J$, telescoping on $[0,t_J]$ yields
\[
\begin{aligned}
&\quad\sup_{n\geq J}\sup_{0\leq t\leq t_J}
\bigl(\norm{\nabla_x\rho_n(t)}_\infty+\norm{\nabla_xu_n(t)}_\infty\bigr)\\
&\leq\sup_{0\leq t\leq t_J}
\bigl(\norm{\nabla_x\rho_J(t)}_\infty+\norm{\nabla_xu_J(t)}_\infty\bigr)
+\sum_{k=J}^\infty\varepsilon_k=: \mathcal M_J<\infty.
\end{aligned}
\]
By \eqref{geo:eq:summableinitial} and \eqref{eq:tametransport}, we have for any
integer $m\geq3$,
\begin{equation}\label{eq:stagehistorysobolev}
\sup_{n\geq J}\sup_{0\leq t\leq t_J}\norm{\rho_n(t)}_{H^m}
\leq\left(\sup_n\norm{\rho_n(0)}_{H^m}\right)
\mathrm e^{C_m\mathcal M_Jt_J}=:K_{J,m}<\infty.
\end{equation}

\medskip\noindent\emph{The lifespan.}
If $T_*<T_\infty$, choose $J$ with $T_*<t_J$. Passing to the limit in
\eqref{eq:stagehistorysobolev} on each $[0,T]$ with $T<T_*$, applying
\eqref{eq:limitfixed}, yields
\[
\sup_{t<T_*}\norm{\rho(t)}_{H^3}\leq K_{J,3}<\infty,
\]
which contradicts maximality from Lemma~\ref{lem:local}.
Hence
\begin{equation}\label{eq:lifespanlower}
T_*\geq T_\infty.
\end{equation}

If $T_*>T_\infty$, then $\rho$ is smooth on $[0,T_\infty]$, and
\eqref{eq:differenceestimate} yields on $[0,t_j]$
\begin{equation}\label{eq:limitcomparison}
\frac{d}{dt}\norm{\rho_j-\rho}_{H^3}
\leq C\left(\sup_{0\leq s\leq T_\infty}\norm{\rho(s)}_{H^4}\right)
\norm{\rho_j-\rho}_{H^3}+C\norm{\rho_j-\rho}_{H^3}^2.
\end{equation}
With $L=C(1+\sup_{[0,T_\infty]}\norm\rho_{H^4})$, the same bootstrap yields
\begin{equation}\label{eq:limitstrong}
\sup_{0\leq t\leq t_j}\norm{\rho_j(t)-\rho(t)}_{H^3}
\leq\mathrm e^{LT_\infty}\norm{\rho_j(0)-\rho_{\rm in}}_{H^3}\to0.
\end{equation}
Applying $H^3(\T^2)\hookrightarrow C^1(\T^2)$ and
\eqref{geo:eq:gradientgrowth} yields
\[
\tfrac12h_j\leq|\nabla_x\rho_j(t_j,0)|
\leq\sup_{0\leq t\leq T_\infty}\norm{\nabla_x\rho(t)}_\infty+o(1),
\]
contradicting $h_j\to\infty$. Therefore
\begin{equation}\label{eq:finitebreakdown}
0<T_*(\rho_{\rm in})=T_\infty<1.
\end{equation}

\medskip\noindent\emph{Smooth convergence and blowup at the origin.}
Fix $T<T_\infty$ and $J$ with $T\leq t_J$. For integers $s>m>3$,
\eqref{eq:limitfixed}, \eqref{eq:stagehistorysobolev} and interpolation yield
\[
\sup_{[0,T]}\norm{\rho_n-\rho}_{H^m}
\leq C_{J,s,T}
\left(\sup_{[0,T]}\norm{\rho_n-\rho}_{H^3}\right)^{(s-m)/(s-3)}\to0.
\]
Together with \eqref{eq:limitfixed}, this yields
\begin{equation}\label{eq:limitsmoothfixed}
\begin{split}
\rho_n\to\rho\quad\text{in }C([0,T];H^m),
\qquad T<T_\infty,\ m\in\mathbb N_0,\\
u_n=\cT\rho_n\to u=\cT\rho\quad\text{in }C([0,T];H^m),
\qquad T<T_\infty,\ m\in\mathbb N_0
\end{split}
\end{equation}
Applying \eqref{eq:exacthistory} and passing to the limit yields
\begin{equation}\label{eq:limithistorytail}
\begin{aligned}
\sup_{0\leq t\leq t_J}\Bigl(
&\norm{\nabla_x(\rho-\rho_J)(t)}_\infty
+\norm{\nabla_x(u-u_J)(t)}_\infty\Bigr)
\leq\sum_{k=J}^\infty\varepsilon_k.
\end{aligned}
\end{equation}

On $[t_j,t_{j+1}]$, \eqref{geo:eq:IHgeometry} and
$(m(n_j)\otimes n_j)_{11}=\sin^2\theta_j\cos\theta_j$ imply
\[
\begin{aligned}
\partial_{x_2}\rho_j(t,0)&\geq h_j(\cos\theta_j-\eps_*\theta_j^6),\\
\partial_{x_1}(u_j)_1(t,0)
&\geq h_j(\sin^2\theta_j\cos\theta_j-\eps_*\theta_j^6).
\end{aligned}
\]
At the origin, we have $\phi_{j,0}=0$ and $\chi_j=f_{\delta_j}'(0)=1$.
In view of \eqref{geo:eq:activationestimates}, we have
\[
(g_{\lambda_j})_2=A_j(p_j)_2\geq0,\qquad
(M_{\lambda_j})_{11}=A_j(p_j)_1^2(p_j)_2\geq0.
\]
Applying \eqref{a:eq:jeterror} and \eqref{eq:limithistorytail} with
$J=j+1$ yields for $t\in[t_j,t_{j+1}]$,
\begin{equation}\label{eq:origincomponentestimates}
\begin{aligned}
\partial_{x_2}\rho(t,0)
&\geq h_j(\cos\theta_j-\eps_*\theta_j^6)
-\lambda_j^{-1/2}-\sum_{k=j+1}^\infty\varepsilon_k,\\
\partial_{x_1}u_1(t,0)
&\geq h_j(\sin^2\theta_j\cos\theta_j-\eps_*\theta_j^6)
-\lambda_j^{-1/2}-\sum_{k=j+1}^\infty\varepsilon_k.
\end{aligned}
\end{equation}
For an absolute $c>0$, we have
\[
\cos\theta_j-\eps_*\theta_j^6\geq c,\qquad
\sin^2\theta_j\cos\theta_j-\eps_*\theta_j^6\geq c\theta_j^2,
\qquad
h_j\theta_j^2=h_0\theta_0^{14}K^j\theta_j^{-12}\to\infty.
\]
Since $t_j\to T_\infty$ and the error tails in
\eqref{eq:origincomponentestimates} vanish, we have
\begin{equation}\label{eq:originblowup}
\partial_{x_2}\rho(t,0)\to+\infty,\qquad
\partial_{x_1}u_1(t,0)\to+\infty
\quad\text{as }t\to T_\infty.
\end{equation}

For the endpoint energy identities, \eqref{a:eq:mapzero} implies
\[
\sup_{0\leq t\leq T_\infty}\norm{\rho_{j+1}(t)-\rho_j(t)}_\infty
\leq\norm{\rho_{j+1}(0)-\rho_j(0)}_\infty
+C\bar{\mu}_j^C\lambda_j^{-1},
\]
where we used $\sup_j\norm{\nabla\rho_j(0)}_\infty<\infty$ from
\eqref{geo:eq:summableinitial}. Thanks to \eqref{geo:eq:Pwindow},
$\bar{\mu}_j^C\lambda_j^{-1}\leq\lambda_j^{-1/2}$ for all sufficiently
large $j$. Thus $\rho_j$ converges
uniformly on $[0,T_\infty]\times\T^2$ to $\rho$.
Define $u(T_\infty)=\cT\rho(T_\infty)$. The $L^2$ multiplier bound
and \eqref{eq:energy} then imply
\[
\norm{\rho(t)}_2=\norm{\rho_{\rm in}}_2,\qquad
\norm{u(t)}_2\leq\norm{\rho_{\rm in}}_2,
\qquad 0\leq t\leq T_*.
\]
\end{proof}

The embedding $H^3\hookrightarrow C^1$ and \eqref{eq:originblowup} imply
\[
\lim_{t\to T_*}\norm{\rho(t)}_{H^3}=\infty.
\]
Moreover, \eqref{eq:tametransport} yields
\[
\norm{\rho(t)}_{H^3}\leq\norm{\rho_{\rm in}}_{H^3}
\exp\!\left(C\int_0^t\left(\norm{\nabla_x\rho(s)}_\infty+
\norm{\nabla_xu(s)}_\infty\right)\,ds\right),
\]
and hence
\[
\int_0^{T_*}\left(\norm{\nabla_x\rho(t)}_\infty+
\norm{\nabla_xu(t)}_\infty\right)\,dt=\infty.
\]

\section*{Acknowledgement}
Publicly available ChatGPT was used to understand the finite-time blowup construction of Open AI \cite{OpenAIEuler} for the Euler equations, 
and to assist with some of the computations in the manuscript.

\end{document}